\newtheorem{mtheo}{Theorem}
\newtheorem{mcoro}{Corollary}[mtheo]
\newtheorem{mtheorem}{Theorem}
\newtheorem{mcorollary}[mtheorem]{Corollary}
\newtheorem{theorem}{Theorem}[section]
\newtheorem{lemma}[theorem]{Lemma}
\newtheorem{claim}[theorem]{Claim}
\newtheorem{corollary}[theorem]{Corollary}
\newtheorem{proposition}[theorem]{Proposition}
\newtheorem*{conjecture}{Conjecture}
\theoremstyle{definition}
\newtheorem{remark}[theorem]{Remark}
\newtheorem{caveat}[theorem]{Caveat}
\newtheorem*{remark*}{Remark}
\numberwithin{equation}{section}
\newcommand{\eqdef}{\stackrel{\scriptscriptstyle\rm def}{=}}
\DeclareMathOperator{\clocon}{\overline{\rm conv}}
\DeclareMathOperator{\trace}{trace}
\DeclareMathOperator{\card}{card}
\DeclareMathOperator{\interior}{int}
\DeclareMathOperator{\Leb}{Leb}
\DeclareMathOperator{\sgn}{sgn}
\DeclareMathOperator{\Lip}{Lip}
\def\varkappa{\kappa}
\def\bN{\mathbb{N}}
\def\bZ{\mathbb{Z}}
\def\bP{\mathbb{P}}
\def\bR{\mathbb{R}}
\def\bQ{\mathbb{Q}}
\def\bS{\mathbb{S}}
\def\cM{\EuScript{M}}
\def\cN{\EuScript{N}}
\def\cE{\EuScript{E}}
\def\cL{\EuScript{L}}
\def\cO{\EuScript{O}}
\def\cP{\EuScript{P}}
\def\fE{\mathfrak{E}}
\def\fH{\mathfrak{H}}
\def\fI{\mathfrak{I}}
\def\fm{\Leb}%\mathfrak{m}}
\DeclareMathSymbol{\varnothing}{\mathord}{AMSb}{"3F}
\renewcommand{\emptyset}{\varnothing}
\begin{document}

\title[Entropy spectrum of Lyapunov exponents]{Entropy spectrum of Lyapunov exponents\\
for
nonhyperbolic step skew-products\\and elliptic cocycles}
\author[L.~J.~D\'iaz]{L. J. D\'\i az}
\address{Departamento de Matem\'atica PUC-Rio, Marqu\^es de S\~ao Vicente 225, G\'avea, Rio de Janeiro 22451-900, Brazil}
\email{lodiaz@mat.puc-rio.br}
\author[K.~Gelfert]{K.~Gelfert}
\address{Instituto de Matem\'atica Universidade Federal do Rio de Janeiro, Av. Athos da Silveira Ramos 149, Cidade Universit\'aria - Ilha do Fund\~ao, Rio de Janeiro 21945-909,  Brazil}\email{gelfert@im.ufrj.br}
\author[M.~Rams]{M. Rams} \address{Institute of Mathematics, Polish Academy of Sciences, ul. \'{S}niadeckich 8,  00-656 Warszawa, Poland}
\email{rams@impan.pl}

\begin{abstract}
	We study the fiber Lyapunov exponents of step skew-product maps over a complete shift of $N$, $N\ge2$, symbols and with $C^1$ diffeomorphisms of the circle as fiber maps. The systems we study are transitive and genuinely nonhyperbolic, exhibiting simultaneously ergodic measures with positive, negative, and zero exponents. Examples of such systems arise from the projective action of $2\times 2$ matrix cocycles and our results apply to an open and dense subset of elliptic $\mathrm{SL}(2,\bR)$ cocycles. We derive a multifractal analysis for the topological entropy of the level sets of  Lyapunov exponent. The results are formulated in terms of Legendre-Fenchel transforms of restricted variational pressures, considering hyperbolic ergodic measures only, as well as in terms of restricted variational principles of entropies of ergodic measures with a given exponent. We show that the entropy of the level sets is a continuous function of the Lyapunov exponent. The level set of the zero exponent has positive, but not maximal, topological entropy. Under the additional assumption of proximality, as for example for skew-products arising from certain matrix cocycles, there exist two unique ergodic  measures of maximal entropy, one with negative and one with positive fiber Lyapunov exponent.
\end{abstract}

\begin{thanks}{This research has been supported  [in part]  by CNE-Faperj, CNPq-grants (Brazil), EU Marie-Curie IRSES ``Brazilian-European partnership in Dynamical
Systems" (FP7-PEOPLE-2012-IRSES 318999 BREUDS), and National Science Centre grant 2014/13/B/ST1/01033 (Poland). The authors acknowledge the hospitality of IMPAN, IM-UFRJ, and PUC-Rio and thank Anton Gorodetski, Yali Liang, and Silvius Klein for their comments. }\end{thanks}

\keywords{entropy,
ergodic measures,
Legendre-Fenchel  transform,
Lyapunov exponents,
pressure,
restricted variational principles,
skew-product,
transitivity}
\subjclass[2000]{%
37D25, %Nonuniformly hyperbolic systems (Lyapunov exponents, Pesin theory, etc.)
37D35, % Thermodynamic formalism, variational principles, equilibrium states
37D30, % partially hyperbolic systems and dominated splittings
28D20, % Entropy and other invariants
28D99% Measure-theoretic ergodic theory
}

\maketitle
\tableofcontents

%----------------------------------------------------------------------------------------------------------------------------------------
\section{Introduction}
%----------------------------------------------------------------------------------------------------------------------------------------

We will study the entropy spectrum of Lyapunov exponents, that is, the topological entropy of level sets of points with a common given Lyapunov exponent. This subject forms part of the multifractal analysis which, in general, studies thermodynamical quantities and objects (such as, for example, equilibrium states, entropies, Lyapunov exponents, Birkhoff averages, and recurrence rates) and their relations with geometrical properties (for example, fractal dimensions). Those properties are often encoded, and we follow this approach, by the topological pressure and its Legendre-Fenchel transform. The novelty of this paper is that we consider transitive systems which are genuinely nonhyperbolic in the sense that their Lyapunov spectra contain zero in its interior (this property continues to hold also for perturbations) and that we provide a description of the full spectrum.

The systems that we investigate are step skew-products with circle fibers.  These systems provide quite easily describable examples in which (robust) nonhyperbolicity can be studied. At the same time,  they serve as models for robustly transitive  and (nonhyperbolic)  partially hyperbolic  diffeomorphisms in a setting motivated by~\cite{BonDiaUre:02,RodRodTahUre:12}, for further details see~\cite[Section 8.3]{DiaGelRam:}. Let us observe that they also appear quite naturally as limit systems (using the terminology in~\cite{GorIly:99}) in some non-local bifurcations and fit into the theory rigorously initiated in~\cite{GorIly:00}. From another point of view,  they can also be  considered as actions of a group of diffeomorphisms on the circle or as random dynamical systems~\cite{Nav:11}. An important class of examples that fit into our setting is the one of step skew-products on the circle which are induced by the projective action of a linear cocycle of $2\times 2$-matrices. Indeed, there are a kind of paradigmatic examples which admit a fairly simple description where our results can be applied (for the complete general setting and precise results  see Section~\ref{sec:results}).

In Sections~\ref{subsec:stepskew} and \ref{subsec:cocyles} we will skip all major technicalities and announce in a quick way our main result and its application to the study of cocycles of matrices in $\mathrm{SL}(2,\mathbb{R})$, while
in Section~\ref{sec:results} we announce our results in their full generality.
We point out that we always work in the lowest possible regularity and consider $C^1$ circle diffeomorphisms as fiber maps.

%-------------------------------------------------------------------------------------
\subsection{Step skew-products with circle fibers} \label{subsec:stepskew}
%-------------------------------------------------------------------------------------

Consider a finite family $f_i\colon \bS^1\to \bS^1$, $i=0,\ldots,N-1$ for $N\ge2$, of $C^1$ diffeomorphisms and the associated  step skew-product
\begin{equation}\label{eq:sp}
	F\colon \Sigma_N\times \bS^1\to \Sigma_N\times \bS^1,
	\quad
	F(\xi,x) =(\sigma(\xi), f_{\xi_0}(x)),
\end{equation}
where $\Sigma_N=\{0,\ldots,N-1\}^\bZ$.
We  consider the class $\mathrm{SP}^1_{\rm shyp}(\Sigma_N\times\bS^1)$ of such maps which are topologically transitive and ``nonhyperbolic in a nontrivial way" in the sense that there are some ``expanding region'' and some ``contracting region'' (relative to the fiber direction) and that any of those can be reached from anywhere in the ambient space under forward/backward iterations as follows:
\begin{itemize}
\item Some hyperbolicity: There is  a ``forward blending" interval $J^+\subset\bS^1$ such that for every sufficiently small interval $H$ with $H\cap J^+\ne \emptyset$ there are $\ell\sim |\log\lvert H\rvert | $ and  a finite sequence $(\xi_0\ldots\xi_{\ell})$ such that $f_{\xi_{\ell}}\circ\ldots\circ f_{\xi_0} (H)$ covers $J^+$ in an uniformly expanding way.
Similarly, there is a ``backward expanding blending" interval $J^-$.
\item Transitions in finite time to/from blending intervals: There exists $M\ge1$ such that for every $x\in\bS^1$ there are finite sequences $(\theta_{-r}\ldots\theta_{-1})$ and $(\beta_0\ldots\beta_s)$, $s,r\le M$, such that $f_{\beta_{s}}\circ\ldots\circ f_{\beta_0}(x)\in J^+$ and $f_{\theta_{-r}}^{-1}\circ\ldots\circ f_{\theta_{-1}}^{-1}(x)\in J^+$. Similarly, there are transitions to/from the  ``backward  blending" interval $J^-$.
\end{itemize}

\begin{remark}\label{r.Gorod}
The simplest setting where the two properties above can be verified are skew-product maps defined on $\Sigma_2\times \mathbb{S}^1$ whose fiber maps  and  are a Morse-Smale diffeomorphism $f_0$ with one attracting and one repelling fixed point and an irrational rotation $f_1$. Moreover, small perturbations of these maps also satisfy the hypotheses, see \cite[Proposition 8.8]{DiaGelRam:}.  Indeed, this type of example plays a specially important role in this paper as they satisfy the following property of \emph{proximality}:
\begin{itemize}
\item Proximality: For every $x,y\in\bS^1$ there exists one bi-infinite sequence $\xi\in\Sigma_N$ such that $\lvert f_{\xi_{n}}\circ\ldots\circ f_{\xi_0}(x)-f_{\xi_n}\circ\ldots\circ f_{\xi_0}(y)\rvert\to0$ and $\lvert f_{\xi_{-n}}^{-1}\circ\ldots\circ f_{\xi_{-1}}^{-1}(x)-f_{\xi_{-n}}^{-1}\circ\ldots\circ f_{\xi_{-1}}^{-1}(y)\rvert\to0$ as $n\to0$
\end{itemize}
Further, examples of a quite different nature can be found in \cite[Section 8.1]{DiaGelRam:}, where also the motivation for the term ``blending" is  discussed. Let us observe that the above properties hold open and densely among $C^1$ transitive nonhyperbolic step skew-products (see~\cite[Proposition 8.9]{DiaGelRam:} for details).
\end{remark}

Given $X=(\xi,x)\in \Sigma_N\times\bS^1$, consider the \emph{(fiber) Lyapunov exponent} of $X$
\begin{equation}\label{def:exponentFfirst}
	\chi(X)
	\eqdef
	 \lim_{n\to\pm\infty}\frac{1}{ n}\log\,\lvert (f_{\xi}^n)'(x)\lvert  ,
\end{equation}
(where $ f_\xi^{-n}\eqdef f_{\xi_{-n}}\circ\ldots \circ f_{-1}$ and $f_\xi^n \eqdef f_{\xi_{n-1}}\circ\ldots\circ f_{\xi_0}$) 
where we assume that both limits $n\to\pm \infty$  exist and coincide. 
We will analyze the topological entropy of the following \emph{level sets of Lyapunov exponents}: given $\alpha\in\bR$ let
\[
	\cL(\alpha)
	\eqdef \big\{X\in\Sigma_N\times\bS^1\colon \chi(X)=\alpha\big\}.
\]
 Here we will rely on the general concept of topological entropy $h_{\rm top}$ introduced by Bowen~\cite{Bow:73} (see Appendix~\ref{App:B}).
 Given an $F$-ergodic measure $\mu$, denote by $\chi(\mu)$ the Lyapunov exponent of $\mu$  defined by 
 \[
 	\chi(\mu)
	\eqdef \int\log\,\lvert(f_{\xi_0})'(x)\rvert\,d\mu(\xi,x).
\]	

\begin{mtheo}\label{teo:specialste}
	For every $N\ge2$ and for every $F\in \mathrm{SP}^1_{\rm shyp}(\Sigma_N\times\bS^1)$  we have $\cL(\alpha)\ne\emptyset$ if and only if $\alpha\in[\alpha_{\rm min},\alpha_{\rm max}]$ for some numbers $\alpha_{\rm min}<0<\alpha_{\rm max}$. Moreover, the map $\alpha\mapsto h_{\rm top}(\cL(\alpha))$ is continuous and concave on each interval $[\alpha_{\rm min},0]$ and $[0,\alpha_{\rm max}]$ and for all $\alpha\in[\alpha_{\rm min},0)\cup(0,\alpha_{\rm max}]$
we have
\[
	h_{\rm top}(\cL(\alpha))
	= \sup\{h(\mu)\colon \mu\in\cM_{\rm erg}(\Sigma_N\times\bS^1),\chi(\mu)=\alpha\}. 
\]	
Moreover, assuming proximality, there exist unique ergodic $F$-invariant probability measures $\mu_-$ and $\mu_+$ of maximal entropy $h(\mu_\pm)=\log N$, respectively, and satisfying
\[
	\alpha_{\rm min}<\alpha_-\eqdef \chi(\mu_-)<0<\alpha_ +\eqdef\chi(\mu_+)<\alpha_{\rm max}
\]
and for all $\alpha \in(\alpha_{\rm min},\alpha_{\rm max})\setminus\{ \alpha_-, \alpha_+\}$ we have
\[
	0< h_{\rm top}(\cL(\alpha)) < \log N.
\]
\end{mtheo}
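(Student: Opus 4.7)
My approach is a Legendre--Fenchel multifractal analysis restricted to hyperbolic ergodic measures. Write $\cM^\pm_{\rm erg}$ for the classes of ergodic $F$-invariant measures with $\pm\chi(\mu)>0$, and define the restricted pressures
\[
	P^\pm(q)\eqdef\sup\{h(\mu)-q\chi(\mu)\colon\mu\in\cM^\pm_{\rm erg}\},
\]
with Legendre--Fenchel transforms $\cE^\pm(\alpha)\eqdef\inf_q(P^\pm(q)+q\alpha)$. The functions $\cE^\pm$ are concave and continuous on $[\alpha_{\rm min},0]$ and $[0,\alpha_{\rm max}]$, respectively, where the endpoints are the asymptotic slopes of $P^\pm$. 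The target equality is $h_{\rm top}(\cL(\alpha))=\cE^{\sgn\alpha}(\alpha)$ for $\alpha\ne 0$, realized by some ergodic measure of exponent $\alpha$, with continuity through $\alpha=0$ extracted separately.

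\textbf{Two-sided bounds for $\alpha\ne 0$.} The upper bound is a Bowen-type covering argument: any $X\in\cL(\alpha)$ satisfies $\lvert(f_\xi^n)'(x)\rvert=e^{n\alpha+o(n)}$, so dynamical balls at $X$ shrink at rate $\asymp e^{-n\lvert\alpha\rvert}$; covering $\cL(\alpha)$ by such balls and inserting the Gibbs-type estimate $\sum e^{-nP^{\sgn\alpha}(q)}\lvert(f_\xi^n)'(x)\rvert^q=O(1)$ coming from horseshoe approximations yields $h_{\rm top}(\cL(\alpha))\le P^{\sgn\alpha}(q)+q\alpha$ for every $q\in\bR$, and infimizing produces $\cE^{\sgn\alpha}(\alpha)$. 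For the lower bound I would carry out a Moran-type gluing construction: fix hyperbolic ergodic $\mu_i\in\cM^{\sgn\alpha}_{\rm erg}$ with weights $\lambda_i\ge 0$, $\sum\lambda_i=1$, chosen so that $\sum\lambda_i\chi(\mu_i)=\alpha$ and $\sum\lambda_ih(\mu_i)$ approaches $\cE^{\sgn\alpha}(\alpha)$; approximate each $\mu_i$ by a horseshoe via Katok's theorem, then concatenate $(n_i,\varepsilon)$-typical $\mu_i$-segments in proportions $\lambda_i$, using the ``transitions in finite time'' hypothesis to splice consecutive segments through a bounded-length itinerary passing near $J^+$ (or $J^-$). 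The resulting Cantor set lies in $\cL(\alpha)$ and carries a Bernoulli-like measure realizing the expected entropy modulo $o(1)$.

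\textbf{Behavior at $\alpha=0$.} Continuity on each half-interval is automatic from concavity of $\cE^\pm$. The harder assertion $h_{\rm top}(\cL(0))>0$ is obtained by the same gluing construction but now alternating between horseshoes in $\cM^+_{\rm erg}$ and $\cM^-_{\rm erg}$ in proportions tuned so that the averaged exponent vanishes exactly; both blending intervals $J^\pm$ are required here to connect the two sign regimes in bounded time, and positive entropy survives the construction. The upper bound $h_{\rm top}(\cL(0))<\log N$ is inherited from the strict concavity argument used in the proximality step below.

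\textbf{Proximality and main obstacle.} Under proximality, the base projection $\pi\colon\Sigma_N\times\bS^1\to\Sigma_N$ gives $h(\mu)\le\log N$ for every $F$-invariant $\mu$, with equality forcing $\pi_*\mu$ to be the Bernoulli $(1/N,\ldots,1/N)$-measure and the fiber disintegration to concentrate on single points (any two distinct fiber points are collapsed by some coding, incompatible with a positive-dimensional disintegration). Exactly two graph measures survive, $\mu_-$ supported on an ``attracting graph'' and $\mu_+$ on a ``repelling graph'', so $\chi(\mu_\pm)\ne 0$; non-affineness of $P^\pm$ forces $\alpha_{\rm min}<\chi(\mu_-)$ and $\chi(\mu_+)<\alpha_{\rm max}$, and strict concavity of $\cE^\pm$ away from their maxima delivers $h_{\rm top}(\cL(\alpha))<\log N$ for every other $\alpha\in(\alpha_{\rm min},\alpha_{\rm max})$. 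In my view the main obstacle is the Moran/gluing lower bound, where one must control the limiting fiber exponent \emph{exactly} (not only on average) while tracking entropy across concatenations whose distortion estimates degenerate as the horseshoe exponents approach zero; synchronizing Pliss-type selection with the blending-interval geometry is the delicate point, and uniqueness of $\mu_\pm$ under proximality is the second subtle issue.
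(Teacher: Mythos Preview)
Your framework via restricted pressures $P^\pm$ and their transforms $\cE^\pm$ matches the paper's, and your lower bound by gluing is close in spirit to the paper's orbitwise construction (Section~\ref{sec:proofoflowerbound}). However, your upper bound has a genuine gap. A Bowen-type argument gives only $h_{\rm top}(\cL(\alpha))\le\sup\{h(\mu):\mu\in\cM,\,\chi(\mu)=\alpha\}$, with the supremum over \emph{all} invariant measures; for $\alpha<0$ a non-ergodic $\mu=t\mu_1+(1-t)\mu_2$ with $\chi(\mu_1)<\alpha<0<\chi(\mu_2)$ and $\chi(\mu)=\alpha$ may satisfy $h(\mu)>\cE^-(\alpha)$, since $P^-$ is defined over $\cM^-_{\rm erg}$ only. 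Your ``Gibbs-type estimate'' does not repair this: the fiber maps are merely $C^1$, so no Gibbs structure is available, and in any case $P^-(q)$ is not a topological pressure on $\Sigma_N\times\bS^1$ for which such an inequality would hold. The paper's argument (Lemma~\ref{lem:below}) is different and essential: assuming $h_{\rm top}(\cL(\alpha))>\cE^-(\alpha)+2\delta$, the skeleton property (Proposition~\ref{newprop:skeleton}) manufactures a basic set $\Gamma$ with $h_{\rm top}(\Gamma)>h_{\rm top}(\cL(\alpha))-\delta$ and \emph{every} ergodic $\nu$ on $\Gamma$ having $\chi(\nu)$ negative and near $\alpha$; the measure of maximal entropy on $\Gamma$ then lies in $\cM^-_{\rm erg}$ and violates property~(E5). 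The missing idea is that one must first trap $\cL(\alpha)$ inside a horseshoe \emph{of the correct sign} before any inequality restricted to $\cM^{\sgn\alpha}_{\rm erg}$ can be invoked.

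Two further gaps. First, concavity of $\cE^\pm$ yields one-sided limits at $0$ but not their equality; the paper needs a separate ``sign-crossing'' estimate (Lemma~\ref{lem:main3}) to match the two limits (Lemma~\ref{lem:welldefined}), and then both the upper bound (Lemma~\ref{lem:somenumber}) and the orbitwise lower bound (Theorem~\ref{theoprop:zero}) to identify $h_{\rm top}(\cL(0))$ with that common value. Second, your proximality step is incomplete: proximality provides for each pair $(x,y)$ only \emph{one} collapsing sequence $\xi$, whereas the disintegration argument needs collapse for $\mathfrak b$-\emph{almost every} $\xi$ with respect to the $(1/N,\ldots,1/N)$-Bernoulli measure. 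That implication (proximality $\Rightarrow$ synchronization) is Malicet's theorem, invoked explicitly by the paper in Section~\ref{sec:synch}; only after synchronization does Lemma~\ref{lem:synch} force exactly two ergodic lifts $\mu_\pm$ of $\mathfrak b$, ruling out in particular any zero-exponent lift.
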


Theorem~\ref{teo:specialste} will be a consequence of the more elaborate version stated in Theorems~\ref{main1},~\ref{main2}, and~\ref{main3}, see Section~\ref{sec:results} and compare Figure~\ref{fig.0}.
\begin{figure}[h] 
 \begin{overpic}[scale=.40]{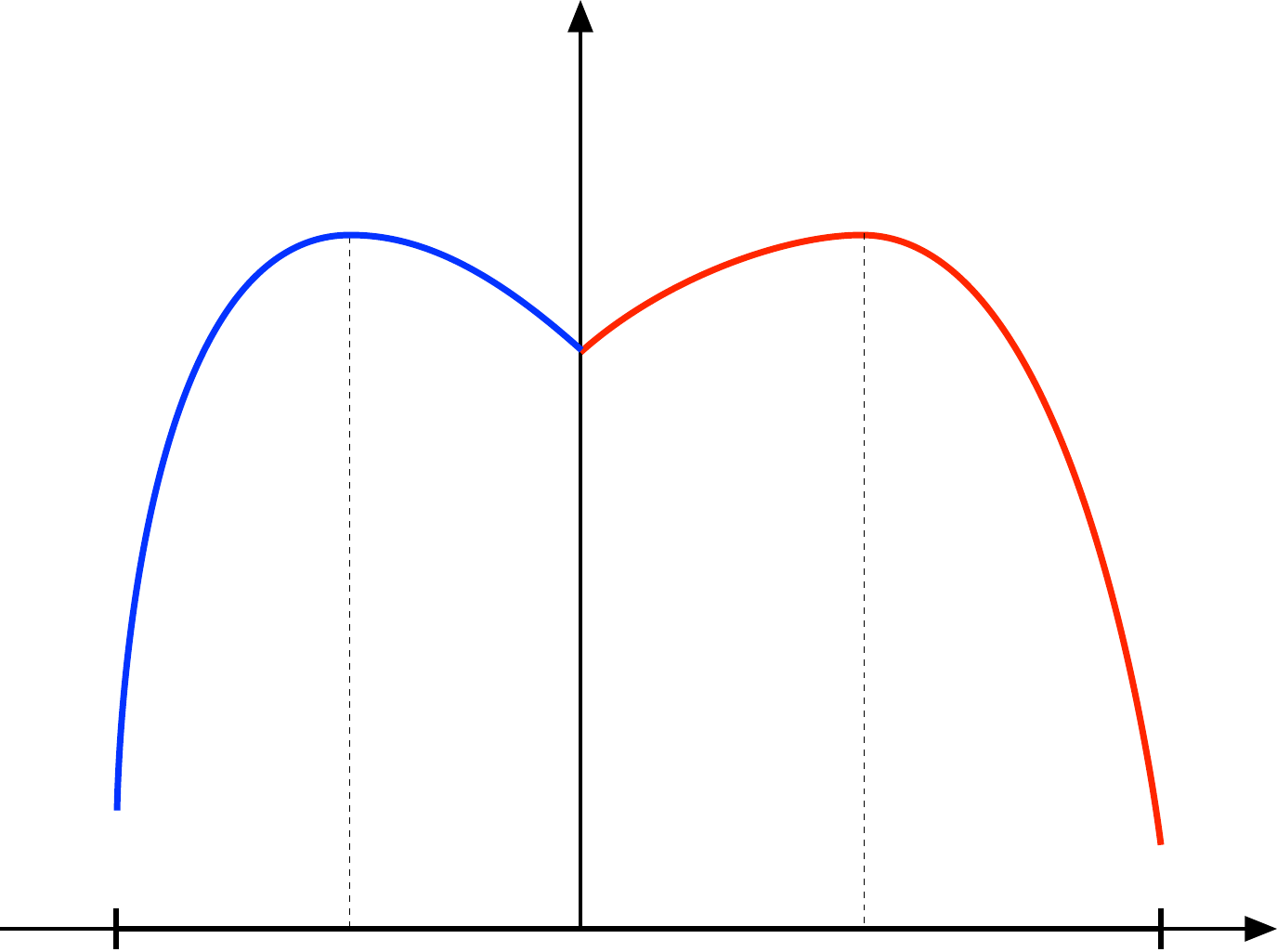}
 \put(48,68){\small entropy$(\cL(\alpha))$}
 	\put(102,0){\small $\alpha$}
	\put(44,-5){\small $0$}
 	\put(5,-5){\small $\alpha_{\rm min}$}
 	\put(82,-5){\small $\alpha_{\rm max}$}
 	\end{overpic}
 \caption{The entropy spectrum, assuming proximality}
 \label{fig.0}
\end{figure}

%-------------------------------------------------------------------------------------
\subsection{Application to $\mathrm{GL}^+(2,\bR)$ and $\mathrm{SL}(2,\bR)$ cocycles} \label{subsec:cocyles}
%-------------------------------------------------------------------------------------

Consider first the group $\mathrm{GL}^+(2, \mathbb{R})$ of all $2\times 2$ matrices with real coefficients and positive determinant. Given $N\ge2$, a continuous map $A\colon\Sigma_N \to \mathrm{GL}^+(2, \mathbb{R})$ is called a $2\times2$
\emph{matrix cocycle}. If $A$ is piecewise constant and depends only on the zeroth coordinate of the sequences $\xi \in \Sigma_N$, that is $A(\xi) = A_{\xi_0}$ where $\mathbf{A}\eqdef\{A_0,\ldots,A_{N-1}\}\in\mathrm{GL}^+(2, \mathbb{R})^N$, then we refer to it as the \emph{one-step cocycle generated} by $\mathbf{A}$ or simply as the \emph{one-step cocycle} $\mathbf{A}$. 
One-step matrix cocycles are an object of intensive study in several branches of mathematics and also have serious physical applications. See for example \cite[Sections 2 and 3]{Dam:}, \cite[Section 7.2]{DuaKle:16}, and~\cite{Via:14}. 

Note that the projective line $\mathbb{P}^1$ is topologically the circle $\bS^1$
and the action of any $\mathrm{GL}^+(2, \mathbb{R})$ matrix on $\mathbb{P}^1$ is a diffeomorphism. We will continue to take this point of view, given a  matrix $A\in \mathrm{GL}^+(2,\bR)$, define $f_A\colon\bP^1\to\bP^1$ by
\begin{equation}\label{eq:defsteskecoc}
	f_A(v)
	\eqdef \frac{Av}{\lVert Av\rVert}.
\end{equation}
Given a one-step $2\times 2$ matrix cocycle $\mathbf A$, we denote by $F_{\mathbf A}$ the associated step skew-product generated by the family of maps $f_{A_0},\ldots,f_{A_{N-1}}$  as in~\eqref{eq:sp}.

To simplify our study of the  Lyapunov exponents of the cocycle,  we consider the \emph{one-sided} one-step cocycle $A\colon\Sigma_N^+ \to \mathrm{GL}^+(2, \mathbb{R})$, where $\Sigma_N^+=\{0,\ldots,N-1\}^{\bN_0}$.
We denote 
$$
	\mathbf A^n(\xi^+) 
	\eqdef A_{\xi_{n-1}} \circ \ldots \circ A_{\xi_{1}} \circ A_{\xi_0}, 
	\quad \xi^+\in \Sigma_N^+, \, n\ge 0.
 $$
The \emph{Lyapunov exponents of the cocycle $\mathbf A$} at $\xi^+ \in \Sigma_N^+$ are the
limits
$$
	\lambda_1 (\mathbf A,\xi^+)
	\eqdef \lim_{n\to \infty} \frac{1}{n}\log \,\lVert \mathbf A^n(\xi^+)\rVert
	\,\mbox{ and }\,
	\lambda_2 (\mathbf A,\xi^+)
	\eqdef \lim_{n\to \infty} \frac{1}{n}\log\, \lVert (\mathbf A^n(\xi^+))^{-1}\rVert^{-1},
$$
where $\lVert L\rVert$ denotes the norm of the matrix $L$, whenever they exist. Given $\alpha\in\bR$, we consider the level set
\begin{equation}\label{def:levelsetA}
	\cL^+_{\mathbf A}(\alpha)
	\eqdef \big\{\xi^+\in\Sigma_N^+\colon \lambda_1(\mathbf A,\xi^+)=\alpha\big\}.
\end{equation}

We now consider the subgroup $\mathrm{SL}(2, \mathbb{R})\subset \mathrm{GL}^+(2,\bR)$ of $2\times 2$ matrices with real coefficients and determinant one.
The space $\mathrm{SL}(2, \mathbb{R})^N$ can be roughly divided into two subsets:
{\emph{elliptic}} and {\emph{uniformly hyperbolic}} ones (denoted by $\fE_N$ and $\fH_N$, respectively). 
 Both $\fE_N$ and $\fH_N$ are open and their union is dense in $\mathrm{SL}(2, \mathbb{R})^N$, see \cite[Proposition 6]{Yoc:04}.
Hyperbolic cocycles are quite well understood, for their characterization see \cite{AviBocYoc:10}.
 Though much less is known about elliptic cocycles. Here we will introduce a subset of elliptic cocycles having ``some hyperbolicity'', denoted by $\fE_{N,{\rm shyp}}$, which forms an open and dense subset of $\fE_N$. For any $\mathbf A\in\fE_{N,{\rm shyp}}$ we will provide a detailed description of the spectrum of its Lyapunov exponents.

To be more precise,  denote by $\langle\mathbf A\rangle$ the semigroup generated by $\mathbf A$.
Recall that an element $R\in\mathrm{SL}(2,\bR)$ is
\emph{elliptic} if  the absolute value of its trace is strictly less than $2$; in such a case the matrix $R$ is conjugate to a rotation by some angle, called its \emph{rotation number} and denoted by $\varrho(R)$. An element $A\in\mathrm{SL}(2,\bR)$ is \emph{hyperbolic} if the absolute value of its trace is strictly larger than $2$, which is equivalent to the fact that the matrix $A$ has one eigenvalue bigger than one and one smaller than one.
The set $\fE_N$ of \emph{elliptic cocycles} is the set of cocycles $\mathbf A\in\mathrm{SL}(2,\bR)^N$ such that $\langle \mathbf A\rangle$ contains an elliptic element.

If a matrix $R$ is elliptic then $f_R$ is  conjugate to a rotation by angle $\varrho(R)$.  Note that in the case when $\varrho(R)$ is irrational then $f_R$ is of the same type as (differentiably conjugate to) the map $f_1$ in Remark~\ref{r.Gorod}.
If a matrix $A$ is hyperbolic then $f_A$ has one attracting and one repelling fixed point. Note that then $f_A$ is a very specific case of a Morse-Smale diffeomorphism of $\bP^1$ of the same type as $f_0$ in Remark~\ref{r.Gorod}.

We consider a subset $\fE_{N,{\rm shyp}}$  of $\fE_N$
consisting of cocycles $\mathbf{A}\in\mathrm{SL}(2,\bR)^N$ which have ``some hyperbolicity'' in the following sense (the precise definition is provided in Appendix~\ref{App:A}):
\begin{itemize}
\item Some hyperbolicity:
There exists $A\in\langle\mathbf A\rangle$ which is hyperbolic.
\item Transitions in finite time: 
There exists $B\in\langle\mathbf A\rangle$ which is sufficiently ``close'' to an irrational rotation.
\end{itemize}
Note that these properties are just a translation of the properties of systems in $\mathrm{SP}^1_{\rm shyp}(\Sigma_N\times\bS^1)$  for the induced fiber maps $f_A$ and $f_B$ arising from matrix cocycles in the spirit of Remark \ref{r.Gorod}. Indeed, it is easy to check that each such a cocycle automatically also satisfies the property Proximality.
Following \cite{AviBocYoc:10}, we will show that $\fE_{N,\rm shyp}$  is an open and dense subset of  $\fE_N$ (see Proposition~\ref{p.elliptic}).

Given $\nu$ an ergodic measure on $\Sigma_N^+$ (with respect to $\sigma^+\colon\Sigma_N^+\to\Sigma_N^+$), denote 
\[
	\lambda_1(\mathbf A,\nu)
	\eqdef \lim_{n\to\infty}\int\frac1n\log\,\lVert \mathbf A^n(\xi^+)\rVert\,d\nu
\]
and note that this number is the Lyapunov exponent $\lambda_1(\mathbf A,\xi^+)$ for any $\nu$-typical $\xi^+$. Denote by $h(\nu)$ the metric entropy of $\nu$.

\begin{mtheo}\label{teo:introSL2R}
For every $N\ge2$ the set $\fE_{N,\rm shyp}$  is open and dense  in $\fE_N$ and has the following property: 
 For every $\mathbf{A}$ in $\fE_{N,\rm shyp}$ there are numbers $0<\alpha_+<\alpha_{\rm max}$ such that the map $\alpha\mapsto h_{\rm top}(\cL^+_{\mathbf A}(\alpha))$ is continuous and concave on $[0,\alpha_{\rm max}]$, having a unique maximum at $\alpha^+$  and 
 \[
	 h_{\rm top}(\cL^+_{\mathbf A}(\alpha_+)) 
	= \log N,
\]
we have $0<h_{\rm top}(\cL^+_{\mathbf A}(0))<\log N$, and for every $\alpha\in(0,\alpha_{\rm max}]$  we have
$$
	h_{\rm top}(\cL^+_{\mathbf A}(\alpha) )
	=\sup\{h(\nu)\colon\nu\in\cM_{\rm erg}(\Sigma_N^+),\lambda_1(\mathbf A,\nu)=\alpha\}.
$$
\end{mtheo}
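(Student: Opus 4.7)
My plan is to deduce Theorem~\ref{teo:introSL2R} from Theorem~\ref{teo:specialste} applied to the step skew-product $F_{\mathbf A}$ induced by the projective action of $\mathbf A$, after verifying (i) that $\fE_{N,\rm shyp}$ is open and dense in $\fE_N$, (ii) that each $F_{\mathbf A}$ with $\mathbf A\in\fE_{N,\rm shyp}$ lies in $\mathrm{SP}^1_{\rm shyp}(\Sigma_N\times\bS^1)$ and satisfies proximality, and (iii) that the cocycle exponent $\lambda_1(\mathbf A,\xi^+)$ translates into the fiber exponent $\chi(\xi,[v])$ by a fixed affine relation on the relevant pieces of phase space.

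For step (i), openness is immediate since both defining conditions are open in $\mathbf A$. For density, I would follow the Avila--Bochi--Yoccoz approach of~\cite{Yoc:04,AviBocYoc:10}: perturbing one generator inside $\mathrm{SL}(2,\bR)$ one can force a long word in $\mathbf A$ to become hyperbolic, and then a further small perturbation makes another element arbitrarily close to an irrational rotation, without leaving $\fE_N$. For (ii), a hyperbolic $A\in\langle\mathbf A\rangle$ gives a Morse--Smale $f_A$ with one attractor and one repeller on $\bP^1$, which produces the forward and backward blending intervals $J^\pm$; a near-rotation $B\in\langle\mathbf A\rangle$ yields the transitions in bounded time needed to reach $J^\pm$ from any point, exactly as in Remark~\ref{r.Gorod}. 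Proximality is automatic because iterating a hyperbolic element of $\langle\mathbf A\rangle$ collapses all but one projective direction to its attracting fixed point.

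For step (iii), if $v$ is a unit vector in $\bR^2$ and $A\in\mathrm{SL}(2,\bR)$, the derivative of the projective map satisfies $|f_A'([v])|=\|Av\|^{-2}$. For any two-sided extension $\xi$ of $\xi^+$ with $\lambda_1(\mathbf A,\xi^+)=\alpha>0$, Oseledec's theorem provides a stable line $E^s(\xi^+)$; for $[v]\neq E^s(\xi^+)$ one gets $\lim_{n\to+\infty}\tfrac1n\log\|\mathbf A^n(\xi^+)v\|=\alpha$ and hence $\chi(\xi,[v])=-2\alpha$ whenever the backward limit also exists and matches. This gives a measurable bijection between ergodic $\nu\in\cM_{\rm erg}(\Sigma_N^+)$ with $\lambda_1(\mathbf A,\nu)=\alpha>0$ and ergodic $\mu\in\cM_{\rm erg}(\Sigma_N\times\bS^1)$ with $\chi(\mu)=-2\alpha$ (pairing $\nu$ with the measure supported on the unstable Oseledec graph). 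This bijection preserves metric entropy since circle diffeomorphisms contribute zero conditional entropy via Abramov--Rokhlin, so
\[
 \sup\{h(\nu):\lambda_1(\mathbf A,\nu)=\alpha\}
 =\sup\{h(\mu):\chi(\mu)=-2\alpha\}
 = h_{\rm top}(\cL(-2\alpha))
\]
by Theorem~\ref{teo:specialste}. The lower bound $h_{\rm top}(\cL^+_{\mathbf A}(\alpha))\ge\sup\{h(\nu):\lambda_1(\mathbf A,\nu)=\alpha\}$ is Bowen's variational inequality applied to ergodic $\nu$ concentrated on $\cL^+_{\mathbf A}(\alpha)$; the reverse inequality follows by lifting $\cL^+_{\mathbf A}(\alpha)$ through the semiconjugacy $\pi:\Sigma_N\times\bS^1\to\Sigma_N^+$ (using the unstable section as a measurable one-sided inverse) into $\cL(-2\alpha)$ and invoking the Lipschitz factor inequality for Bowen entropy. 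The features claimed in the theorem are then direct transcriptions of the negative-exponent branch of Theorem~\ref{teo:specialste}: uniqueness and maximality of $\mu_-$ give a unique $\alpha^+=-\chi(\mu_-)/2\in(0,\alpha_{\rm max})$ with $h_{\rm top}(\cL^+_{\mathbf A}(\alpha^+))=\log N$, and concavity/continuity on $[0,\alpha_{\rm max}]$ together with $0<h_{\rm top}(\cL^+_{\mathbf A}(0))<\log N$ follow correspondingly.

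The main obstacle I expect is the careful passage from two-sided skew-product dynamics to the one-sided cocycle: while $\lambda_1(\mathbf A,\xi^+)$ depends only on the future, $\chi(\xi,[v])$ requires the existence and equality of both forward and backward limits, so the lift of a full-entropy subset of $\cL^+_{\mathbf A}(\alpha)$ into $\cL(-2\alpha)$ demands a coherent choice of backward extension (handled by invariance of the unstable graph). A secondary technical hurdle is the density of $\fE_{N,\rm shyp}$ in $\fE_N$, where one must perturb without destroying ellipticity while engineering both the hyperbolic and near-rotation elements in the generated semigroup simultaneously.
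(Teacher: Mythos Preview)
Your overall strategy matches the paper's: reduce to Theorem~\ref{teo:specialste} via the induced skew-product $F_{\mathbf A}$ after checking open-density and the axioms. Steps (i) and (ii) are fine, and the measure-level correspondence in (iii) (ergodic lifts of $\nu$ to $F_{\mathbf A}$ with $h(\mu)=h(\nu)$ and $\chi(\mu)=\pm 2\lambda_1(\mathbf A,\nu)$) is correct and is exactly what the paper implicitly uses for the variational-principle half of the statement.

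The genuine gap is your upper bound $h_{\rm top}(\cL^+_{\mathbf A}(\alpha))\le h_{\rm top}(\cL(-2\alpha))$. The ``unstable section'' is not well defined from $\xi^+$ alone: for a one-sided cocycle only the \emph{stable} line $E^s(\xi^+)=v_0(\xi^+)$ is determined by the future, while the Oseledets unstable line needs the past. More importantly, for \emph{every} $\xi^+\in\cL^+_{\mathbf A}(\alpha)$ you must produce a backward tail $\xi^-$ and a vector $v$ with two-sided exponent $-2\alpha$; ``invariance of the unstable graph'' is a measure-theoretic statement on an Oseledets full-measure set and does not furnish such a pointwise section on the whole level set. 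Without this, the Lipschitz-factor inequality cannot be applied in the direction you need.

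The paper closes this gap orbitwise rather than via any graph. It first proves set equalities $\cL^+_{\mathbf A}(\alpha)=\pi(\cL^+(\pm 2\alpha))$ through a direct analysis of regular and nonregular sequences (Theorem~\ref{theoprop:onesidedspectrum}), and then, for the passage to two-sided level sets, exploits the $\mathrm{SL}(2,\bR)$ symmetry $\lambda_1(\mathbf A,\cdot)=\lambda_1(\mathbf A^{-1},\cdot)$: for each $\xi^+$ it picks an explicit backward tail coming from some $\eta^+\in\cL^+_{\mathbf A^{-1}}(\alpha)$ and checks case-by-case that the resulting two-sided fiber exponent is correct (this is the content of the claims in the proof of Theorem~\ref{teo:SL2Rskewproduct}). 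That construction is exactly the ``coherent choice of backward extension'' you flag as the main obstacle, and it requires this hands-on argument rather than an appeal to graph invariance.
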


A fundamental step to prove the above theorem is to study the relations between the Lyapunov exponents of the cocycle and the ones of the associated step skew-product. With these results at hand, we can invoke the results about skew-products and prove Theorem~\ref{teo:introSL2R}. 
More precisely, by Remark~\ref{rem:example}, for every $\mathbf A\in \fE_{N,\rm shyp}$ the associated step skew-product $F_{\mathbf A}$ satisfies the hypothesis of Theorems~\ref{main2} and~\ref{main3}.  Theorem~\ref{teo:SL2Rskewproduct} then translates the Lyapunov spectrum of the skew-product to the one of the cocycle and hence proves Theorem~\ref{teo:introSL2R}.

Note that the existence of a zero  Lyapunov exponent for $\mathrm{SL}^+(2,\bR)$ cocycles  immediately ``translates" to the condition  of having two equal exponents for $\mathrm{GL}^+(2,\bR)$ cocycles, just by considering the normalization $A\mapsto A/\sqrt{\lvert\det(A)\rvert}$. Thus, for such cocycles the above result can be read as follows. 

\begin{mcoro}\label{cor-teo:introSL2R}
	For every $N\ge2$ there is an open and dense subset $\mathcal S\subset\mathrm{GL}^+(2,\bR)^N$ such that for every $\mathbf A\in\mathcal S$ we have 
\[
	0
	< h_{\rm top}(\{\xi^+\in\Sigma_N^+
		\colon\lambda_1(\mathbf A,\xi^+)=\lambda_2(\mathbf A,\xi^+)\})
	<\log N.
\]	
\end{mcoro}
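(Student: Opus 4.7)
My plan is to deduce the corollary directly from Theorem~\ref{teo:introSL2R} via the normalization $A\mapsto A/\sqrt{\det A}$ sketched in the sentence just before the statement. I would introduce the continuous projection $\pi\colon\mathrm{GL}^+(2,\bR)^N\to\mathrm{SL}(2,\bR)^N$ sending $\mathbf A$ to $\tilde{\mathbf A}$ with $\tilde A_i\eqdef A_i/\sqrt{\det A_i}$. Multiplicativity of the determinant along the cocycle yields the factorization $\mathbf A^n(\xi^+)=(\det\mathbf A^n(\xi^+))^{1/2}\,\tilde{\mathbf A}^n(\xi^+)$. Taking norms, taking logarithms, and dividing by $n$ then gives, for the Birkhoff average $\bar d(\xi^+)\eqdef\lim_{n\to\infty}\tfrac1n\sum_{k=0}^{n-1}\log\det A_{\xi_k}$,
\[
\lambda_i(\mathbf A,\xi^+)=\lambda_i(\tilde{\mathbf A},\xi^+)+\tfrac12\,\bar d(\xi^+),\qquad i=1,2,
\]
whenever these limits exist; the $\lambda_2$ identity follows analogously by writing $(\mathbf A^n(\xi^+))^{-1}=(\det\mathbf A^n(\xi^+))^{-1/2}(\tilde{\mathbf A}^n(\xi^+))^{-1}$. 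Because $\tilde{\mathbf A}$ takes values in $\mathrm{SL}(2,\bR)$, one has $\lambda_1(\tilde{\mathbf A},\xi^+)+\lambda_2(\tilde{\mathbf A},\xi^+)=0$, so the equality $\lambda_1(\mathbf A,\xi^+)=\lambda_2(\mathbf A,\xi^+)$ is equivalent to $\lambda_1(\tilde{\mathbf A},\xi^+)=0$. The level set in the corollary is therefore exactly $\cL^+_{\tilde{\mathbf A}}(0)$.

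With this identification in hand, I would set $\mathcal S\eqdef\pi^{-1}(\fE_{N,{\rm shyp}})$. For every $\mathbf A\in\mathcal S$ the normalized cocycle lies in $\fE_{N,{\rm shyp}}$, so applying Theorem~\ref{teo:introSL2R} at $\alpha=0$ yields $0<h_{\rm top}(\cL^+_{\tilde{\mathbf A}}(0))<\log N$, which by the identification of level sets is precisely the desired conclusion. Openness of $\mathcal S$ in $\mathrm{GL}^+(2,\bR)^N$ is immediate from continuity of $\pi$ and openness of $\fE_{N,{\rm shyp}}$.

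The one step that requires care, and the main obstacle, is the density assertion. I would use the canonical diffeomorphism $\mathrm{GL}^+(2,\bR)^N\cong\mathrm{SL}(2,\bR)^N\times(\bR_{>0})^N$ sending $\mathbf A$ to $\bigl(\tilde{\mathbf A},(\sqrt{\det A_i})_{i=0}^{N-1}\bigr)$; under this identification $\mathcal S$ corresponds to $\fE_{N,{\rm shyp}}\times(\bR_{>0})^N$, so the density of $\mathcal S$ reduces to the density of $\fE_{N,{\rm shyp}}$ in the appropriate region of $\mathrm{SL}(2,\bR)^N$. Theorem~\ref{teo:introSL2R} (equivalently, Proposition~\ref{p.elliptic}) gives that $\fE_{N,{\rm shyp}}$ is open and dense in $\fE_N$, and by \cite[Proposition~6]{Yoc:04} the union $\fE_N\cup\fH_N$ is open and dense in $\mathrm{SL}(2,\bR)^N$. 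Hence $\mathcal S$ is open and dense in the open subset $\pi^{-1}(\fE_N)\subset\mathrm{GL}^+(2,\bR)^N$, which is the parameter region where the inequality $0<h_{\rm top}$ can actually hold (on the complementary open set $\pi^{-1}(\fH_N)$ the level set is empty by uniform hyperbolicity of the normalized cocycle). This is the natural reading of the density statement and the only delicate point in the argument; the substance of the corollary is the two-line reduction to Theorem~\ref{teo:introSL2R} carried out in the first paragraph.
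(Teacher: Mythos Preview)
Your reduction via the normalization $A\mapsto A/\sqrt{\det A}$ is exactly the argument the paper has in mind: the text immediately preceding the corollary says precisely this, and no further proof is given. Your identification of the level set $\{\lambda_1=\lambda_2\}$ with $\cL^+_{\tilde{\mathbf A}}(0)$ and the subsequent appeal to Theorem~\ref{teo:introSL2R} at $\alpha=0$ is correct and complete.

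You are also right to flag the density issue, and your treatment of it is more careful than the paper's. As you observe, $\pi^{-1}(\fE_{N,\rm shyp})$ cannot be dense in all of $\mathrm{GL}^+(2,\bR)^N$ since $\fH_N$ is open and nonempty, and on $\pi^{-1}(\fH_N)$ the level set is empty so the left inequality fails. The paper's statement is therefore imprecise as written; your reading---open and dense in the elliptic region $\pi^{-1}(\fE_N)$, which is the only place the conclusion can hold---is the correct one and is implicit in the paper's framing of the corollary as a translation of Theorem~\ref{teo:introSL2R}.
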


The left inequality in Corollary~\ref{cor-teo:introSL2R} also follows from~\cite[Theorem 3]{BocRam:16}, where a different approach is used.

The study of level sets of \emph{Lyapunov exponents} within the context of cocycles fits within the analysis of the simplicity of the Lyapunov spectrum. There is an intensive line of research, perhaps initiated by \cite{Kni:91}, where a measure in the base space is fixed and varying the cocycle one aims to establish conditions which guarantee that the integrated top Lyapunov is (or is not) positive, see for instance \cite{Avi:11,AviVia:07}. In a slightly different context, the designated measure in the base is the Riemannian volume, the fiber dynamics is the derivative cocycle of a volume preserving diffeomorphism, where more precise results about the spectrum of Lyapunov exponents are obtained (a dichotomy between all exponents being  equal  to zero versus the existence of a dominated Oseledets splitting), see
for instance \cite{BocVia:02,Boc:02}.

In contrast to these works, here our cocycle is fixed (within an open and dense set of elliptic cocycles), and \emph{a priori} no base measure is designated, and we study the orbitwise Lyapunov exponents. Our measurement of the level sets  will be in terms of topological  entropy. For that we will establish restricted variational principles and
develop a multifractal analysis in a nonhyperbolic setting, which we will now discuss. 

%------------------------------------------------------------------------------------------------
\subsection{Multifractal context}\label{subsec:multi}%------------------------------------------------------------------------------------------------
For  uniformly hyperbolic dynamics multifractal analysis is understood in great depth and
has found already far reaching applications.
There is a enormous literature on this subject. To highlight a collection of results in the field at different stages of development, we refer, for example, to
\cite{Rue:78} (analyticity of pressure and its consequences),
\cite{Ols:95,PesWei:97} (multifractal analysis for conformal expanding maps and Smale's horsehoes), and
\cite{BarSau:01} (mixed spectra and restricted variational principles).
In many of those references, particular attention is drawn to the so-called geometric potentials because of their close relation to Lyapunov exponents, entropy, and Sinai-Ruelle-Bowen measures.
One key property of uniformly hyperbolic systems, under which the classical context of multifractal analysis was developed so far, is the specification property (studied for example in~\cite{TakVer:03,PfiSul:07,FanLiaPey:08}). Note that it is also essential (compare~\cite{Bow:75b}) to guarantee the uniqueness of equilibrium states which is another key property to study multifractal analysis. The specification property implies many further strong properties, for example that the set of all invariant probability measures is a Poulsen simplex (\cite{Sig:74}) with a hence very rich topological structure.

The multifractal analysis theory extends also to ``one-sided" nonuniformly hyperbolic systems, that is, for example to nonuniformly expanding maps where the presence of a nonpositive Lyapunov exponent is the only obstruction to hyperbolicity, that is, the spectrum of Lyapunov exponents covers a range of hyperbolicity and the zero exponent bounds this range from one side, see for example \cite{GelPrzRam:10} (expansive Markov maps of the interval) and
\cite{PrzRiv:,IomTod:11} (multimodal interval maps).
So far, there is not much understanding of a multifractal analysis for more complicated types of nonhyperbolic systems. It is difficult to describe all the situations that can happen in general; one natural class of systems to focus on could be the systems with a designated line field (associated with the Oseledets decomposition) for which the Lyapunov exponent takes both positive and negative values arbitrarily close to zero (and also zero). Naturally, we assume topological transitivity, hence the system in question cannot split into ``one-sided" nonuniformly hyperbolic parts.

Probably, the simplest setting of such ``two-sided" nonhyperbolic dynamical systems (that is, with zero Lyapunov exponent in the interior of the spectrum)
can be found in step skew-products with a hyperbolic horseshoe map in its base and circle diffeomorphisms in its fibers.
The nonuniform hyperbolicity arises from the coexistence of  contracting and expanding regions (in the fibers) which are blended by the dynamics. These properties are exemplified by the hypotheses ``some hyperbolicity" and ``transitions in finite time" stated in Section~\ref{subsec:stepskew}.  
The considered dynamics is topologically transitive and simultaneously has ``horseshoes" which are contracting  and   ``horseshoes" which are expanding in the fiber direction. 
These horseshoes are intermingled and there coexist dense sets of periodic points with negative and positive fiber Lyapunov exponents. 
As a consequence, the system exhibits ergodic hyperbolic measures with positive entropy. 
An important feature is the occurrence of ergodic \emph{nonhyperbolic measures} (i.e., with zero Lyapunov exponent) with positive entropy, see \cite{BocBonDia:16}.
A natural question is what type of behavior (hyperbolic or nonhyperbolic one) prevails, for example in terms of entropy. Another important question is how the degree of hyperbolicity measured in terms of exponents varies, for example, how the entropy of the corresponding level sets changes.

In~\cite{DiaGelRam:} we provide a conceptual framework for the prototypical dynamics which present the features in the above paragraph (see also Sections~\ref{sec:Axioms} and~\ref{ss.previous}). Moreover, we see that the mentioned topological and ergodic properties hold even for perturbations of these systems%
\footnote{Indeed, as explained in~\cite[Section 8.3]{DiaGelRam:}, if $\mathcal S$ denotes the set of step skew-product maps $F$ as in~\eqref{eq:sp} which are robustly transitive and have periodic points of different indices, then there is a $C^1$-open and dense subset of $\mathcal S$ consisting of maps with satisfy the axioms stated in Section~\ref{sec:Axioms}.}.
The works~\cite{DiaGelRam:,DiaGelRam:b}
 contain results about the topology of the space of invariant measures which laid the basis for the multifractal analysis of the entropy of the level sets of fiber Lyapunov exponents. 

%------------------------------------------------------------------------------------------------
\subsection{Tools of multifractal analysis}\label{subsec:summary}
%------------------------------------------------------------------------------------------------
 
In the classical approach for multifractal analysis one expresses the entropy of a level set simultaneously
\begin{itemize}
\item  in terms of a restricted variational principle and
\item  in terms of the Legendre-Fenchel transform of a topological pressure function.
\end{itemize}
It is important to point out that in our setting the dynamical system as a whole does not satisfy the specification property and none of the previous approaches applies.
Instead we rather follow a thermodynamic approach based on restricted variational principles.
The philosophy is that in order to obtain relevant multifractal information about the respective classes of exponents one should not consider the whole variational-topological pressure, but instead  its restrictions to ergodic measures with corresponding exponents, so-called \emph{restricted variational pressures}\footnote{The use of restricted (sometimes also called \emph{hidden}) pressures was initiated in~\cite{MakSmi:00} (for rational maps of the Riemann sphere) and  subsequently used, for example, in~\cite{GelPrzRam:10} (for non-exceptional rational maps) and \cite{PrzRiv:} (for multimodal interval maps).}, and to derive the information about entropy on level sets from so-called \emph{exhausting families}.
 As the difficulty  in our setting comes from  the coexistence of negative, zero, and positive fiber Lyapunov exponents and as zero exponent measures are notoriously difficult to analyze, a natural solution is to separately consider the restricted pressures defined on the ergodic measures with negative and positive exponents, respectively. 

To make the link between restricted variational pressures and the multifractal information  which they carry for the relevant subsystems,  we follow  a somewhat general principle.
While we do not have  specification  on the whole space, we are able to find certain families of subsets (basic sets) on which we do have this property.
 First, we recall the general restricted variational principle for topological entropy in \cite{Bow:73} which provides a natural lower bound for $h_{\rm top}(\cL(\alpha))$, see Section \ref{subsec:ent}. 
 In Section~\ref{sec:exhau}, we are going to present a general theory of \emph{restricted pressures} which allows us to obtain dynamical properties of the full system knowing the properties of  subsystems. Our key-concept is  the existence of so-called \emph{exhausting families} on which each restricted variational pressure can be approached gradually. 
 In Section~\ref{sec:bridging} we show the existence of exhausting families in our setting, treating negative and positive exponents separately. 
For that we will strongly use the fact that for any pair of uniformly hyperbolic sets with negative (positive) fiber exponents there exists a larger one containing them both. 
We show that the entropy spectrum of fiber Lyapunov exponents is described in terms of the Legendre-Fenchel transforms of the respective restricted variational pressure functions and is simultaneously given in terms of a restricted variational principle. This applies to negative/positive exponents only.
As a consequence, in our setting, we show that for each $\alpha\in [\alpha_{\min}, 0)\cup(0,\alpha_{\max}]$ the level set $\cL(\alpha)$ of points with fiber exponent equal to $\alpha$ is nonempty and its topological entropy changes continuously with $\alpha$ (see Figure~\ref{fig.0}).

We proved in  \cite{DiaGelRam:} that any nonhyperbolic ergodic measure can be  approached by hyperbolic ones (weak$\ast$ and in metric entropy) and that then the difficulties arising from zero exponents can be somewhat circumvented. This provides a tool to deal with zero exponent (nonhyperbolic) ergodic measures, enables us to consider exhausting families ``approaching nonhyperbolicity", and to ``glue" the two parts of the spectrum,  which would be completely unrelated otherwise.

To extend our results to a description of the level set of zero exponent,  we then combine a thermodynamical and an orbitwise approach. On the one hand, we study the restricted variational pressure functions and extract properties from its  shape. This approach gives us convexity for free, which turns out to be a surprisingly useful property.
On the other hand, in our approach we put our hands on the orbits of the level sets (the amount of their entropy provides explicit information about them), using natural recurrence properties of the systems (which is guided by the concept of so-called blending intervals in Section~\ref{subsec:stepskew}), and follow an ``orbit-gluing approach". 

While for exponents $\alpha\in[\alpha_{\rm min},0)\cup(0,\alpha_{\rm max}]$ we can give the full description of the Lyapunov exponent level sets, including the restricted variational principle and the exact formula for their entropy, there are very restricted tools for studying the level set $\cL(0)$. 
We are able to describe its entropy, but the restricted variational principle for $h_{\rm top}(\cL(0))$ cannot be obtained by our methods.
Let us observe that the fact that $\cL (0)$ has positive topological entropy was obtained in a similar context in \cite{BocBonDia:16} by proving the existence of ergodic measures with positive entropy and zero exponents%
\footnote{Indeed, \cite{BocBonDia:16} shows the existence of a compact and invariant set with positive topological entropy consisting of points with zero Lyapunov exponent.}. In this paper, this property is also obtained as a surprising consequence of the shape of the pressure map.
Though positive, we also show that in our setting and assuming proximality the topological entropy of $\cL(0)$ is strictly smaller than the maximal, that is, the topological entropy of the system.

The systems we study always have (at least) two hyperbolic ergodic measures of maximal entropy, one  with negative and one with positive fiber Lyapunov exponent. Indeed, this is an immediate consequence of \cite{Cra:90}, obtained from a different point of view of our system as a random dynamical system, that is, as a product of independent and identically distributed circle diffeomorphisms, also observing the fundamental fact that our hypotheses exclude the case that our system is a rotation extension of a Bernoulli shift.
 This is a particular case of a result in a more general setting~\cite{RodRodTahUre:12}, stated for accessible partially hyperbolic diffeomorphisms having compact center leaves, see also~\cite{TahYan:} where higher regularity is required. Under the additional assumption of proximality, with~\cite{Mal:} we even can conclude uniqueness of the ergodic measure of maximal entropy with negative and  positive exponents, respectively.

%------------------------------------------------------------------------------------------------
\subsection{Structure of the paper}
%------------------------------------------------------------------------------------------------

In Section \ref{sec:results} we precisely state our main results with all details from which we deduce the simplified versions Theorems~\ref{teo:specialste} and~\ref{teo:introSL2R}. In Section \ref{sec:Axioms} we describe the setting in which we derive our results and in Section~\ref{ss.previous} recall some key results about ergodic approximations. In Section \ref{sec:entpres} we give some basic information about the thermodynamical formalism (entropy and pressure function and its convex conjugate). 
In Section \ref{sec:exhau} we introduce (in an abstract setting) the restricted pressures and exhausting families, then in Section \ref{sec:bridging} verify their existence in the setting of our paper. 
Our main result Theorem~\ref{main1} is proved in Sections~\ref{sec:proofmain1} and~\ref{sec:proofoflowerbound}.
Theorem~\ref{main2} and Corollary~\ref{c.maximalentropy} are proved in Section~\ref{sec:proofmain2}.
Theorem~\ref{main3} is shown in Section~\ref{sec:proofmain3}. 
To apply the above results to matrix cocycles, in Section~\ref{sec:cocyles} we develop several general tools to relate Lyapunov exponents of cocycles with the ones of the induced skew-products. The main result there is Theorem~\ref{theoprop:onesidedspectrum} which implies Theorem~\ref{teo:SL2Rskewproduct}. Finally,  we recall in Appendix~\ref{App:A} some more details about the space of elliptic cocycles and in Appendix~\ref{App:B} the definition of topological entropy  of general sets.

%=======================================================
\section{Precise statements of the results}\label{sec:results}
%=======================================================

Let $\sigma\colon\Sigma_N\to\Sigma_N$,  $N\ge2$,  be the usual shift map on the space $\Sigma_N\eqdef\{0,\ldots,N-1\}^\bZ$ of two-sided sequences. We equip the shift space $\Sigma_N$ with the standard metric $d_1(\xi,\eta)\eqdef 2^{-n(\xi,\eta)}$, where $n(\xi,\eta)\eqdef\sup\{\lvert \ell\rvert\colon \xi_{ i}=\eta_{i}\text{ for }i=-\ell,\ldots,\ell\}$. We equip $\Sigma_N\times\bS^1$   with the metric $d((\xi,x),(\eta,y))\eqdef\sup\{d_1(\xi,\eta),\lvert x-y\rvert\}$, where $\lvert\cdot\rvert$ is the usual metric on $\bS^1$.

We will require the step skew-product
\begin{equation}\label{eq:fssp}
	F\colon \Sigma_N\times \bS^1\to \Sigma_N\times \bS^1,
	\quad
	F(\xi,x) =(\sigma(\xi), f_{\xi_0}(x))
\end{equation}
to satisfy Axioms CEC$\pm$ and Acc$\pm$ (see Section~\ref{sec:Axioms}).
Sometimes, we will also take another point of view and study the underlying \emph{iterated function system} (\emph{IFS}) generated by the family of maps $\{f_i\}_{i=0}^{N-1}$.
 We will denote by $\pi\colon\Sigma_N\times\bS^1\to\Sigma_N$ the natural projection $\pi(\xi,x)\eqdef\xi$.

Let $\cM$ be the space of $F$-invariant probability measures supported in $\Sigma_N\times \bS^1$, equip $\cM$ with the weak$\ast$ topology, and denote by $\cM_{\rm erg}\subset\cM$ the subset of ergodic measures. To characterize nonhyperbolicity, given $\mu\in\cM$
 denote by $\chi(\mu)$ its \emph{(fiber) Lyapunov exponent} which is given by
\[
	\chi(\mu)
	\eqdef \int\log\,\lvert(f_{\xi_0})'(x)\rvert\,d\mu(\xi,x). 	
\]
An ergodic measure $\mu$ is  {\emph{nonhyperbolic}} if $\chi(\mu)=0$. Otherwise the measure is  {\emph{hyperbolic}}. In our setting, any hyperbolic ergodic  measure has either a negative or a positive
exponent.
Accordingly, we divide the set of all \emph{ergodic} measures and  consider the decomposition
\begin{equation}\label{eq:ergdecompo}
	\cM_{\rm erg}=\cM_{\rm erg,<0}\cup\cM_{\rm erg,0}\cup\cM_{\rm erg,>0}
\end{equation}
into measures with negative, zero, and positive fiber Lyapunov exponent, respectively.
In our setting, each component is nonempty.
In general, it is very difficult to determine which type of hyperbolicity ``prevails''.
For that we will study the spectrum of possible exponents and will perform a  multifractal analysis of the topological entropy of level sets of equal (fiber) Lyapunov exponent.

To be more precise, a sequence $\xi=(\ldots\xi_{-1}.\xi_0\xi_1\ldots)\in\Sigma_N$ can be written as $\xi=\xi^-.\xi^+$, where $\xi^+\in\Sigma_N^+\eqdef\{0,\ldots,N-1\}^{\bN_0}$ and $\xi^-\in\Sigma_N^-\eqdef\{0,\ldots,N-1\}^{-\bN}$.
Given  \emph{finite} sequences $(\xi_0\ldots \xi_n)$ and  $(\xi_{-m}\ldots\xi_{-1})$, we let
\[
    f_{[\xi_0\ldots\,\xi_n]}
    \eqdef f_{\xi_n} \circ \cdots \circ f_{\xi_0}
    \quad\text{ and }\quad
    f_{[\xi_{-m}\ldots\,\xi_{-1}.]}
    \eqdef 
    (f_{[\xi_{-m}\ldots\,\xi_{-1}]})^{-1}.
\]
For $n\ge0$ denote also
\[
	f^n_\xi\eqdef f_{[\xi_0\ldots\,\xi_{n-1}]}
	\quad\text{ and }\quad
	f^{-n}_\xi\eqdef f_{[\xi_{-n}\ldots\,\xi_{-1}.]}.
\]	
As usual, we use the following notation for cylinder sets
\[
	[\xi_0\ldots\xi_n]
	\eqdef \{\eta\in\Sigma_N\colon \eta_0=\xi_0,\ldots,\eta_n=\xi_n\}.
\]
Given $X=(\xi,x)\in \Sigma_N\times\bS^1$ consider the \emph{(fiber) Lyapunov exponent} of $X$
\[
	\chi(X)
	\eqdef
	 \lim_{n\to\pm\infty}\frac{1}{ n}\log\,\lvert (f_{\xi}^n)'(x)\lvert  ,
\]
where we assume that both limits $n\to\pm \infty$  exist and coincide. Note that in our context the  exponent is nothing but the Birkhoff average of the continuous function (also called \emph{potential}) $\varphi\colon \Sigma_N\times\bS^1\to\bR$ defined for $X=(\xi,x)$ by
\begin{equation}\label{def:potential}
	\varphi(X)
	\eqdef  \log\,\lvert (f_{\xi_0})'(x)\rvert.
\end{equation}

We will analyze the topological entropy of the following \emph{level sets of Lyapunov exponents}: given $\alpha\in\bR$ let
\begin{equation}\label{def:levelset}
	\cL(\alpha)
	\eqdef \big\{X\in\Sigma_N\times\bS^1\colon \chi(X)=\alpha\big\},
\end{equation}
assuming that the Lyapunov exponent at $X$ is well defined and equal to $\alpha$.
Note that each level set is invariant but, in general, noncompact. Hence we will rely on the general concept of topological entropy $h_{\rm top}$ introduced by Bowen~\cite{Bow:73} (see Appendix~\ref{App:B}).
Denoting by $\cL_{\rm irr}$   the set of points where the fiber Lyapunov exponent is not well-defined (either one of the limits does not exist or both limits exist but they do not coincide), we obtain the following \emph{multifractal decomposition}
\[
   \Sigma_N\times \bS^1
	=\bigcup_{\alpha\in\bR}\cL(\alpha)\cup\cL_{\rm irr}.
\]
Note that $\cL(\alpha)$ will be nonempty in some range of $\alpha$, only. Under our axioms  this range   decomposes into three natural nonempty parts
\[
	\{\alpha\colon \cL(\alpha)\ne\emptyset\}
	= [\alpha_{\rm min},0)
	\cup\{0\}
	\cup(0,\alpha_{\rm max}],
\]
where
\[
	\alpha_{\rm max}
	\eqdef \sup\big\{\alpha\colon \cL (\alpha)\ne\emptyset\big\},
	\quad
	\alpha_{\rm min}
	\eqdef \inf\big\{\alpha\colon \cL (\alpha)\ne\emptyset\big\}.
\]
We have that $\inf$ and $\sup$ are indeed attained, justifying the notation.

To state our main results, we need the following thermodynamical quantities. Denote by $h(\mu)$ the \emph{entropy} of a measure $\mu$ and consider the following pressures and their convex conjugates (see Section~\ref{sec:entpres} for details)
\begin{equation}\label{eq:defpres}
	P_{\ast}(q\varphi)
	\eqdef \sup_{\mu\in\cM_{\rm erg,\ast}}\big(h(\mu)-q\chi(\mu)\big)
	,\quad
	\cE_\ast(\alpha)
	\eqdef \inf_{q\in\bR}\big(P_\ast(q\varphi)-q\alpha\big),
\end{equation}
where $\ast$ should be replaced by $<0$ and $>0$, respectively (recall~\eqref{eq:ergdecompo}). In the terminology of~\cite{PrzRivSmi:04}, this would be called (\emph{positive}/\emph{negative}) \emph{variational hyperbolic pressure}, we call them simply \emph{pressures}. For simplicity we will use the notation
\[
\cP_\ast(q) \eqdef P_\ast(q\varphi),
\]
as $\{q\varphi\}_{q\in\bR}$ is the only family of potentials whose pressure  we are going to consider.
Similarly, we define
\[
	\cP_0(q)
	\eqdef \sup_{\mu\in\cM_{\rm erg,0}}h(\mu).
\]
Clearly,
\[
	\max\{\cP_{<0}(q),\cP_0(q),\cP_{>0}(q)\}=P_{\rm top}(q\varphi)
\]
is the classical \emph{topological pressure} of $q\varphi $ with respect to $F$ (see~\cite[Chapter 7]{Wal:82}).
We will also write $\cE$ for both $\cE_{>0}$ and $\cE_{<0}$, because the domains of those two functions are disjoint.

\begin{mtheorem}\label{main1}
Consider a transitive step skew-product map $F$ as in~\eqref{eq:fssp} whose fiber maps are $C^1$.
	Assume that $F$ satisfies Axioms CEC$\pm$ and Acc$\pm$.

Then there are numbers $\alpha_{\rm min}<0<\alpha_{\rm max}$ such that $\alpha\in[\alpha_{\min},\alpha_{\max}]$ if and only if $\cL(\alpha)\ne\emptyset$. Moreover,
\begin{itemize}
\item[a)] for every $\alpha\in[\alpha_{\rm min},0)$ we have
\[
	h_{\rm top}(\cL(\alpha))
	= \sup \big\{h(\mu)\colon \mu\in\cM_{\rm erg},\chi(\mu)=\alpha\big\}
	= \cE_{<0}(\alpha),
\]
\item[b)] for every $\alpha\in(0,\alpha_{\rm max}]$ we have
\[
	h_{\rm top}(\cL(\alpha))
	= \sup \big\{h(\mu)\colon \mu\in\cM_{\rm erg},\chi(\mu)=\alpha\big\}
	= \cE_{>0}(\alpha),
\]
\item[c)] for every $\alpha\in\{\alpha_{\min},0,\alpha_{\max}\}$ we have
\[
	\lim_{\beta\to \alpha}h_{\rm top}(\cL(\beta))
	=h_{\rm top}(\cL(\alpha)),
\]
\item[d)] $h_{\rm top}(\cL(0))>0$.
\end{itemize}
Moreover, there exist (finitely many) ergodic measures $\mu_+, \mu_-$ of maximal entropy $h(\mu_\pm)=\log N$ and with $\chi(\mu_-) < 0 < \chi(\mu_+)$.
\end{mtheorem}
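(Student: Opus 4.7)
The plan is to implement the thermodynamic scheme sketched in Section~\ref{subsec:summary}: for each sign $\ast\in\{<0,>0\}$ we work with the restricted pressure $\cP_\ast(q)$ and its Legendre--Fenchel conjugate $\cE_\ast$, and we bring them into play on ``nice'' subsystems via the exhausting families of uniformly hyperbolic basic sets that will be constructed in Section~\ref{sec:bridging}. The defining feature of such an exhausting family is that every $\mu\in\cM_{\rm erg,\ast}$ can be approximated in the $(h,\chi)$-plane by invariant measures supported on basic sets $\Lambda$ enjoying the specification property; on each such $\Lambda$ the classical multifractal formalism applies directly.

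For parts (a) and (b), the upper bound $h_{\rm top}(\cL(\alpha))\le\cE_\ast(\alpha)$ comes from a Bowen-type covering argument: fix $q$ with the sign matching $\alpha$ and use that every $X\in\cL(\alpha)$ satisfies $\tfrac1n S_n\varphi(X)\to\alpha$, so that the standard $n$-Bowen-ball estimate combined with the definition of $\cP_\ast$ yields $h_{\rm top}(\cL(\alpha))\le \cP_\ast(q)-q\alpha$, and infimizing over $q$ gives $\cE_\ast(\alpha)$. For the matching lower bound and the restricted variational principle we run the Pesin--Weiss / Takens--Verbitskiy multifractal scheme on each basic set $\Lambda$ of the exhausting family, obtaining $h_{\rm top}(\cL(\alpha)\cap\Lambda)=\cE_\Lambda(\alpha)$ realized by an ergodic measure supported on $\Lambda$. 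Passing to the supremum over the exhausting family converts $\cE_\Lambda$ into $\cE_\ast$, giving simultaneously the equality $h_{\rm top}(\cL(\alpha))=\cE_\ast(\alpha)$ and its identification as a supremum over $\cM_{\rm erg,\ast}$.

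Part (c) at $\alpha_{\rm min}$ and $\alpha_{\rm max}$ is then automatic from the concavity of $\cE_\ast$ established in (a)--(b) together with the fact that the spectrum does not extend beyond these endpoints. The delicate case is continuity at $\alpha=0$, where the two a priori unrelated halves of the spectrum must match. The key ingredient here is the approximation theorem from~\cite{DiaGelRam:} stating that every nonhyperbolic ergodic measure is the weak$\ast$- and entropy-limit of hyperbolic ergodic measures; this forces $\lim_{\alpha\to 0^\pm}\cE_\ast(\alpha)$ to dominate $\sup\{h(\mu):\mu\in\cM_{\rm erg,0}\}$, and combined with the Bowen upper bound the two one-sided limits coincide. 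To actually identify this common value with $h_{\rm top}(\cL(0))$ and to establish the positivity statement (d), we need an orbit-gluing construction based on the blending intervals of Axioms~CEC$\pm$: by alternating long orbit segments with positive and negative finite-time exponents one produces orbits whose Birkhoff averages are forced to zero while retaining enough separation to yield positive entropy. This ``gluing at zero'' is what I expect to be the main obstacle, since specification is unavailable there and a direct thermodynamic argument is out of reach; positivity of the common value $\cE_\ast(0^\pm)$ follows from the shape of $\cP_\ast$ at $q=0$ (both convex functions take values at least $\log N>0$ there, due to the ergodic measures of maximal entropy produced below).

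The final assertion is extracted from the structure above together with classical results. Since Axioms~CEC$\pm$ and Acc$\pm$ exclude the degenerate case of $F$ being a rotation extension of a Bernoulli shift, \cite{Cra:90} supplies ergodic $F$-invariant measures of maximal entropy with each prescribed sign of exponent; the fact that the factor map $\pi$ is the Bernoulli shift of entropy $\log N$ forces $h(\mu_\pm)=\log N$ via the Abramov--Rokhlin formula, because the fiber factor carries zero entropy once the exponent is nonzero.
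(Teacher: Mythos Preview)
Your proposal has a genuine gap in the upper bound for parts (a)--(b). A Bowen covering argument bounds $h_{\rm top}(\cL(\alpha))$ by $P_{\rm top}(q\varphi)-q\alpha$ for the \emph{full} topological pressure, and infimizing gives the Legendre transform of $P_{\rm top}$, not of the restricted $\cP_\ast$. Since $\cP_\ast(q)\le P_{\rm top}(q\varphi)$ (strictly for many $q$), this is the wrong inequality: for $\alpha<0$ close to $0$ the full transform is close to $\log N$, whereas $\cE_{<0}(\alpha)$ is close to $h_0<\log N$. The paper closes this gap not by covering arguments but via a Katok-type approximation result (Proposition~\ref{newprop:skeleton}, the ``skeleton property'' from~\cite{DiaGelRam:}): if $h_{\rm top}(\cL(\alpha))$ were larger than $\cE_{<0}(\alpha)$, one could build a basic set of \emph{contracting type} with entropy close to $h_{\rm top}(\cL(\alpha))$ and all exponents near $\alpha$, whose measure of maximal entropy would then violate the trivial inequality $h(\nu)\le\cE_{<0}(\chi(\nu))$. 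You cannot bypass this --- the whole point of restricting to $\cM_{\rm erg,\ast}$ is that the full-pressure transform is too weak, and the skeleton property is exactly what forces the level-set entropy to be witnessed by measures of the correct sign.

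Two further issues. First, the coincidence of the one-sided limits at $0$ requires more than approximating nonhyperbolic measures by hyperbolic ones: you need Lemma~\ref{lem:main3} (from~\cite[Theorem~5]{DiaGelRam:}), which starting from an ergodic measure with exponent $\alpha>0$ produces ergodic measures with small \emph{negative} exponent and controlled entropy loss $\sim 1/(1+c\lvert\alpha\rvert)$. This is what lets you cross zero and equate $h_0^-$ with $h_0^+$. Second, the positivity $\cE(0)>0$ does not follow from $\cP_\ast(0)=\log N$; that only gives $\max_\alpha\cE_\ast(\alpha)=\log N$, not a lower bound at $\alpha=0$. The paper's argument (Theorem~\ref{main3} item~j) instead uses the same constant $c$ from Lemma~\ref{lem:main3} to bound the one-sided derivatives $\lvert D_{L/R}\cE(0)\rvert\le c\,\cE(0)$, and then derives a contradiction from $\cE(0)=0$ since this would force $\cE$ to have its maximum at $0$.
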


\begin{figure}
\begin{minipage}[c]{\linewidth}
\centering
\begin{overpic}[scale=.28]{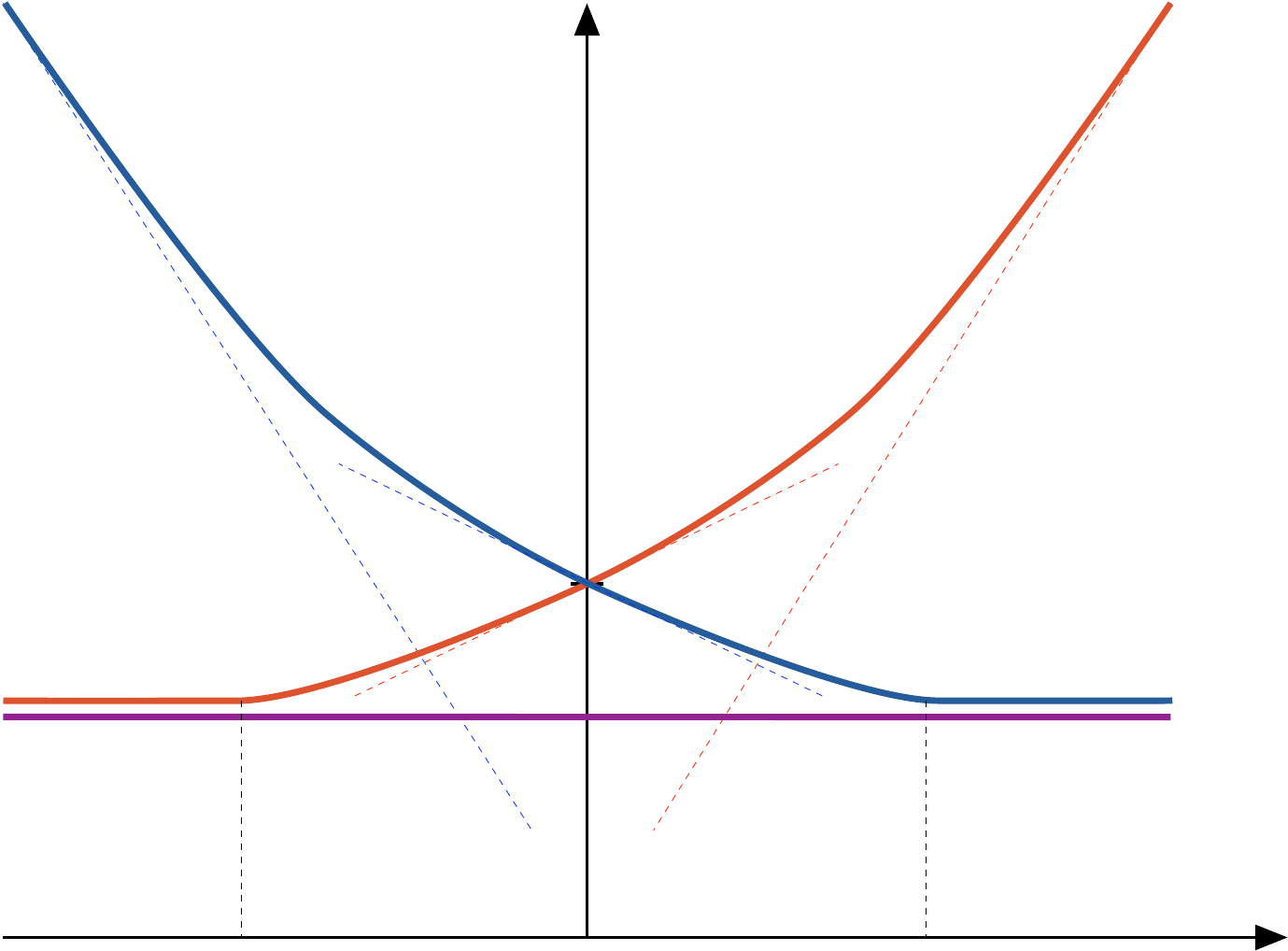}
  	\put(16,-7){\tiny$D_+$}
 	\put(68,-7){\tiny$D_-$}
	\put(57,70){\tiny \textcolor{red}{$\cP_{>0}(q)$}}
 	\put(5,70){\tiny \textcolor{blue}{$\cP_{<0}(q)$}}
 	\put(0,10){\tiny \textcolor{magenta}{$\cP_{0}(q)$}}
 	\put(95,5){\tiny $q$}
 \end{overpic}
 \hspace{0.1cm}
 \begin{overpic}[scale=.28]{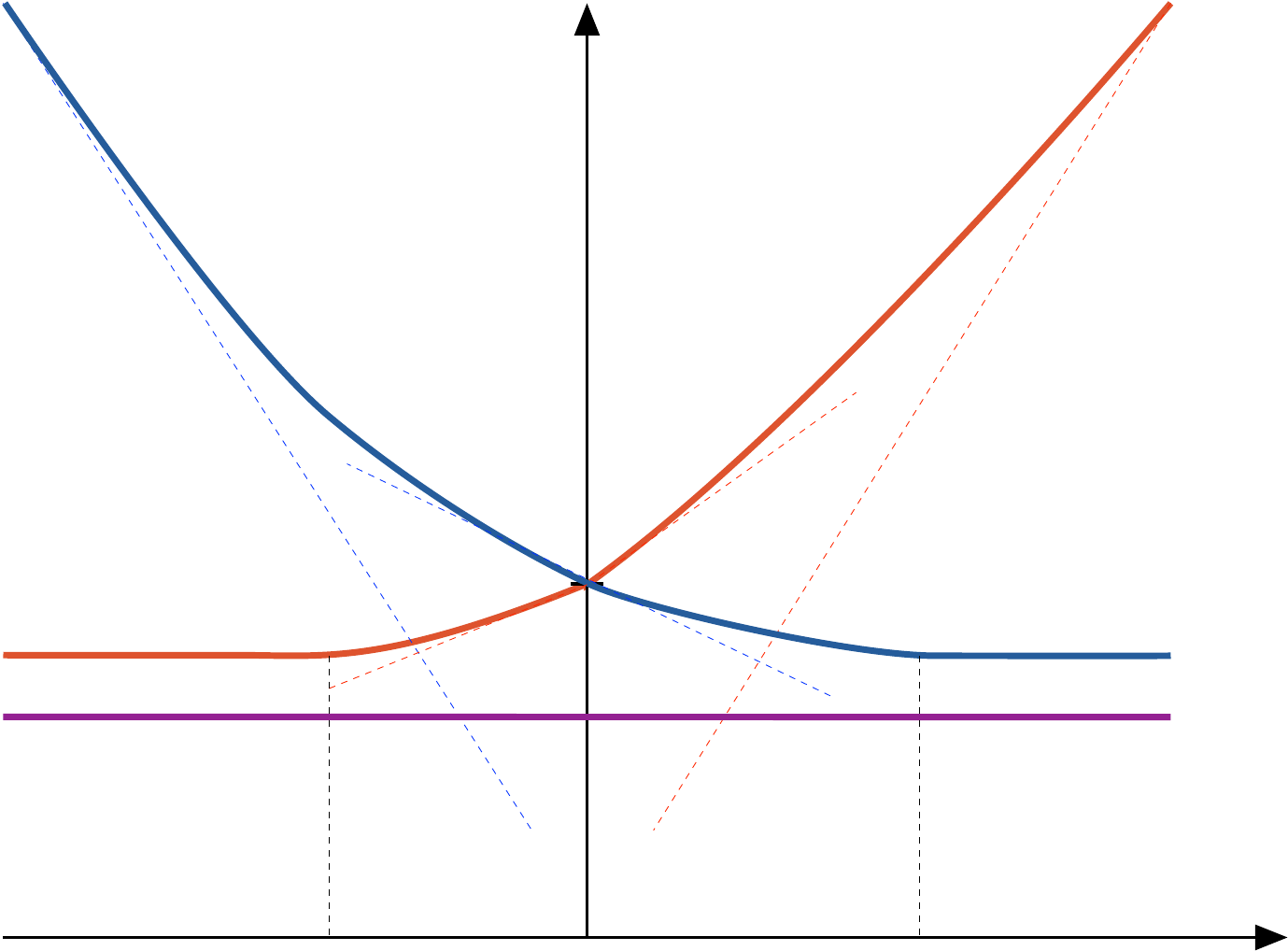}
 	\put(20,-7){\tiny$D_+$}
 	\put(68,-7){\tiny$D_-$}
 	\put(57,70){\tiny \textcolor{red}{$\cP_{>0}(q)$}}
 	\put(5,70){\tiny \textcolor{blue}{$\cP_{<0}(q)$}}
 	\put(0,10){\tiny \textcolor{magenta}{$\cP_{0}(q)$}}
 	\put(95,5){\tiny $q$}
 \end{overpic}
 \hspace{0.1cm}
 \begin{overpic}[scale=.28]{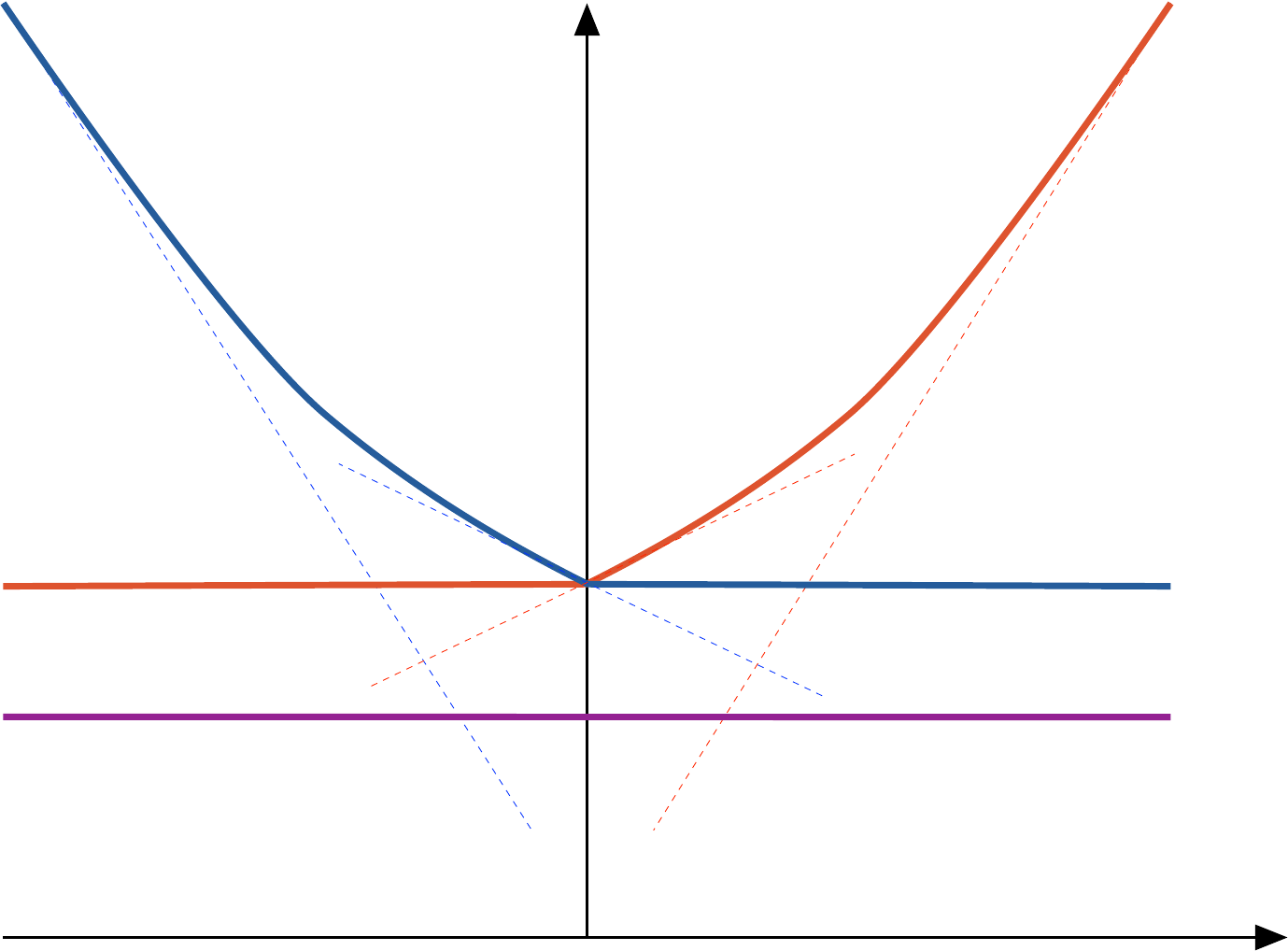}
  	\put(20,-7){\tiny$D_+=D_-=0$}
	\put(57,70){\tiny \textcolor{red}{$\cP_{>0}(q)$}}
 	\put(5,70){\tiny \textcolor{blue}{$\cP_{<0}(q)$}}
 	\put(0,10){\tiny \textcolor{magenta}{$\cP_{0}(q)$}}
 	\put(95,5){\tiny $q$}
\end{overpic}
\caption{Pressures. Left figure: Under the hypothesis of Theorem~\ref{main2}}
\label{Fig:Pressure}
\vspace{0.5cm}
\begin{overpic}[scale=.28]{P_Ea_ex2.pdf}
 	\put(48,68){\tiny $\cE(\alpha)$}
 	\put(95,5){\tiny $\alpha$}
 	\put(23,-5){\tiny $\alpha_-$}
 	\put(62,-5){\tiny $\alpha_+$}
 	\put(2,-5){\tiny $\alpha_{\rm min}$}
 	\put(79,-5){\tiny $\alpha_{\rm max}$}
 \end{overpic}
 \hspace{0.1cm}
 \begin{overpic}[scale=.28]{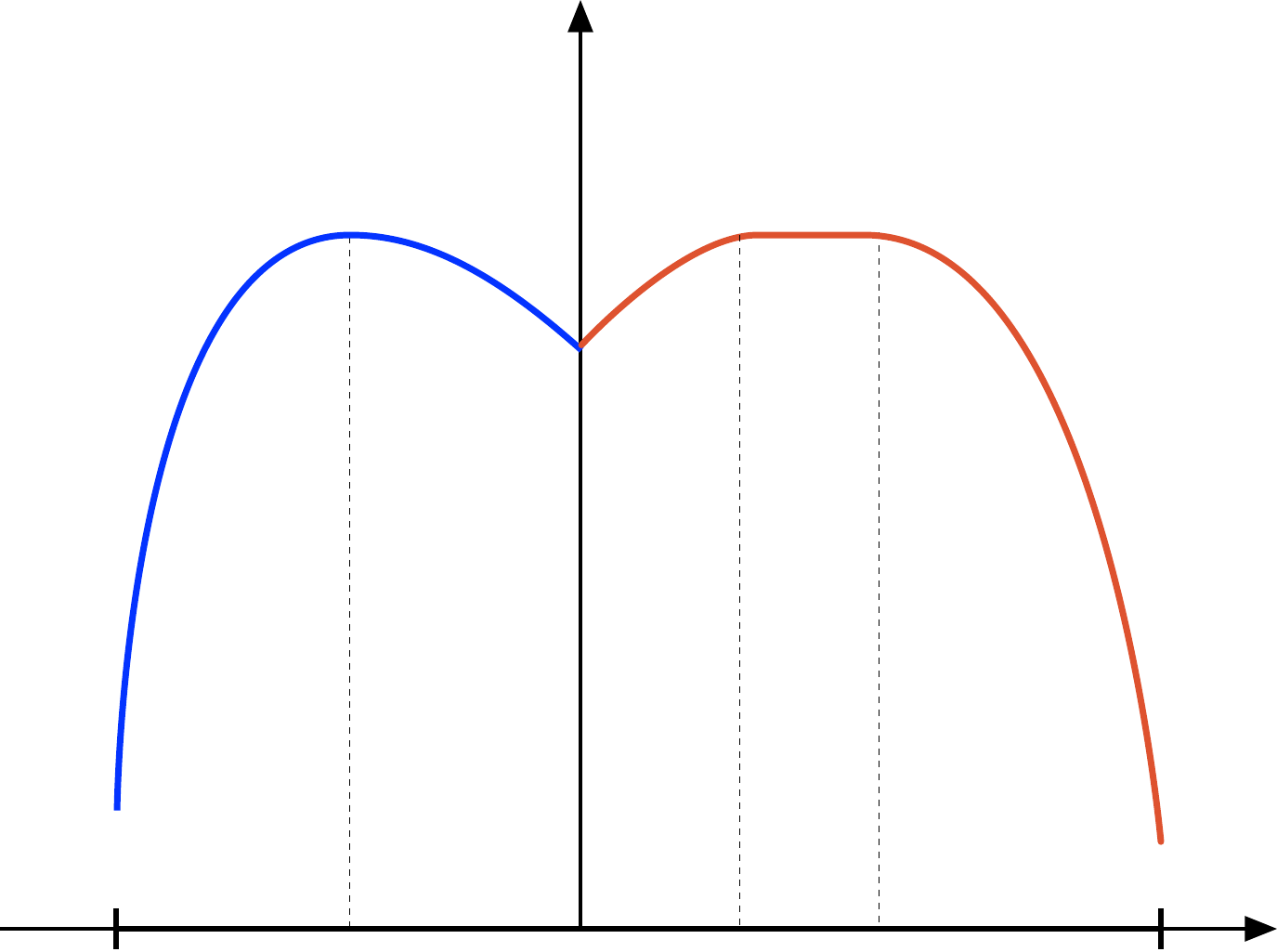}
 	\put(48,68){\tiny $\cE(\alpha)$}
 	\put(95,5){\tiny $\alpha$}
 	\put(23,-5){\tiny $\alpha_-$}
 	\put(62,-5){\tiny $\alpha_+$}
 	\put(2,-5){\tiny $\alpha_{\rm min}$}
 	\put(79,-5){\tiny $\alpha_{\rm max}$}
 \end{overpic}
 \hspace{0.1cm}
 \begin{overpic}[scale=.28]{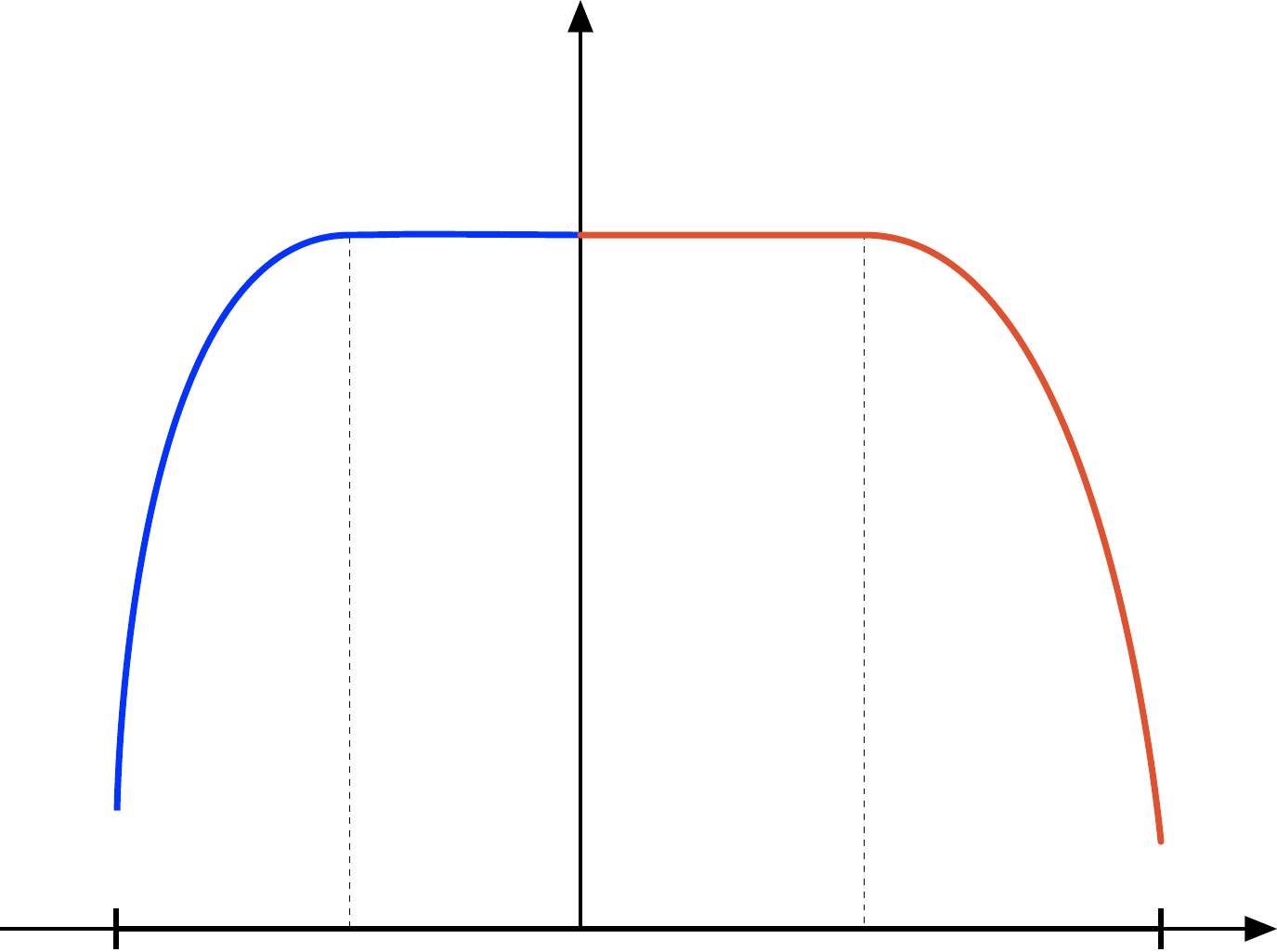}
 	\put(48,68){\tiny $\cE(\alpha)$}
 	\put(95,5){\tiny $\alpha$}
 	\put(23,-5){\tiny $\alpha_-$}
 	\put(62,-5){\tiny $\alpha_+$}
 	\put(2,-5){\tiny $\alpha_{\rm min}$}
 	\put(79,-5){\tiny $\alpha_{\rm max}$}
\end{overpic}
\caption{Convex conjugates. Left figure: Under the hypothesis of Theorem~\ref{main2}}
\label{Fig:Legendre}
\end{minipage}
\end{figure}

To prove uniqueness of the measures $\mu_\pm$ of maximal entropy, we require the additional assumption (see Section \ref{sec:synch} for  discussion). We say that the iterated function system (IFS) generated by the family of fiber maps $\{f_i\}_{i=0}^{N-1}$ of the step skew-product map $F$ is \emph{proximal}%
\footnote{We borrow this terminology from~\cite{Mal:}.} 
if for every $x,y\in \bS^1$ there exists at least one sequence $\xi\in\Sigma_N$ such that $\lvert f_{\xi}^n(x)- f_\xi^n(y)\rvert \to 0$ as $\lvert n\rvert\to\infty$. By some abuse of notation, in this case we also say that the skew-product is proximal.

\begin{remark}\label{rem:proximal}
It is easy to see that the step skew-product is proximal if, for example, there exists one Morse-Smale fiber map with exactly one attracting and one repelling fixed point 
(North pole-South pole map) 
and the step skew-product satisfies  Axioms CEC$\pm$ and Acc$\pm$.
\end{remark}

\begin{mtheorem}\label{main2}
Assume the hypothesis of Theorem~\ref{main1} and also proximality of the step skew-product. Then there exist unique ergodic $F$-invariant probability measures $\mu_-$ and $\mu_+$ of maximal entropy $h(\mu_\pm)=\log N$, respectively, and satisfying
\[
	\alpha_-\eqdef \chi(\mu_-)<0<\alpha_ +\eqdef\chi(\mu_+).
\]
We have
\[
	h_{\rm top}(\cL(\alpha_-)) = h_{\rm top}(\cL(\alpha_+)) = \log N
\]
and for all $\alpha \in(\alpha_{\rm min},\alpha_{\rm max})\setminus\{ \alpha_-, \alpha_+\}$ we have
\[
	0< h_{\rm top}(\cL(\alpha)) < \log N.
\]
\end{mtheorem}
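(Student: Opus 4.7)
The plan is to supplement Theorem~\ref{main1} with the uniqueness theorem of Malicet~\cite{Mal:} and with convex-analytic information extracted from the concave maps $\cE_{<0},\cE_{>0}$. First I would invoke Theorem~\ref{main1} to get finitely many ergodic measures of maximal entropy $\log N$, at least one in each class $\cM_{\rm erg,<0}$ and $\cM_{\rm erg,>0}$. Since the IFS $\{f_0,\ldots,f_{N-1}\}$ is proximal, Malicet's result~\cite{Mal:} upgrades this to uniqueness of such measures in each class. Call them $\mu_-$ and $\mu_+$, set $\alpha_\pm\eqdef\chi(\mu_\pm)$; then $\alpha_-<0<\alpha_+$ follows from Theorem~\ref{main1}.

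The identities $h_{\rm top}(\cL(\alpha_\pm))=\log N$ are then immediate: the lower bound uses $\mu_\pm$ as a witness in the restricted variational principle of Theorem~\ref{main1}(a),(b) (Birkhoff's theorem applied to $\varphi$ ensures $\mu_\pm$ is supported on $\cL(\alpha_\pm)$), while the upper bound is the trivial $h_{\rm top}(\cL(\alpha))\le h_{\rm top}(\Sigma_N\times\bS^1)=\log N$. For the strict upper bound $h_{\rm top}(\cL(\alpha))<\log N$ elsewhere, by Theorem~\ref{main1}(a),(b) it suffices to show that the concave function $\cE_{<0}$ attains its maximum value $\log N$ only at $\alpha_-$ (and symmetrically for $\cE_{>0}$ at $\alpha_+$). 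Any $\alpha$ with $\cE_{<0}(\alpha)=\log N=\cP_{<0}(0)$ is a subgradient of the convex function $\cP_{<0}$ at $q=0$, and the set of such subgradients is the interval between the one-sided derivatives at $0$. Using the exhausting-family construction of Section~\ref{sec:bridging}, I would exhibit, at each extremal subgradient, a weak$\ast$ accumulation measure of near-equilibrium measures lying in the closure of $\cM_{\rm erg,<0}$ and having entropy $\log N$; by uniqueness of $\mu_-$, each ergodic component of such an accumulation measure must equal $\mu_-$, forcing both one-sided derivatives to coincide with $-\alpha_-$. Hence $\alpha_-$ is the unique subgradient and the unique maximizer of $\cE_{<0}$. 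Concavity then gives $\cE_{<0}(\alpha)<\log N$ for $\alpha\in[\alpha_{\min},0)\setminus\{\alpha_-\}$, and continuity from Theorem~\ref{main1}(c) transports this to $\alpha=0$. The argument for $\cE_{>0}$ is analogous.

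For the strict positive lower bound $h_{\rm top}(\cL(\alpha))>0$, the case $\alpha=0$ is Theorem~\ref{main1}(d); for $\alpha\in(\alpha_{\min},0)\setminus\{\alpha_-\}$, concavity of $\cE_{<0}$ combined with the reference values $\cE_{<0}(\alpha_{\min})\ge 0$, $\cE_{<0}(\alpha_-)=\log N>0$, and $\lim_{\beta\to 0^-}\cE_{<0}(\beta)=h_{\rm top}(\cL(0))>0$ forces strict positivity on the whole interval $(\alpha_{\min},0]$; the analogous argument handles $[0,\alpha_{\max})$. The main obstacle is the uniqueness-of-subgradient step: uniqueness of the ergodic equilibrium state does not automatically imply differentiability of the restricted pressure, and one must use the concrete hyperbolic structure provided by the exhausting families of Section~\ref{sec:bridging}, together with upper semicontinuity of entropy on hyperbolic basic sets, to control the ergodic decomposition of every weak$\ast$ accumulation point of near-equilibrium measures with negative exponent and to conclude that it is supported on $\mu_-$.
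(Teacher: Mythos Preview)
Your overall strategy matches the paper's: obtain uniqueness of $\mu_\pm$ from proximality (via Malicet/synchronization, packaged in the paper as Lemma~\ref{lem:synch}), then use convex analysis on $\cP_{<0},\cP_{>0}$ at $q=0$ to show the concave functions $\cE_{<0},\cE_{>0}$ have unique maximizers $\alpha_-,\alpha_+$. The lower bound $h_{\rm top}(\cL(\alpha))>0$ via concavity and Theorem~\ref{main1}(c),(d) is fine.

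Where you diverge from the paper is in the execution of the key step, and there your argument has a real gap. The paper does not go through exhausting families here; it invokes property~(P10) directly: upper semi-continuity of entropy yields \emph{ergodic} equilibrium states for $q=0$ (with respect to $\cN=\cM_{\rm erg,>0}$, say) whose exponents are exactly $D_L\cP_{>0}(0)$ and $D_R\cP_{>0}(0)$. These are ergodic measures of entropy $\log N$, hence by Lemma~\ref{lem:synch} each must be $\mu_-$ or $\mu_+$; since both one-sided derivatives are nonnegative (because $\cP_{>0}$ is nondecreasing) and $\chi(\mu_-)<0$, both must equal $\mu_+$, forcing $D_L\cP_{>0}(0)=D_R\cP_{>0}(0)=\alpha_+$.

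Your route instead produces a weak$\ast$ accumulation point $\nu$ of near-equilibrium measures in $\cM_{\rm erg,<0}$ with $h(\nu)=\log N$ and $\chi(\nu)$ equal to an extremal subgradient, and then asserts that ``each ergodic component of $\nu$ must equal $\mu_-$''. That assertion is not justified: the ergodic components all have entropy $\log N$, so each is $\mu_-$ or $\mu_+$, but nothing you have written rules out $\mu_+$. A nontrivial combination $t\mu_-+(1-t)\mu_+$ with $t<1$ can perfectly well have $\chi(\nu)\le 0$ and arise as a weak$\ast$-and-entropy limit of ergodic measures with negative exponent (indeed, ruling this out is exactly Corollary~\ref{c.maximalentropy}, which is a \emph{consequence} of Theorem~\ref{main2}, not an input). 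What closes the argument is precisely the ``ergodic'' clause in~(P10): one needs an \emph{ergodic} equilibrium state with exponent equal to each one-sided derivative, and then the sign constraint finishes it. Your exhausting-family machinery is reproving the non-ergodic half of~(P10) but not the ergodic half. (Minor point: the supporting line for $\cP_{<0}$ at $0$ coming from $\mu_-$ has slope $\alpha_-$, not $-\alpha_-$.)
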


Under the hypothesis of Theorem~\ref{main1}, possible shapes of the graph of the corresponding function $\alpha \to h_{\rm top}(\cL(\alpha))=\cE(\alpha)$ are as in Figure \ref{Fig:Legendre} and under the hypotheses of Theorem~\ref{main2} as in Figure \ref{Fig:Legendre} (left figure). 

Similar phenomenon as in Theorem~\ref{main2} (the entropy achieving its maximum away from zero exponent) in a slightly different setting (for ergodic measures on $C^2$ systems) was observed in \cite{TahYan:}. 
We note that somewhat related questions about the topology of the space of measures are considered in~\cite{GorPes:17,DiaGelRam:b,BocBonGel:,GelKwi:}.

In the following, when referring to  \emph{weak$\ast$ and in entropy convergence} we mean that a sequence of measures converges in the weak$\ast$ topology and their entropies  converge to the entropy of the limit measure.

\begin{mcorollary}\label{c.maximalentropy}
Under the hypothesis of Theorem~\ref{main2}, no measure which is a nontrivial convex combination of the two ergodic measures of maximal entropy is a weak$\ast$ and in entropy limit of ergodic measures.
\end{mcorollary}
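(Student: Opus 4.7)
I would argue by contradiction. Suppose there exists $t\in(0,1)$ such that $\mu\eqdef t\mu_-+(1-t)\mu_+$ is the weak$\ast$ and in entropy limit of a sequence $(\nu_n)\subset\cM_{\rm erg}$. Since metric entropy is affine on $\cM$, one has $h(\mu)=t\log N+(1-t)\log N=\log N$, so $h(\nu_n)\to\log N$. The potential $\varphi$ from \eqref{def:potential} is continuous because all $f_i$ are $C^1$, hence $\chi(\cdot)=\int\varphi\,d(\cdot)$ is a continuous functional on $\cM$ with the weak$\ast$ topology. Thus
\[
	\chi(\nu_n)\longrightarrow \chi(\mu)=t\alpha_-+(1-t)\alpha_+\eqdef\alpha^\ast.
\]
The crucial elementary observation is that $\alpha_-<\alpha_+$ together with $t\in(0,1)$ forces $\alpha^\ast\in(\alpha_-,\alpha_+)$; in particular $\alpha^\ast\neq\alpha_\pm$.

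Next I would transfer the measure-theoretic bound on $h(\nu_n)$ to the level sets of Lyapunov exponent. Since each $\nu_n$ is ergodic, Birkhoff's theorem says that $\nu_n$-a.e.\ point $X$ satisfies $\chi(X)=\chi(\nu_n)$, so the $\nu_n$-generic set is contained in $\cL(\chi(\nu_n))$. By the standard variational bound for Bowen's topological entropy of noncompact sets (see Appendix~\ref{App:B}), this yields
\[
	h(\nu_n)\,\le\,h_{\rm top}(\cL(\chi(\nu_n))).
\]
Now I would invoke the continuity of $\alpha\mapsto h_{\rm top}(\cL(\alpha))$ on $[\alpha_{\rm min},\alpha_{\rm max}]$, which is provided by parts (a), (b), and (c) of Theorem~\ref{main1}, to conclude $h_{\rm top}(\cL(\chi(\nu_n)))\to h_{\rm top}(\cL(\alpha^\ast))$. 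Combining Theorem~\ref{main2} (which asserts that $h_{\rm top}(\cL(\alpha))<\log N$ for every $\alpha\in(\alpha_{\rm min},\alpha_{\rm max})\setminus\{\alpha_-,\alpha_+\}$) with the fact that $\alpha^\ast\neq\alpha_\pm$, we obtain $h_{\rm top}(\cL(\alpha^\ast))<\log N$, contradicting $h(\nu_n)\to\log N$.

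The main thing to watch is the case $\alpha^\ast=0$, where a priori one might worry that $\chi(\nu_n)$ oscillates across $0$ and the relevant bound could fail to be continuous at $0$; however, this is precisely handled by Theorem~\ref{main1}(c), which ensures two-sided continuity of the entropy spectrum at zero exponent, so no separate argument is needed. Once this is in place, the proof is a clean combination of the affinity of entropy, continuity of $\chi$, Bowen's bound $h(\nu)\le h_{\rm top}(\cL(\chi(\nu)))$, and the strict inequality $h_{\rm top}(\cL(\alpha))<\log N$ away from $\alpha_\pm$ given by Theorem~\ref{main2}.
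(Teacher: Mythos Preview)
Your proof is correct and follows essentially the same route as the paper's: argue by contradiction, use that $\chi(\nu_n)\to\alpha^\ast\in(\alpha_-,\alpha_+)$ by weak$\ast$ convergence, and then derive a contradiction from the fact that the entropy spectrum is strictly below $\log N$ near $\alpha^\ast$. The paper phrases the last step via $\cE_{>0}$ (after reducing to $\alpha^\ast\ge 0$) rather than $h_{\rm top}(\cL(\cdot))$, but by Theorem~\ref{main1} these coincide, so the difference is cosmetic; if anything, your use of the global continuity of $\alpha\mapsto h_{\rm top}(\cL(\alpha))$ handles the possibility that $\chi(\nu_n)$ changes sign without splitting into cases.
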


The results in~\cite{TahYan:} and our results suggest the following conjecture (which is indeed true for maximal entropy measures, by Corollary~\ref{c.maximalentropy}).

\begin{conjecture}
For every pair of hyperbolic ergodic measures $\mu_1$ and $\mu_2$ with
$\chi(\mu_1)<0<\chi(\mu_2)$  every nontrivial convex combination of $\mu_1$ and $\mu_2$ cannot be
approximated (weak$\ast$ and in entropy) by ergodic measures.
\end{conjecture}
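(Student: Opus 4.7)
The plan is to extend the argument behind Corollary~\ref{c.maximalentropy}, which handles the special case $\mu_i=\mu_\pm$, to arbitrary pairs of hyperbolic ergodic measures with exponents of opposite sign. The basic tool is the upper bound $h(\nu)\le\cE(\chi(\nu))$ for every ergodic hyperbolic $\nu$, provided by Theorem~\ref{main1}, combined with the geometric picture of the graph $\alpha\mapsto\cE(\alpha)$ as the common envelope of two concave pieces meeting at $\alpha=0$.

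I would argue by contradiction. Suppose $(\nu_n)\subset\cM_{\rm erg}$ satisfies $\nu_n\to\mu\eqdef t\mu_1+(1-t)\mu_2$ in weak$\ast$ topology together with $h(\nu_n)\to h(\mu)=th(\mu_1)+(1-t)h(\mu_2)$ for some $t\in(0,1)$. Weak$\ast$-continuity of $\chi$ (since $\varphi$ from~\eqref{def:potential} is continuous) yields $\chi(\nu_n)\to\chi(\mu)=t\chi(\mu_1)+(1-t)\chi(\mu_2)$. The argument then splits according to whether $\chi(\mu)$ is positive, negative, or zero.

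In the hyperbolic case, say $\chi(\mu)>0$, Theorem~\ref{main1}(b),(c) gives $h(\mu)\le\cE_{>0}(\chi(\mu))$. The desired contradiction is the reverse strict inequality
\[
	th(\mu_1)+(1-t)h(\mu_2)>\cE_{>0}\bigl(t\chi(\mu_1)+(1-t)\chi(\mu_2)\bigr).
\]
Geometrically, this asks that the chord in the $(\alpha,h)$-plane joining $(\chi(\mu_1),h(\mu_1))$ (in the negative-exponent region) to $(\chi(\mu_2),h(\mu_2))$ (in the positive one) lies strictly above the graph of $\cE_{>0}$ at $\alpha=\chi(\mu)$. The chord crosses $\alpha=0$ at a height bounded below by $\max\{\cE_{<0}(0),\cE_{>0}(0)\}$ only if it dominates each concave branch separately; strict inequality should follow from the fact that the two branches $\cE_{<0}$, $\cE_{>0}$ do not glue into a single concave function on $[\alpha_{\min},\alpha_{\max}]$, because $\cE$ is ``pinched'' at $0$ in the sense suggested by Figure~\ref{Fig:Legendre}, and under the additional assumption of proximality this is sharpened to the strict gap $\cE(0)<\log N$ of Theorem~\ref{main2}.

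The main obstacle is the case $\chi(\mu)=0$: Theorem~\ref{main1}(d) only asserts $h_{\rm top}(\cL(0))>0$ without a matching restricted variational principle, so one cannot immediately conclude $h(\nu_n)\le h_{\rm top}(\cL(0))$ from $\chi(\nu_n)\to0$. A separate upper bound would have to be developed, perhaps by approximating $\mu$ itself by nonhyperbolic ergodic measures through the techniques recalled in Section~\ref{ss.previous}, and then transferring the chord-versus-graph estimate through the limit. A secondary obstacle is that strict concavity of $\cE_{<0}$ and $\cE_{>0}$ on their full domains is not established in the paper: phase transitions of the restricted pressures $\cP_{<0}$ and $\cP_{>0}$ would produce affine segments on which the chord inequality could fail without further input — which is precisely why the statement remains conjectural.
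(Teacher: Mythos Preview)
The statement you are attempting to prove is labeled \emph{Conjecture} in the paper, and the paper offers no proof of it. So there is no ``paper's own proof'' to compare against; the authors explicitly leave this open, and your closing sentence correctly recognizes this.

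That said, your outline overstates how far the Corollary~\ref{c.maximalentropy} argument can be pushed. In that corollary the chord joining $(\alpha_-,\log N)$ to $(\alpha_+,\log N)$ is the horizontal line at height $\log N$, which under proximality lies \emph{strictly above} the graph of $\cE$ on all of $(\alpha_-,\alpha_+)\setminus\{\alpha_\pm\}$; this is exactly what Theorem~\ref{main2} supplies, and it is what makes the contradiction work. For arbitrary $\mu_1,\mu_2$ the chord endpoints sit at heights $h(\mu_1),h(\mu_2)$, which by Theorem~\ref{main1} satisfy only $h(\mu_i)\le\cE(\chi(\mu_i))$ and may be much smaller (e.g.\ zero, for periodic-orbit measures). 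In that case the chord lies \emph{below} the graph of $\cE$ everywhere, and the inequality
\[
	th(\mu_1)+(1-t)h(\mu_2)>\cE\bigl(t\chi(\mu_1)+(1-t)\chi(\mu_2)\bigr)
\]
simply fails. So the obstruction is not only the zero-exponent case or possible affine segments of $\cE$: already in the generic hyperbolic case $\chi(\mu)\ne0$ the chord-versus-envelope picture gives no contradiction unless one has additional lower bounds on $h(\mu_1),h(\mu_2)$ tying them to the graph of $\cE$. The maximal-entropy situation is special precisely because those lower bounds are forced.
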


We finally summarize the properties of (restricted) pressure functions, its Legendre-Fenchel transform, and of the entropy spectrum of Lyapunov exponents in the following theorem (compare  Figures~\ref{Fig:Pressure} and~\ref{Fig:Legendre}).

\begin{mtheorem} \label{main3}
	Under the assumptions of Theorem \ref{main1}, we have the following:
\begin{itemize}
\item[a)] $\cP_{<0}$  and $\cP_{>0}$ are nonincreasing and nondecreasing convex functions, respectively,
\item[b)] (Plateaus) There are numbers $D_\pm$ and $h_\pm>0$ such that
\[
	\cP_{<0}(q) = h_-\text{ for all }q\ge D_-
	\quad\text{ and } \quad
	\cP_{>0}(q) = h_+\text{ for all }q\le D_+
.
\]	
\item[c)] $h_- = h_+ = h_{\rm top} (\cL(0))$.
\item[d)] $D_+ \leq 0 \leq D_-$.
\item[e)] $\cP_{>0}(0) = \cP_{<0}(0) = \log N = h_{\rm top} (F)$.
\item [f)] The map $\alpha \mapsto h_{\rm top} (\cL(\alpha))$ achieves its maximum value $\log N$ at some points
\[	
	\alpha_-<0
	\quad\text{ and }\quad
	\alpha_+>0.
\]	
\item[g)] For $\alpha < 0$ the function $\alpha \mapsto \cE(\alpha)$ is a Legendre-Fenchel transform of $q\mapsto \cP_{<0}(q)$. Similarly, for $\alpha > 0$ the function $\alpha \mapsto \cE(\alpha)$ is a Legendre-Fenchel transform of $q\mapsto \cP_{>0}(q)$. In particular, $\alpha \mapsto \cE(\alpha)$ is a concave function on the domains  $\alpha <0$ and $\alpha>0$, respectively.
\item[h)] $h_{\rm top} (\cL(\alpha))$ is a continuous function on $[\alpha_{\min},\alpha_{\max}]$.
\item[i)] We have $0\leq -D_L\cE(0)<\infty$ and $0\leq D_R\cE(0)<\infty$, where $D_L$ and $D_R$ denote the one-sided derivatives from the left and from the right, respectively.
\item[j)] $\cE(0)=\lim_{\alpha\to0}\cE(\alpha)>0$ and hence $h_{\rm top} (\cL(0)) >0$.
\end{itemize}
Moreover, under the assumptions of Theorem \ref{main2} we have additional properties
\begin{itemize}
\item[k)] $\cP_{>0}$ and $\cP_{<0}$ are differentiable at $q=0$
\end{itemize}
and in items d) and i) we have strict inequalities:
\[
	D_+ <0 <D_-
	\quad\text{ and }\quad	
	D_L\cE(0) <0<D_R\cE(0),
\]	
 and the points $\alpha_-, \alpha_+$ in item f) are the unique numbers $\alpha$ for which $h_{\rm top}(\cL(\alpha))=\log N$.
\end{mtheorem}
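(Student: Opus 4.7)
The plan is to assemble Theorem~\ref{main3} from the already-established Theorems~\ref{main1} and~\ref{main2}, standard convex analysis of Legendre-Fenchel duals, and the ergodic approximation of zero-exponent measures by hyperbolic ones from~\cite{DiaGelRam:}. I would begin with the structural items~(a), (e), and~(g), which are essentially immediate: each $\cP_\ast$ is a supremum of affine functions of $q$, hence convex, with monotonicity dictated by the constant sign of $\chi(\mu)$ on the appropriate ergodic class; evaluation at $q=0$ gives $\cP_\ast(0)=\sup_{\cM_{\rm erg,\ast}}h(\mu)$, which equals $\log N$ because Theorem~\ref{main1} provides ergodic measures of entropy $\log N$ in both classes and $h_{\rm top}(F)=\log N$ (the latter coming from the skew-product structure over the full $N$-shift with circle-diffeomorphism fibers of vanishing topological entropy); and~(g) is the definitional Legendre-Fenchel pairing combined with Theorem~\ref{main1}(a),(b), whence concavity of $\cE_\ast$ on each side is automatic.

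The core of the argument is the plateau block~(b), (c), (d), (j). Set $h_\ast\eqdef h_{\rm top}(\cL(0))$. The ergodic approximation of~\cite{DiaGelRam:} supplies sequences of hyperbolic ergodic measures of both signs with $\chi(\mu_n)\to 0^\mp$ and $h(\mu_n)\to h_\ast$, which forces $\cP_\ast(q)\geq h_\ast$ on the appropriate half-line. For the reverse bound, any measure with $|\chi(\mu)|\geq\varepsilon$ contributes at most $\log N-|q|\varepsilon$, which drops below $h_\ast$ once $|q|$ is large, so the sup is asymptotically carried by measures with vanishing exponent and tends to $h_\ast$. A convex function attaining its infimum this way must be constant on a half-line, proving~(b) with common plateau value $h_-=h_+=h_\ast$ in~(c). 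Item~(d) follows by comparison with~(e): $\cP_\ast(0)=\log N\geq h_\ast$ together with monotonicity yields $D_+\leq 0\leq D_-$. For~(j), concavity of $\cE$ on each side and Theorem~\ref{main1}(c) give $\cE(0)=\lim_{\alpha\to 0}\cE(\alpha)$, which by Legendre duality equals $h_\ast$, positive by Theorem~\ref{main1}(d).

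The spectrum items~(f), (h), (i) are then straightforward concave analysis of $\cE$ on $[\alpha_{\min},0]$ and $[0,\alpha_{\max}]$: concavity from~(g) gives continuity on the interiors, and continuity at $\alpha_{\min}$, $\alpha_{\max}$, $0$ is Theorem~\ref{main1}(c), yielding~(h); the attained maximum $\log N$ on each side produces points $\alpha_-<0<\alpha_+$ realizing it, giving~(f); and boundedness plus concavity on each half-interval gives finite one-sided derivatives of $\cE$ at $0$ with the stated signs (since $\cE(0)\leq\cE(\alpha_\pm)$), which is~(i).

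Under the stronger proximality hypothesis, Theorem~\ref{main2} gives uniqueness of the ergodic measures $\mu_\pm$ of maximal entropy, so $\sup_{\cM_{\rm erg,\ast}}h(\mu)$ has a unique maximizer; the resulting unique supporting line at $q=0$ yields differentiability of $\cP_\ast$ there, proving~(k). The strict inequality $h_\ast<\log N$ from Theorem~\ref{main2} forces $\cP_\ast(0)>h_\ast$, hence $D_+<0<D_-$ by monotonicity, and Legendre duality then delivers strict inequalities for the one-sided derivatives of $\cE$ at $0$. Uniqueness of $\alpha_\pm$ follows by combining Theorem~\ref{main1}(a),(b) with Theorem~\ref{main2}: any $\alpha$ realizing $h_{\rm top}(\cL(\alpha))=\log N$ is the exponent of an ergodic measure of maximal entropy, and the only such measures are $\mu_\pm$. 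The main technical obstacle in this plan is the plateau identification itself: showing that the pressures, which see only hyperbolic measures, nonetheless detect the zero-exponent level set at their infimum requires both the ergodic approximation of~\cite{DiaGelRam:} and the exhausting-family framework developed in Sections~\ref{sec:exhau} and~\ref{sec:bridging}; the rest is Legendre-Fenchel bookkeeping.
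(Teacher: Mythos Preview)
Your overall architecture is right, and most items are handled correctly, but there is a genuine gap in your treatment of the plateau block~(b) and~(i), which are Legendre-dual to each other and constitute the heart of the theorem.

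For~(b) you argue that $\cP_{>0}(q)\ge h_\ast$ for all $q$ (from approximation of zero-exponent measures by hyperbolic ones) and that $\cP_{>0}(q)\to h_\ast$ as $q\to -\infty$ (since measures with exponent bounded away from zero are eventually irrelevant), and then you assert that ``a convex function attaining its infimum this way must be constant on a half-line.'' This is false: $q\mapsto h_\ast + e^{q}$ is convex, nondecreasing, bounded below by $h_\ast$, tends to $h_\ast$ as $q\to -\infty$, and is never constant. Equivalently, your argument for~(i) -- that boundedness plus concavity of $\cE$ on $(0,\alpha_{\max}]$ forces $D_R\cE(0)<\infty$ -- fails for the same reason: $\alpha\mapsto\sqrt{\alpha}$ is concave and bounded on $[0,1]$ but has infinite right derivative at $0$. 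So neither side of the duality is established.

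What is missing is exactly the \emph{quantitative} approximation result, Lemma~\ref{lem:main3} (rephrasing \cite[Theorem~5]{DiaGelRam:}): there exists $c>0$ such that for every ergodic $\mu$ with $\chi(\mu)=\alpha\ne 0$ one can find ergodic measures of the opposite sign with exponents tending to $0$ and entropy limit at least $h(\mu)/(1+c|\alpha|)$. Combined with the restricted variational principle this yields $\cE(0)\ge \cE(\alpha)/(1+c|\alpha|)$, i.e.\ $(\cE(\alpha)-\cE(0))/|\alpha|\le c\,\cE(0)$, which \emph{directly} bounds the one-sided derivatives $D_{L/R}\cE(0)$ by $c\,\cE(0)<\infty$. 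This is the paper's Lemma~\ref{lem:stolen}, and it is what turns the asymptotic behaviour into an actual plateau (via $D_-=D_R\cE(0)$, $D_+=D_L\cE(0)$). The same bound also gives the paper's self-contained proof of~(j): if $\cE(0)=0$ then both one-sided derivatives vanish, so $\cE$ is maximal at $0$, forcing $\cE(0)=\log N$, a contradiction. Your appeal to Theorem~\ref{main1}(d) for~(j) is circular in the paper's logical order, since that item is itself derived from Theorem~\ref{main3}(j); you should instead invoke Lemma~\ref{lem:main3} directly.
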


\begin{remark}
The following questions remain open. The restricted pressures can be differentiable or nondifferentiable at the beginning of the plateaus in Theorem~\ref{main3} item b). The nondifferentiability of, for example, $\cP_{>0}$ at $D_-$ would mean that $\cE(\alpha)$ is linear on some interval $[0, q]$. Further regularity properties (smoothness, analyticity)  of the restricted pressure functions (excluding the ends of plateaus) and of the spectrum are unknown.

The asymptote of $\cP_{>0}$ at $q\to \infty$ is some line $\{P=\alpha_{\max}q + h_{\max}\} $, similarly $\cP_{<0}$ is asymptotic to $\{P=\alpha_{\min}q+h_{\min}\}$, and we do not know whether $h_{\max}$ and $h_{\min}$ are equal to zero (which would mean that $h_{\rm top}( \cL(\alpha_{\max})) = h_{\rm top} (\cL(\alpha_{\min}))=0$; this phenomenon is sometimes referred to as \emph{ergodic optimization}, see for example~\cite{Jen:06}).

Finally, even though we know that there do exist ergodic measures with Lyapunov exponent zero and with positive entropy (this follows from \cite{BocBonDia:16}), we do not know if there exist such measures with entropy arbitrarily close to $h_{\rm top}( \cL(0))$ (which would mean that we have the restricted variational principle also for exponent zero).
\end{remark}

Our final result deals with cocycles. Recall, given a cocycle $\mathbf A\in \mathrm{SL}(2,\bR)^N$, the definitions of the associated skew-product $F_{\mathbf A}\colon \Sigma_N\times\bP^1\to\Sigma_N\times\bP^1$ with fiber maps $f_A$ as in~\eqref{eq:defsteskecoc} and the level sets $\cL^+_{\mathbf A}$ in~\eqref{def:levelsetA} and $\cL(\alpha)$ as defined in~\eqref{def:levelset} for $F_{\mathbf A}$. Recall also the existence of the open and dense subset $\fE_{N,\rm shyp}\subset \fE_N$ in Theorem \ref{teo:introSL2R} and  Appendix~\ref{App:A}.

\begin{mtheorem}\label{teo:SL2Rskewproduct}
	For every $N\ge2$ and every $\mathbf A\in \fE_{N,\rm shyp}$  we have the following: There are numbers $0<\alpha_+<\alpha_{\rm max}$  such that for every $\alpha\in[0,\alpha_{\rm max}]$ we have $\cL^+_{\mathbf A}(\alpha)\ne\emptyset$. Moreover,
\begin{enumerate}
\item[a)] for every $\alpha\in[0,\alpha_{\rm max}]$ we have 
\[
	h_{\rm top}(\cL^+_{\mathbf A}(\frac\alpha2))
	= h_{\rm top}(\cL(\alpha))
	= h_{\rm top}(\cL(-\alpha))
	.
\]
In particular, the function $\alpha\mapsto h_{\rm top}(\cL(\alpha))$ is even.
\item[b)] For all $\alpha\in[0,\alpha_{\rm max})\setminus\{\alpha_+\}$ we have
\[
	0
	< 	 h_{\rm top}(\cL^+_{\mathbf A}(\alpha))
	< \log N
	=h_{\rm top}(\cL^+_{\mathbf A}(\alpha_+)) .
\] 
\end{enumerate}
\end{mtheorem}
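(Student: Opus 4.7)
My plan is to combine Theorem~\ref{theoprop:onesidedspectrum} --- which dictionaries the exponent spectrum of the cocycle to that of the induced skew-product $F_{\mathbf A}$ --- with the multifractal description of $F_{\mathbf A}$ provided by Theorems~\ref{main1}, \ref{main2}, and~\ref{main3}. The dictionary rests on the identity $|f_A'(v)|=\|Av\|^{-2}$ for the projective action of $A\in\mathrm{SL}(2,\bR)$ (a short computation using $\det A=1$), which converts the Birkhoff average defining the skew-product fiber exponent at $(\xi,v)$ into $-\tfrac{2}{n}\log\|\mathbf A^n(\xi^+)v\|$. In particular, if $\lambda_1(\mathbf A,\xi^+)=\beta\ge 0$ exists, then the skew-product fiber exponent at $(\xi,v)$ equals $-2\beta$ for every $v$ outside the stable direction $v_s(\xi^+)$ and equals $+2\beta$ for $v=v_s(\xi^+)$.

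\textbf{Applicability of Theorems~\ref{main1}--\ref{main3}.} Since $\mathbf A\in\fE_{N,\rm shyp}$, the semigroup $\langle\mathbf A\rangle$ contains a hyperbolic matrix (whose projective action is a Morse--Smale North-pole--South-pole diffeomorphism of $\bP^1$) together with an element close to an irrational rotation, so by Remark~\ref{rem:example} the axioms CEC$\pm$ and Acc$\pm$ hold for $F_{\mathbf A}$, and Remark~\ref{rem:proximal} supplies proximality. Consequently Theorems~\ref{main1}, \ref{main2}, and~\ref{main3} all apply to $F_{\mathbf A}$: writing $[a_{\min},a_{\max}]$ for the skew-product spectrum of fiber exponents, the map $\alpha\mapsto h_{\rm top}(\cL(\alpha))$ is continuous on $[a_{\min},a_{\max}]$ with $h_{\rm top}(\cL(0))>0$, and there is a unique pair of ergodic maximal-entropy measures $\mu_\pm$ satisfying $\chi(\mu_-)<0<\chi(\mu_+)$, $h(\mu_\pm)=\log N$, and $h_{\rm top}(\cL(\alpha))<\log N$ for $\alpha\in[a_{\min},a_{\max}]\setminus\{\chi(\mu_-),\chi(\mu_+)\}$.

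\textbf{From skew-product to cocycle.} The main technical step is Theorem~\ref{theoprop:onesidedspectrum}, which yields the entropy identity
\[
  h_{\rm top}\bigl(\cL^+_{\mathbf A}(\beta)\bigr)
  = h_{\rm top}\bigl(\cL(-2\beta)\bigr)
  = h_{\rm top}\bigl(\cL(+2\beta)\bigr),
  \qquad \beta\in[0,a_{\max}/2].
\]
The first equality encodes that the forward projection $\Sigma_N\times\bP^1\to\Sigma_N^+$ sends $\cL(-2\beta)$ essentially onto $\cL^+_{\mathbf A}(\beta)$: the forgotten past contributes no Bowen topological entropy (the classical equivalence between one- and two-sided subshifts) and neither does the one-dimensional fiber, whose forward orbit is asymptotic to the unstable Oseledec direction of $(\mathbf A,\xi^+)$ and is therefore ultimately encoded by $\xi$ itself. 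The second equality expresses that $\cL(+2\beta)$ is the graph of the measurable stable section $\xi\mapsto v_s(\xi)$ over the same base set, and this injection preserves Bowen topological entropy. Making these heuristics rigorous at the level of $(n,\varepsilon)$-covers --- in particular showing that the merely measurable (not continuous) character of $v_s$ does not spoil the comparison --- is the main obstacle in the proof of Theorem~\ref{theoprop:onesidedspectrum}.

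\textbf{Conclusion.} Item~(a) of Theorem~\ref{teo:SL2Rskewproduct} is immediate from the above identity with $\alpha=2\beta$, together with the evenness of $\alpha\mapsto h_{\rm top}(\cL(\alpha))$ which is itself a direct consequence of the same identity. For item~(b), set $\alpha_+\eqdef\chi(\mu_+)/2$, which equals $-\chi(\mu_-)/2$ by the evenness just obtained. Then for $\beta\in[0,a_{\max}/2)\setminus\{\alpha_+\}$ one has $h_{\rm top}(\cL^+_{\mathbf A}(\beta))=h_{\rm top}(\cL(\pm 2\beta))\in(0,\log N)$ (the positivity at $\beta=0$ comes from Theorem~\ref{main1}(d)), whereas $h_{\rm top}(\cL^+_{\mathbf A}(\alpha_+))=h_{\rm top}(\cL(\chi(\mu_+)))=\log N$ by Theorem~\ref{main2}. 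Openness and density of $\fE_{N,\rm shyp}$ in $\fE_N$ is recorded separately as Proposition~\ref{p.elliptic}.
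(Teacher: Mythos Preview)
Your overall strategy matches the paper's, but you overstate what Theorem~\ref{theoprop:onesidedspectrum} delivers. That theorem compares $\cL^+_{\mathbf A}(\beta)\subset\Sigma_N^+$ only with the \emph{one-sided} sets $\{\xi^+:\chi^+(\xi^+,v)=\pm 2\beta\text{ for some }v\}$; it says nothing about the two-sided level sets $\cL(\pm 2\beta)\subset\Sigma_N\times\bP^1$. The direction you call easy --- projecting forward and using $\chi(\xi,v)=\alpha\Rightarrow\chi^+(\xi^+,v)=\alpha$, together with the fact that projecting a skew-product set to the base does not change Bowen entropy --- gives only $h_{\rm top}(\cL(\alpha))\le h_{\rm top}(\cL^+_{\mathbf A}(\lvert\alpha\rvert/2))$.

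The reverse inequality is \emph{not} part of Theorem~\ref{theoprop:onesidedspectrum}; the paper proves it separately (Claim~\ref{cl:parte1} in the proof of Theorem~\ref{teo:SL2Rskewproduct}) by constructing, for each $\xi^+\in\cL^+_{\mathbf A}(\alpha/2)$, an explicit backward tail $\eta^-$ so that the two-sided point $(\eta^-.\xi^+,v_0(\xi^+))$ lies in $\cL(\alpha)$. The construction uses that $\mathbf A^{-1}$ has the same Lyapunov spectrum as $\mathbf A$ (an $\mathrm{SL}(2,\bR)$ feature): one picks $\eta^+\in\cL^+_{\mathbf A^{-1}}(\alpha/2)$, applies Proposition~\ref{prolem:regular} to the inverse cocycle to locate its distinguished direction, and checks that the time-reversal of $\eta^+$ (with possibly one extra symbol prepended when the two distinguished directions $v_0(\xi^+,\mathbf A)$ and $v_0(\eta^+,\mathbf A^{-1})$ happen to coincide) yields the required past. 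The evenness $h_{\rm top}(\cL(\alpha))=h_{\rm top}(\cL(-\alpha))$ (Claim~\ref{cl:parte2}) is obtained by the same device, not as a formal consequence of the cocycle identity. Your heuristics about the measurable stable section point in the right direction, but the obstacle is not the discontinuity of $v_s$ at the level of $(n,\varepsilon)$-covers; it is the existence of a compatible past for each one-sided sequence, and that requires the explicit construction above.
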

 
The proof of Theorem~\ref{teo:SL2Rskewproduct} has two parts. First, for $\mathbf A\in \fE_{N,\rm shyp}$ the skew-product $F_{\mathbf A}$ satisfies the hypotheses of Theorems~\ref{main1} and~\ref{main2}. Second,  there is done a careful analysis of the relation between the spectra of exponents of the cocycle and the ones of the associated skew-product (see Theorem~\ref{theoprop:onesidedspectrum}), which is done in Section~\ref{sec:cocyles}. 

%-------------------------------------------------------------------------------------------------------------
\section{Setting}\label{sec:Axioms}
%-------------------------------------------------------------------------------------------------------------

We  recall the precise setting of our Axioms CEC$\pm$ and Acc$\pm$ and their main consequences, established in~\cite{DiaGelRam:}. The step skew-product structure of $F$ allows us to reduce the study of its dynamics to the study of the IFS generated by the fiber maps $\{f_i\}_{i=0}^{N-1}$. In what follows we always assume that $F$ is transitive.

Given a point $x\in\bS^1$, consider and define its \emph{forward} and \emph{backward orbits}
by
\[
	\mathcal{O}^+(x)	
	\eqdef \bigcup_{n\ge 1}
	\bigcup_{(\theta_0\ldots\theta_{n-1})}f_{[\theta_0\ldots\,\theta_{n-1}]}(x)
	\,\,\mbox{and} \,\,
	\mathcal{O}^{-}(x) \eqdef  \bigcup_{m\le 1}
	\bigcup_{(\theta_{-m}\ldots\theta_{-1})}f_{[\theta_{-m}\ldots\,\theta_{-1}.]}(x),
\]
respectively. Consider also the \emph{full orbit} of $x$
$$
	\mathcal{O}(x) \eqdef \mathcal{O}^+(x) \cup \mathcal{O}^-(x).
$$
Similarly, we define the orbits $\mathcal{O}^+(J),\mathcal{O}^-(J)$, and $\mathcal{O}(J)$ for any subset $J\subset\bS^1$.

In requiring that the step skew-product $F$ with fiber maps $\{f_i\}_{i=0}^{N-1}$ \emph{satisfies the Axioms CEC$\pm$ and Acc$\pm$} we mean that there are so-called  (closed) \emph{forward} and \emph{backward blending intervals} $J^+, J^-\subset\bS^1$ such that the following properties hold.

\medskip\noindent
\textbf{CEC+($J^+$) (Controlled Expanding forward Covering  relative to $J^+$).} There exist positive constants $K_1,\ldots,K_5$ such that for every interval $H\subset\bS^1$ intersecting $J^+$ and satisfying $\lvert H\rvert<K_1$ we have
\begin{itemize}
\item  (controlled covering)  there exists a finite sequence $(\eta_0\ldots\eta_{\ell-1})$ for some positive integer $\ell\le  K_2\,\lvert\log\,\lvert H\rvert\rvert +K_3$ such that
\[
	f_{[\eta_0\ldots\,\eta_{\ell-1}]}(H)\supset B(J^+,K_4),
\]	
where $B(J^+,\delta)$ is the $\delta$-neighborhood of the set $J^+$.
\item  (controlled expansion) for every $x\in H$ we have
\[
	\log \,\lvert  (f_{[\eta_0\ldots\,\eta_{\ell-1}]})'(x)\rvert\ge \ell K_5.
\]	
\end{itemize}

\medskip\noindent
\textbf{CEC$-(J^-$) (Controlled Expanding backward Covering relative to $J^-$).} The step skew-product  $F^{-1}$ satisfies the Axiom CEC$+(J^-)$.

\medskip\noindent
\textbf{Acc$+$($J^+$) (forward Accessibility  relative to $J^+$).} $\mathcal{O}^+(\interior J^+)=\bS^1$.

\medskip\noindent
\textbf{Acc$-$($J^-$) (backward Accessibility relative to $J^-$).} $\mathcal{O}^-(\interior J^-)=\bS^1$.

\medskip
When the step skew-product $F$ is transitive then there is a common interval $J\subset\bS^1$ satisfying CEC$\pm(J)$ and Acc$\pm(J)$ (see Lemma~\ref{l:minimalcommon} and detailed discussion in~\cite[Section 2.2]{DiaGelRam:}).

\begin{remark}[Remark~\ref{r.Gorod} continued]\label{rem:example}
	Consider an IFS $\{f_i\}_{i=0}^{N-1}$ of diffeomorphisms $f_0,\dots, f_{N-1}\colon \mathbb{S}^1\to \mathbb{S}^1$ and assume that there are finite sequences $(\xi_0\ldots\xi_r)$ and $(\zeta_0\ldots\zeta_t)$ such that $f_{[\xi_0\ldots\,\xi_r]}$ is Morse-Smale with exactly one  attracting fixed point and one  repelling fixed point and $f_{[\zeta_0\ldots\,\zeta_t]}$ is an irrational rotation. Then by~\cite[Proposition 8.8]{DiaGelRam:},  every $C^1$-small perturbation of this IFS satisfies Axioms CEC$\pm$ and Acc$\pm$. Moreover, the system is proximal (recall Remark~\ref{rem:proximal}).

	Also note that it is enough to assume that $f_{[\zeta_0\ldots\,\zeta_t]}$ is only $C^2$ conjugate to an irrational rotation (this avoids Denjoy-like counterexamples guaranteeing that every orbit is dense).
\end{remark}	
	
%----------------------------------------------------------------------------------------------------------
\section{Ergodic approximations}\label{ss.previous}
%----------------------------------------------------------------------------------------------------------

We recall some technical results from~\cite{DiaGelRam:}. The first one claims that any  nonhyperbolic ergodic measure $\mu$ (that is, with exponent $\chi(\mu)=0$) is  weak$\ast$ and in entropy approximated by hyperbolic ergodic measures. 

\begin{lemma}[Rephrasing partially {\cite[Theorem 1]{DiaGelRam:}}] 
\label{lem:main30}
For every ergodic measure $\mu$ with zero Lyapunov exponent $\chi (\mu)= 0$ there is a sequence of ergodic measures $\nu_i$ with Lyapunov exponents $\chi (\nu_i) =\beta_i$ such that $\beta_i>0$, $\beta_i\to 0$, $\lim_{i\to\infty}\nu_i=\mu$ in the weak$\ast$ topology, and
\[
	\lim_{i\to \infty} h(\nu_i) = h(\mu).
\]
The same holds true with ergodic measures $\nu_i$ satisfying $\chi(\nu_i)=\alpha_i$ such that $\alpha_i<0$ and $\alpha_i\to0$.
\end{lemma}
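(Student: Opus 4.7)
The statement is essentially a specialization of Theorem~1 in \cite{DiaGelRam:}; the plan is to recall the underlying construction and to note that the two one-sided approximations (positive and negative exponents) follow from one another by the symmetry $F\leftrightarrow F^{-1}$ built into Axioms CEC$\pm$ and Acc$\pm$.

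The core construction I would present proceeds in three stages. First, a Katok-type selection: for every $\epsilon>0$ and all sufficiently large $n$, the ergodic measure $\mu$ concentrates on a set of length-$n$ orbit segments on which the Birkhoff sum $\frac{1}{n}S_n\varphi$ is $\epsilon$-close to $\chi(\mu)=0$, and whose $(n,\delta)$-separated count is at least $e^{n(h(\mu)-\epsilon)}$ essentially independent pieces. Second, an orbit-gluing step using Axioms Acc+ and CEC+: Axiom Acc+ supplies short words sending any fiber point into (a neighborhood of) the blending interval $J^+$, and Axiom CEC+ supplies a controlled covering word of length $\ell\lesssim|\log|H||$ with uniform expansion $\ge\ell K_5$ on small intervals meeting $J^+$. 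Splicing many copies of the Katok segments through these short controlled connectors produces a basic subset of Markov-like type on which the entropy is $h(\mu)-O(\epsilon)$, the orbit measure visits the Katok set with large frequency, and the fiber Lyapunov exponent is positive but of order $\epsilon$ (because the bulk of every periodic cycle contributes Birkhoff sum nearly zero while a small, controllable fraction of expanding connectors tilts the average into the positive range). Third, choosing an ergodic measure of near-maximal entropy on each basic set and letting $\epsilon\to 0$ yields the desired sequence $\nu_i$ with $\chi(\nu_i)=\beta_i>0$, $\beta_i\to 0$, $\nu_i\to\mu$ weak$\ast$, and $h(\nu_i)\to h(\mu)$.

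For the second conclusion, approximation by ergodic measures with $\chi(\nu_i)=\alpha_i<0$ and $\alpha_i\to 0$, I would invoke symmetry: the map $F^{-1}$ satisfies Axioms CEC+ and Acc+ precisely because $F$ satisfies CEC$-$ and Acc$-$, and for every $F$-invariant measure $\nu$ one has $\chi_{F^{-1}}(\nu)=-\chi_F(\nu)$ while the entropy and the weak$\ast$ topology are preserved by the passage to the inverse. Thus the already-established positive-side approximation, applied to the $F^{-1}$-ergodic nonhyperbolic measure $\mu$, produces hyperbolic $F^{-1}$-ergodic measures with small positive $F^{-1}$-exponent, which are precisely $F$-ergodic measures with small negative $F$-exponent converging to $\mu$ weak$\ast$ and in entropy.

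The main obstacle is the delicate simultaneous control required in the splicing step: one must produce enough genuinely independent concatenations to preserve near-maximal entropy of the selected Katok pieces, while simultaneously inserting few enough and short enough expanding connectors to keep the exponent arbitrarily close to zero and the invariant measures of the resulting horseshoes weak$\ast$-close to $\mu$. Balancing these requirements is the technical heart of the GIKN/Bochi--Bonatti--D\'iaz-style construction carried out in \cite{DiaGelRam:}, and its verification under Axioms CEC$\pm$ and Acc$\pm$ is exactly the content being invoked here.
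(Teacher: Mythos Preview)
Your proposal is appropriate: the paper does not prove this lemma at all but simply quotes it as a rephrasing of \cite[Theorem~1]{DiaGelRam:}, so there is no ``paper's own proof'' to compare against. Your sketch of the underlying construction (Katok-type orbit selection, gluing via the blending intervals from Axioms CEC$+$/Acc$+$, and then extracting ergodic measures on the resulting basic sets) together with the $F\leftrightarrow F^{-1}$ symmetry for the negative-exponent half is a faithful outline of how the cited result is obtained, and is exactly the kind of explanation one would give when invoking a black-box theorem.
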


A further result claims that given an ergodic measure
$\mu$ with exponent  $\chi(\mu)=\alpha>0$ and entropy $h(\mu)>0$, for every small $\beta<0$ there are ergodic measures with exponents close to $\beta$ and positive entropy, but in this construction some entropy is lost.  \cite[Theorem 5]{DiaGelRam:} bounds the amount of lost entropy that is related to the
size of $\alpha+\lvert\beta\rvert$. A specially interesting case occurs when the exponent $\beta$ is taken arbitrarily close to $0^-$. The estimates are summarized  in the next lemma.

\begin{lemma}[Rephrasing partially {\cite[Theorem 5]{DiaGelRam:}}] \label{lem:main3}
There exists $c>0$ such that for every ergodic measure $\mu$ with nonzero Lyapunov exponent $\chi (\mu)=\alpha\neq 0$ there is a sequence of ergodic measures $\nu_i$ with Lyapunov exponents $\chi (\nu_i) =\beta_i$,  $\sgn \alpha \neq \sgn \beta_i$, such that $\beta_i\to 0$ and
\[
	\lim_{i\to \infty} h(\nu_i) 
	\geq \frac {h(\mu)} {1+c|\alpha|}.
\]
\end{lemma}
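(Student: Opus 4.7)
The plan is to pass from $\mu$ to a uniformly hyperbolic approximation and then manufacture an ergodic measure with opposite-sign exponent close to zero, by concatenating expanding blocks inherited from this approximation with contracting blocks produced by the blending axioms on the opposite side. By symmetry (working with $F^{-1}$ and swapping the roles of $J^\pm$), I may assume $\alpha=\chi(\mu)>0$ and construct $\nu_i$ with $\beta_i<0$, $\beta_i\to 0^-$.

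First, for arbitrarily small $\epsilon>0$ I would invoke a Katok-type horseshoe approximation for the ergodic measure $\mu$: since $\chi(\mu)\neq0$ the measure is fiber-hyperbolic, and Pesin/Katok theory yields a compact $F$-invariant uniformly hyperbolic set $\Lambda$ on which $F|_\Lambda$ is conjugate to a topologically mixing subshift of finite type, with fiber Lyapunov exponent within $\epsilon$ of $\alpha$ and topological entropy $\geq h(\mu)-\epsilon$. This is the standard approximation tool used throughout~\cite{DiaGelRam:}. Next, using CEC$-(J^-)$ and Acc$-(J^-)$, I would produce \emph{contracting blocks}: Acc$-$ brings any fiber point into the interior of $J^-$ in uniformly bounded time, and CEC$-$ then supplies finite base words $(\eta_0\ldots\eta_{L-1})$ realizing uniform fiber contraction at rate at least $-K_5$ on every interval overlapping $J^-$; these blocks are (essentially) deterministic, hence carry zero entropy. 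Using the transitions-in-finite-time clauses implicit in Acc$\pm$ I can splice bounded-length connecting words between a $\Lambda$-block and a contracting block, obtaining an admissible subshift of finite type $\Sigma^{T,L}$ whose allowed words alternate, up to bounded transitions, blocks of length $T$ from $\Lambda$ with blocks of length $L$ realizing contraction.

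Third, let $\nu^{T,L}$ be (the projection of) the unique measure of maximal entropy on $\Sigma^{T,L}$. Bounded distortion along these concatenated orbits, guaranteed by the controlled-expansion clause in CEC$\pm$ together with $C^1$-continuity of the fiber maps $f_i$, yields
\[
	\chi(\nu^{T,L})\approx\frac{T\alpha-LK_5}{T+L},\qquad
	h(\nu^{T,L})\geq\frac{T(h(\mu)-\epsilon)}{T+L}+O\!\Bigl(\tfrac{1}{T+L}\Bigr).
\]
Given a target exponent $\beta<0$ close to $0$, I would pick the ratio $L/T$ so that $(T\alpha-LK_5)/(T+L)=\beta$, i.e.\ $L/T=(\alpha-\beta)/(K_5+\beta)$, which tends to $\alpha/K_5$ as $\beta\to 0^-$. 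Then the entropy lower bound becomes $(h(\mu)-\epsilon)/(1+L/T)$, which in the limit $\epsilon\to 0$, $\beta\to 0^-$ is $h(\mu)/(1+\alpha/K_5)$. Setting $c\eqdef 1/K_5$ and extracting a diagonal sequence produces the required $\nu_i$.

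The hard part will be making the concatenation quantitative: one must (i) track the bounded-length transition words needed to glue a $\Lambda$-block to a contracting block without violating the IFS-admissibility, so that the resulting object is a genuine topologically mixing subshift of finite type with a well-defined measure of maximal entropy, and (ii) prove a uniform bounded-distortion estimate guaranteeing that the fiber contribution of each contracting block is indeed $\approx -LK_5$ independently of the starting fiber point, and similarly that the $\Lambda$-block contribution is $\approx T\alpha$ with error $o(T)$. Both are controlled uniformly by the constants $K_1,\ldots,K_5$ appearing in CEC$\pm$ together with the finite-time transitions of Acc$\pm$; carrying out these error estimates precisely is the technical content of~\cite[Theorem~5]{DiaGelRam:}, which the authors invoke here.
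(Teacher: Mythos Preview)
Your proposal is correct and faithfully sketches the construction behind \cite[Theorem~5]{DiaGelRam:}, which is exactly what the paper does here: it does not prove the lemma but simply rephrases that theorem and cites it. Your outline---Katok-type horseshoe approximation of $\mu$, contracting blocks produced by CEC$-$/Acc$-$, concatenation with ratio $L/T\approx\alpha/K_5$ to force the exponent just past zero, yielding the entropy bound $h(\mu)/(1+\alpha/K_5)$ with $c=1/K_5$---is precisely the mechanism of the cited result, and you correctly flag that the quantitative gluing and distortion control are the technical work carried out there rather than in the present paper.
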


This result also implies the following.

\begin{corollary}\label{cor:maisemenos}
	There exist ergodic measures with negative/positive exponents arbitrarily close to $0$.
\end{corollary}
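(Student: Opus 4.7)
The plan is to deduce the corollary as an immediate consequence of Lemma~\ref{lem:main3}, once an ergodic measure of each nonzero sign has been exhibited. First, I would produce an ergodic measure $\mu_+$ with $\chi(\mu_+)>0$ and an ergodic measure $\mu_-$ with $\chi(\mu_-)<0$. Under Axiom CEC$+$, the ``controlled expansion'' clause provides, for every small interval intersecting the forward blending interval $J^+$, a finite word along which the fibered composition expands uniformly, with derivative bounded below by $e^{\ell K_5}$. Assembling such words into a Markov family covering $J^+$ yields a compact invariant set on which the fiber dynamics is uniformly expanding; any ergodic measure supported on such a basic set has positive fiber Lyapunov exponent and is an admissible $\mu_+$. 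Axiom CEC$-$ applied to $F^{-1}$ supplies $\mu_-$ by the symmetric construction on $J^-$.

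Second, I would apply Lemma~\ref{lem:main3} to $\mu_+$ with $\alpha=\chi(\mu_+)>0$. This yields a sequence of ergodic measures $\nu_i$ with $\chi(\nu_i)=\beta_i$ satisfying $\sgn\beta_i\ne\sgn\alpha$ and $\beta_i\to 0$, hence a sequence of ergodic measures with negative exponents arbitrarily close to $0$. Applying the lemma symmetrically to $\mu_-$ produces ergodic measures with positive exponents arbitrarily close to $0$, completing the argument.

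The only potentially nontrivial step is the first one, extracting the initial hyperbolic ergodic measures of each sign. However, this is not a genuine obstacle: the horseshoe-type argument built directly from the uniform expansion factor in Axiom CEC$\pm$ is precisely the backbone of the topological picture developed in~\cite{DiaGelRam:}, and can simply be cited. The second step is then a direct invocation of Lemma~\ref{lem:main3} and requires no further work. As an even more economical alternative, one could also appeal to Lemma~\ref{lem:main30} applied to any nonhyperbolic ergodic measure (which exists in this setting by~\cite{BocBonDia:16}), since that lemma already produces ergodic approximants with positive exponents tending to $0$ and, symmetrically, with negative exponents tending to $0$.
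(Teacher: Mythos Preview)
Your proposal is correct and follows essentially the same route as the paper: the corollary is stated immediately after Lemma~\ref{lem:main3} with the words ``This result also implies the following,'' so the intended argument is precisely to start from any ergodic measure with nonzero exponent (whose existence is implicit from Axioms CEC$\pm$, e.g.\ via a hyperbolic periodic point) and invoke Lemma~\ref{lem:main3} to produce ergodic measures with exponents of the opposite sign converging to $0$. Your alternative via Lemma~\ref{lem:main30} is also valid but unnecessary here.
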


The systems considered in this paper satisfy the so-called {\emph{skeleton property}}
which implies the existence of orbit pieces that allow to approximate entropy and Lyapunov exponent,
see \cite[Section 4]{DiaGelRam:} for details.
The skeleton property is referred to some blending interval  and to quantifiers corresponding to the
entropy and a level set for the Lyapunov exponent.
An important property is that if $\cL (\alpha)\ne 0$ then the skeleton property holds relative to
$h=h_{\mathrm{top}} (\cL(\alpha))$ and $\alpha$.
Based on the skeleton property, we have the following.

Given a compact $F$-invariant set $\Gamma\subset\Sigma_N\times\bS^1$, we say that $\Gamma$ has {\emph{uniform fiber expansion}} (\emph{contraction}) if every ergodic measure $\mu\in \cM_{\rm erg}(\Gamma)$ has a positive (a negative)  Lyapunov exponent. It is \emph{hyperbolic} if it either has uniform fiber expansion or uniform fiber contraction. We say that a set is \emph{basic} (with respect to $F$) if it is compact, $F$-invariant, locally maximal, topologically transitive, and hyperbolic%
\footnote{This definition mimics the usual definition of a basic set in a differentiable setting.}.

\begin{proposition}[{\cite[Theorems 4.3 and 4.4 and Proposition 4.8]{DiaGelRam:}}]\label{newprop:skeleton}
Given $\alpha\le0$ such that $\cL(\alpha)\ne\emptyset$ and $h=h_{\rm top}(\cL(\alpha))>0$, for every $\gamma\in(0,h)$ and every small $\lambda>0$ there is a basic set  $\Gamma\subset\Sigma_N\times\bS^1$ such that
\begin{enumerate}
\item
$h_{\rm top}({\Gamma})\in[ h-\gamma,h+\gamma]$ and
\item
every $\nu\in\cM_{\rm erg}({\Gamma})$ satisfies $\chi(\nu)\in(\alpha-\lambda,\alpha+\lambda)\cap\bR_-$. 
\end{enumerate}
The analogous result holds for any Lyapunov exponent $\alpha\ge0$.
\end{proposition}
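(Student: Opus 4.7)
The plan is to build $\Gamma$ by a Pesin--Katok horseshoe construction adapted to the nonhyperbolic setting, carried out in two stages. First, extract from the data $(\alpha,h)$ an ergodic hyperbolic measure $\mu$ with $\chi(\mu)$ close to $\alpha$ and $h(\mu)$ close to $h$. Second, convert $\mu$ into a basic set by producing a large family of finite orbit segments with uniform fiber contraction and concatenating them through the blending interval $J^-$ using Axioms CEC$-$ and Acc$-$.

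For the first stage, in the case $\alpha<0$, the hypothesis $h_{\rm top}(\cL(\alpha))=h>0$ combined with Bowen's definition of entropy for noncompact sets (Appendix~\ref{App:B}) yields, for every $\varepsilon>0$, exponentially many $(n,\varepsilon)$-separated orbit segments whose empirical time averages of $\varphi$ lie within $\varepsilon$ of $\alpha$. Taking a weak$\ast$ accumulation point of the associated empirical measures and passing to an ergodic component $\mu$ produces an ergodic measure with $h(\mu)\ge h-\varepsilon$ and $\chi(\mu)\in(\alpha-\lambda/2,\alpha+\lambda/2)\cap\bR_{<0}$. In the boundary case $\alpha=0$ the same argument yields a (possibly nonhyperbolic) invariant measure with entropy close to $h$; passing to an ergodic component and then invoking Lemma~\ref{lem:main30}, one approximates it in weak$\ast$ and in entropy by ergodic measures with exponent in $(-\lambda,0)$, which serves as the $\mu$ needed below.

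For the second stage, fix a Pesin block of large $\mu$-measure on which the finite-time fiber contraction rate is uniformly close to $\chi(\mu)$; a standard $(n,\varepsilon)$-separated-set argument on this block produces at least $e^{n(h(\mu)-\varepsilon)}$ orbit segments of common length $n$, each with fiber contraction rate within $\lambda/4$ of $\alpha$. These segments form a \emph{skeleton}: a finite family of admissible words in $\Sigma_N$, each carrying a fiber interval on which the corresponding composition is uniformly contracting. Axiom Acc$-$ routes the image of any such fiber interval into $J^-$ in a bounded number of steps, and Axiom CEC$-$ lets us concatenate two skeleton words through $J^-$ by a transition word whose length and contraction are both controlled by the parameters $K_1,\dots,K_5$. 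The free concatenations are coded by a subshift of finite type over the skeleton alphabet, producing a compact invariant set $\Gamma$ which is locally maximal, topologically transitive, and hyperbolic, with $h_{\rm top}(\Gamma)\in[h-\gamma,h+\gamma]$ and every $\nu\in\cM_{\rm erg}(\Gamma)$ having $\chi(\nu)\in(\alpha-\lambda,\alpha+\lambda)\cap\bR_-$.

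The main obstacle is the first stage: the inequality $h_{\rm top}(\cL(\alpha))\le\sup\{h(\mu):\mu\in\cM_{\rm erg},\chi(\mu)=\alpha\}$ is itself the restricted variational principle that Theorem~\ref{main1}(a) eventually asserts, so it cannot be invoked here and must be replaced by an orbitwise counting argument inside $\cL(\alpha)$. A secondary technical point is to make sure that the concatenating transitions, whose lengths grow only logarithmically in the fiber scale by CEC$-$, remain negligible compared to the skeleton length $n$, so that both the entropy and the finite-time Lyapunov exponent of $\Gamma$ stay genuinely close to those of the skeleton; this is precisely the role played by bundling the length bound and the uniform contraction bound together in Axiom CEC$-$.
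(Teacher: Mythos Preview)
This proposition is not proved in the present paper; it is imported verbatim from \cite[Theorems 4.3 and 4.4 and Proposition 4.8]{DiaGelRam:}. So there is no ``paper's own proof'' to compare against here, only the construction sketched in that reference, which the paper summarizes just before the statement as the \emph{skeleton property}: from $\cL(\alpha)\ne\emptyset$ and $h=h_{\rm top}(\cL(\alpha))$ one extracts orbit pieces directly, then glues them through the blending interval.

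Your two-stage plan is close to that construction, but Stage~1 as you wrote it does not work, and you correctly identify the circularity in your final paragraph. The specific failure is the step ``take a weak$\ast$ accumulation point \ldots\ and pass to an ergodic component $\mu$'': a weak$\ast$ limit of empirical measures along $(n,\varepsilon)$-separated sets in $\cL(\alpha)$ does give an invariant measure with $\int\varphi\,d\mu=\alpha$ and $h(\mu)\ge h-\varepsilon$, but an ergodic component carrying most of the entropy need not have exponent close to $\alpha$ (the limit could be a nontrivial convex combination of measures with exponents far from $\alpha$). This is precisely the situation the paper is working around throughout.

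The fix is to drop Stage~1 and feed the orbit segments you already have directly into Stage~2. From $h_{\rm top}(\cL(\alpha))>h-\gamma$ and Bowen's definition you get, for every large $n$, at least $e^{n(h-\gamma)}$ finite sequences $(\xi_0\ldots\xi_{n-1})$ together with fiber points whose finite-time exponents along the whole segment lie in $(\alpha-\lambda/4,\alpha+\lambda/4)$. These segments \emph{are} the skeleton; no intermediate ergodic measure is needed. One does need a Pliss-type selection (or a choice of hyperbolic times) to upgrade ``terminal finite-time exponent close to $\alpha$'' to ``uniform contraction along the segment'', so that the concatenation through $J^-$ via Acc$-$ and CEC$-$ produces a genuinely hyperbolic locally maximal set. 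Your second paragraph for Stage~2 then applies verbatim, and your observation that the CEC$-$ transitions have length $O(\lvert\log(\text{scale})\rvert)=o(n)$ is exactly what keeps both entropy and exponent under control.
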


A further consequence of the Axioms CEC$\pm$ and Acc$\pm$ is that the IFS $\{f_i\}$ is forward and backward minimal.
 \cite[Lemma~2.2]{DiaGelRam:} states a quantitative version of this minimality.
We also will use the following results which are  simple consequences of these axioms.

\begin{lemma}[{\cite[Lemmas 2.2 and 2.3]{DiaGelRam:}}]
\label{l:minimalcommon}
Every nontrivial interval $I\subset \bS^1$ contains a subinterval $J\subset I$  such that $F$ satisfies
Axioms CEC$\pm(J)$ and Acc$\pm(J)$.
Moreover, there is a number $M=M(I)\ge1$ such that for every point $x\in \bS^1$ there are finite sequences $(\theta_1\ldots \theta_r)$ and $(\beta_1\ldots \beta_s)$ with $r,s\le M$ such that
$$
f_{[\beta_1\ldots \beta_s]} (x)\in I \quad \mbox{and} \quad
f_{[\theta_1\ldots \theta_r.]} (x)\in I
$$
\end{lemma}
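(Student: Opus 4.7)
The plan is to handle the two assertions separately, making use of the common blending interval $\hat J$ provided (by transitivity of $F$) that simultaneously satisfies Axioms CEC$\pm(\hat J)$ and Acc$\pm(\hat J)$.

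For the first assertion, given the nontrivial interval $I$, I would select a point $y_0\in\interior I$ and invoke Acc$+(\hat J)$, namely $\mathcal{O}^+(\interior\hat J)=\bS^1$, to produce a finite forward word $(\mu_0\ldots\mu_{k-1})$ and a point $x_0\in\interior\hat J$ with $f_{[\mu_0\ldots\,\mu_{k-1}]}(x_0)=y_0$. By continuity of the composition, a small open subinterval $V_0\subset\interior\hat J$ around $x_0$ satisfies $f_{[\mu_0\ldots\,\mu_{k-1}]}(V_0)\subset I$, and I set
\[
	J\eqdef f_{[\mu_0\ldots\,\mu_{k-1}]}(V_0)\subset I.
\]
Since $V_0$ is well inside $\hat J$, there is $\delta>0$ with $B(J,\delta)\subset f_{[\mu_0\ldots\,\mu_{k-1}]}(\hat J)$. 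To verify CEC$+(J)$ I would run a three-block forward sandwich: given any interval $H$ of small length with $H\cap J\ne\emptyset$, first I apply Acc$-(\hat J)$ together with a Lebesgue-number argument on the open cover $\{f_{[\kappa]}^{-1}(\interior\hat J)\}_{(\kappa)}$ of the compact space $\bS^1$ to obtain a uniformly bounded forward word $(\kappa)$ (drawn from a finite list) with $f_{[\kappa]}(H)\subset\interior\hat J$; second, I apply CEC$+(\hat J)$ to $f_{[\kappa]}(H)$ to produce a further forward word $(\eta)$ of length $\ell\le K_2\lvert\log\lvert H\rvert\rvert+K_3$ (up to $O(1)$ corrections) with $f_{[\kappa\cdot\eta]}(H)\supset B(\hat J,K_4)$; third, I append the fixed word $(\mu)$, so that
\[
	f_{[\kappa\cdot\eta\cdot\mu]}(H) \supset f_{[\mu]}(B(\hat J,K_4))\supset B(J,\delta).
\]
The uniformly bounded outer blocks $(\kappa)$ and $(\mu)$ contribute only $O(1)$ additive corrections to both the length and to the log-derivative of the composition, while the middle block contributes at least $\ell K_5$ of log-expansion; shrinking the admissible upper bound on $\lvert H\rvert$ therefore forces the per-step log-expansion along $(\kappa\cdot\eta\cdot\mu)$ to be at least $K_5/2$, which is what CEC$+(J)$ demands (with appropriately modified constants). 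Applying the same argument to $F^{-1}$ gives CEC$-(J)$. For accessibility I would observe that every open $W\subset\interior\hat J$ satisfies $\mathcal{O}^+(W)=\bS^1$ (use CEC$+(\hat J)$ to expand a subinterval of $W$ onto $\hat J$, then Acc$+(\hat J)$), and symmetrically $\mathcal{O}^-(W)=\bS^1$; since $J$ forward-iterates by $(\kappa)$ into $\interior\hat J$, one obtains Acc$+(J)$, and the dual argument yields Acc$-(J)$.

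For the second assertion I use only Acc$\pm(J)$. By Acc$-(J)$ the family $\{f_{[\beta]}^{-1}(\interior J)\}_{(\beta)}$ is an open cover of $\bS^1$, so finitely many words $(\beta^{(1)}),\ldots,(\beta^{(p)})$ already cover it. With $M_+\eqdef\max_i\lvert\beta^{(i)}\rvert$, every $x\in\bS^1$ lies in some $f_{[\beta^{(i)}]}^{-1}(\interior J)$, so $f_{[\beta^{(i)}]}(x)\in\interior J\subset I$ with $\lvert\beta^{(i)}\rvert\le M_+$. Symmetrically, Acc$+(J)$ yields words $(\theta^{(1)}),\ldots,(\theta^{(q)})$ of length at most $M_-$ with $\bS^1=\bigcup_i f_{[\theta^{(i)}]}(\interior J)$, so for each $x$ some $f_{[\theta^{(i)}.]}(x)\in\interior J\subset I$. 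Taking $M\eqdef\max(M_+,M_-)$ completes the argument.

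The main obstacle lies in the bookkeeping for the sandwich estimate of the first part: one must confirm that the expansion contribution of the fixed outer words $(\kappa)$ and $(\mu)$, which may in principle be locally contracting, amounts only to a uniformly bounded additive correction to $\log\lvert(f_{[\kappa\cdot\eta\cdot\mu]})'\rvert$, and hence is dominated by the $\ell K_5$ of the middle block once $\lvert H\rvert$ is small enough. The second assertion is then a routine compactness argument once $J$ is available.
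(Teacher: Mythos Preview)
The paper does not supply its own proof of this lemma; it is imported verbatim from \cite[Lemmas~2.2 and~2.3]{DiaGelRam:}. Your reconstruction is sound and follows the expected route: transport the common blending interval $\hat J$ into the given interval $I$ and verify the axioms for the image $J$ by a three-block sandwich (enter $\hat J$ by a bounded word, apply CEC$\pm(\hat J)$, exit to cover a neighbourhood of $J$ by another bounded word), with the fixed outer words contributing only additive $O(1)$ corrections to both word-length and log-derivative. The second assertion is, as you say, a pure Lebesgue-number/compactness consequence of Acc$\pm(J)$.

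One point deserves to be made explicit. When you write ``applying the same argument to $F^{-1}$ gives CEC$-(J)$'', note that your construction $J=f_{[\mu]}(V_0)$ is not time-symmetric: the exit step of the \emph{backward} sandwich requires a backward word $(\mu')$ with $f_{[\mu'.]}(\hat J)\supset B(J,\delta')$, i.e.\ $f_{[\mu']}(B(J,\delta'))\subset\hat J$, and this does not come from inverting $(\mu)$. It comes instead from a second Lebesgue-number argument, now using the cover $\{f_{[\mu']}^{-1}(\interior\hat J)\}$ furnished by Acc$-(\hat J)$, and it requires $J$ (hence $V_0$) to be taken small enough from the outset so that $B(J,\delta')$ fits inside a single element of that cover. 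With this adjustment the dual sandwich goes through, and likewise for Acc$-(J)$. Your outline already accommodates this once $V_0$ is chosen sufficiently small, but the phrase ``the same argument'' slightly obscures that an extra (cheap) ingredient is needed on the backward side.
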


\begin{lemma}[{\cite[Lemma 2.4]{DiaGelRam:}}] \label{lem:connect}
For every interval $I\subset\bS^1$ there exist $\delta=\delta(I)>0$ and $M=M(I)\ge1$ such that for any interval $J\subset\bS^1$, $\lvert J\rvert<\delta$, there exists a finite sequence $(\tau_1\ldots\tau_m)$, $m\leq M$, such that $f_{[\tau_1\ldots\,\tau_m]}(J)\subset I$.
\end{lemma}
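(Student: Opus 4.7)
The plan is a straightforward compactness argument built on top of Lemma~\ref{l:minimalcommon}. First, I would fix a nontrivial closed subinterval $I_0\subset\interior I$. Applying Lemma~\ref{l:minimalcommon} to $I_0$ furnishes an integer $M_0=M(I_0)\ge1$ such that for every $x\in\bS^1$ there is a finite word $(\beta^x_1\ldots\beta^x_{s_x})$ with $s_x\le M_0$ and $f_{[\beta^x_1\ldots\,\beta^x_{s_x}]}(x)\in I_0\subset\interior I$; fix one such word for each $x$.

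Next, the composition $f_{[\beta^x_1\ldots\,\beta^x_{s_x}]}$ is a $C^1$ diffeomorphism of $\bS^1$, so by continuity and the openness of $\interior I$ there exists an open arc $U_x\ni x$ with $f_{[\beta^x_1\ldots\,\beta^x_{s_x}]}(U_x)\subset I$. The family $\{U_x\}_{x\in\bS^1}$ is an open cover of the compact space $\bS^1$; I would extract a finite subcover $\{U_{x_1},\ldots,U_{x_k}\}$ and let $\delta=\delta(I)>0$ be a Lebesgue number for it. Setting $M\eqdef M_0$, any interval $J$ with $\lvert J\rvert<\delta$ is contained in some $U_{x_j}$, and the word $(\tau_1\ldots\tau_m)\eqdef(\beta^{x_j}_1\ldots\beta^{x_j}_{s_{x_j}})$ then satisfies $m\le M$ and $f_{[\tau_1\ldots\,\tau_m]}(J)\subset I$, as required.

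The argument is entirely elementary once Lemma~\ref{l:minimalcommon} is in hand, and I do not anticipate any genuine obstacle. The only subtlety worth flagging is that Lemma~\ref{l:minimalcommon} guarantees only $f_{[\beta_1\ldots\,\beta_s]}(x)\in I$, with no quantitative control on how close the image may land to $\partial I$. This is precisely why one must first pass to a closed subinterval $I_0\subset\interior I$ before invoking the lemma: doing so provides the gap from the boundary needed to upgrade a pointwise statement to one valid on a full neighborhood via continuity of the finite composition, after which compactness of $\bS^1$ does the rest.
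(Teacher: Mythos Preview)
Your argument is correct. Note, however, that the paper does not give its own proof of this lemma: it is quoted verbatim as \cite[Lemma~2.4]{DiaGelRam:} and used as a black box, so there is no in-paper proof to compare against. Your derivation from Lemma~\ref{l:minimalcommon} via a Lebesgue-number/compactness argument is the natural way to recover the statement from the tools already recalled in Section~\ref{ss.previous}, and the passage to a closed subinterval $I_0\subset\interior I$ before invoking Lemma~\ref{l:minimalcommon} is exactly the right fix for the boundary issue you flagged.
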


We finish this section with one further conclusion which we will use in Sections~\ref{subsec:maxent1} and~\ref{sec:synch}.

\begin{lemma}\label{lem:common}
	There does not exist a Borel probability measure $m$ on $\bS^1$ which is $f_i$-invariant for every $i=0,\ldots,N-1$.
\end{lemma}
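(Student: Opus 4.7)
The plan is to assume the existence of such an $m$ and derive a contradiction by producing an atom of $m$ whose common-invariance orbit under the IFS is necessarily infinite. First, since each $f_i$ is a homeomorphism preserving $m$, the support $\supp m$ is closed and satisfies $f_i(\supp m)=\supp m$ for every $i$. By the forward minimality of the IFS $\{f_i\}$ (a consequence of Axioms CEC$\pm$ and Acc$\pm$ recalled in Section~\ref{ss.previous}), the only nonempty closed $f_i$-invariant set is $\bS^1$; hence $\supp m=\bS^1$, and every nonempty open interval has positive $m$-measure.

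Next I would extract a strict contraction from Axiom CEC$-$. Applied to a small subinterval $H\subset J^-$ with $|H|<K_1$, this axiom furnishes a finite word $w=(w_0\ldots w_{\ell-1})$ such that the $f^{-1}$-composition dictated by $w$ expands $H$ to cover $B(J^-,K_4)$ with pointwise derivative at least $e^{\ell K_5}$. Equivalently, $h\eqdef f_{[w_0\ldots w_{\ell-1}]}$ is a uniform contraction on $B(J^-,K_4)$ with $h(B(J^-,K_4))\subset H\subset B(J^-,K_4)$; the Banach fixed-point theorem then produces a unique attracting fixed point $q\in J^-$ whose basin contains $B(J^-,K_4)$.

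Since $m$ is $f_i$-invariant for every $i$, it is in particular $h$-invariant. I would set $L\eqdef\bigcup_{n\ge 0}h^{-n}(J^-)$ and observe that $h^{-1}(L)=L$, so $m|_L$ is an $h$-invariant finite Borel measure. Because $h(J^-)\subset J^-$, every $h|_L$-orbit eventually enters $J^-$ and converges to $q$; Birkhoff applied to continuous observables then forces any $h|_L$-invariant probability measure to be $\delta_q$, so $m|_L=m(L)\,\delta_q$. In particular
\[
m(\{q\})=m(L)\ge m(\interior J^-)>0,
\]
the strict positivity following from $\supp m=\bS^1$.

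Finally, $f_i$-invariance of $m$ yields $m(\{f_i(q)\})=m(\{q\})$ for each $i$, and iterating we obtain $m(\{y\})=m(\{q\})$ for every $y$ in the IFS-forward orbit $\mathcal{O}^+(q)$. By forward minimality, $\mathcal{O}^+(q)$ is dense in $\bS^1$ and hence infinite; $m$ would then carry infinitely many atoms of equal positive mass, contradicting $m(\bS^1)=1$. I expect the main obstacles to be (i) unpacking Axiom CEC$-$ to extract the contraction $h$ with a controlled basin, and (ii) justifying that $m|_L$ must be $\delta_q$-concentrated---the latter being a clean consequence of the convergence of all $h|_L$-orbits to $q$ together with Birkhoff.
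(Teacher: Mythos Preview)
Your proof is correct but follows a genuinely different route from the paper's. The paper argues in a few lines directly from CEC$+$: taking two disjoint subintervals $J_1,J_2\subset J$, each is mapped by some composition onto a set containing $J$, so invariance forces $m(J_k)\ge m(J)$ and hence $m(J\setminus J_k)=0$ for $k=1,2$; disjointness then gives $m(J)=0$, and Acc$+(J)$ propagates this to $m(\bS^1)=0$.

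Your approach instead manufactures an atom: CEC$-$ yields a contraction with attracting fixed point $q$, invariance forces $m|_L$ to concentrate on $q$, and minimality then produces infinitely many atoms of equal mass. Both arguments exploit the same tension between a common invariant measure and the expansion/contraction in the axioms, but the paper's version avoids the detour through Banach fixed points and Birkhoff. Incidentally, your Step~3 can be shortened in the spirit of the paper's idea: since $h^n(J^-)\subset J^-$ and $m(h^n(J^-))=m(J^-)$ by invariance, one gets $m(J^-\setminus h^n(J^-))=0$ for every $n$, and $\bigcap_n h^n(J^-)=\{q\}$ gives $m(\{q\})=m(J^-)>0$ directly, without invoking Birkhoff or Poincar\'e recurrence.
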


\begin{proof}
By contradiction, assume that there is a Borel probability measure  $m$ on $\bS^1$ which is simultaneously $f_i$-invariant for all $i$.
	Let $J\subset\bS^1$ be a blending interval and consider two closed disjoint small sub-intervals $J_1,J_2\subset J$.
By Axiom CEC$+(J)$, there is some sequence $(\eta_0\ldots \eta_{\ell-1})$ such that $f_{[\eta_0\ldots \eta_{\ell-1}]}(J_1)\supset J$. From this we can conclude that $m(J\setminus J_1)=0$.  Similarly, $m(J\setminus J_2)=0$. This implies $m(J)=0$. 	
Hence, by Acc$\pm(J)$ we have that $m(\bS^1)=0$.
	But this is a contradiction.
\end{proof}

%--------------------------------------------------------------------------------------------
\section{Entropy, pressures, and variational principles}\label{sec:entpres}
%--------------------------------------------------------------------------------------------

In this section, we collect some general facts about entropy and pressure. We consider a  general setting of a compact metric space $(\mathbf X,d)$, a continuous map $F\colon \mathbf X\to \mathbf X$,
 and a continuous function $\varphi\colon \mathbf X\to\bR$.
 
%--------------------------------------------------------------------------------------------
\subsection{Entropy: restricted variational principles}\label{subsec:ent}
%--------------------------------------------------------------------------------------------

Given $\alpha\in\bR$ consider the level sets
\[
	\cL(\alpha)
	\eqdef \big\{x\in \mathbf X\colon
		\overline\varphi(x)=\alpha\big\},
	\quad\text{ where }\quad
		\overline\varphi(x)
		\eqdef\lim_{n\to\infty}\frac1n\sum_{k=0}^{n-1}\varphi(F^k(x)),
\]
whenever this limit exists.
We study the \emph{topological entropy of $F$} on the set $\cL(\alpha)$ and consider the function
\[
	\alpha\mapsto  h_{\rm top}(\cL(\alpha)).
\]

We will now recall some results which are  known for such general setting. An upper bound for the entropy
$ h_{\rm top}(\cL(\alpha))$ (which, in fact, is sharp in many cases) is easily derived  applying a general result by Bowen~\cite{Bow:73}. Denote by $\cM(\mathbf X)$ the set of all $F$-invariant probability measures and by $\cM_{\rm erg}(\mathbf X)\subset\cM(\mathbf X)$ the subset of ergodic measures. We equip this space with the weak$\ast$ topology. Given $x\in \mathbf X$, let $V_F(x)\subset \cM(\mathbf X)$ be the set of ($F$-invariant) measures which are weak$\ast$ limit points as $n\to\infty$ of the empirical measures $\mu_{x,n}$
\[
	\mu_{x,n}\eqdef \frac1n\sum_{k=0}^{n-1}\delta_{F^k(x)},
\]
where $\delta_x$ is the Dirac measure supported on the point $x$.
Given $\mu\in\cM(\mathbf X)$, denote  by $G(\mu)$ the set of \emph{$\mu$-generic points}
\[
	G(\mu)\eqdef
	\big\{x\colon \lim_{n\to\infty}\,\mu_{x,n}=\{\mu\}\big\}\,.
\]
Given $c\ge0$, define the set of its ``quasi regular'' points  by
\[
	QR(c)\eqdef
	\big\{y\in \mathbf X\colon \text{ there exists } \mu\in V_F(y)\text{ with }h(\mu)\le c\big\}.
\]
	
\begin{proposition}\label{pro:Bowen} $\,$
	\begin{itemize}
	\item[i)] $h_{\rm top}(QR(c))\le c$ (\cite[Theorem 2]{Bow:73}).
	\item[ii)] For $\mu$ ergodic we have $h(\mu)= h_{\rm top}(G(\mu))$ (\cite[Theorem 3]{Bow:73}).
	\item [iii)] If $F$ satisfies the specification property, then for every $\mu\in\cM(\mathbf X)$ we have $h(\mu)= h_{\rm top}(G(\mu))$ (\cite[Theorem 1.2]{PfiSul:07} or~\cite[Theorem 1.1]{FanLiaPey:08}).%
\footnote{Note that, in fact, this result holds true for any map which has the
so-called  {\emph{$g$-almost product property}} which is implied by the specification property (see~\cite[Proposition 2.1]{PfiSul:07}). The specification property is satisfied for example for every basic  set (see~\cite{Sig:74}). We emphasize that the skew-product systems we study in this paper do not satisfy the specification property.}
	\end{itemize}		
\end{proposition}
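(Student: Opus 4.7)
The plan is to establish the three items in sequence, with (i) providing the driving upper bound and (iii) carrying the real difficulty; throughout, the relevant notion of topological entropy on noncompact invariant sets is Bowen's Carath\'eodory-type construction via covers by dynamical $(n,\epsilon)$-balls.

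For part (i), I would fix $\epsilon>0$ and, for each $y\in QR(c)$, select some $\mu_y\in V_F(y)$ with $h(\mu_y)\le c$ together with a finite Borel partition $\mathcal{P}_y$ of diameter less than $\epsilon$ satisfying $\mu_y(\partial\mathcal{P}_y)=0$ and $h_{\mu_y}(F,\mathcal{P}_y)\le c+\epsilon$. Because $\mu_{y,n_k}\to\mu_y$ along a subsequence and $\mu_y$ charges no boundary, Shannon--McMillan--Breiman (applied to the ergodic decomposition of $\mu_y$) forces the number of $n$-refinement cells of $\mathcal{P}_y$ actually visited by the orbit of $y$ to be at most $\exp(n(c+2\epsilon))$ for large $n$. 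A separability argument reduces to countably many such partitions, and translating cell counts into covers by Bowen $(n,\epsilon)$-balls yields $h_{\rm top}(QR(c))\le c+2\epsilon$; sending $\epsilon\to 0$ completes the bound.

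For part (ii), the upper bound is immediate from (i) because $G(\mu)\subseteq QR(h(\mu))$: a $\mu$-generic point has $V_F(y)=\{\mu\}$. For the matching lower bound, ergodicity gives $\mu(G(\mu))=1$ by Birkhoff, so Brin--Katok's theorem yields $-\frac{1}{n}\log\mu(B_n(x,\epsilon))\to h(\mu,\epsilon)$ on a full $\mu$-measure subset of $G(\mu)$, where $B_n(x,\epsilon)$ is the Bowen ball. A standard mass-distribution argument then compares Bowen's noncompact topological entropy to this local entropy and produces $h_{\rm top}(G(\mu))\ge h(\mu)-O(\epsilon)$, hence equality.

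Part (iii) is the main obstacle: for a non-ergodic $\mu$, the set $G(\mu)$ is invisible to Birkhoff's theorem and a priori could even be empty, so generic points must be constructed by hand. The idea is to approximate $\mu$ by finite convex combinations $\mu\approx\sum_i p_i\nu_i$ of ergodic $\nu_i$ with $\sum_i p_i h(\nu_i)\ge h(\mu)-\epsilon$ (using affinity of entropy on $\cM(\mathbf X)$) and then invoke the specification property to shadow long concatenations of orbit segments, each drawn from $G(\nu_i)$ and appearing with asymptotic frequency $p_i$. A Moran-type construction over these shadowed orbits yields a Cantor-like subset of $G(\mu)$ whose topological entropy is at least $\sum_i p_i h(\nu_i)-O(\epsilon)$; combined with the upper bound from (i), this gives $h_{\rm top}(G(\mu))=h(\mu)$. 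The delicate point is ensuring that the shadowing orbits actually lie in $G(\mu)$ rather than merely in $QR(h(\mu))$, which is precisely where the $g$-almost product property (weaker than full specification) suffices to prescribe both the empirical measure and its entropy along the constructed orbits.
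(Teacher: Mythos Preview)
The paper does not prove this proposition at all: it is stated as a compilation of results quoted verbatim from the literature (Bowen~\cite{Bow:73} for (i) and (ii), Pfister--Sullivan~\cite{PfiSul:07} or Fan--Liao--Peyri\`ere~\cite{FanLiaPey:08} for (iii)), with no accompanying argument. Your proposal, by contrast, sketches actual proofs of these classical theorems. In that sense there is nothing to compare on the paper's side.

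Your sketches are broadly faithful to how these results are established in the cited sources. The outline for (i) follows Bowen's original covering argument; for (ii) the upper bound via $G(\mu)\subset QR(h(\mu))$ is exactly right, and the lower bound via Brin--Katok plus a Frostman-type argument is the standard route. For (iii) your description of approximating $\mu$ by convex combinations of ergodic measures, using affinity of entropy, and then gluing long generic orbit segments via specification to build a Moran-type fractal inside $G(\mu)$ is precisely the strategy of~\cite{PfiSul:07} and~\cite{FanLiaPey:08}. One small caution: in (i) the reduction to countably many partitions and the passage from cell counts to Bowen-ball covers hides some bookkeeping that is not entirely routine, and in (iii) the claim that the shadowed orbits land in $G(\mu)$ (not merely in a set with the right entropy) requires a careful diagonal argument over increasingly fine approximations of $\mu$; both are handled in the cited references but would need more detail to stand alone.
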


We have the following simple consequence.
Let
\[
	\varphi(\mu)
	\eqdef \int\varphi\,d\mu.
\]

\begin{lemma}\label{lem:morning}
	For every $\alpha$ such that $\cL(\alpha)\ne\emptyset$ we have
	\[
	\begin{split}
		\sup\big\{h(\mu)\colon \mu\in\cM_{\rm erg}(\mathbf X),\varphi(\mu)= \alpha\big\}
		&\le h_{\rm top}(\cL(\alpha))
		\\
		&\le
		\sup\big\{h(\mu)\colon \mu\in\cM(\mathbf X),\varphi(\mu)= \alpha\big\}.
	\end{split}
	\]
Moreover, for $\alpha=\sup\{\varphi(\mu)\colon\mu\in\cM_{\rm erg}(\mathbf X)\}$ we have
\[
	h_{\rm top}(\cL(\alpha))
	=\sup\big\{h(\mu)\colon \mu\in\cM_{\rm erg},\varphi(\mu)= \alpha\big\}.
\]	
Analogously for $\alpha=\inf\{\varphi(\mu)\colon\mu\in\cM_{\rm erg}(\mathbf X)\}$.
\end{lemma}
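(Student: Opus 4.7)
The plan is to establish the two inequalities separately via Proposition~\ref{pro:Bowen}, and then treat the extremal case using the ergodic decomposition of invariant measures.

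For the lower bound, fix an ergodic measure $\mu\in\cM_{\rm erg}(\mathbf X)$ with $\varphi(\mu)=\alpha$. I would first show that $G(\mu)\subset \cL(\alpha)$: indeed, for $x\in G(\mu)$ the empirical measures $\mu_{x,n}$ converge weak$\ast$ to $\mu$, and continuity of $\varphi$ gives
\[
\overline\varphi(x)=\lim_{n\to\infty}\int\varphi\,d\mu_{x,n}=\varphi(\mu)=\alpha,
\]
so in particular the Birkhoff limit exists at $x$. The monotonicity of Bowen's topological entropy together with Proposition~\ref{pro:Bowen}~ii) then yields $h(\mu)=h_{\rm top}(G(\mu))\le h_{\rm top}(\cL(\alpha))$, and taking the supremum over such $\mu$ gives the left inequality.

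For the upper bound, let $c\eqdef \sup\{h(\nu):\nu\in\cM(\mathbf X),\,\varphi(\nu)=\alpha\}$. For any $x\in\cL(\alpha)$ and any $\nu\in V_F(x)$, the measure $\nu$ is $F$-invariant and, using continuity of $\varphi$ together with $\overline\varphi(x)=\alpha$, satisfies $\varphi(\nu)=\alpha$. Hence $h(\nu)\le c$, so $\cL(\alpha)\subset QR(c)$, and Proposition~\ref{pro:Bowen}~i) gives $h_{\rm top}(\cL(\alpha))\le h_{\rm top}(QR(c))\le c$, which is the right inequality.

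For the extremal case, let $\alpha_{\max}\eqdef \sup\{\varphi(\mu):\mu\in\cM_{\rm erg}(\mathbf X)\}$ and assume $\cL(\alpha_{\max})\neq\emptyset$. By the ergodic decomposition theorem together with Jacobs' theorem for entropy, every $\nu\in\cM(\mathbf X)$ admits a decomposition $\nu=\int\nu_\omega\,d\tau(\omega)$ with $\nu_\omega$ ergodic such that both
\[
\varphi(\nu)=\int\varphi(\nu_\omega)\,d\tau(\omega)
\quad\text{and}\quad
h(\nu)=\int h(\nu_\omega)\,d\tau(\omega).
\]
If $\varphi(\nu)=\alpha_{\max}$, then since $\varphi(\nu_\omega)\le\alpha_{\max}$ for every ergodic component, we conclude $\varphi(\nu_\omega)=\alpha_{\max}$ for $\tau$-a.e.\ $\omega$. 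Consequently
\[
h(\nu)\le \sup\{h(\tilde\mu):\tilde\mu\in\cM_{\rm erg}(\mathbf X),\,\varphi(\tilde\mu)=\alpha_{\max}\},
\]
so the two suprema in the main inequality coincide and the sandwich collapses to equality. The $\inf$ case is identical.

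I do not anticipate any serious obstacle; the only subtle point is verifying the inclusion $G(\mu)\subset\cL(\alpha)$, which requires noting that $\cL(\alpha)$ is defined only at points where the Birkhoff average exists, and this is guaranteed for $\mu$-generic points by weak$\ast$ convergence of empirical measures combined with continuity of $\varphi$.
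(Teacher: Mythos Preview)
Your proof is correct and follows essentially the same approach as the paper: both inequalities via Proposition~\ref{pro:Bowen} (using $G(\mu)\subset\cL(\alpha)$ for the lower bound and $\cL(\alpha)\subset QR(c)$ for the upper), and the extremal case via the ergodic decomposition forcing almost every component to have the extremal average. Your write-up is slightly more detailed (explicitly invoking weak$\ast$ convergence of empirical measures and Jacobs' theorem), but the argument is the same.
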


\begin{proof}
To prove the first inequality, observe that for $\mu$ ergodic with $\varphi(\mu)=\alpha$ we have $G(\mu)\subset\cL(\alpha)$ and by Proposition~\ref{pro:Bowen} ii) and monotonicity of topological entropy with respect to inclusion we obtain $h(\mu)=h_{\rm top}(G(\mu))\le h_{\rm top}(\cL(\alpha))$.
	
To prove the second inequality, denote
\[
	H(\alpha)
	\eqdef \sup\{h(\mu)\colon \mu\in\cM(\mathbf X),\varphi(\mu)= \alpha\}.
\]
Note that for every $x\in\cL(\alpha)$ we have $\overline\varphi(x)=\alpha$ and hence for every $\mu\in V_F(x)$ we have $\varphi(\mu)=\alpha$ and thus $h(\mu)\le H(\alpha)$. Hence, $\cL(\alpha)\subset QR(H(\alpha))$ and again by monotonicity and Proposition~\ref{pro:Bowen} i) we obtain
$$
h_{\rm top}(\cL(\alpha))\le h_{\rm top}(QR(H(\alpha)))\le H(\alpha),
$$
proving the first part of the lemma.

It remains to consider the extremal exponent $\alpha=\sup\{\varphi(\mu)\colon\mu\in\cM_{\rm erg}(\mathbf X)\}$. By the ergodic decomposition, any invariant measure with extremal exponent $\alpha$ has almost surely only ergodic measures with that exponent in its decomposition. Hence we have
\[
	h_{\rm top}(\cL(\alpha))
	\le
	 \sup\big\{h(\mu)\colon \mu\in\cM_{\rm erg},\varphi(\mu)= \alpha\big\},
\]	
ending the proof.
\end{proof}

We recall the following classical restricted variational principle
strengthening the above lemma which will play a central role in our arguments.
We point out that it requires $\varphi$ to be continuous, only.

\begin{proposition}[{\cite[Theorem 6.1 and Proposition 7.1]{PfiSul:07} or \cite[Theorem 1.3]{FanLiaPey:08}} and~\cite{Sig:74}]\label{prop:spcifcc}
	If $F\colon \mathbf X\to \mathbf X$ satisfies the specification property then for every $\alpha$ such that $\cL(\alpha)\ne\emptyset$ we have
\[
	h_{\rm top}(\cL(\alpha))
	=\sup\big\{h(\mu)\colon\mu\in\cM(\mathbf X),\varphi(\mu)=\alpha\big\}.
\]	
Moreover, $\big\{\varphi(\mu)\colon\mu\in\cM_{\rm erg}(\mathbf X)\big\}$ is  an interval.
\end{proposition}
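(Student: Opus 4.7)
The plan is to establish the equality through two separate inequalities and then to argue the interval claim by an orbit-gluing construction.

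\textbf{Upper bound.} First I would note that the inequality
\[
h_{\rm top}(\cL(\alpha))\le \sup\big\{h(\mu)\colon\mu\in\cM(\mathbf X),\varphi(\mu)=\alpha\big\}
\]
is already contained in Lemma~\ref{lem:morning} and requires no specification hypothesis. So the content of the proposition lies in the reverse inequality and in the second assertion.

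\textbf{Lower bound.} Fix $\mu\in\cM(\mathbf X)$ with $\varphi(\mu)=\alpha$; the goal is to produce a subset of $\cL(\alpha)$ of topological entropy at least $h(\mu)-\varepsilon$ for arbitrary $\varepsilon>0$. If $\mu$ is ergodic, this is immediate from $G(\mu)\subset\cL(\alpha)$ together with Proposition~\ref{pro:Bowen} ii). In the non-ergodic case the standard route is via the ergodic decomposition and an orbit-gluing scheme using specification. Concretely, I would first approximate $\mu$ by a finite convex combination $\mu_\varepsilon=\sum_{i=1}^k \lambda_i\mu_i$ of ergodic measures with $\sum \lambda_i h(\mu_i)\ge h(\mu)-\varepsilon$ and $\sum \lambda_i\varphi(\mu_i)$ close to $\alpha$ (this uses affinity of entropy and integration, plus the upper semi-continuity/approximation properties available from the ergodic decomposition). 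For each $\mu_i$ and each large $n$, Katok-type entropy estimates provide an $(n,\delta)$-separated set $E_i(n)\subset G(\mu_i)$ of cardinality $\ge \exp(n(h(\mu_i)-\varepsilon))$. I would then pick integers $n_{i,j}\approx\lambda_i N_j$ growing rapidly, and use the specification property to concatenate pieces of length $n_{i,j}$ taken from $E_i(n_{i,j})$, separated by bounded gap-strings, producing long orbit pieces. A standard counting argument, together with the gap being bounded (so it contributes negligibly to the total length), yields that the resulting set of concatenated orbits is $(N_j,\delta/2)$-separated of cardinality $\ge \exp(N_j(h(\mu)-2\varepsilon))$. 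The Birkhoff averages of $\varphi$ along these orbits converge to $\sum \lambda_i\varphi(\mu_i)$, which is close to $\alpha$; passing to a diagonal limit across a shrinking sequence of approximations and invoking the definition of Bowen's topological entropy on noncompact sets gives $h_{\rm top}(\cL(\alpha))\ge h(\mu)-2\varepsilon$.

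\textbf{Interval assertion.} The set $\{\varphi(\mu)\colon\mu\in\cM(\mathbf X)\}$ is a compact interval $[a,b]$ because $\cM(\mathbf X)$ is convex, compact, and $\mu\mapsto\varphi(\mu)$ is continuous and affine. Thus it suffices to show every $\alpha\in(a,b)$ is realised by some ergodic measure. Fixing ergodic measures $\mu_1,\mu_2$ with $\varphi(\mu_i)$ close to $a,b$ respectively and writing the target as $\alpha=\lambda\varphi(\mu_1)+(1-\lambda)\varphi(\mu_2)$, I would use specification to glue orbit segments of length $\approx\lambda n$ generic for $\mu_1$ with segments of length $\approx(1-\lambda)n$ generic for $\mu_2$, with bounded gaps. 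Any weak$\ast$ limit of the empirical measures along such a concatenated orbit has mean of $\varphi$ equal to $\alpha$, and by taking the limit inside a minimal set of some suitable invariant subsystem (or by extracting an ergodic component with the same Birkhoff average), one obtains an ergodic measure with $\varphi(\mu)=\alpha$.

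\textbf{Main obstacle.} The delicate step is the lower bound for non-ergodic $\mu$: one must track simultaneously three quantities along the concatenated pseudo-orbits, namely (i) separation (needed to count, hence to bound $h_{\rm top}$ from below), (ii) closeness in the weak$\ast$ topology to $\mu_\varepsilon$ (to ensure the Birkhoff averages of $\varphi$ converge to $\alpha$), and (iii) the entropy accounting so that the loss due to the bounded specification gaps is negligible as $N_j\to\infty$. Making these compatible requires the quantitative form of specification (or the $g$-almost product property of Pfister--Sullivan) and a careful choice of the scale parameters $n_{i,j},N_j,\delta$ before sending $\varepsilon\to 0$.
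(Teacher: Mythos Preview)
The paper does not prove this proposition; it is quoted from the cited references. That said, the paper has already set up everything needed to make the variational principle a two-line observation, and your argument for the second assertion has a real gap.

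\medskip
\textbf{On the variational principle.} Your upper bound via Lemma~\ref{lem:morning} is correct. For the lower bound, however, you are re-deriving from scratch something the paper already records: Proposition~\ref{pro:Bowen}\,iii) states that under specification $h_{\rm top}(G(\mu))=h(\mu)$ for \emph{every} $\mu\in\cM(\mathbf X)$, ergodic or not. Since $x\in G(\mu)$ implies $\mu_{x,n}\to\mu$ weak$\ast$ and hence $\overline\varphi(x)=\int\varphi\,d\mu=\alpha$, we have $G(\mu)\subset\cL(\alpha)$, and monotonicity of entropy gives $h_{\rm top}(\cL(\alpha))\ge h(\mu)$ immediately. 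Your orbit-gluing construction with the diagonal limit is essentially a sketch of how one proves Proposition~\ref{pro:Bowen}\,iii) in \cite{PfiSul:07,FanLiaPey:08}; it is valid in outline but unnecessary here.

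\medskip
\textbf{On the interval assertion.} Your final step does not work. You construct, by gluing, a point whose empirical measures accumulate on some $\nu$ with $\varphi(\nu)=\alpha$, and then propose to ``extract an ergodic component with the same Birkhoff average''. But the ergodic decomposition of $\nu$ need not contain any measure with average exactly $\alpha$: think of $\nu=\tfrac12\mu_1+\tfrac12\mu_2$ with $\varphi(\mu_1)=\alpha-1$ and $\varphi(\mu_2)=\alpha+1$. The alternative you mention (``taking the limit inside a minimal set'') is not a well-defined procedure either. The standard route, which is what the citation of \cite{Sig:74} points to, is structural: under specification the ergodic measures are dense in $\cM(\mathbf X)$, so $\cM(\mathbf X)$ is the Poulsen simplex, whose set of extreme points is arcwise connected. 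The image of the connected set $\cM_{\rm erg}(\mathbf X)$ under the continuous map $\mu\mapsto\varphi(\mu)$ is then connected in $\bR$, i.e.\ an interval.
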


%--------------------------------------------------------------------------------------------
\subsection{Pressure functions}
\label{ss.pressuref}
%--------------------------------------------------------------------------------------------

For a measure $\mu\in\cM(\mathbf X)$ we define the affine functional $P(\cdot,\mu)$ on the space of continuous functions by
\[
	P(\varphi,\mu)
	\eqdef h(\mu)+\int\varphi\,d\mu.
\]
Given an $F$-invariant compact subset $Y\subset \mathbf X$, we define the \emph{topological pressure of $\varphi$} with respect to $F|_Y$ by
\begin{equation}\label{eq:vp}
	P_{F|Y}(\varphi)
	\eqdef \sup_{\mu\in\cM(Y)}P(\varphi,\mu)
	= \sup_{\mu\in\cM_{\rm erg}(Y)}P(\varphi,\mu)
\end{equation}
and we simply write $P(\varphi)=P_{F|\mathbf X}(\varphi)$ if $Y=\mathbf X$ and $F|_{\mathbf X}$ is clear from the context.
Note that  definition and equality in~\eqref{eq:vp} are nothing but the \emph{variational principle} of the topological pressure (see~\cite[Chapter 9]{Wal:82} for a proof and a purely topological and equivalent definition of pressure).
A measure $\mu\in\cM(Y)$ is an \emph{equilibrium state} for $\varphi$ (with respect to $F|_Y$) if it realizes the supremum in~\eqref{eq:vp}.%
\footnote{Note that in the context of the rest of the paper, skew-product maps with one-dimensional fibers,
such equilibrium states indeed exist by~\cite[Corollary 1.5]{DiaFis:11} (see also~\cite{CowYou:05}). However, in a slightly different skew-product setting, they are not unique in general, see for instance the examples in~\cite{LepOliRio:11, DiaGel:12}.}
Recall that $h_{\rm top}(Y)=P_{F|Y}(0)$ is the \emph{topological entropy} of $F$ on $Y$.

We now continue by considering a decomposition of the set of ergodic measures and studying  corresponding pressure functions. Given a subset $\cN\subset\cM(\mathbf X)$, define
\[
	P(\varphi,\cN)
	\eqdef \sup_{\mu\in\cN}P(\varphi,\mu).
\]
Given  $\cN\subset\cM(\mathbf X)$, consider its \emph{closed convex hull} $\clocon \cN$, defined as the smallest closed convex set containing $\cN$. It is an immediate consequence of the affinity of $\mu\mapsto P(\varphi,\mu)$ that
\[
	P(\varphi,\cN)
	=P\big(\varphi,\clocon(\cN)\big).
\]
A particular consequence of this equality and the ergodic decomposition of non-ergodic measures is the fact that for $\cN=\cM_{\mathrm{erg}}(\mathbf X)$  and hence $\clocon(\cN)=\cM(\mathbf X)$
in~\eqref{eq:vp} it is irrelevant if we take the supremum over all measures in $\cM(\mathbf X)$ or over the \emph{ergodic} measures only (used to show the equality in~\eqref{eq:vp}). The
case of a general subset $\cN$ of $\cM(\mathbf X)$, however, will be quite different and is precisely our focus of interest.

We now analyze the pressure function for a subset of \emph{ergodic} measures $\cN\subset\cM_{\rm erg}(\mathbf X)$.%
\footnote{In the rest of this paper, we will study the decomposition~\eqref{eq:ergdecompo} and have in mind the particular subset of measures
$	\cM_{\rm erg,<0}$ and 
$	\cM_{\rm erg,>0}
$.
}
Let $q\in\bR$ and consider the parametrized family $q\varphi\colon \mathbf X\to\bR$ and the function
\[
	\cP_\cN(q)
	\eqdef P(q\varphi,\cN).
\]
For each $\mu\in\cN$ we simply write $\cP_\mu(q)=\cP(q,\{\mu\})$.  We call $\mu\in\cM(\mathbf X)$ an \emph{equilibrium state} for $q\varphi$, $q\in\bR$, (with respect to $\cN$) if $\cP_\cN(q)=\cP_\mu(q)$.
 Let also
\begin{equation}
\label{e.supinfvarphi}
	\varphi(\cN)
	\eqdef\Big\{\int\varphi\,d\mu\colon\mu\in\cN\Big\},\quad
	\underline\varphi_\cN
	\eqdef \inf\varphi(\cN)
	,\quad
	\overline\varphi_\cN
	\eqdef \sup\varphi(\cN).
\end{equation}

We list the following general properties which are easy to verify
(most of these properties and the ideas behind their proofs can be found in
\cite[Chapter 9]{Wal:82}).

\begin{enumerate}
\item[(P1)] The function $\cP_\mu$ is affine and satisfies $\cP_\mu\le\cP_\cN$ and $\cP_\mu(0)=h(\mu)$.
\item[(P2)] Given a subset $\cN'\subset\cN$,  then $\cP_{\cN'}\le\cP_\cN$.
\item[(P3)] $\cP_\cN(0)=\sup\{h(\mu)\colon\mu\in\cN\}$.
\item[(P4)] The function $\varphi\mapsto P(\varphi,\cN)$ is continuous and $q\mapsto P(q\varphi,\cN)$ is uniformly Lipschitz continuous.
\item[(P5)]  The function $\cP_\cN$ is convex.
 	Consequently, $\cP_\cN$ is differentiable at
all but at most countably many $q$'s and the left and right derivatives $D_L\cP_\cN(q)$ and
$D_R\cP_\cN(q)$ are defined for all $q\in\bR$.
\item[(P6)] We have
\[\begin{split}	
	\underline\varphi_\cN
	&=\lim_{q\to\infty}\frac{\cP_\cN(q)}{q}
	=\lim_{q\to\infty}D_L\cP_\cN(q)
	=\lim_{q\to\infty}D_R\cP_\cN(q),
	\\
	\overline\varphi_\cN
	&= \lim_{q\to-\infty}\frac{\cP_\cN(q)}{q}
	=\lim_{q\to-\infty}D_L\cP_\cN(q)
	=\lim_{q\to-\infty}D_R\cP_\cN(q).
\end{split}\]
\item[(P7)] The graph of $\cP_\cN$ has a supporting straight line of slope $\varphi(\mu)$ for every $\mu  \in \cN$. Thus,  for any $\alpha \in (\underline\varphi_\cN,\overline\varphi_\cN)$ it has a supporting straight line of slope $\alpha$.
\item[(P8)]   If the entropy map $\mu\mapsto h(\mu)$ is upper semi-continuous on $\cM(\mathbf X)$ then for any number $\alpha \in (\underline\varphi_\cN,\overline\varphi_\cN)$ there is a measure $\mu_\alpha\in\cM(\mathbf X)$ (not necessarily ergodic and not necessarily in $\cN$) such that $\varphi(\mu_\alpha) = \alpha$ and $q \mapsto \cP_{\mu_\alpha}(q)$ is a supporting straight line for $\cP_\cN$.
\item[(P9)] If $\mu\in\cM(\mathbf X)$ is an equilibrium state for $q\varphi$ for some $q\in\bR$ (with respect to $\cN$), then $D_L\cP_\cN(q) \leq \varphi(\mu) \leq D_R\cP_\cN(q)$. Moreover, the graph of $\cP_\mu$ is a supporting straight line for the graph of $\cP_\cN$ at $(q,\cP_\cN(q))$.
\item[(P10)] If the entropy map $\mu\mapsto h(\mu)$ is upper semi-continuous, then for any  $q$ there are equilibrium states $\mu_{L,q}$ and $\mu_{R,q}$  for $q\varphi$ (with respect to $\cN$) such that  $\varphi(\mu_{L,q}) = D_L\cP_\cN(q)$ and $\varphi(\mu_{R,q}) = D_R\cP_\cN(q)$. Moreover, $\mu_{L,q}$ and $\mu_{R,q}$ can be chosen to be ergodic (but not necessarily in $\cN$).
\item[(P11)] $\cP_\cN$ is differentiable at $q$ if and only if all equilibrium states for $q\varphi$ (with respect to $\cN$) have the same exponent and this exponent is $\cP'_\cN(q)$. In particular, if there is a unique  equilibrium state for $q\varphi$ (with respect to $\cN$) then $\cP_\cN$ is differentiable at $q$.
\item[(P12)] If $\mu\in\clocon(\cN)$ is not ergodic and $\cP_\mu(q) = \cP_\cN(q)$ for some $q$, then almost all measures in the  ergodic decomposition of $\mu$ are equilibrium states for  $q\varphi$ (with respect to $\cN$). 	
\end{enumerate}

%--------------------------------------------------------------------------------------------
\subsection{The convex conjugates of pressure functions}
\label{ss.convexconjugate}
%--------------------------------------------------------------------------------------------
One of our goals is to express the topological entropy $h_{\rm top}(\cL(\alpha))$ of each level set $\cL(\alpha)$ in terms of a restricted variational principle and in terms of a Legendre-Fenchel transform of an appropriate pressure function. Let us hence recall some simple facts about such transforms.

Given a subset of ergodic measures $\cN\subset\cM_{\rm erg}(\mathbf X)$, we define
\[
	\cE_\cN(\alpha)
	\eqdef \inf_{q\in\bR}\big(\cP_\cN(q)-q\alpha\big)
\]
on its domain
\[
	D(\cE_\cN)
	\eqdef \Big\{\alpha\in\bR\colon \inf_{q\in\bR}(\cP_\cN(q)-q\alpha)>-\infty\Big\}.
\]
Observe that $(\cP_\cN,\cE_\cN)$ forms a \emph{Legendre-Fenchel pair}.%
\footnote{The \emph{Legendre-Fenchel transform} of  a convex function $\beta\colon\bR\to\bR\cup\{\infty\}$ is defined by
\[
	\beta^\star(\alpha)\eqdef\sup_{q\in\bR}\big(\alpha q-\beta(q)\big),
\] and is convex on its domain $D(\beta^\star)=\{\alpha\in\bR\colon\beta^\star(\alpha)<\infty\}$. In particular, the convex function $\beta$ is differentiable at all but at most countably many points and
\[
	\beta^\star(\alpha)
	= \beta'(q)q-\beta(q)\quad\text{ for }\quad\alpha=\beta'(q).
\]
On the set of strictly convex functions the transform is involutive $\beta^{\star\star}=\beta$. Formally, it is the function $\alpha\mapsto-\cE_\cN(-\alpha)$ which is the Legendre-Fenchel transform of $\cP_\cN(q)$, but it is common practice  in the context of this paper (that we will also follow) to address $\cE_\cN$ by this name.}
We list the following general properties.
\begin{enumerate}
\item [(E1)] The function $\cE_\cN$ is concave (and hence continuous). Consequently, it is  differentiable at all but at most countably many $\alpha$, and the left and right derivatives are defined for all $\alpha\in D(\cE_{\cN})$.
\item [(E2)] We have
\[
	D(\cE_\cN)\supset (\underline\varphi_\cN,\overline\varphi_\cN).
\]
\item [(E3)] If $\mu$ is an equilibrium state for $q\varphi$ for some $q\in\bR$ (with respect to $\cN$) and $\alpha=\varphi(\mu)$, then $h(\mu)=\cE_\cN(\alpha)$.
\item [(E4)] We have
\[
	\max_{\alpha\in D(\cE_\cN)}\cE_\cN(\alpha)
	= \cP_\cN(0).
\]
Moreover, this maximum is attained at exactly one value of $\alpha$ if, and only if, $\cP_\cN$ is differentiable at $0$.
\item [(E5)] For every $\alpha\in D(\cE_\cN)$ we have
\[
	\cE_\cN(\alpha)
	\ge \sup\big\{h(\mu)\colon\mu\in\cN,\varphi(\mu)=\alpha\big\}.
\]
\end{enumerate}

For completeness, we give the short proof of (E5).
\begin{proof}[Proof of property (E5)]
Let $\alpha\in\interior D(\cE_\cN)$.
Fix any $q\in\bR$. Observe that
\[\begin{split}
	\sup\big\{h(\mu)\colon \mu\in\cN,\varphi(\mu)=\alpha\big\}
	&= \sup\big\{h(\mu)+q\varphi(\mu)\colon \mu\in\cN,\varphi(\mu)=\alpha\big\}	-q\alpha\\
	&\le \sup\big\{h(\mu)+q\varphi(\mu)\colon \mu\in\cN\big\}	-q\alpha\\
	&= \cP_{\cN}(q)-q\alpha.
\end{split}\]	
Since $q$ was arbitrary, we can conclude
\[\begin{split}
	\sup\big\{h(\mu)\colon \mu\in\cN,\varphi(\mu)=\alpha\big\}
	&\le \inf_{q\in\bR}\big(\cP_\cN(q)-q\alpha\big)	
	= \cE_\cN(\alpha)
\end{split}\]	
proving the property.
\end{proof}

\begin{proposition}\label{prolem:basresvarprinc}
	Assume that  $\mathbf X$ is a basic set of the skew-product map $F\colon\Sigma_N\times\bS^1\to\Sigma_N\times\bS^1$.
	Let $\varphi\colon \mathbf X\to\bR$ be a continuous potential. Then for  $\cN=\cM_{\rm erg}(\mathbf X)$ and every $\alpha\in\interior D(\cE_\cN)$ we have
\[
	\sup\big\{h(\mu)\colon \mu\in\cN,\varphi(\mu)=\alpha\big\}
	=\sup\big\{h(\mu)\colon \mu\in\cM(\mathbf X),\varphi(\mu)=\alpha\big\}
	= \cE_\cN(\alpha).	
\]	
\end{proposition}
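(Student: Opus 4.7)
The argument is the classical multifractal analysis on basic sets. Being a basic set, $\mathbf X$ is expansive and $F|_\mathbf X$ satisfies Bowen's specification property (by Sigmund's theorem). Expansivity makes the entropy map upper semi-continuous on $\cM(\mathbf X)$. The trivial chain
\[
	\sup\{h(\mu) : \mu\in\cN, \varphi(\mu)=\alpha\}
	\leq \sup\{h(\mu) : \mu\in\cM(\mathbf X), \varphi(\mu)=\alpha\}
	\leq \cE_\cN(\alpha)
\]
holds, with the first inequality from $\cN \subset \cM(\mathbf X)$ and the second from $h(\mu)+q\varphi(\mu) \leq \cP_\cN(q)$ taking the infimum over $q \in \bR$. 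Both asserted equalities follow once we show $\cE_\cN(\alpha) \leq \sup\{h(\mu) : \mu\in\cN, \varphi(\mu)=\alpha\}$.

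Using specification, Proposition~\ref{prop:spcifcc} yields $h_{\rm top}(\cL(\alpha)) = \sup\{h(\mu) : \mu\in\cM(\mathbf X), \varphi(\mu)=\alpha\}$, while Lemma~\ref{lem:morning} gives $\sup\{h(\mu) : \mu\in\cN, \varphi(\mu)=\alpha\} \leq h_{\rm top}(\cL(\alpha))$. Thus the missing lower bound is equivalent to producing, for every $\alpha \in \interior D(\cE_\cN)$, ergodic measures with $\varphi$-integral equal to $\alpha$ and entropy approaching $\cE_\cN(\alpha)$.

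At every $q \in \bR$ where $\cP_\cN$ is differentiable (all but countably many), properties~(P10) and~(P11), valid thanks to upper semi-continuity of the entropy, produce an ergodic equilibrium state $\mu_q$ for $q\varphi$ with $\varphi(\mu_q) = \cP_\cN'(q)$ and $h(\mu_q) = \cP_\cN(q)-q\cP_\cN'(q) = \cE_\cN(\cP_\cN'(q))$. The image $\{\cP_\cN'(q) : q \text{ a differentiability point}\}$ is a dense subset of $(\underline\varphi_\cN, \overline\varphi_\cN)$ whose complement consists of at most countably many open gaps $(D_L\cP_\cN(q_0), D_R\cP_\cN(q_0))$ on which $\cE_\cN$ is affine, and whose endpoints are realized as the exponents of the ergodic equilibrium states $\mu_{L,q_0}, \mu_{R,q_0}$ of $q_0\varphi$ provided by~(P10). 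For $\alpha$ inside such a gap, one uses the specification property to concatenate, in proportions $\lambda, 1-\lambda$ chosen so that the time-average of $\varphi$ equals $\alpha$, long orbit segments typical for $\mu_{L,q_0}$ and $\mu_{R,q_0}$; along a subsequence the resulting empirical measures converge to ergodic measures with $\varphi$-integral $\alpha$ and entropy approaching the affine combination $\lambda h(\mu_{L,q_0})+(1-\lambda)h(\mu_{R,q_0}) = \cE_\cN(\alpha)$.

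The main obstacle is this final orbit-gluing step in the gap case, which must deliver ergodic measures whose $\varphi$-integral is \emph{exactly} $\alpha$ and whose entropy approaches the correct value $\cE_\cN(\alpha)$. The construction is standard in the multifractal analysis of uniformly hyperbolic systems; detailed implementations along these lines appear in~\cite{PfiSul:07, FanLiaPey:08}, with earlier horseshoe antecedents in~\cite{BarSau:01}. Upper semi-continuity of the entropy map (from expansivity) is what allows passage to the weak$\ast$ limit while retaining the full entropy $\cE_\cN(\alpha)$.
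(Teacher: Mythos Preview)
Your approach differs genuinely from the paper's, and the gap case is not quite closed as written. The sentence ``the resulting empirical measures converge to ergodic measures'' is not correct: limits of empirical measures along concatenated orbits need not be ergodic, and the references you cite (\cite{PfiSul:07,FanLiaPey:08}) establish the restricted variational principle over $\cM(\mathbf X)$, not over $\cM_{\rm erg}(\mathbf X)$. To carry your route through one would need, for each $\alpha$ in a gap, to build a sub-horseshoe on which $\varphi$ is nearly constant equal to $\alpha$ and of entropy close to $\cE_\cN(\alpha)$, and then vary the mixing proportion $\lambda$ to hit $\alpha$ exactly via the intermediate value theorem --- doable, but a nontrivial extra argument you have not supplied.

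The paper takes a different and shorter path that exploits the basic-set hypothesis more directly. Given $\alpha$, property~(P8) produces a (possibly non-ergodic) equilibrium state $\mu_\alpha$ for some $q(\alpha)\varphi$ with $\varphi(\mu_\alpha)=\alpha$ and $h(\mu_\alpha)=\cE_\cN(\alpha)$. To obtain an \emph{ergodic} measure with $\varphi$-integral exactly $\alpha$, the paper approximates $\varphi$ by a H\"older potential $\tilde\varphi$; on a basic set, Bowen's theorem guarantees a \emph{unique}, hence ergodic, equilibrium state $\tilde\nu_{\tilde q}$ for each $\tilde q\tilde\varphi$, and $\tilde q\mapsto\int\varphi\,d\tilde\nu_{\tilde q}$ is continuous, so some $\tilde q$ near $q(\alpha)$ gives $\varphi(\tilde\nu_{\tilde q})=\alpha$ exactly. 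Lipschitz continuity of pressure~(P4) then forces $h(\tilde\nu_{\tilde q})$ arbitrarily close to $\cE_\cN(\alpha)$. This H\"older-approximation trick sidesteps the orbit-gluing construction entirely; your route trades the Bowen uniqueness input for a more hands-on specification argument, which is more elementary in spirit but requires the additional work you gestured at.
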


Note that to show the inequality $\le$ in the proposition we, in fact, do not need the hypothesis of a basic set.

\begin{proof}
Let $\alpha\in\interior D(\cE_\cN)$.
Note that $\cN\subset\cM(\mathbf X)$, the above proof of (E5), and the fact that for $\cN=\cM_{\rm erg}(\mathbf X)$ we have $\cP_\cN(q)=\sup\{h(\mu)+q\varphi(\mu)\colon \mu\in\cM(\mathbf X)\}$ (see~\cite[Corollary 9.10.1 i)]{Wal:82}) immediately implies the inequalities $\le$.

It remains to prove the inequality $\cE_\cN(\alpha)\le \sup\{h(\mu)\colon\mu\in\cN,\varphi(\mu)=\alpha\}$ and hence the proposition. First recall \cite{Bow:08} that for any H\"older continuous potential $\tilde\varphi\colon \mathbf X\to\bR$ and $\tilde q\in\bR$ there is a unique equilibrium state for $\tilde q\tilde\varphi$ for a basic set of a diffeomorphism. Note that this hypothesis naturally translates to our skew-product setting.
By property (P8) applied to $\mathbf X$ and $\cN$, there is a measure $\mu_\alpha\in\cM(\mathbf X)$ (not necessarily ergodic) such that $\varphi(\mu_\alpha)=\alpha$ and $q\mapsto \cP_{\mu_\alpha}(q)$ is a supporting straight line for $\cP_\cN$. Hence, there is $q=q(\alpha)$ such that $\cP_\cN(q)=h(\mu_\alpha)+qh(\mu_\alpha)$.
If $\mu_\alpha$ was already ergodic then we are done.  Otherwise, note that we can find $\tilde\varphi\colon \mathbf X\to\bR$ H\"older continuous and arbitrarily close to the continuous potential $\varphi\colon \mathbf X\to\bR$ and  $\tilde q$ arbitrarily close to $q$ and an ergodic equilibrium state $\tilde\nu\in\cN$ for $\tilde q\tilde\varphi$ such that $\varphi(\tilde\nu)=\alpha$. By (P4) we have that $P(\tilde q\tilde\varphi,\cN)$ is arbitrarily close to $P(q \varphi,\cN)$.
Hence, for such $\tilde\nu$ we have
\[
	h(\tilde\nu)
	= P(\tilde q\tilde\varphi,\cN)-\tilde q\alpha
	= \big(P(q\varphi,\cN)- q\alpha\big) + \big(P(\tilde q\tilde\varphi,\cN)-P(q\varphi,\cN)\big)
		+  \big(q\alpha - \tilde q\alpha  \big) .
\]
Thus, we can conclude
\[
	\sup\big\{h(\nu)\colon \nu\in \cN,\varphi(\nu)=\alpha\big\}
	\ge \big(P(q\varphi,\cN)- q\alpha\big) .
\]
Taking the infimum over all $q\in\bR$ we obtain
\[
	\sup\big\{h(\nu)\colon \nu\in \cN,\varphi(\nu)=\alpha\big\}
	\ge \inf_{q\in\bR}\big(\cP_\cN(q)- q\alpha\big)
	=\cE_\cN(\alpha).
\]
This finishes the proof of the lemma.
\end{proof}

%-------------------------------------------------------------------------------------------------------------
\section{Exhausting families}\label{sec:exhau}
%-------------------------------------------------------------------------------------------------------------

In this section, we present a general principle to perform a multifractal analysis. It was already used in several  contexts having some hyperbolicity (see, for example,~\cite{GelRam:09} for Markov maps on the interval, \cite{GelPrzRam:10} for non-exceptional rational maps of the Riemann sphere, or \cite{BurGel:14} for geodesic flows of rank one surfaces). As the system as a whole does not satisfy the specification property,  we consider certain families of subsets (basic sets, see Section~\ref{sec:homrel}) on which we do have specification.
The general theory of \emph{restricted pressures} presented here allows us to obtain dynamical properties of the full system knowing the properties of those subsets.

%-------------------------------------------------------------------------------------------------------------
\subsection{General framework}\label{sec:exhau}
%-------------------------------------------------------------------------------------------------------------

Let $(\mathbf X,d)$ be a compact metric space, $F\colon \mathbf X\to \mathbf X$ a continuous map,
and $\varphi \colon \mathbf X\to \bR$ a continuous potential. Fix a
set of ergodic measures $\cN\subset\cM_{\rm erg}(\mathbf X)$.
Recall that we defined for $\alpha\in D(\cE_\cN)$
\[
	\cE_\cN(\alpha)
	\eqdef \inf_{q\in\bR}\big(\cP_\cN(q)-q\alpha\big).
\]

 A sequence of  compact $F$-invariant sets $\mathbf X_1, \mathbf X_2,\ldots\subset \mathbf X$  is said to be \emph{$(\mathbf X,\varphi,\cN)$-exhausting}  if the following holds: for every $i\ge1$ we have
\begin{itemize}
\item[(exh1)] $ \cM_{\rm erg}(\mathbf X_i)\subset \cN$,
\item[(exh2)]  $F|_{\mathbf X_i}$ has the specification property,
\item[(exh3)]
Given $\cM_i=\cM_{\rm erg}(\mathbf X_i)$
let $\cP_i=\cP_{\cM_i}$  and
\[
	\cE_i(\alpha)
	\eqdef \inf_{q\in\bR}\big(\cP_i(q)-q\alpha\big).
\]
Then for every $\alpha \in \interior D(\cE_i)$ the \emph{restricted variational principle} holds
\[
	\cE_i(\alpha)
	=\sup\big\{h(\mu)\colon \mu\in\cM_i,\varphi(\mu)=\alpha\big\}.
\]

\item[(exh4)]
for every $q\in\bR$ we have
\[
	\lim_{i\to\infty}P_{F|\mathbf X_i}(q\varphi)
	=\cP_\cN(q).
\]	
\item[(exh5)]
Let $\underline\varphi_\cN$ and $\overline\varphi_\cN$ be as in \eqref{e.supinfvarphi}, then
\[
	\underline\varphi_\cN=\lim_{i\to\infty}\underline\varphi_{\cM_i},\quad
	\overline\varphi_\cN=\lim_{i\to\infty}\overline\varphi_{\cM_i}.
\]
\end{itemize}

Note that $(\cP_i,\cE_i)$ forms a Legendre-Fenchel pair for every $i\ge1$.

The exhausting property for appropriate $\cN$ is the essential step to relate the lower bound in the restricted variational principle~\eqref{lem:morning} to the Legendre-Fenchel transform of the restricted pressure function $\cP_\cN$. This is the requirement (exh3).

\begin{lemma}\label{l.conjugated}
It holds $\lim_{i\to\infty}\cE_i(\alpha)=\cE_\cN(\alpha)$. In particular, $\interior D(\cE_\cN) = (\underline\varphi_\cN,\overline\varphi_\cN).$
\end{lemma}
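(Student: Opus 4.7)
The plan is to establish the two inequalities $\limsup_i\cE_i(\alpha)\le\cE_\cN(\alpha)$ and $\liminf_i\cE_i(\alpha)\ge\cE_\cN(\alpha)$ for every $\alpha\in\interior D(\cE_\cN)$, and then to identify $\interior D(\cE_\cN)=(\underline\varphi_\cN,\overline\varphi_\cN)$ from the asymptotic slope behaviour of $\cP_\cN$. The first inequality is cheap: by (exh1) we have $\cM_i\subset\cN$, so (P2) yields $\cP_i\le\cP_\cN$ pointwise, and hence $\cE_i(\alpha)=\inf_q(\cP_i(q)-q\alpha)\le\inf_q(\cP_\cN(q)-q\alpha)=\cE_\cN(\alpha)$ for every $\alpha\in\bR$.

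For the reverse inequality, the strategy is to localize the $\inf$ defining $\cE_i(\alpha)$ onto a $q$-interval independent of $i$, and then apply uniform convergence. Given $\alpha\in(\underline\varphi_\cN,\overline\varphi_\cN)$, I would pick $\alpha^-,\alpha^+$ with $\underline\varphi_\cN<\alpha^-<\alpha<\alpha^+<\overline\varphi_\cN$, and then use (P5) and (P6) for $\cP_\cN$ to select $Q^-<Q^+$ at which $\cP_\cN$ is differentiable and whose derivatives lie strictly on opposite sides of $[\alpha^-,\alpha^+]$; in particular $q\mapsto\cP_\cN(q)-q\alpha$ is strictly monotone off $[Q^-,Q^+]$. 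Now the classical Rockafellar-type fact that a pointwise-convergent sequence of finite convex functions on $\bR$ converges uniformly on compact sets, and that the one-sided derivatives of the approximants converge at every point of differentiability of the limit, combined with (exh4), forces $D_L\cP_i(Q^-)$ and $D_R\cP_i(Q^+)$ to lie on the correct sides of $\alpha$ for all sufficiently large $i$. Together with (exh5) and (P6) applied to $\cP_i$ (which renders $\cP_i(q)-q\alpha$ coercive), this traps every minimizer $q_i^\ast$ of $q\mapsto\cP_i(q)-q\alpha$ inside the fixed compact interval $[Q^-,Q^+]$.

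With the minimizers thus confined, set $\varepsilon_i\eqdef\sup_{q\in[Q^-,Q^+]}|\cP_i(q)-\cP_\cN(q)|$, which tends to $0$ by the aforementioned uniform convergence on $[Q^-,Q^+]$. Then
\[
\cE_\cN(\alpha)\le\cP_\cN(q_i^\ast)-q_i^\ast\alpha\le\cP_i(q_i^\ast)-q_i^\ast\alpha+\varepsilon_i=\cE_i(\alpha)+\varepsilon_i,
\]
so $\liminf_i\cE_i(\alpha)\ge\cE_\cN(\alpha)$, completing the proof of convergence. Finally, the identification $\interior D(\cE_\cN)=(\underline\varphi_\cN,\overline\varphi_\cN)$ follows from (E2) together with the observation that (P6) forces $\cP_\cN(q)-q\alpha\to-\infty$ along one end of $\bR$ whenever $\alpha$ lies outside $[\underline\varphi_\cN,\overline\varphi_\cN]$, so $\cE_\cN(\alpha)=-\infty$ there. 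The genuinely delicate step is the uniform trapping of the $q_i^\ast$: this is where both (exh4) and (exh5) are really used, since pointwise convergence of pressures alone gives no control over the tails of the $\cP_i$ and could in principle allow the near-minimizers to escape to $\pm\infty$.
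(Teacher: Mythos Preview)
Your argument is correct. The paper's own proof, however, is a one-line citation: it simply observes that (exh4) gives pointwise convergence of the convex functions $\cP_i$ to the convex function $\cP_\cN$, and then invokes the classical Wijsman theorem~\cite{Wij:66} that pointwise convergence of (proper, finite) convex functions forces convergence of their Legendre--Fenchel transforms on the interior of the domain. What you have done is to reprove that theorem by hand in this setting: the monotonicity $\cP_i\le\cP_\cN$ gives one inequality immediately, and for the reverse you localize the minimizers via convergence of one-sided derivatives at differentiability points (a consequence of uniform convergence on compacts, itself following from pointwise convergence of convex functions). This is a genuinely more elementary and self-contained route, at the cost of length; the paper's route is shorter but outsources the content to convex-analysis literature.

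One small remark: your invocation of (exh5) is not actually needed. Once you have $D_R\cP_i(Q^-)<\alpha<D_L\cP_i(Q^+)$ for large $i$ (which follows from (exh4) alone via the derivative-convergence argument you sketch), convexity of $\cP_i$ already forces $q\mapsto\cP_i(q)-q\alpha$ to be nonincreasing on $(-\infty,Q^-]$ and nondecreasing on $[Q^+,\infty)$, so $\inf_{q\in\bR}=\inf_{q\in[Q^-,Q^+]}$ without any appeal to coercivity at infinity. Thus your closing sentence overstates the role of (exh5) in this particular lemma; it earns its keep elsewhere (in identifying $\varphi(\cN)$ as an interval in Proposition~\ref{pro:Climen}).
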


\begin{proof}
Note that property (exh4) of pointwise convergence
of convex functions of pressures $\cP_i$ to the convex function
of pressure $\cP_\cN$ and the fact that
$\cE_i$ and $\cE_\cN$ are their Legendre-Fenchel transforms
 imply the claim, see for instance ~\cite{Wij:66}.
\end{proof}

The following result will be the main step in establishing the lower bounds for entropy in Theorem~\ref{main1}.
We derive it in the general setting of this subsection.

\begin{proposition}\label{pro:Climen}
Assume that there exists an increasing family of sets $(\mathbf X_i)_i\subset \mathbf X$ which is $(\mathbf X,\varphi,\cN)$-exhausting.
Then
\begin{itemize}
\item we have
\[
	(\underline\varphi_\cN,\overline\varphi_\cN)
	\subset \varphi(\cN)
	\subset [\underline\varphi_\cN,\overline\varphi_\cN].
\]	
In particular, $\varphi(\cN)$ is an interval.
\item
For every $\alpha\in(\underline\varphi_\cN,\overline\varphi_\cN)$ we have $\cL(\alpha)\ne\emptyset$ and
\[
	h_{\rm top}(\cL(\alpha))
	\ge 
	  \cE_\cN(\alpha)\\
	= \lim_{i\to\infty}\sup\big\{h(\mu)\colon\mu\in\cM_i
	,\varphi(\mu)=\alpha\big\}.
\]
\end{itemize}
\end{proposition}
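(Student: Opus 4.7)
The plan is to deduce both parts of the proposition by combining the exhausting conditions (exh1)--(exh5) with the classical variational principle under specification (Proposition~\ref{prop:spcifcc}), Bowen's restricted variational principle upper bound (Lemma~\ref{lem:morning}), and the Wijsman-type convergence of Legendre-Fenchel transforms already encapsulated in Lemma~\ref{l.conjugated}.

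First I would prove that $\varphi(\cN)$ is an interval. The inclusion $\varphi(\cN)\subset[\underline\varphi_\cN,\overline\varphi_\cN]$ is tautological from \eqref{e.supinfvarphi}. For the opposite inclusion on the open interval, fix $\alpha\in(\underline\varphi_\cN,\overline\varphi_\cN)$. Since the $\mathbf X_i$ are increasing, the sequences $\underline\varphi_{\cM_i}$ and $\overline\varphi_{\cM_i}$ are monotone and, by (exh5), converge to $\underline\varphi_\cN$ and $\overline\varphi_\cN$ from inside; hence $\alpha\in(\underline\varphi_{\cM_i},\overline\varphi_{\cM_i})$ for every sufficiently large $i$. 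By (exh2), $F|_{\mathbf X_i}$ satisfies specification, so Proposition~\ref{prop:spcifcc} (applied to $F|_{\mathbf X_i}$) ensures that $\varphi(\cM_i)$ is an interval. In particular, there exists $\mu\in\cM_i\subset\cN$ with $\varphi(\mu)=\alpha$, establishing $\alpha\in\varphi(\cN)$.

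Next, to see that $\cL(\alpha)\ne\emptyset$ for such $\alpha$, note that the measure $\mu$ produced above is ergodic, so by Birkhoff's ergodic theorem $\mu$-a.e.\ point realizes the Birkhoff average $\varphi(\mu)=\alpha$ and therefore lies in $\cL(\alpha)$.

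Finally, for the entropy estimate I would fix $\alpha\in(\underline\varphi_\cN,\overline\varphi_\cN)$ and choose $i_0$ so large that $\alpha\in(\underline\varphi_{\cM_i},\overline\varphi_{\cM_i})\subset\interior D(\cE_i)$ for all $i\ge i_0$ (using property (E2) applied to $\cM_i$). By (exh3),
\[
\cE_i(\alpha)=\sup\big\{h(\mu)\colon\mu\in\cM_i,\varphi(\mu)=\alpha\big\}.
\]
Since $\cM_i\subset\cN\subset\cM_{\rm erg}(\mathbf X)$, the first inequality of Lemma~\ref{lem:morning} yields
\[
h_{\rm top}(\cL(\alpha))\ge\sup\big\{h(\mu)\colon\mu\in\cM_{\rm erg}(\mathbf X),\varphi(\mu)=\alpha\big\}\ge\cE_i(\alpha).
\]
Letting $i\to\infty$ and invoking Lemma~\ref{l.conjugated} (which in turn relies on (exh4)) yields $h_{\rm top}(\cL(\alpha))\ge\lim_{i\to\infty}\cE_i(\alpha)=\cE_\cN(\alpha)$, together with the stated equality $\cE_\cN(\alpha)=\lim_{i\to\infty}\sup\{h(\mu)\colon\mu\in\cM_i,\varphi(\mu)=\alpha\}$.

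The argument is essentially a careful bookkeeping of the exhausting axioms, and there is no real obstacle; the only point that requires attention is ensuring that $\alpha$ belongs to $\interior D(\cE_i)$ for large $i$, which is where the monotonicity of the $\cM_i$ combined with (exh5) is used to guarantee convergence of the pressure-affine slopes from inside rather than merely in the closure.
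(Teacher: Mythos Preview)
Your proof is correct and follows essentially the same route as the paper's. The only cosmetic difference is that for the entropy lower bound the paper passes through $h_{\rm top}(\cL(\alpha)\cap\mathbf X_i)$ (using Proposition~\ref{prop:spcifcc} on $\mathbf X_i$ and then monotonicity of entropy), whereas you invoke Lemma~\ref{lem:morning} directly and then observe $\cM_i\subset\cM_{\rm erg}(\mathbf X)$; your path is arguably slightly cleaner but the content is identical.
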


\begin{proof}
By condition (exh4) and the property of pointwise convergence of convex functions to a convex function (see (P5)), we can conclude that for every $i$
\[
	P_{F|\mathbf X_{n(i)}}(q\varphi)\ge \cP_\cN(q)-\frac1i
\]
for all $q\in[-i,i]$ and some sequence $(n(i))_i$. For simplicity, allowing a change of indices, we will assume that $n(i)=i$.

 A particular consequence of  specification of $F|_{\mathbf X_i}$ is that by  Proposition~\ref{prop:spcifcc} the set $\varphi(\cM_i)$ is an interval.
 Together with (exh5) this implies that $\varphi(\cN)$ is an  interval and we have
 \begin{equation}\label{equa:spec}
 	(\underline\varphi_\cN,\overline\varphi_\cN)
	\subset \varphi(\cN)
	=\bigcup_{i\ge1}\varphi(\cM_i)
	\subset[\underline\varphi_\cN,\overline\varphi_\cN],
 \end{equation}
proving the first item.

Let $\alpha\in(\underline\varphi_\cN,\overline\varphi_\cN)$. For every index $i$, by  Proposition~\ref{prop:spcifcc},  we have
\[
	h_{\rm top}(\cL(\alpha)\cap \mathbf X_i)
	=\sup\big\{h(\mu)\colon\mu\in\cM_i,\varphi(\mu)=\alpha\big\}
	\le h_{\rm top}(\cL(\alpha)),
\]
where for the inequality we use monotonicity of entropy.
By~\eqref{equa:spec}, there is $i=i(\alpha)\ge1$ such that $\alpha\in\varphi(\cM_i)$ and, in particular, we have $\cL(\alpha)\ne\emptyset$.
By (exh3),  for every $\alpha\in(\underline\varphi_\cN,\overline\varphi_\cN)$ and $i$ sufficiently big, we have
$$
	\cE_i(\alpha)
	= \sup\big\{h(\mu)\colon\mu\in\cM_i,\varphi(\mu)=\alpha\big\}.
$$

By Lemma~\ref{l.conjugated} we have $\lim_{i\to\infty}\cE_i(\alpha)=\cE_\cN(\alpha)$, concluding  the proof of the proposition.
\end{proof}

%-------------------------------------------------------------------------------------------------------------
\subsection{Existence of exhausting families in our setting}\label{sec:bridging}
%-------------------------------------------------------------------------------------------------------------

In this section, we return to consider a transitive step skew-product map $F$ as in~\eqref{eq:fssp} whose fiber maps are $C^1$ and satisfies Axioms CEC$\pm$ and Acc$\pm$.
Recall  that the map $F$ has ergodic measures with negative/positive exponents arbitrarily close to $0$, see Corollary~\ref{cor:maisemenos}.
The goal of this section is to prove the following proposition.

\begin{proposition}\label{pro:estamosehaustos}
Consider the set of ergodic measures $\cN= \cM_{\rm{erg},<0}$ and the potential $\varphi\colon \Sigma_N\times \bS^1\to \bR$ in \eqref{def:potential}.
Then  there is a $(\Sigma_N\times \bS^1, \varphi, \cN)$-exhausting family consisting of nested basic sets and $\varphi(\cN)=[\alpha_{\min}, 0)$.

The analogous statement is true for $\cN= \cM_{\rm{erg},>0}$ with $\varphi(\cN)=(0, \alpha_{\max}].$
\end{proposition}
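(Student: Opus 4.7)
The plan is to take $\mathbf X_i$ to be nested basic sets with uniform fiber contraction, produced through the skeleton property (Proposition~\ref{newprop:skeleton}). Conditions (exh1)--(exh3) are essentially automatic for such sets. Indeed, a basic set with uniform fiber contraction has $\cM_{\rm erg}(\mathbf X_i) \subset \cN$ by definition, giving (exh1); it is a transitive uniformly hyperbolic invariant set, so classical Bowen theory yields the specification property, giving (exh2); and combining the specification-based variational principle in Proposition~\ref{prop:spcifcc} with Proposition~\ref{prolem:basresvarprinc} applied to $\mathbf X_i$ gives (exh3).

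The substance is the construction of the nested sequence so that the pressures $P_{F|\mathbf X_i}(q\varphi)$ exhaust $\cP_\cN(q)$ for every $q$. For each fixed $q$, choose measures $\mu_n \in \cN$ with $h(\mu_n) - q\chi(\mu_n) \to \cP_\cN(q)$ and set $\alpha_n \eqdef \chi(\mu_n) < 0$. Since $G(\mu_n) \subset \cL(\alpha_n)$ has topological entropy $h(\mu_n)$ by Proposition~\ref{pro:Bowen} item ii), we have $h_{\rm top}(\cL(\alpha_n)) \ge h(\mu_n)$, so Proposition~\ref{newprop:skeleton} provides a basic set $\Gamma_n$ with uniform fiber contraction, topological entropy within $1/n$ of $h_{\rm top}(\cL(\alpha_n))$, and every ergodic fiber exponent within $1/n$ of $\alpha_n$. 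A measure of maximal entropy on $\Gamma_n$ then gives $P_{F|\Gamma_n}(q\varphi) \ge h(\mu_n) - q\chi(\mu_n) - O(1/n)$, while the trivial upper bound $P_{F|\Gamma_n}(q\varphi) \le \cP_\cN(q)$ follows from $\cM_{\rm erg}(\Gamma_n) \subset \cN$. (The corner case $h(\mu_n) = 0$ is handled using a periodic orbit of exponent close to $\alpha_n$ in lieu of Proposition~\ref{newprop:skeleton}.)

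To obtain a single nested sequence that works for every $q$ simultaneously, I would diagonalize: at stage $n$ fix a $1/n$-dense net $\{q_{n,j}\}$ in $[-n,n]$ and construct basic sets $\tilde\Gamma_{n,j}$ with $P_{F|\tilde\Gamma_{n,j}}(q_{n,j}\varphi) \ge \cP_\cN(q_{n,j}) - 1/n$ for each net point. These, together with the previous basic set $\mathbf X_{n-1}$ and auxiliary basic sets with exponents within $1/n$ of $\alpha_{\min}$ and of $0^-$ (the latter produced by Corollary~\ref{cor:maisemenos} combined with the skeleton property), are then merged into a single basic set $\mathbf X_n \supset \mathbf X_{n-1}$ still of uniform-fiber-contraction type. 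This merging is the key technical step and rests on the fact used repeatedly in~\cite{DiaGelRam:} that any finite collection of negative-exponent basic sets is contained in a larger one of the same type, via heteroclinic linking provided by Axioms CEC$-$ and Acc$-$ together with Lemma~\ref{l:minimalcommon}. Since both $q \mapsto \cP_\cN(q)$ and $q \mapsto P_{F|\mathbf X_n}(q\varphi)$ are Lipschitz with constant $\|\varphi\|_\infty$ by property (P4), the $1/n$-net approximation upgrades to uniform approximation on $[-n,n]$, yielding (exh4); the auxiliary basic sets built into the construction yield (exh5).

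Finally, $\varphi(\cN) = [\alpha_{\min}, 0)$: the inclusion $\varphi(\cN) \subset [\alpha_{\min}, 0)$ is immediate ($0 \notin \varphi(\cN)$ by definition, and $\alpha_{\min}$ is a lower bound for $\chi$ on all invariant measures); the endpoint $\alpha_{\min}$ is attained by some ergodic measure obtained via ergodic decomposition of a minimizer of $\chi$ over the compact set $\cM$, so $\alpha_{\min} \in \varphi(\cN)$; and the open interval $(\alpha_{\min}, 0) \subset \varphi(\cN)$ follows from the first conclusion of Proposition~\ref{pro:Climen} applied to the exhausting family just constructed. The analogous statement for $\cN = \cM_{\rm erg,>0}$ is symmetric, using CEC$+$, Acc$+$, and the positive-exponent version of Proposition~\ref{newprop:skeleton}. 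I expect the main obstacle to be the merging step, where one must verify that the heteroclinic constructions used to glue together negative-exponent basic sets produce a new basic set that retains uniform fiber contraction and does not inadvertently introduce ergodic measures of zero or positive exponent.
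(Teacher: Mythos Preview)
Your proposal is correct and takes essentially the same approach as the paper: a finite net in $q$, basic-set approximation at each net point via Proposition~\ref{newprop:skeleton} (packaged in the paper as a separate Lemma~\ref{lem:PesKat}), Lipschitz extension to the whole interval, and merging into a single basic set. The merging step you flag as the main obstacle is exactly the content of the paper's Bridging Lemma~\ref{lem:bridge}, which invokes the standard fact (citing~\cite{Rob:95}) that homoclinically related basic sets embed in a common basic set of the same type; the paper first verifies that under the axioms all saddles of the same (contracting) type are homoclinically related, which dispels your worry about inadvertently introducing nonnegative-exponent measures.
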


%-------------------------------------------------------------------------------------------------------------
\subsubsection{Homoclinic relations}\label{sec:homrel}
%-------------------------------------------------------------------------------------------------------------
We say that a periodic point of $F$ is \emph{hyperbolic} or a \emph{saddle}  if its (fiber) Lyapunov exponent is nonzero. In our partially hyperbolic setting with one-dimensional central bundle, there are only two possibilities: a saddle has either a negative or positive (fiber) Lyapunov exponent. We say that two saddles are of  \emph{the same type} if either both have negative exponents or both have positive exponents. Note that all  saddles in a basic set are of the same (contracting/expanding) type. We say that two basic sets are of the \emph{same type} if their saddles are of the same type.

Given a  saddle $P$ we define the \emph{stable} and \emph{unstable sets} of its orbit $\cO(P)$
by
$$
	W^{\mathrm{s}} (\cO(P))
	\eqdef \{ X \colon \lim_{n\to \infty} d(F^n(X), \cO(P))=0\},
$$
and
$$
	W^{\mathrm{u}} (\cO(P))
	\eqdef \{ X \colon \lim_{n\to \infty} d(F^{-n}(X), \cO(P))=0\},
$$
respectively.

We say that a point $X$ is a {\emph{homoclinic point}} of $P$ if
$X\in W^{\mathrm{s}} (\cO(P))\cap W^{\mathrm{u}} (\cO(P))$.
Two saddles $P$ and $Q$ of the same index are \emph{homoclinically related}  if
the stable and unstable sets of their orbits intersect cyclically, that is, if
$$
	W^{\mathrm{s}} (\cO(P))\cap W^{\mathrm{u}} (\cO(Q))
	\ne\emptyset \ne
	W^{\mathrm{s}} (\cO(Q))\cap W^{\mathrm{u}} (\cO(P)).
$$
In our context, homoclinic intersections behave the same as transverse homoclinic intersections in the differentiable setting.
As in the differentiable case, to be homoclinically related defines an equivalence relation on the set of saddles of $F$. The {\emph{homoclinic class}} of a saddle $P$, denoted by $H(P,F)$, is the closure of the set of saddles which are homoclinically related to $P$. A homoclinic class can be also defined as the closure of the homoclinic points of $P$. As in the differentiable case, a homoclinic class is an $F$-invariant and transitive set.%
\footnote{These assertions are folklore ones, details can be found, for instance,  in \cite[Section 3]{DiaEstRoc:16}.
Note that in our skew-product  context the standard transverse intersection condition between the invariant sets of the saddles in the definition of a homoclinic relation is not required  and does not make sense.  However,  since  the dynamics in the central direction is  non-critical (the fiber maps are diffeomorphisms and hence have no critical points)  the intersections between invariant sets of saddles of the same type behave as ``transverse" ones and the arguments in the differentiable setting can be translated to the skew-product setting (here the fact that the fiber direction is one-dimensional is essential).}

\begin{lemma}
	Any pair of saddles $P,Q\in\Sigma_N\times\bS^1$ of the same type are homoclinically related.
\end{lemma}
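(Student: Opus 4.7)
The plan is to prove that $W^{\rm u}(\cO(P))\cap W^{\rm s}(\cO(Q))\ne\emptyset$; by swapping the roles of $P$ and $Q$ the symmetric intersection follows, yielding the homoclinic relation. By first passing to $F^{-1}$ if necessary, which preserves the collection of Axioms CEC$\pm$ and Acc$\pm$ (only swapping $J^+\leftrightarrow J^-$) and exchanges $W^{\rm s}$ with $W^{\rm u}$, I may assume that both $P$ and $Q$ have negative fiber Lyapunov exponent, i.e., are fiber-contracting saddles.

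Write $P=(\xi^*_P,\bar x_P)$ and $Q=(\xi^*_Q,\bar x_Q)$, where $\xi^*_P,\xi^*_Q\in\Sigma_N$ are periodic of periods $p$ and $q$, and $\bar x_P$, $\bar x_Q$ are attracting fiber fixed points of $f^p_{\xi^*_P}$ and $f^q_{\xi^*_Q}$ respectively. Choose an open fiber neighborhood $B_Q\ni\bar x_Q$ inside the local attracting basin of $\bar x_Q$ for $f^q_{\xi^*_Q}$. The core step is to produce a \emph{connecting word}: a finite sequence $(\alpha_0,\ldots,\alpha_{k-1})$ with $f_{[\alpha_0\ldots\alpha_{k-1}]}(\bar x_P)\in B_Q$. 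This is exactly what Lemma~\ref{l:minimalcommon} delivers when applied to the interval $B_Q$ and the point $\bar x_P$; quantitative forward minimality of the IFS is the only place where Axioms CEC$\pm$ and Acc$\pm$ are used.

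With this word in hand, I construct $\xi\in\Sigma_N$ by setting $\xi_n=(\xi^*_P)_n$ for $n<0$, $\xi_n=\alpha_n$ for $0\le n<k$, and $\xi_n=(\xi^*_Q)_{n-k}$ for $n\ge k$, and take $X=(\xi,\bar x_P)$. Membership in $W^{\rm u}(\cO(P))$ is then immediate: the past of $\xi$ agrees with $\xi^*_P$, so $\sigma^{-n}\xi\to\cO(\xi^*_P)$, and since $\bar x_P$ is a periodic fiber fixed point of $f^p_{\xi^*_P}$ the backward fiber iterates $f^{-n}_\xi(\bar x_P)$ cycle through the finitely many fiber coordinates of $\cO(P)$; hence $F^{-n}(X)\to\cO(P)$. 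Membership in $W^{\rm s}(\cO(Q))$ is likewise immediate: after $k$ forward steps the fiber coordinate lies in $B_Q$ and the future of $\sigma^k\xi$ equals $\xi^*_Q$, so subsequent forward fiber iterates converge to $\cO(Q)$ by definition of $B_Q$.

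The only nontrivial ingredient is the existence of the connecting word, that is, Lemma~\ref{l:minimalcommon}; everything else is a direct unfolding of the definitions of $W^{\rm s}$, $W^{\rm u}$ and exploits only the periodicity of the two saddles. No specification property or thermodynamical machinery is needed. The symmetric construction, exchanging the roles of $P$ and $Q$, produces a point in $W^{\rm u}(\cO(Q))\cap W^{\rm s}(\cO(P))$, closing the argument.
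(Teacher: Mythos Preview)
Your proof is correct and follows essentially the same route as the paper's: reduce to the contracting case, use the local attracting basin at $Q$, invoke Lemma~\ref{l:minimalcommon} to obtain a finite connecting word sending $\bar x_P$ into that basin, and concatenate the periodic pasts/futures to build an explicit intersection point. The paper carries out both directions of the cyclic intersection explicitly (with two calls to Lemma~\ref{l:minimalcommon}), whereas you do one and appeal to symmetry, but this is a cosmetic difference only.
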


\begin{proof}
Let us assume that $P$ and $Q$ both have negative exponents.
The proof of the other case is analogous and  omitted.
Let $P=(\xi,p)$ and $Q=(\eta,q)$, where $\xi=(\xi_0\ldots\xi_{n-1})^\bZ$ and $\eta=(\eta_0\ldots\eta_{m-1})^\bZ$.
By hyperbolicity, there is $\delta>0$ such that
\[
	f_\xi^n\big([p-\delta,p+\delta]\big)\subset (p-\delta,p+\delta)
	\quad\text{ and }\quad
	f_\eta^m\big([q-\delta,q+\delta]\big)\subset (q-\delta,q+\delta)
\]	
and such  that those maps are uniformly contracting on those intervals.
This immediately implies that
\[
\begin{split}
&
[. (\xi_0\dots \xi_{n-1})^\bN  ] \times
[p-\delta,p+\delta]
\subset W^{\mathrm{s}} (\cO(P)),\\
&
[. (\eta_0\dots \eta_{m-1})^\bN  ] \times
[q-\delta,q+\delta]
\subset W^{\mathrm{s}} (\cO(Q)).
\end{split}
\]
Similarly we get
$$
[(\xi_0\dots \xi_{n-1})^{-\bN}. ] \times
\{p\}
\subset W^{\mathrm{u}} (\cO(P)),\quad
[(\eta_0\dots \eta_{m-1})^{-\bN}.  ] \times
\{q\}
\subset W^{\mathrm{u}} (\cO(Q)).
$$
By Lemma~\ref{l:minimalcommon} there are $(\beta_0\ldots\beta_s)$ and $(\gamma_0\ldots\gamma_r)$
such that
\[
	f_{[\beta_0\ldots\,\beta_s]}(q)\in (p-\delta, p+\delta)
	\quad
	\mbox{and}
	\quad
	f_{[\gamma_0\ldots\,\gamma_r]}(p)\in (q-\delta, q+\delta).
\]
By construction, this implies that
\[
\begin{split}
&\big( (\eta_0\dots \eta_{m-1})^{-\bN}. \beta_0\ldots\,\beta_s
(\xi_0\dots \xi_{n-1})^\bN, q \big) \in
 W^{\mathrm{u}} (\cO(Q)) \cap  W^{\mathrm{s}} (\cO(P)),\\
 & \big( (\xi_0\dots \xi_{n-1})^{-\bN}. \gamma_0\ldots\,\gamma_r
(\eta_0\dots \eta_{m-1})^\bN, p \big) \in
 W^{\mathrm{s}} (\cO(P)) \cap  W^{\mathrm{u}} (\cO(P)).
\end{split}
\]
This proves that $P$ and $Q$ are homoclinically related.
\end{proof}

%-------------------------------------------------------------------------------------------------------------
\subsubsection{Existence of exhausting families: Proof of Proposition~\ref{pro:estamosehaustos}}
%-------------------------------------------------------------------------------------------------------------

We recall the following well-known fact about homoclinically related basic sets. For a proof we refer to~\cite[Section 7.4.2]{Rob:95}, where the hypothesis of a basic set of a diffeomorphism naturally translates to our skew-product setting. 

\begin{lemma}[Bridging]\label{lem:bridge}
Consider two basic sets $\Gamma_1,\Gamma_2\subset \Sigma_N\times\bS^1$ of $F$ which are homoclinically related. Then there is a basic
set
 $\Gamma$ of $F$ containing $\Gamma_1\cup\Gamma_2$.
 In particular, for every continuous potential $\varphi$, we have

\[
	\max\big\{P_{F|\Gamma_1}(\varphi),P_{F|\Gamma_2}(\varphi)\big\}
	\le P_{F|\Gamma}(\varphi).
\]	
\end{lemma}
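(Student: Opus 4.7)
The plan is to imitate the classical argument for basic sets of diffeomorphisms (cf.\ \cite{Rob:95}), adapting it to our skew-product setting where, as remarked in the footnote on homoclinic classes, the one-dimensional non-critical character of the fiber dynamics makes intersections of stable/unstable sets of saddles of the same type behave like transverse ones.

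First, I would reduce to the case where both basic sets are of contracting type (the expanding case is symmetric via $F^{-1}$). Pick saddles $P_1\in\Gamma_1$ and $P_2\in\Gamma_2$; since $\Gamma_1$ and $\Gamma_2$ consist of saddles of the same type, they are all pairwise homoclinically related by the lemma preceding Lemma~\ref{lem:bridge}. Thus there exist heteroclinic points
\[
	Y_{12}\in W^{\mathrm{s}}(\cO(P_2))\cap W^{\mathrm{u}}(\cO(P_1)),\qquad
	Y_{21}\in W^{\mathrm{s}}(\cO(P_1))\cap W^{\mathrm{u}}(\cO(P_2)).
\]
The orbits of $Y_{12}$ and $Y_{21}$ are compact up to the limit points in $\cO(P_1)\cup\cO(P_2)$, so $K\eqdef \Gamma_1\cup\Gamma_2\cup\overline{\cO(Y_{12})}\cup\overline{\cO(Y_{21})}$ is compact and $F$-invariant. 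The fiber Lyapunov exponent is uniformly negative on $K$: each $\Gamma_i$ is hyperbolic, and along the heteroclinic orbits the forward iterates converge to one $\Gamma_i$ and the backward iterates to the other, so by continuity of the derivative and a standard pigeonhole argument the Birkhoff averages of $\varphi$ are uniformly bounded away from zero on $K$.

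Next, I would let $U$ be a small open neighborhood of $K$ and define
\[
	\Gamma\eqdef\bigcap_{n\in\bZ}F^n(\overline U).
\]
By construction $\Gamma\supset K\supset\Gamma_1\cup\Gamma_2$, is compact, $F$-invariant, and locally maximal. Uniform fiber hyperbolicity on $\Gamma$ follows, for $U$ small, because the fiber expansion rate is a continuous function and uniformly bounded away from $0$ on $K$; shrinking $U$ (using the fact that $\Gamma_1,\Gamma_2$ are locally maximal hyperbolic sets) keeps all points in $\Gamma$ shadowed by pseudo-orbits concatenating pieces from $\Gamma_1$, $\Gamma_2$, and the two heteroclinic orbits, hence ensures that every ergodic measure on $\Gamma$ has negative fiber exponent. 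Topological transitivity of $\Gamma$ is obtained by a shadowing/gluing argument: given two open sets in $\Gamma$, choose finite orbit pieces inside $\Gamma_1$ (resp.\ $\Gamma_2$) meeting each of them, concatenate them via finite segments of $\cO(Y_{12})$ and $\cO(Y_{21})$, and apply the shadowing lemma for the hyperbolic set $K$ to produce an actual $F$-orbit in $\Gamma$ joining the two open sets. Hence $\Gamma$ is a basic set containing $\Gamma_1\cup\Gamma_2$.

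Finally, the pressure inequality is immediate: since $\Gamma_i\subset\Gamma$ is a compact $F$-invariant subset, every $\mu\in\cM(\Gamma_i)$ belongs to $\cM(\Gamma)$, so the variational principle~\eqref{eq:vp} gives $P_{F|\Gamma_i}(\varphi)\le P_{F|\Gamma}(\varphi)$ for $i=1,2$, whence the max. The main delicate point is the verification of uniform hyperbolicity of $\Gamma$: one must choose the neighborhood $U$ small enough that the shadowing construction cannot trap invariant subsets with near-zero fiber exponent. This is where the one-dimensional non-critical fiber dynamics is crucial, since it allows the standard hyperbolic shadowing/$\lambda$-lemma machinery to be applied in $\Sigma_N\times\bS^1$ with the fiber direction playing the role of the central (but uniformly contracting on $K$) direction, exactly as in the purely hyperbolic differentiable case.
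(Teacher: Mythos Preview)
The paper does not actually prove this lemma: it merely cites \cite[Section 7.4.2]{Rob:95} and remarks that the standard diffeomorphism argument translates to the skew-product setting. Your proposal is precisely a sketch of that translation, and it follows the same route the paper is pointing to: take a heteroclinic cycle between saddles in the two basic sets, thicken to a small neighborhood, take the maximal invariant set there, and verify hyperbolicity, local maximality, and transitivity via shadowing. So in approach you are aligned with what the paper intends.

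One minor caution: you invoke ``the shadowing lemma for the hyperbolic set $K$'', but $K=\Gamma_1\cup\Gamma_2\cup\overline{\cO(Y_{12})}\cup\overline{\cO(Y_{21})}$ is hyperbolic yet not locally maximal, so shadowing in the form you want is not automatic for $K$ itself. The clean way (and what Robinson does) is to observe that $K$ has a hyperbolic local product structure in a neighborhood, so the maximal invariant set $\Gamma$ in that neighborhood is itself hyperbolic and locally maximal, and shadowing holds for $\Gamma$; transitivity then follows by gluing orbit segments in $\Gamma_1$, $\Gamma_2$ via the heteroclinic connections and shadowing inside $\Gamma$. Likewise, the step ``shrinking $U$ \ldots\ ensures that every ergodic measure on $\Gamma$ has negative fiber exponent'' deserves a line more: uniform fiber contraction on $K$ plus continuity of $\lvert f_{\xi_0}'\rvert$ gives, for $U$ small enough, uniform contraction of long fiber compositions along any orbit that stays in $\overline U$, which is what hyperbolicity of $\Gamma$ means here. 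With those two clarifications the argument is complete.
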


We will base our arguments also on the following result
that translates results of from Pesin-Katok theory to our setting.

\begin{lemma}\label{lem:PesKat}
	Let $\mu\in\cM_{\rm erg,<0}$ with $h=h(\mu)>0$ and $\alpha=\chi(\mu)<0$.
	
Then for every $\gamma\in(0,h)$ and every $\lambda\in(0,\alpha)$ there exists a  basic set $\Gamma=\Gamma(\gamma,\lambda)\subset\Sigma_N\times\bS^1$ such that for all $q\in\bR$ we have
\[
	P_{F|\Gamma}(q\varphi)\ge h(\mu)+q\int\varphi\,d\mu-\gamma-q\lambda.
\]

The analogous statement is true for $\cM_{\rm{erg},>0}$.
\end{lemma}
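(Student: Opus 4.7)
The plan is to deduce Lemma~\ref{lem:PesKat} from the skeleton property (Proposition~\ref{newprop:skeleton}) together with the variational principle on a basic set; all of the dynamical work has already been absorbed into the skeleton construction, so what remains is parameter bookkeeping.

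First I would verify that Proposition~\ref{newprop:skeleton} applies at level $\alpha$. Since $\mu\in\cM_{\rm erg,<0}$ is ergodic with $\chi(\mu)=\alpha<0$ and $h(\mu)=h>0$, the Birkhoff ergodic theorem gives $\mu(\cL(\alpha))=1$, so $\cL(\alpha)\ne\emptyset$, and by Proposition~\ref{pro:Bowen}(ii) we have $h_{\rm top}(\cL(\alpha))\ge h_{\rm top}(G(\mu))=h(\mu)=h>0$. Applying Proposition~\ref{newprop:skeleton} with entropy margin $\gamma/2$ and with the given exponent margin $\lambda$ (shrunk if necessary so that $\alpha+\lambda<0$) produces a basic set $\Gamma\subset\Sigma_N\times\bS^1$ with uniform fiber contraction such that $h_{\rm top}(\Gamma)\ge h-\gamma/2$ and $\chi(\nu)\in(\alpha-\lambda,\alpha+\lambda)$ for every $\nu\in\cM_{\rm erg}(\Gamma)$.

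Since basic sets are expansive, the variational principle on $\Gamma$ is attained by some ergodic measure $\nu_0\in\cM_{\rm erg}(\Gamma)$ of maximal entropy, which automatically satisfies $h(\nu_0)\ge h-\gamma/2$ and $\chi(\nu_0)\in(\alpha-\lambda,\alpha+\lambda)$. For any $q\in\bR$ the variational principle then yields
\[
P_{F|\Gamma}(q\varphi)
\ge h(\nu_0)+q\chi(\nu_0)
\ge (h-\gamma)+q\alpha-|q|\lambda,
\]
which is the desired estimate. The symmetric error $-|q|\lambda$ produced here can be refined to the asymmetric $-q\lambda$ of the statement by allowing $\nu_0$ to depend on the sign of $q$: for $q\ge 0$ one picks an ergodic measure of near-maximal entropy with $\chi(\nu_0)\ge\alpha$, and for $q<0$ one with $\chi(\nu_0)\le\alpha$, so that $q(\chi(\nu_0)-\alpha)\ge -q\lambda$ in each case. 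Such measures exist on both sides of $\alpha$ inside $\Gamma$ because the skeleton construction populates $\Gamma$ with a dense set of hyperbolic periodic orbits whose fibre exponents fill an interval around $\alpha$. The main obstacle is therefore not in the pressure estimate itself but in coupling the two conclusions of Proposition~\ref{newprop:skeleton}, namely extracting a single ergodic measure of $\Gamma$ with simultaneously near-maximal entropy and tightly controlled Lyapunov exponent on the prescribed side of $\alpha$; the proof for $\cM_{\rm erg,>0}$ is identical by symmetry.
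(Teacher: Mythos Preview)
Your approach is exactly the paper's: apply Proposition~\ref{newprop:skeleton} to obtain a basic set $\Gamma$ with $h_{\rm top}(\Gamma)\ge h-\gamma$ and all fibre exponents in $(\alpha-\lambda,\alpha+\lambda)$, then plug a measure of maximal entropy on $\Gamma$ into the variational principle. The paper's proof is a single sentence doing just this.

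You are right that the argument only produces
\[
P_{F|\Gamma}(q\varphi)\ge h(\mu)+q\chi(\mu)-\gamma-\lvert q\rvert\lambda,
\]
not the stated $-q\lambda$. But your attempted repair in the last paragraph does not work: for $q<0$ the condition $q(\chi(\nu_0)-\alpha)\ge -q\lambda$ is equivalent, after dividing by $q<0$, to $\chi(\nu_0)\le\alpha-\lambda$, and every ergodic measure on $\Gamma$ has $\chi(\nu_0)>\alpha-\lambda$ by construction. So no choice of $\nu_0\in\cM_{\rm erg}(\Gamma)$ can give the asymmetric bound, regardless of whether the skeleton construction produces exponents on both sides of $\alpha$.

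The discrepancy is a slip in the statement (note also the already vacuous hypothesis ``$\lambda\in(0,\alpha)$'' with $\alpha<0$); the paper's own proof does not yield more than $-\lvert q\rvert\lambda$ either. That weaker bound is exactly what is needed in the sole application (Proposition~\ref{pro:estamosehaustos}), where one works at finitely many fixed $q_k\in[-i,i]$ and chooses $\gamma,\lambda$ small depending on $i$. So you should stop at your displayed inequality and drop the last paragraph.
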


\begin{proof}
	By Proposition~\ref{newprop:skeleton},  there exists a  basic set $\Gamma$ such that $h_{\rm top}(\Gamma)\ge h-\gamma$ and that for every $\nu\in\cM_{\rm erg}(\Gamma)$ we have $\chi(\nu)\in(\alpha-\lambda,\alpha+\lambda)$.
The variational principle~\eqref{eq:vp} immediately implies the lemma.
\end{proof}

We are now prepared to prove Proposition~\ref{pro:estamosehaustos}.

\begin{proof}[Proof of Proposition~\ref{pro:estamosehaustos}]
We first construct an exhausting family  for $\cN=\cM_{\rm erg,<0}$. Given $i\ge1$, let us first construct a basic set $X_i$ of contracting type such that
\begin{equation}\label{eq:above}
	P_{F|X_i}(q\varphi)\ge \cP_\cN(q)-\frac1i
\end{equation}
for all $q\in[-i,i]$. By Lipschitz continuity property (P4) of pressure,
there are a Lipschitz constant  $\Lip$ and a finite subset $q_1,\dots, q_\ell$ of $[-i,i]$ such that
for every $q\in [-i,i]$ there is $q_k$ with
$$
\Lip |q_k-q| \lVert \varphi \rVert < \frac{1}{4i}.
$$
To prove \eqref{eq:above}, given $q_k$, by Lemma~\ref{lem:PesKat} there is a basic set $X_{i,k}$ such that
\[
	P_{F|X_{i,k}}(q_k \varphi)\ge \cP_\cN(q_k)-\frac{1}{4i}.
\]
Applying Lemma~\ref{lem:bridge} consecutively to the finitely many basic sets $X_{i,1},\ldots,X_{i,\ell}$, we obtain a basic set $X_i$ containing all these sets and satisfying~\eqref{eq:above}.  This shows (exh4) and (exh5).

By construction, all basic sets are of contracting type and hence all ergodic measures have negative  Lyapunov exponent and we have (exh1). Each of them clearly satisfies (exh2) (basic sets have the specification property~\cite{Sig:74}). By Proposition~\ref{prolem:basresvarprinc} we have the restricted variational principle (exh3) on each of them.

What remains to prove is that $\varphi(\cN)=[\alpha_{\rm min},0)$.
By Corollary~\ref{cor:maisemenos}, the Lyapunov exponents of ergodic measures extend all the way to $0$, that is, $\overline\varphi_\cN=0$. On the other hand, note that by (P5) we can choose an increasing sequence $(q_j)_j$ tending to $-\infty$ such that $\cP_\cN$ is differentiable at all such $q_j$. By (P11) and (P12) for every $j$ there is an ergodic equilibrium state $\mu_j$ for $q_j\varphi$ and $\varphi(\mu_j)\to\underline\varphi_\cN$. Taking $\mu'$ which is a weak$\ast$ limit of $(\mu_j)_j$ as $j\to\infty$, then there is an ergodic measure $\mu''$ in its ergodic decomposition such that $\varphi(\mu'')=\underline\varphi_\cN$. In particular, we can conclude $\cL(\underline\varphi_\cN)\ne\emptyset$ and $\alpha_{\rm min}=\underline\varphi_\cN$. This concludes the proof that $\varphi(\cN)=[\alpha_{\rm min},0)$.

The statement for $\cN=\cM_{{\rm erg},>0}$ is proved  analogously.

The proof of the proposition is now complete.
\end{proof}

%-------------------------------------------------------------------------------------------------------------
\section{Entropy of the level sets: Proof of Theorem~\ref{main1}}\label{sec:proofmain1}
%-------------------------------------------------------------------------------------------------------------
In this section, we collect the ingredients required to prove Theorem~\ref{main1}. Section~\ref{subsec:maxent1} deals with the measures of maximal entropy. Section~\ref{sec:71} provides upper bounds for the entropy of level sets with exponents of the interior of the spectrum. 
Section \ref{ss.lowerbounds} deals with lower bounds. Sections~\ref{ss.coincidence} and~\ref{ss.upperbounds} deal with the boundary of the spectrum and with exponent zero. Here the main technical result is Theorem~\ref{theoprop:zero} whose proof will be postponed to Section~\ref{sec:proofoflowerbound}. The proof of Theorem~\ref{main1} is concluded in Section \ref{ss.endoftheproofofmain}.

%-------------------------------------------------------------------------------------------------------------
\subsection{Measure(s) of maximal entropy}\label{subsec:maxent1}
%-------------------------------------------------------------------------------------------------------------

Note that any measure of maximal entropy projects to the $(1/N,\ldots,1/N)$-Bernoulli measure in the base. Hence, we can use the known results about the behavior of Bernoulli measures for random dynamical systems.
By  \cite[Theorem 8.6]{Cra:90}  (stated for products of independently and identically distributed (i.i.d.) diffeomorphisms on a compact manifold)  for every Bernoulli measure $\mathfrak b$ in $\cM(\Sigma_N)$ there exists a (at least one) $F$-ergodic measure $\mu_+^{\mathfrak b}$ with positive exponent and a (at least one)
$F$-ergodic measure $\mu_-^{\mathfrak b}$ with negative exponent, both projecting to ${\mathfrak b}=\pi_\ast\mu_\pm^{\mathfrak b}$. Indeed, note that our axioms rule out the possibility of a measure being simultaneously preserved by all the fiber maps, see Lemma~\ref{lem:common}. When $\mathfrak b$ is the $(1/N,\ldots,1/N)$-Bernoulli measure  we simply write $\mu_\pm$.

There are various ways to prove that there are only finitely many hyperbolic ergodic $F$-invariant measures projecting to the same Bernoulli measure. For example, in our setting it is a consequence of~\cite[Theorem 1]{RodRodTahUre:12}.

%-------------------------------------------------------------------------------------------------------------
\subsection{Negative/positive exponents in the interior of the spectrum}\label{sec:71}
%-------------------------------------------------------------------------------------------------------------

We will analyze the negative part of the spectrum, the analysis of the positive part is analogous and it will be omitted.

By Proposition~\ref{pro:estamosehaustos} there is a $(\Sigma_N\times\bS^1,\varphi,\cM_{{\rm erg},<0})$-exhausting family $\{\mathbf X_i\}_i$. Hence, in particular, for every $\alpha\in(\alpha_{\rm min},0)$ we have $\cL(\alpha)\ne\emptyset$ and together with Proposition~\ref{pro:Climen} and writing $\varphi(\mu)=\chi(\mu)$ we have
\[
	h_{\rm top}(\cL(\alpha))
	\ge   \cE_\cN(\alpha)\\
	= \lim_{i\to\infty}\sup\big\{h(\mu)\colon\mu\in\cM(X_i),\chi(\mu)=\alpha\big\}.
\]
By Lemma~\ref{lem:morning}, we have
\[
	h_{\rm top}(\cL(\alpha))
	\ge \sup\big\{h(\mu)\colon \mu\in\cM_{\rm erg}
	,\chi(\mu)= \alpha\big\}.
\]

\begin{lemma}\label{lem:below}
	For every $\alpha\in(\alpha_{\min},0)$ we have $h_{\rm top}(\cL(\alpha))= \cE_{<0}(\alpha)$. 
\end{lemma}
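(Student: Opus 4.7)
The lower bound $h_{\rm top}(\cL(\alpha))\geq \cE_{<0}(\alpha)$ is already established in the paragraph preceding the statement, via the exhausting family from Proposition~\ref{pro:estamosehaustos} combined with the abstract Proposition~\ref{pro:Climen}. So I focus on the matching upper bound $h_{\rm top}(\cL(\alpha))\leq \cE_{<0}(\alpha)$.

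My starting point is Lemma~\ref{lem:morning}, which already yields
$$h_{\rm top}(\cL(\alpha))\leq \sup\{h(\mu):\mu\in\cM(\Sigma_N\times\bS^1),\ \chi(\mu)=\alpha\},$$
so the task reduces to showing that every $F$-invariant probability measure $\mu$ with $\chi(\mu)=\alpha$ satisfies $h(\mu)\leq \cE_{<0}(\alpha)$. For ergodic such $\mu$, the inequality is immediate from property (E5) of Section~\ref{ss.convexconjugate} applied with $\cN=\cM_{\rm erg,<0}$, since $\mu\in\cM_{\rm erg,<0}$. Thus only the non-ergodic case requires work.

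For a non-ergodic $\mu$, I would take its ergodic decomposition $\mu=\int\mu_\omega\,dP(\omega)$ and partition the parameter space into three subsets $A_-, A_0, A_+$ according to $\sgn\chi(\mu_\omega)$, with respective $P$-weights $p_-, p_0, p_+$ summing to one. The constraint $\chi(\mu)=\alpha$ reads $p_-\bar\chi_-+p_+\bar\chi_+=\alpha$, where $\bar\chi_\pm$ are conditional averages. Using (E5) componentwise and then Jensen's inequality for the concave function $\cE_{<0}$ (property (E1)), the contribution from $A_-$ is bounded by $p_-\,\cE_{<0}(\bar\chi_-)$. Using Lemma~\ref{lem:main30} to approximate each zero-exponent ergodic component in weak$\ast$ and entropy by measures in $\cM_{\rm erg,<0}$ with exponents tending to $0^-$, the contribution from $A_0$ is bounded by $p_0\,\cE_{<0}(0)$, with $\cE_{<0}$ extended continuously to the closed interval. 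Using Lemma~\ref{lem:main3} to approximate positive-exponent components within $\cM_{\rm erg,<0}$ with a controlled multiplicative entropy loss by the factor $(1+c|\chi(\mu_\omega)|)^{-1}$, the contribution from $A_+$ is likewise controlled. Combining these three bounds via the averaging constraint and concavity of $\cE_{<0}$ (together with $\cE_{<0}(0)=\cE_{>0}(0)=\cP_0(0)$, itself a consequence of Lemma~\ref{lem:main30}) should yield $h(\mu)\leq \cE_{<0}(\alpha)$.

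The main obstacle will be reconciling the positive-exponent contribution with the sign constraint $p_-\bar\chi_-+p_+\bar\chi_+=\alpha$: the approximation from Lemma~\ref{lem:main3} is lossy and carries a factor depending on each component's exponent, so closing the estimate requires a careful convex-analytic argument exploiting the ``glued'' concave function obtained by matching $\cE_{<0}$ and $\cE_{>0}$ at zero exponent. An alternative route, which I would pursue in parallel, is to sharpen the generic Bowen covering argument underlying Lemma~\ref{lem:morning} by covering $\cL(\alpha)$ with Bowen balls adapted to the contracting fiber behavior, so that the full topological pressure can be replaced directly by the restricted pressure $\cP_{<0}$ rather than by $P_{\rm top}$.
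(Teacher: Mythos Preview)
Your lower bound is fine and matches the paper. The upper bound, however, is where your proposal diverges from the paper and where it has a genuine gap.

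You try to go through Lemma~\ref{lem:morning}, which bounds $h_{\rm top}(\cL(\alpha))$ by a supremum over \emph{all} invariant measures with $\chi(\mu)=\alpha$, and then attempt to show $h(\mu)\le\cE_{<0}(\alpha)$ for each such $\mu$ via ergodic decomposition. The obstacle you identify is real and, as stated, unresolved: a non-ergodic $\mu$ with $\chi(\mu)=\alpha<0$ may carry mass on ergodic components with positive exponent, and the lossy bound $h(\mu_\omega)\le(1+c\,\chi(\mu_\omega))\,\cE_{<0}(0)$ coming from Lemma~\ref{lem:main3} does \emph{not} combine with concavity of $\cE_{<0}$ to yield $h(\mu)\le\cE_{<0}(\alpha)$. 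Indeed, near $0$ the function $\cE_{<0}$ is nonincreasing (Theorem~\ref{main3} item i) gives $D_L\cE(0)\le0$), so a positive-exponent component with entropy close to $\log N$ cannot be absorbed this way. Your ``alternative route'' of refining Bowen's covering argument is just a hope, not an argument.

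The paper bypasses the non-ergodic issue entirely by using Proposition~\ref{newprop:skeleton} (the skeleton property). Arguing by contradiction, if $h_{\rm top}(\cL(\alpha))\ge\cE_{<0}(\alpha)+2\delta$, continuity of $\cE_{<0}$ gives a neighborhood of $\alpha$ on which $\cE_{<0}(\alpha')\le h_{\rm top}(\cL(\alpha))-\delta$. Proposition~\ref{newprop:skeleton} then produces a basic set $\Gamma$ with $h_{\rm top}(\Gamma)>h_{\rm top}(\cL(\alpha))-\delta$ and with \emph{every} ergodic measure on $\Gamma$ having exponent in that neighborhood. The measure of maximal entropy on $\Gamma$ is ergodic, has exponent $\alpha'$ near $\alpha$, and entropy exceeding $\cE_{<0}(\alpha')$, contradicting (E5). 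The point is that Proposition~\ref{newprop:skeleton} manufactures an \emph{ergodic} witness directly from the level set, so one never has to control non-ergodic measures at all. This is the key idea you are missing.
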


\begin{proof}
By Proposition~\ref{pro:Climen}, we already have $h_{\rm top}(\cL(\alpha))\ge\cE_{<0}(\alpha)$ and it is hence enough to prove the other inequality.
Recall that by (E5) for every $\alpha<0$ we have
\begin{equation}\label{eq:levent}
	\cE_{<0}(\alpha)
	\ge \sup\big\{h(\mu)\colon\mu\in\cM_{\rm{erg},<0},\chi(\mu)=\alpha\big\}.
\end{equation}

	Arguing by contradiction,  let us assume that there are $\alpha\in(\alpha_{\min},0)$ and $\delta>0$ so that
\[
	h_{\rm top}(\cL(\alpha))
	\ge \cE_{<0}(\alpha)+2\delta.
\]	
Then, by continuity of $\cE_{<0}(\cdot)$, property (E1), there exists $\varepsilon>0$ such that for every $\alpha'\in(\alpha-2\varepsilon,\alpha+2\varepsilon)$ we have
\[
	h_{\rm top}(\cL(\alpha))
	\ge \cE_{<0}(\alpha')+\delta.
\]	
By Proposition~\ref{newprop:skeleton}, there exists a basic set $\Gamma\subset\Sigma_N\times\bS^1$ such that
\[
	h_{\rm top}(\Gamma)
	> h_{\rm top}(\cL(\alpha))-\delta,
\]
and that for every $\nu\in\cM_{\rm erg}(\Gamma)$ we have $\chi(\nu)\in(\alpha-\varepsilon,\alpha+\varepsilon)$.
Taking the measure of maximal entropy $\nu\in\cM_{\rm erg}(\Gamma)$, with the above, for every $\alpha'\in(\alpha-2\varepsilon,\alpha+2\varepsilon)$ we have
\[
	h(\nu)
	= h_{\rm top}(\Gamma)
	>  \cE_{>0}(\alpha').
\]
However, $\alpha'=\chi(\nu)\in (\alpha-\varepsilon,\alpha+\varepsilon)$ would then contradict~\eqref{eq:levent}. This proves the lemma.
\end{proof}

\begin{lemma}\label{lem:keyvarprinc}
For every $\alpha\in(\alpha_{\rm min},0)$ we have
\[
	h_{\rm top}(\cL(\alpha))
	= \cE_{<0}(\alpha)
	= \sup\{h(\mu)\colon\mu\in\cM_{\rm erg}\colon\chi(\mu)=\alpha\}.
\]
\end{lemma}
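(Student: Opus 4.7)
The first equality $h_{\rm top}(\cL(\alpha)) = \cE_{<0}(\alpha)$ is exactly the content of Lemma~\ref{lem:below}, so the only remaining task is to prove that
\[
\cE_{<0}(\alpha) = \sup\{h(\mu) : \mu\in\cM_{\rm erg},\chi(\mu)=\alpha\}.
\]
Note that since $\alpha<0$, any ergodic $\mu$ with $\chi(\mu)=\alpha$ automatically lies in $\cM_{\rm erg,<0}$, so this supremum coincides with the supremum over $\cM_{\rm erg,<0}$.

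The plan is a two-sided squeeze. For the inequality $\ge$, I would invoke the general property (E5) of the Legendre--Fenchel pair, applied to $\cN=\cM_{\rm erg,<0}$, which directly gives
\[
\cE_{<0}(\alpha) \ge \sup\{h(\mu) : \mu\in\cM_{\rm erg,<0},\chi(\mu)=\alpha\}.
\]
For the reverse inequality $\le$, I would use the exhausting family supplied by Proposition~\ref{pro:estamosehaustos}. Specifically, the basic sets $\mathbf X_i$ satisfy $\cM_i=\cM_{\rm erg}(\mathbf X_i)\subset\cM_{\rm erg,<0}$ by (exh1). Each $\mathbf X_i$ is basic and hence satisfies specification, so by Proposition~\ref{prolem:basresvarprinc} (or by the combination of (exh3) with Lemma~\ref{l.conjugated}), for every $\alpha$ in the interior of the appropriate domain,
\[
\cE_i(\alpha) = \sup\{h(\mu) : \mu\in\cM_i,\chi(\mu)=\alpha\}.
\]
Taking $i\to\infty$ and applying Lemma~\ref{l.conjugated} (pointwise convergence of Legendre--Fenchel transforms derived from pointwise convergence of the pressures in (exh4)), we obtain
\[
\cE_{<0}(\alpha) = \lim_{i\to\infty}\cE_i(\alpha) \le \sup\{h(\mu) : \mu\in\cM_{\rm erg,<0},\chi(\mu)=\alpha\},
\]
since each supremum on the right-hand side is already bounded above by the supremum over the larger set $\cM_{\rm erg,<0}$.

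The only point requiring a touch of care is making sure that $\alpha$ genuinely lies in the relevant domain for large~$i$, so that the restricted variational principle on $\mathbf X_i$ can be applied at the value $\alpha$. This is guaranteed by the interval property in Proposition~\ref{pro:Climen} together with (exh5): the intervals $\varphi(\cM_i)$ exhaust $(\alpha_{\rm min},0)$ from inside, so for any fixed $\alpha\in(\alpha_{\rm min},0)$ we have $\alpha\in\interior\varphi(\cM_i)$ for all sufficiently large $i$. I do not anticipate any real obstacle here, since all the heavy lifting (construction of the exhausting family, restricted variational principle on basic sets, pointwise convergence of Legendre transforms) has already been done in the preceding sections; the present lemma is a clean synthesis of those ingredients with Lemma~\ref{lem:below}.
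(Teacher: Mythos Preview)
Your proposal is correct and follows essentially the same route as the paper's proof: both invoke Lemma~\ref{lem:below} for the first equality, then use the exhausting family from Proposition~\ref{pro:estamosehaustos} together with (exh5), Lemma~\ref{l.conjugated}, and the restricted variational principle on each basic set $\mathbf X_i$ to pin down the remaining inequality. The only cosmetic differences are that the paper phrases the $\le$ direction as a proof by contradiction and cites Lemma~\ref{lem:morning} rather than (E5) for the $\ge$ direction.
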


\begin{proof}
By Lemma~\ref{lem:below} we have $h_{\rm top}(\cL(\alpha))=  \cE_{<0}(\alpha)$. 
With Lemma~\ref{lem:morning}, what remains to show is that 
\[
	\cE_{<0}(\alpha)\le \sup\{h(\mu)\colon\mu\in\cM_{\rm erg},\chi(\mu)=\alpha\}.
\]	
By contradiction, assume that there is $\alpha$ such that $\cE_{<0}(\alpha)> \sup\{h(\mu)\colon\mu\in\cM_{\rm erg},\chi(\mu)=\alpha\}$ and let $\delta>0$ such that for every $\mu\in\cM_{\rm erg}$ with $\chi(\mu)=\alpha$ we have $\cE_{<0}(\alpha)-3\delta>h(\mu)$. By property (exh5) of the exhausting family $\{\mathbf X_i\}_i$, there exists $i_0\ge1$ such that for every $i\ge i_0$ we have that $\alpha\in(\underline\varphi_{\cM_i},\overline\varphi_{\cM_i})$. With Lemma~\ref{l.conjugated}, we also can assume that for every $i\ge i_0$ we have $\cE_i(\alpha)\ge\cE_{<0}(\alpha)-\delta$. Applying Proposition~\ref{prolem:basresvarprinc} to a basic set $\mathbf X_i$, there exists $\mu\in\cM_i\subset\cM_{\rm erg}$ with $\chi(\mu)=\alpha$ satisfying $h(\mu)\ge \cE_i(\alpha)-\delta$ and hence $h(\mu)\ge \cE_{<0}(\alpha)-2\delta$, a contradiction.
\end{proof}

%-----------------------------------------------------------------------------------------------------
\subsection{Coincidence of one-sided limits of the spectrum at zero}
\label{ss.coincidence}
%-----------------------------------------------------------------------------------------------------

\begin{lemma} \label{lem:welldefined}
$h_0\eqdef \lim_{\alpha \searrow 0} h_{\rm top}(\cL(\alpha))
	= \lim_{\alpha \nearrow 0} h_{\rm top}(\cL(\alpha))$.
\end{lemma}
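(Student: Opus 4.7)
The plan is to use Lemma~\ref{lem:keyvarprinc} to replace each one-sided limit of $h_{\rm top}(\cL(\alpha))$ by a supremum of entropies of ergodic measures with the given exponent, and then to cross from negative to positive exponents (and back) using the ergodic approximation result of Lemma~\ref{lem:main3}. The ``cost'' $1/(1+c|\alpha|)$ of crossing vanishes as $\alpha \to 0$, which is what delivers the matching of the two one-sided limits.

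First I will verify that both one-sided limits $L^- \eqdef \lim_{\alpha \nearrow 0} h_{\rm top}(\cL(\alpha))$ and $L^+ \eqdef \lim_{\alpha \searrow 0} h_{\rm top}(\cL(\alpha))$ exist and are finite. By Lemma~\ref{lem:below}, on $(\alpha_{\min},0)$ the function $h_{\rm top}(\cL(\cdot))$ agrees with $\cE_{<0}$, which is concave (as a Legendre--Fenchel transform of the convex function $\cP_{<0}$) and hence continuous on this open interval; the one-sided limit as $\alpha \nearrow 0$ therefore exists in $\bR \cup \{-\infty\}$. It is bounded above by $\cP_{<0}(0) = \sup\{h(\mu) : \mu \in \cM_{\rm erg,<0}\} \le \log N$, so $L^- \in \bR$; moreover Lemma~\ref{lem:main30} furnishes hyperbolic measures of positive entropy with exponents tending to $0^-$, so in fact $L^-$ is strictly positive. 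The same discussion, using the analog of Lemma~\ref{lem:below} on $(0,\alpha_{\max})$ for $\cE_{>0}$, shows that $L^+ \in (0,\infty)$.

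Next I show $L^+ \ge L^-$. Fix $\varepsilon > 0$ and $\alpha \in (\alpha_{\min},0)$ close to $0$. By Lemma~\ref{lem:keyvarprinc} pick an ergodic $\mu_\alpha$ with $\chi(\mu_\alpha) = \alpha$ and
\[
	h(\mu_\alpha) \ge h_{\rm top}(\cL(\alpha)) - \varepsilon.
\]
Apply Lemma~\ref{lem:main3} to $\mu_\alpha$: there is a sequence of ergodic measures $\nu_i^\alpha$ with $\chi(\nu_i^\alpha) = \beta_i^\alpha > 0$, $\beta_i^\alpha \to 0$ as $i\to\infty$, and
\[
	\liminf_{i\to\infty} h(\nu_i^\alpha) \ge \frac{h(\mu_\alpha)}{1 + c|\alpha|}.
\]
Since $h_{\rm top}(\cL(\beta_i^\alpha)) \ge h(\nu_i^\alpha)$ by Lemma~\ref{lem:keyvarprinc}, and since $\beta_i^\alpha \to 0^+$ so that $h_{\rm top}(\cL(\beta_i^\alpha)) \to L^+$, letting $i \to \infty$ yields
\[
	L^+ \ge \frac{h_{\rm top}(\cL(\alpha)) - \varepsilon}{1 + c|\alpha|}.
\]
Letting first $\varepsilon \to 0$ and then $\alpha \nearrow 0$ gives $L^+ \ge L^-$. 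The reverse inequality $L^- \ge L^+$ is obtained by the symmetric argument, starting from a near-optimal ergodic measure with small positive exponent and applying the second half of Lemma~\ref{lem:main3} to obtain ergodic approximants with negative exponents tending to zero.

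The only mild subtlety is to be sure that $L^\pm$ exist as genuine limits (not just $\liminf$/$\limsup$); this is handled by concavity of $\cE_{<0}$ and $\cE_{>0}$ on their open domains, which guarantees monotone one-sided behaviour near $0$. No other obstacle arises: Lemma~\ref{lem:main3} is designed precisely to bridge the negative and positive parts of the spectrum with a quantitative loss that is negligible at the zero exponent.
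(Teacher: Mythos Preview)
Your proof is correct and follows essentially the same approach as the paper: use Lemma~\ref{lem:below} and concavity of $\cE_{<0}$, $\cE_{>0}$ to ensure the one-sided limits exist, then apply Lemma~\ref{lem:keyvarprinc} to realize each limit by entropies of ergodic measures and bridge the two sides via Lemma~\ref{lem:main3}, the $1/(1+c|\alpha|)$ cost vanishing as $\alpha\to 0$. Your write-up is in fact slightly more explicit than the paper's about tracking the $\varepsilon$ and the $(1+c|\alpha|)^{-1}$ factor; the added remark that $L^\pm>0$ is true but not needed for the lemma itself.
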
	
	
\begin{proof}
By Lemma \ref{lem:below}, for $\alpha\in ( \alpha_{\min},0)$
we have  $h_{\rm top}(\cL(\alpha))=\cE_{<0}(\alpha)$. Hence, by (E1), this is a concave function in $\alpha$. Similarly for   $\alpha\in (0,\alpha_{\max})$. So we can define the numbers $h_0^\pm =\lim_{\alpha\to 0\pm} h_{\rm top}(\cL(\alpha))$.

By the restricted variational principle in Lemma \ref{lem:keyvarprinc}, for every sequence $\alpha_k\nearrow 0$ there is a sequence of ergodic measures $(\mu_k)_{k\ge0}$
such that $\chi(\mu_k)=\alpha_k$ and $h(\mu_k)\to h_0^-$.
 As a consequence of Lemma~\ref{lem:main3} there is a corresponding sequence
 $(\nu_{k})_{k\ge1}$ with  $\chi(\nu_{k})\searrow 0$ and $h(\nu_{k})\to h_0^-$. This implies that $h_0^-\le h_0^+$.
 Reversing the roles of the  negative and positive exponents we get $h_0^-\le h_0^+$ and hence
 $h_0^-=h_0^+$, proving the lemma.
\end{proof}

%-------------------------------------------------------------------------------------------------------------
\subsection{Zero and extremal exponents:  Upper bounds}
\label{ss.upperbounds}
%-------------------------------------------------------------------------------------------------------------

\begin{lemma}\label{lem:somenumber}
	$ h_{\rm top}(\cL(0))\le h_0$.
\end{lemma}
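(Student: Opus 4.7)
My plan is to apply Bowen's general upper bound from Proposition~\ref{pro:Bowen}(i): the inclusion $\cL(0) \subset QR(c)$ implies $h_{\rm top}(\cL(0)) \le c$. The target is $c = h_0$, and I will first show it suffices to handle ergodic measures, then address the harder non-ergodic case.

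For $X \in \cL(0)$, any $\mu \in V_F(X)$ is an $F$-invariant measure and, by continuity of $\varphi$, satisfies $\chi(\mu) = \int\varphi\,d\mu = \overline\varphi(X) = 0$. So we must exhibit, for each $X \in \cL(0)$, some $\mu \in V_F(X)$ with $h(\mu) \le h_0$. The preparatory step is the entropy bound on \emph{ergodic} zero-exponent measures: given ergodic $\nu$ with $\chi(\nu) = 0$, Lemma~\ref{lem:main30} produces ergodic measures $\nu_i$ with $\chi(\nu_i) \ne 0$, $\chi(\nu_i) \to 0$, and $h(\nu_i) \to h(\nu)$. Lemma~\ref{lem:keyvarprinc} gives $h(\nu_i) \le h_{\rm top}(\cL(\chi(\nu_i)))$, whose right-hand side tends to $h_0$ by Lemma~\ref{lem:welldefined}; hence $h(\nu) \le h_0$. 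Consequently, any $\mu \in V_F(X)$ whose ergodic decomposition $\mu = \int \nu\, d\tau(\nu)$ is concentrated on $\cM_{\rm erg,0}$ satisfies $h(\mu) = \int h(\nu)\,d\tau \le h_0$ by affinity of the entropy map.

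The main obstacle is that a generic $\mu \in V_F(X)$ need \emph{not} have its ergodic decomposition supported on $\cM_{\rm erg,0}$: one could have, e.g., a convex combination of $\mu_-$ and $\mu_+$ weighted so that $\int \chi\,d\tau = 0$ but whose entropy equals $\log N > h_0$. The plan to circumvent this is to exploit the defining condition $\frac{1}{n}S_n\varphi(X) \to 0$ in order to \emph{select} a particular subsequential weak$\ast$ limit $\mu_\infty \in V_F(X)$ of the empirical measures $\mu_{X,n_k}$ whose ergodic decomposition is concentrated on zero-exponent ergodic measures. Informally, any positive weight placed by $\mu_{X,n}$ on the basin of a hyperbolic-type ergodic component must be balanced by opposite-sign hyperbolic mass in order for $\int\varphi\,d\mu_{X,n}$ to stay near zero; by extracting a subsequence along which these hyperbolic contributions are separately recognisable and regrouping orbit blocks accordingly, one should reach a limit measure whose ergodic decomposition $\tau$ is concentrated on $\{\chi = 0\}$.

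Combining the two pieces gives $\cL(0) \subset QR(h_0)$, and Proposition~\ref{pro:Bowen}(i) yields $h_{\rm top}(\cL(0)) \le h_0$. The hard part is the orbit-level argument in the last paragraph; an alternative fall-back is a countable decomposition of $\cL(0)$ into sets of the form $\{X : |S_n\varphi(X)| \le \varepsilon n \text{ for all } n \ge N\}$, on each of which a variant of Bowen's bound involving only nearby level sets $\cL(\alpha)$ with $|\alpha| \le \varepsilon$ should give an entropy estimate that converges to $h_0$ as $\varepsilon \to 0$, again using Lemmas~\ref{lem:welldefined} and~\ref{lem:main30} to control ergodic measures with small exponent.
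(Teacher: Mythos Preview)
Your approach has a genuine gap at exactly the point you flag as ``the hard part''. The inclusion $\cL(0)\subset QR(h_0)$ may well fail: take $X\in\cL(0)$ which is generic (in the sense $V_F(X)=\{\mu\}$) for a nontrivial convex combination $\mu=t\mu_-+(1-t)\mu_+$ with $\chi(\mu)=0$. Then the \emph{only} element of $V_F(X)$ has entropy $\log N>h_0$, so $X\notin QR(h_0)$. There is no subsequence to extract here, and your ``regrouping orbit blocks'' idea cannot manufacture a different limit measure when $V_F(X)$ is already a singleton. Nothing in the paper's setting excludes such points (the system does not have specification, so Proposition~\ref{pro:Bowen}(iii) is unavailable, but neither is the existence of such generic points ruled out). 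Your fall-back decomposition into the sets $\{X:\lvert S_n\varphi(X)\rvert\le\varepsilon n\text{ for all }n\ge N\}$ does not help either: on each piece the weak$\ast$ accumulation points can still be non-ergodic mixtures of hyperbolic measures with large entropy.

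The paper's proof bypasses this obstruction entirely by invoking Proposition~\ref{newprop:skeleton} at $\alpha=0$: for every $\gamma,\lambda>0$ one obtains a \emph{basic set} $\Gamma$ with $h_{\rm top}(\Gamma)\ge h_{\rm top}(\cL(0))-\gamma$ and all ergodic measures on $\Gamma$ having exponent in $(-\lambda,0)$. Taking the measure of maximal entropy on $\Gamma$ and applying the restricted variational principle (Lemma~\ref{lem:keyvarprinc}) yields some $\alpha'\in(-\lambda,0)$ with $h_{\rm top}(\cL(\alpha'))\ge h_{\rm top}(\cL(0))-\gamma$; letting $\lambda,\gamma\to0$ and using Lemma~\ref{lem:welldefined} gives $h_0\ge h_{\rm top}(\cL(0))$. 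The key input is the skeleton property behind Proposition~\ref{newprop:skeleton}, which produces \emph{hyperbolic} orbit pieces approximating both the entropy and the exponent of $\cL(0)$ --- this is precisely the tool that lets one avoid ever confronting non-ergodic zero-exponent measures.
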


\begin{proof}
	As a consequence of Proposition~\ref{newprop:skeleton} together with Lemma~\ref{lem:welldefined}, for every $\gamma>0$ and $\lambda >0$ there exists $\alpha \in (-\lambda,0)$ such that $h_{\rm top}(\cL(\alpha)) \geq h_{\rm top} (\cL(0)) - \gamma$. The assertion then follows.
\end{proof}

\begin{lemma}\label{lem:someothernumber}
	For $\alpha\in\{\alpha_{\min},\alpha_{\max}\}$ we have $h_{\rm top}(\cL(\alpha))\le \lim_{\beta\to\alpha}h_{\rm top}(\cL(\beta))$ and 
\[
	h_{\rm top}(\cL(\alpha))
	= \sup\{h(\mu)\colon\mu\in\cM_{\rm erg},\chi(\mu)=\alpha\}.
\]	
\end{lemma}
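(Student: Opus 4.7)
The plan is to treat $\alpha=\alpha_{\max}$; the case $\alpha=\alpha_{\min}$ is entirely symmetric. The second identity---the restricted variational principle at the endpoint---will follow directly from the last sentence of Lemma~\ref{lem:morning} once $\alpha_{\max}$ is identified with the global supremum $\sup\{\chi(\mu)\colon\mu\in\cM_{\rm erg}\}$. Any ergodic $\mu$ has generic points in $\cL(\chi(\mu))$, so $\chi(\mu)\in[\alpha_{\min},\alpha_{\max}]$; conversely Proposition~\ref{pro:estamosehaustos} yields $\alpha_{\max}\in\chi(\cM_{\rm erg,>0})\subset\chi(\cM_{\rm erg})$. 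Lemma~\ref{lem:morning} then gives
\[
	h_{\rm top}(\cL(\alpha_{\max}))
	=\sup\{h(\mu)\colon\mu\in\cM_{\rm erg},\,\chi(\mu)=\alpha_{\max}\}.
\]

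For the inequality $h_{\rm top}(\cL(\alpha_{\max}))\le\lim_{\beta\to\alpha_{\max}}h_{\rm top}(\cL(\beta))$ I would transfer the question to the conjugate function $\cE_{>0}$ associated with $\cN=\cM_{\rm erg,>0}$. Since any $\mu\in\cM_{\rm erg}$ with $\chi(\mu)=\alpha_{\max}>0$ automatically belongs to $\cN$, property (E5) together with the restricted variational principle above yields $h_{\rm top}(\cL(\alpha_{\max}))\le\cE_{>0}(\alpha_{\max})$; in particular this value is finite, so $\alpha_{\max}\in D(\cE_{>0})$. On the other hand, the positive-exponent analog of Lemma~\ref{lem:keyvarprinc} (the argument symmetric to Section~\ref{sec:71}) asserts $h_{\rm top}(\cL(\beta))=\cE_{>0}(\beta)$ for every $\beta\in(0,\alpha_{\max})$, so it suffices to establish left-continuity of $\cE_{>0}$ at $\alpha_{\max}$.

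Left-continuity is a soft consequence of concavity: by (E1), $\cE_{>0}$ is concave on its domain, which contains $(0,\alpha_{\max}]$ by the step above. Fixing $\beta_0\in(0,\alpha_{\max})$ and writing any $\beta\in(\beta_0,\alpha_{\max})$ as a convex combination of $\beta_0$ and $\alpha_{\max}$, concavity gives
\[
	\cE_{>0}(\beta)
	\ge\tfrac{\alpha_{\max}-\beta}{\alpha_{\max}-\beta_0}\cE_{>0}(\beta_0)
	+\tfrac{\beta-\beta_0}{\alpha_{\max}-\beta_0}\cE_{>0}(\alpha_{\max}),
\]
and letting $\beta\nearrow\alpha_{\max}$ yields $\liminf_{\beta\nearrow\alpha_{\max}}\cE_{>0}(\beta)\ge\cE_{>0}(\alpha_{\max})$, which closes the loop. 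The only point that requires care---though hardly a real obstacle given the machinery already in place---is making sure that $\alpha_{\max}$ itself lies in $D(\cE_{>0})$ so that concavity may be invoked up to the endpoint; this is furnished for free by the finite lower bound supplied by (E5).
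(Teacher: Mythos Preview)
Your argument is correct and follows essentially the same route as the paper's own proof: both invoke the ``moreover'' clause of Lemma~\ref{lem:morning} for the restricted variational principle at the extremal exponent, then use the (E5)-type estimate $h_{\rm top}(\cL(\alpha_{\max}))\le\cE_{>0}(\alpha_{\max})$ and the one-sided upper semi-continuity of the concave function $\cE_{>0}$ at the boundary of its domain. The only cosmetic differences are that the paper spells out the (E5) computation directly rather than quoting the property, and cites \cite{Wij:66} for the semi-continuity whereas you give the elementary convexity argument; your observation that the (E5) bound itself guarantees $\alpha_{\max}\in D(\cE_{>0})$ is exactly what makes the paper's inequality $\cE_{<0}(\alpha_{\min})\ge h_{\rm top}(\cL(\alpha_{\min}))$ meaningful as well.
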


\begin{proof}
	We consider $\alpha=\alpha_{\min}$, the other case is analogous. By Lemma \ref{lem:below}, for every $\beta\in(\alpha_{\min},0)$ we have already $h_{\rm top}(\cL(\beta))=\cE_{<0}(\beta)$.
	By the second part in Lemma~\ref{lem:morning}, we have
\[
	h_{\rm top}(\cL(\alpha_{\min}))
	 =\sup\big\{h(\mu)\colon \mu\in\cM_{\rm erg},\varphi(\mu)= \alpha_{\min}\big\},
\]	
proving the second assertion in the lemma.
Hence, for every $q\in\bR$ we have
\[
	\cP_{<0}(q)
	=\sup\{h(\mu)+q\chi(\mu)\colon\mu\in\cM_{\rm erg,<0}\}
	\ge h_{\rm top}(\cL(\alpha_{\min})) 
	+q\alpha_{\min},
\]
which implies
\[
	\cE_{<0}(\alpha_{\rm min})
	= \inf_{q\in\bR}\big(\cP_{<0}(q)-q\alpha_{\min}\big)
	\ge h_{\rm top}(\cL(\alpha_{\min})) .
\]
By the definition of $\cE_{<0}$ in~\eqref{eq:defpres} and~\cite{Wij:66},  the left hand side is not larger than $ \lim_{\beta\to\alpha_{\min}}\cE_{<0}(\beta)$, proving the first assertion.
\end{proof}

%-------------------------------------------------------------------------------------------------------------
\subsection{Whole spectrum: Lower bounds}
\label{ss.lowerbounds}
%-------------------------------------------------------------------------------------------------------------

The following result is the final step needed to complete the proof of Theorem~\ref{main1}.
We postpone its proof to Section~\ref{sec:proofoflowerbound}.

\begin{theorem} \label{theoprop:zero}
For every $\alpha \in [\alpha_{\min}, \alpha_{\max}]$ we have
\[
	\limsup\limits_{\beta\to\alpha} \cE(\beta)\le h_{\rm top}(\cL(\alpha)).
\]		
\end{theorem}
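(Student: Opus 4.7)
The inequality is immediate on the interior $\alpha \in (\alpha_{\min},0)\cup(0,\alpha_{\max})$: by Lemma~\ref{lem:below} we have $h_{\rm top}(\cL(\beta)) = \cE(\beta)$ for all $\beta$ of the same sign as $\alpha$ close enough to $\alpha$, and $\cE_{<0}$, $\cE_{>0}$ are continuous on the interior of their domains by property (E1). The task therefore reduces to the three extremal exponents $\alpha \in \{\alpha_{\min},0,\alpha_{\max}\}$.

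The main case is $\alpha = 0$, and this is where the genuinely nonhyperbolic orbit construction is needed. Fix $\varepsilon > 0$. Combining Lemma~\ref{lem:welldefined} with the restricted variational principle of Lemma~\ref{lem:keyvarprinc}, I would pick ergodic measures $\mu_n^+, \mu_n^-$ with $\chi(\mu_n^\pm) \to 0^\pm$ and $h(\mu_n^\pm) \ge h_0 - \varepsilon$. Applying Proposition~\ref{newprop:skeleton}, each $\mu_n^\pm$ gets replaced by a basic set $\Gamma_n^\pm$ of topological entropy at least $h_0-\varepsilon$ whose ergodic measures all have fiber exponent in a narrow window $\lambda_n\searrow 0$ around $\chi(\mu_n^\pm)$. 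The axioms CEC$\pm$/Acc$\pm$ together with Lemmas~\ref{l:minimalcommon} and~\ref{lem:connect} then permit concatenation of any orbit segment in $\Gamma_n^\pm$ with one in $\Gamma_n^\mp$ via a connector of uniformly bounded length. I would then assemble a Cantor-like set $\Lambda \subset \cL(0)$ of trajectories whose symbolic codes alternate long blocks drawn from $\Gamma_n^+$ and $\Gamma_n^-$, with the relative block lengths tuned so that the Birkhoff sums of $\varphi$ along every orbit in $\Lambda$ tend to zero. A Moran-cover / Bowen-ball entropy count would yield $h_{\rm top}(\Lambda) \ge h_0 - \varepsilon$, and letting $\varepsilon \searrow 0$ gives $h_{\rm top}(\cL(0)) \ge h_0$.

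For $\alpha = \alpha_{\min}$ (symmetrically $\alpha_{\max}$), set $c := \lim_{\beta\searrow\alpha_{\min}}\cE_{<0}(\beta)$, which exists by concavity. Via Proposition~\ref{newprop:skeleton} applied to a sequence $\beta_n \searrow \alpha_{\min}$, I would pick basic sets $\Gamma_n$ of contracting type with $h_{\rm top}(\Gamma_n)\to c$ and all ergodic measures on $\Gamma_n$ having fiber exponents in a narrow window around $\beta_n$. Using the same connector apparatus as in the previous paragraph, I would build orbits that spend successively longer blocks in $\Gamma_1,\Gamma_2,\ldots$, with block lengths $T_n$ growing sufficiently fast that the Birkhoff-averaged fiber exponent tends to $\alpha_{\min}$, while the Moran count yields topological entropy at least $c-\varepsilon$. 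Together with Lemma~\ref{lem:someothernumber}, this gives $h_{\rm top}(\cL(\alpha_{\min}))\ge c$.

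The main obstacle is the orbit-gluing in the case $\alpha = 0$: the block lengths inside $\Gamma_n^+$ and $\Gamma_n^-$ must grow fast enough relative to the exponent windows $\lambda_n \searrow 0$ that the bounded-length connectors contribute $o(1)$ to the Birkhoff average, while the alternating positive and negative contributions cancel asymptotically to zero, and simultaneously the concatenation tree must retain enough $(n,\varepsilon)$-separated branches for Bowen's formula to recover the full entropy $h_0-\varepsilon$. An analogous but less delicate balancing is needed for the extremal case, where only a one-sided sequence of blocks is glued.
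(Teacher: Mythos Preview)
Your outline is in the right spirit and would likely be made to work, but it diverges from the paper's proof in one important structural choice that makes your version harder than necessary.

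\textbf{The single-sign simplification.} For $\alpha=0$, the paper does \emph{not} alternate between positive- and negative-exponent blocks. Instead, it fixes a sequence of ergodic measures $(\mu_k)_k$ all of the \emph{same} sign (say $\chi(\mu_k)<0$) with $\chi(\mu_k)\to 0$ and $h(\mu_k)\to h_0$; such a one-sided sequence exists by the results of Section~\ref{ss.previous}. Since each block already has exponent close to $0$, no cancellation is needed: concatenating longer and longer blocks from $\mu_1,\mu_2,\ldots$ automatically yields forward Lyapunov exponent $0$. This buys a crucial simplification on the fiber: because every block is contracting, one can track an entire \emph{interval} $I_1$ forward through all concatenations (via Lemma~\ref{lem:connect}), avoiding the delicate bookkeeping your alternating scheme would require when expanding and contracting blocks are interleaved. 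The backward direction is then handled separately (still with the same negative-exponent measures, which are expanding under $F^{-1}$), producing for each backward itinerary a single fiber point that lands in $I_1$.

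\textbf{Extremal exponents.} For $\alpha\in\{\alpha_{\min},\alpha_{\max}\}$ the paper points out (in the Remark following the theorem) a much shorter argument than your orbit-gluing: take a weak$\ast$ limit $\mu$ of measures $\mu_k$ with $\chi(\mu_k)\to\alpha$ and $h(\mu_k)\to c$; since $\alpha$ is extremal, every ergodic component of $\mu$ has exponent $\alpha$, and one of them has entropy $\ge c$ by upper semi-continuity. Its generic points sit inside $\cL(\alpha)$. Your gluing construction would work here too, but is overkill.

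\textbf{Technical implementation.} The paper does not use Proposition~\ref{newprop:skeleton} (basic sets) for this theorem. It instead extracts from each $\mu_k$ a collection $\Xi_k^+(n)$ of finite symbolic words with precise branching estimates (Proposition~\ref{prolem:fluminense}), concatenates them into a set $\Xi\subset\pi(\cL(\alpha))$, and then lower-bounds $h_{\rm top}(\Xi)$ by constructing a \emph{bridging measure} and applying Frostman's lemma (Section~\ref{subsec:estoftheentr}). This replaces your ``Moran-cover/Bowen-ball count'' with an explicit mass-distribution argument; the branching control in Lemma~\ref{lem:skelestrong} is exactly what makes the Frostman estimate go through.

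In short: your plan is viable, but the paper's proof is cleaner because it avoids the sign alternation entirely and handles the extremal endpoints by a soft measure-theoretic argument rather than a second gluing construction.
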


\begin{remark}
For $\alpha\in(\alpha_{\min},\alpha_{\max})\setminus\{0\}$ the result of the above theorem follows already from Section~\ref{sec:71}. For $\alpha\in\{\alpha_{\min},\alpha_{\max}\}$ this result can be easily obtained by the following arguments: Take the weak$\ast$ limit $\mu$ of a sequence of measures $(\mu_k)_k$ converging in exponent to $\alpha$ and in entropy to $h=\limsup_{\beta\to\alpha} \cE(\beta)$. Indeed, such sequences exist by the already obtained description in the interior of the spectrum. The ergodic decomposition of $\mu$ contains an ergodic measure $\mu'$ with exponent $\alpha$ and entropy at least $h$. The set of $\mu'$-generic points is contained in $\cL(\alpha)$ which will imply the assertion.
So, we only need to prove Theorem~\ref{theoprop:zero} for $\alpha=0$. However, the proof is completely general.
\end{remark}

%===========================================================================
\subsection{Proof of Theorem~\ref{main1}}
\label{ss.endoftheproofofmain}
%===========================================================================

By the arguments in Section~\ref{sec:71}, for every $\alpha\in(\alpha_{\rm min},0)$ we have $\cL(\alpha)\ne\emptyset$, analogously for $\alpha\in(0,\alpha_{\rm max})$. The fact that $\cL(0)\ne\emptyset$ is a consequence of \cite{BocBonDia:16}. Note that by Theorem \ref{main3} item j) we have $\lim_{\alpha\to0}\cE(\alpha)>0$ and hence, by Theorem~\ref{theoprop:zero}, we obtain $h_{\rm top}(\cL(0))>0$ and, in particular, $\cL(0)\ne\emptyset$.  This already proves item d) of the theorem.
For any $\alpha\in\{\alpha_{\min},\alpha_{\max}\}$ take the weak$\ast$ limit $\mu$ of a sequence of measures $(\mu_k)_k$ converging in exponent to $\alpha$. Indeed, such sequences exist by the already obtained description in the interior of the spectrum. The ergodic decomposition of $\mu$ contains an ergodic measure $\mu'$ with exponent $\alpha$. This implies that $\cL(\alpha)\ne\emptyset$. 

Item a) (and analogously item b)) follows from Lemmas~\ref{lem:below} and~\ref{lem:keyvarprinc} for the interior of the spectrum. The restricted variational principle for $\alpha_{\rm min}$ and $\alpha_{\rm max}$ follows from Lemma~\ref{lem:morning}. 

To prove item c), for $\alpha\in\{\alpha_{\min},0,\alpha_{\max}\}$, again by Theorem~\ref{theoprop:zero} we have
\[
	\limsup\limits_{\beta\to\alpha} \cE(\beta)
	= \limsup\limits_{\beta\to\alpha} h_{\rm top}(\cL(\beta))\le h_{\rm top}(\cL(\alpha)).
\]		
Now if $\alpha=0$, the existence of the limit and the equality is a consequence of Lemmas~\ref{lem:welldefined} and~\ref{lem:somenumber}. If $\alpha\in\{\alpha_{\rm min},\alpha_{\rm max}\}$, then we apply Lemma~\ref{lem:someothernumber} instead proving item c).

Finally, the existence of finitely many ergodic measures of maximal entropy $\log N$ follows by the arguments in Section~\ref{subsec:maxent1}.

%-----------------------------------------------------------------------------------------------------
\section{Orbitwise approach and bridging measures: Proof of Theorem~\ref{theoprop:zero}}\label{sec:proofoflowerbound}
%-----------------------------------------------------------------------------------------------------

In this section, we  prove Theorem~\ref{theoprop:zero} which goes as follows. In Section~\ref{ss:apprxo} we identify a set of orbits with appropriate properties (``cardinality" and ``finite-time Lyapunov exponents" in Proposition~\ref{prolem:fluminense}). Based on those orbits, we construct a subset of $\pi(\cL(\alpha))$ in Section~\ref{sec:conlarsubset}. After some preliminary estimates in Section~\ref{subsec:prelimste}, in Section~\ref{subsec:estoftheentr} we show that this subset is ``large" by estimating its entropy following the approach of ``bridging measures". The proof of the theorem will be completed in Section~\ref{sec:endofproporrr}.

%-----------------------------------------------------------------------------------------------------
\subsection{Orbitwise approximation of ergodic measures}\label{ss:apprxo}
%-----------------------------------------------------------------------------------------------------

We will consider sets $\Xi^+(n)$ of finite sequences of length $n$
whose cardinalities grow exponentially fast. The following proposition provides precise estimates of how fast initial sequences  are  ``branching out'' to form  sequences in $\Xi^+(n)$. Given $1 \le \ell \le n$ we say $(\xi_0\ldots \xi_{\ell-1})$ is an 
{\emph{initial sequence of
length $\ell$}} of $\Xi^+(n)$ if there is $(\xi_{\ell}\ldots\xi_{n-1})$ such that  
 $(\xi_0\ldots \xi_{\ell-1} \xi_{\ell}\ldots\xi_{n-1}) \in \Xi^+(n)$.

\begin{proposition}\label{prolem:fluminense}
Given any $\mu\in\cM_{\rm{erg}}$, for every small $\varepsilon >0$ there exists a constant $K=K(\mu,\varepsilon)>1$ such that for every $n\ge1$ there exists
a set $\Xi^+(n)\subset\{0,\ldots,N-1\}^n$ of finite sequences of length $n$ satisfying:
\begin{itemize}
\item[1)] (Cardinality) The set $\Xi^+(n)$ has cardinality
$$
	K^{-1}e^{n(h(\mu) -\varepsilon)}
	\le \card (\Xi^+(n))\le
	Ke^{n(h(\mu) +\varepsilon)}	
$$
such that for every $j\in\{0,\ldots,n\}$ and  $\ell\in\{j,\ldots, n\}$ there exist at least 
\[
	\frac13K^{-2} e^{-2(\ell-j)\varepsilon} e^{j(h(\mu)-\varepsilon)}
\]
	 initial sequences of length $j$ of $\Xi^+(n)$ such that  each of them has between 
\[
	 \frac 13 K^{-2} e^{-2j\varepsilon} e^{(\ell-j)(h(\mu)-\varepsilon)}
	\quad \text{ and }\quad
	 K e^{(\ell-j)(h(\mu)+\varepsilon)}
\]
	  continuations to an initial sequence of length $\ell$ of $\Xi^+(n)$.
\item[2)] (Finite-time Lyapunov exponents)
There exists an interval $I=I(\mu,\varepsilon)\subset\bS^1$ such that for every $n\ge1$ for each $(\rho_0\ldots\rho_{n-1})\in \Xi^+(n)$ and each point $x\in I$ for every  $j\in\{1,\ldots,n\}$ we have
\begin{equation}\label{eq:claimedprop}
	K^{-1} e^{j(\chi(\mu) - \varepsilon)}
	\leq \lvert(f_{[\rho_0\ldots\,\rho_{j-1}]})'(x)\rvert
	\leq K e^{j(\chi(\mu) + \varepsilon)}.
\end{equation}
\end{itemize}
\end{proposition}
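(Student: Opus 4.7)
My plan is to approximate the ergodic measure $\mu$ by a uniformly hyperbolic basic set $\Gamma$ via the skeleton property, and then extract both the sequences $\Xi^+(n)$ and the interval $I$ from the Markov structure of $\Gamma$. As a preliminary step, I would reduce to the case $\chi(\mu) \neq 0$: if $\chi(\mu) = 0$, replace $\mu$ with a nearby hyperbolic ergodic measure $\nu$ supplied by Lemma~\ref{lem:main30}, losing at most $\varepsilon/4$ in entropy and in exponent. Without loss of generality take $\chi(\mu) < 0$ (the case $\chi(\mu) > 0$ is handled symmetrically by considering $F^{-1}$). Apply Proposition~\ref{newprop:skeleton} with parameters $\gamma,\lambda < \varepsilon/4$ to obtain a basic set $\Gamma \subset \Sigma_N\times\bS^1$ of the same type of hyperbolicity with $h_{\rm top}(\Gamma) \ge h(\mu) - \varepsilon/2$ and with every ergodic measure on $\Gamma$ having fiber exponent in $(\chi(\mu)-\varepsilon/2, \chi(\mu)+\varepsilon/2)$.

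Second, I would construct $\Xi^+(n)$ from the Markov structure of $\Gamma$. Being a hyperbolic basic set, $\Gamma$ admits a Markov partition whose atoms can be arranged to be subordinate to the base cylinders $[\xi_0 = i]$, so that each Markov-admissible word of length $n$ corresponds canonically to a word $(\rho_0 \ldots \rho_{n-1}) \in \{0,\ldots,N-1\}^n$ recording the $\xi_0$-coordinates along its orbit. Let $\Xi^+(n)$ be the set of such admissible words realized in $\Gamma$. The cardinality bounds in item~1 come from the Gibbs/Bowen property of the measure of maximal entropy $m_\Gamma$ on $\Gamma$: each admissible $n$-cylinder carries $m_\Gamma$-mass $\asymp e^{-n h_{\rm top}(\Gamma)}$ within a uniform multiplicative constant, whence $\mathrm{card}(\Xi^+(n))\asymp e^{n h_{\rm top}(\Gamma)}\asymp e^{n h(\mu)}$. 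The branching assertion—that a definite proportion of length-$j$ prefixes have between $\asymp e^{(\ell-j)(h(\mu)-\varepsilon)}$ and $\asymp e^{(\ell-j)(h(\mu)+\varepsilon)}$ extensions to length $\ell$—follows from the same two-sided Gibbs bound applied to the shifted subsystem, together with a Chebyshev-type counting argument: the total number of length-$\ell$ extensions summed over all length-$j$ prefixes is $\mathrm{card}(\Xi^+(\ell))$, the maximum per prefix is bounded above by the Gibbs upper bound, and so a definite proportion of prefixes must carry at least a definite fraction of the average.

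Third, I would pick $(\xi^\ast,x^\ast)\in\Gamma$, take $I$ to be a small fiber interval around $x^\ast$ contained in a local stable manifold in $\Gamma$, and derive~\eqref{eq:claimedprop} from uniform hyperbolicity and bounded distortion on $\Gamma$. Since on $\Gamma$ the iterates $f_\rho^j$ corresponding to admissible $\rho$ are uniformly contracting at exponential rate close to $\chi(\mu)$, and since the fiber maps are $C^1$ with uniform modulus of continuity of the derivative, the standard bounded distortion estimate applies: the ratio $|(f_\rho^j)'(x)|/|(f_\rho^j)'(x')|$ is uniformly bounded for $x,x'\in I$, all admissible $\rho$, and all $j\le n$, since $f_\rho^k(I)$ shrinks exponentially. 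Combining this with the pointwise Lyapunov bounds inherited from $\Gamma$ yields the two-sided estimate~\eqref{eq:claimedprop} with a single constant $K = K(\mu,\varepsilon)$.

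The main obstacle will be arranging the Markov partition of $\Gamma$ compatibly with the base partition into cylinders $[\xi_0=i]$ so that the symbolic coding by Markov atoms coincides with the $\xi$-coding, and simultaneously ensuring that a single fiber interval $I$—independent of $n$—supports bounded distortion for \emph{every} admissible $\rho \in \Xi^+(n)$ and every intermediate time $j$. Compatibility of the codings follows from the fact that the base dynamics is already a full shift and the Markov atoms of $\Gamma$ can be taken arbitrarily thin in the base direction; the uniformity of the distortion constant across all sequences comes from the uniform hyperbolicity of $\Gamma$ (exponential contraction rate bounded away from zero) combined with the $C^1$-regularity of the fiber maps via a standard summable-distortion argument along orbits.
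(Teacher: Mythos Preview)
Your route is quite different from the paper's. The paper works directly with the measure $\mu$: it applies the Shannon--McMillan--Breiman, Birkhoff, and Egorov theorems to obtain a set $\Lambda\subset\Sigma_N\times\bS^1$ of $\mu$-measure close to~$1$ on which both the cylinder-measure bounds (yielding the cardinality and branching estimates via the projected measure $\pi_\ast\mu$) and the finite-time derivative bounds hold uniformly in~$n$. The interval $I$ is then extracted by a pigeonhole argument on the fiber coordinates of points of $\Lambda$, combined with a $C^1$-distortion estimate. No basic set, Markov partition, or Gibbs measure enters. Your approach through Proposition~\ref{newprop:skeleton} is conceptually reasonable and makes the branching count pleasant via the Gibbs property, but as written it has two gaps.

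First, Proposition~\ref{newprop:skeleton} produces a basic set whose topological entropy approximates $h_{\rm top}(\cL(\alpha))$, not $h(\mu)$; when $\mu$ is not the entropy-maximizer on its level set these differ, and you lose the \emph{upper} cardinality bound $\card\Xi^+(n)\le Ke^{n(h(\mu)+\varepsilon)}$. You would need either to pass to a sub-horseshoe of prescribed smaller entropy, or to invoke a measure-wise Katok-type approximation (which is what underlies Lemma~\ref{lem:PesKat}, though only the lower entropy bound is used there). Also note that Proposition~\ref{newprop:skeleton} assumes positive entropy, so the case $h(\mu)=0$ needs separate (easy) treatment.

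Second, and more seriously, taking $I$ as a fiber neighbourhood of a single point $x^\ast\in\Gamma$ does not deliver~\eqref{eq:claimedprop} for \emph{every} admissible word $\rho$. An admissible $\rho$ corresponds to an orbit segment in $\Gamma$ starting at a fiber point $y_\rho$ determined by the initial Markov rectangle of $\rho$; when that rectangle is not the one containing $x^\ast$, $y_\rho$ need not lie near $x^\ast$. The uniform-hyperbolicity bound on $\lvert(f_\rho^j)'\rvert$ lives at $y_\rho$, and your distortion argument (``$f_\rho^k(I)$ shrinks exponentially'') is circular unless you already have a reference point \emph{inside} $I$ with controlled derivative---which is exactly what is in question. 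The fix is to restrict $\Xi^+(n)$ to admissible words whose initial Markov rectangle is the one containing $x^\ast$; this costs only a fixed multiplicative constant absorbable into $K$, but you have not said this.
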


\begin{proof}
The  results are consequences of ergodicity, the definition of the Lyapunov exponent and the Brin-Katok, the Birkhoff ergodic, the Shannon-McMillan-Breiman, and the Egorov theorems (see also~\cite[Proposition 3.1]{DiaGelRam:}).  For completeness we provide the details. 

Given $\varepsilon_E\in(0,\chi(\mu)/2)$, $\varepsilon_H>0$, and $\kappa\in(0,1/6)$, there are a constant $K>1$ and a set $\Lambda\subset \Sigma_N\times\bS^1$ such that $\mu(\Lambda)>1-\kappa$ and for every $X=(\rho,x)\in\Lambda$ for every $n\ge1$ we have
\begin{equation}\label{eq:understand}
	K^{-1} e^{n(\chi(\mu) - \varepsilon_E/2)}
	\leq \lvert(f_{[\rho_0\ldots\,\rho_{n-1}]})'(x)\rvert
	\leq K e^{n(\chi(\mu) + \varepsilon_E/2)}.
\end{equation}
Let $S\eqdef\pi(\Lambda)\subset\Sigma_N$ and $\nu\eqdef\pi_\ast\mu$, where $\pi\colon\Sigma_N\times\bS^1\to\Sigma_N$ denotes the natural projection. Note that $\nu$ is $\sigma$-invariant  and ergodic, $\nu(S)\ge 1-\kappa$, and we can also assume that for every $\pi(\rho)\in S$ and every $n\ge1$
\begin{equation}\label{eq:understandc}
	K^{-1}e^{-n(h(\mu)+\varepsilon_H)}
	\le \nu([\rho_0\ldots\rho_{n-1}])
	\le Ke^{-n(h(\mu)-\varepsilon_H)}.
\end{equation}
For every $n\ge1$, define 
\[
	\Xi^+(n)
	\eqdef \{(\rho_0\ldots\rho_{n-1})\in\{0,\ldots,N-1\}^n\colon 
		[\rho_0\ldots\rho_{n-1}]\cap S\ne\emptyset\}.
\]	
Note that  by~\eqref{eq:understandc} (notice that those cylinders are pairwise disjoint and cover a set of measure at least $1-\kappa$) this set has cardinality $M_n$ bounded by
\[
	(1-\kappa)\cdot K^{-1}e^{n(h(\mu)-\varepsilon_H)}
	\le M_n
	\le Ke^{n(h(\mu)+\varepsilon_H)},
\]
which proves the first assertion of item 1).

To prove the second assertion of item 1), fix  positive integers $j,\ell$ such that $j\le \ell\le n$. 
Let $S(j)\eqdef S\cap\sigma^{-j}(S)$. Note that $\nu(S)>1-\kappa$ implies $1-2\kappa<\nu(S(j))\le1$. Observe that $S(j)$ consists of sequences $\xi=(\ldots\xi_{-1}.\xi_0\xi_1\ldots)$ which by~\eqref{eq:understandc} have the property that the initial $j$-cylinder $[\xi_0\ldots\xi_{j-1}]$ satisfies
\[
	K^{-1}e^{-j(h(\mu)+\varepsilon_H)}
	\le \nu([\xi_0\ldots\xi_{j-1}])
	\le Ke^{-j(h(\mu)-\varepsilon_H)}
\]	
 and that (because $\sigma^j(\xi)\in S$) has the property that its $(\ell-j)$-cylinder $[\xi_j\ldots\xi_{\ell-1}]$ satisfies
 \[
 	K^{-1}e^{-(\ell-j)(h(\mu)+\varepsilon_H)}
	\le \nu([\xi_j\ldots\xi_{\ell-1}])
	\le Ke^{-(\ell-j)(h(\mu)-\varepsilon_H)}.
\]	
These estimates have the consequence that we can choose a subset of $S(j)$ of sequences which is $(\ell-j,1)$-separated  (with respect to $\sigma$) and whose cardinality $M_{\ell-j}$ is bounded between
\begin{equation}\label{eq:flamengo}
	(1-2\kappa)\cdot K^{-1}e^{(\ell-j)(h(\mu)-\varepsilon_H)}
	\le M_{\ell-j}
	\le Ke^{(\ell-j)(h(\mu)+\varepsilon_H)}.
\end{equation}

Arguing in the same way for $j$ instead of $\ell-j$, we get a subset of $S(j)$ of sequences which is $(j,1)$-separated and whose cardinality $M_j$ is bounded between
\[
	(1-2\kappa)\cdot K^{-1}e^{j(h(\mu)-\varepsilon_H)}
	\le M_j
	\le Ke^{j(h(\mu)+\varepsilon_H)}.
\]
Note that to any such sequence with a fixed initial $j$-cylinder $[\xi_0\ldots\xi_{j-1}]\in S(j)$ there are at most $M_{\ell-j}$ continuations to a sequence with a corresponding $\ell$-cylinder $[\xi_0\ldots\xi_{j-1}\xi_j\ldots\xi_{\ell-1}]$ which has a continuation to some sequence in $S(j)$.

Assume now, by contradiction, that among those sequences with initial $j$-cylinder there are only less than 
\[
	M_j'\eqdef\frac13K^{-2} e^{-2(\ell-j)\varepsilon_H} e^{j(h(\mu)-\varepsilon_H)}
\]	 
of them which have more than $\frac 13 K^{-2} e^{-2j\varepsilon_H} e^{(\ell-j)(h(\mu)-\varepsilon_H)}$ continuations to a $\ell$-cylinder.  
Note that $M_j$ gives  a simple  estimate from above for the number of the remaining initial cylinders. Hence, the total number of $(\ell,1)$-separated sequences in $S(j)$ would be bounded from above by
\[\begin{split}
	M_j\,\cdot\,& \frac 13 K^{-2} e^{-2j\varepsilon_H} e^{(\ell-j)(h(\mu)-\varepsilon_H)}
	+ M_j'\cdot M_{\ell-j}\\
	&\le  K e^{j(h(\mu)+\varepsilon_H)} 
		\cdot \frac 13 K^{-2} e^{-2j\varepsilon_H} e^{(\ell-j)(h(\mu)-\varepsilon_H)}\\
	&\phantom{\le}	+ \frac13K^{-2} e^{-2(\ell-j)\varepsilon_H} e^{j(h(\mu)-\varepsilon_H)}\cdot Ke^{(\ell-j)(h(\mu)+\varepsilon_H)}\\
	&=\frac23K^{-1}e^{\ell(h(\mu)-\varepsilon_H)}.
\end{split}\]
However, from~\eqref{eq:flamengo} with $j=0$, there are at least $M_\ell\ge (1-2\varkappa)K^{-1}e^{\ell(h(\mu)-\varepsilon_H)}$ such sequences in $S(j)$, which leads us to a contradiction. Together with the upper estimate in~\eqref{eq:flamengo}, this ends the proof of  item 1).

To show item 2), what remains is to prove that~\eqref{eq:claimedprop} holds for \emph{any} point in $I$ and not only for a point $x$ such that $(\rho,x)\in \Lambda$ as in~\eqref{eq:understand}.
As any $X=(\rho,x)\in\Lambda$ is a point with uniform contraction (using that $\chi(\mu)<0$), we will obtain distortion control on some small neighborhood whose size depends on the constant $K$. For that we use the following.

\begin{claim}[{\cite[Proposition 3.4]{DiaGelRam:}}]\label{claim:here}
	Given $\varepsilon_D>0$, let $\delta_0>0$ be such that
\[
	\max_{i=0,\ldots,N-1}\,\,\,\max_{x,y\in\bS^1,\lvert y-x\rvert\le 2\delta_0}
	\Big\lvert\log\frac{\lvert f_i'(y)\rvert}{\lvert f_i'(x)\rvert}\Big\rvert
	\le \varepsilon_D.
\]	
Suppose that $x\in\bS^1$, $r>0$, $n\ge1$ are such that for every $\ell=0,\ldots,n-1$ we have
\[
	\lvert (f_\rho^\ell)'(x)\rvert<\frac1r\delta_0e^{-\ell\varepsilon_D},
\]
then for every $\ell=0,\ldots,n-1$  we have
\[
	\sup_{x,y\colon \lvert y-x\rvert\le r}
	\frac{\lvert (f_\rho^\ell)'(y)\rvert}{\lvert (f_\rho^\ell)'(x)\rvert}
	\le e^{\ell\varepsilon_D}.
\]
\end{claim}

Fixing some 
\[
	\varepsilon_D<\frac12\min\{\varepsilon_E,\lvert\chi(\mu)-\varepsilon_E\rvert\},
\]	 
let $\delta_0>0$ be as in Claim~\ref{claim:here} and choose also $r>0$ such that $K<\delta_0/r$. Thus, for every $X=(\rho,x)\in\Lambda$ and every $y\in (x-r,x+r)$ for every $\ell=0,\ldots,n-1$ with~\eqref{eq:understand} we obtain
\[
	 K e^{\ell(\chi(\mu) - \varepsilon_E/2)}e^{-\ell\varepsilon_D}
	 \le \lvert(f_{[\rho_0\ldots\,\rho_{\ell-1}]})'(y)\rvert
	\leq K e^{\ell(\chi(\mu) + \varepsilon_E/2)}e^{\ell\varepsilon_D}.
\]
Dividing now $\bS^1$  into intervals of length $r$, at least one interval of them, denoted $I$,  must contain  at least $K^{-1}re^{n(h(\mu)-\varepsilon)}$  starting points $x$ of $(n,1)$-separated trajectories  corresponding to $\Xi^+(n)$. Note that this way we perhaps disregard some of the elements in $\Xi^+(n)$, but we continue to denote the remaining set by $\Xi^+(n)$ and the proposition follows exchanging now $K^{-1}r$ for $K$.
\end{proof}

%-----------------------------------------------------------------------------------------------------
\subsection{Large subset of the level set}\label{sec:conlarsubset}
%-----------------------------------------------------------------------------------------------------

In this section, we will construct a large subset $\Xi\subset\pi(\cL(\alpha))$.
We start by fixing some quantifiers.

\medskip
\noindent\emph{Choice of quantifiers.} 
Given 
\begin{equation}\label{eq:let}
	h=\limsup_{\beta\to\alpha}\cE(\alpha),
\end{equation}	
 there is  a  sequence of ergodic measures $(\mu_k)_{k\ge0}$ with Lyapunov exponents converging to $\alpha$ and with the upper limit of entropies equal to $h$.
We aim to prove that $h_{\rm top} (\cL(\alpha))\geq h$.
Without weakening of assumptions, by passing to a subsequence, we can assume that all the measures $\mu_k$ have exponents of the same sign and that their entropies converge to $h$, recall the results in Section~\ref{ss.previous}. In the following, we will assume that $\alpha\le 0$ and that all the measures $\mu_k$ have negative exponents. 
The other case can be obtained by studying the map $F^{-1}$ instead of $F$.

Fix a sequence $(\varepsilon_k)_{k\ge0}$ with $\varepsilon_k \searrow 0$ and apply Proposition \ref{prolem:fluminense} item 1) to each measure $\mu_k$:
we get constants $K_k=K_k(\mu_k,\varepsilon_k)$ and for every $n\ge1$ a set $\Xi^+_k(n)$ of finite sequences of length $n$.
As, by our choice of sequences, $h(\mu_k)\to h$, we can assume that our constants $K_k$ and $\varepsilon_k$ are such that for every $k\ge0$ and for every $\ell\ge1$ 
we have
\begin{equation} \label{eqn:condskeleent}
	K_k^{-1}e^{\ell(h-\varepsilon_k)}
	\le\card \Xi^+_k(\ell)
	\le K_k e^{\ell(h+\varepsilon_k)}.
\end{equation}
We will choose a sequence $(n_k)_{k\ge0}$ with $n_k\nearrow\infty$, which will be further specified below.
By Proposition~\ref{prolem:fluminense} item 1), there is $\Xi^+_k(n_k)$ such that for every $j\in\{0,\ldots, n_k\}$ and $\ell\in\{j,\ldots,n_k\}$ there exist at least 
\begin{equation}\label{e:cjns}
\frac13K_k^{-2} e^{-2(\ell-j)\varepsilon_k} e^{j(h-\varepsilon_k)}
\end{equation}
 initial sequences of length $j$ of $\Xi^+_k(n_k)$, each of which has between 
 \begin{equation}
 \label{e.conos}
 \frac 13 K_k^{-2} e^{-2j\varepsilon_k} e^{(\ell-j)(h-\varepsilon_k)}
 \quad
 \text{and}
 \quad
 K_k e^{(\ell-j)(h+\varepsilon_k)}
 \end{equation} 
 continuations to an initial sequence of length $\ell$ of $\Xi^+_k(n_k)$. 

The same arguments but applied to $\sigma^{-1}$ instead of $\sigma$ provide sets $\Xi^-_k(n_k)$ of finite sequences with the very same properties.

Finally, by Proposition~\ref{prolem:fluminense} item 2) applied to $\mu_k$ and $\sigma$, there exist  intervals $I_k=I(\mu_k,\varepsilon_k)$ such that for every $x\in I_k$ we have
\[
	K_k^{-1} e^{\ell(\chi(\mu_k) - \varepsilon_k)}
	\leq \lvert(f_{[\rho_1\ldots\,\rho_\ell]})'(x)\rvert
	\leq K_k e^{\ell(\chi(\mu_k) + \varepsilon_k)}.
\]
To each interval $I_k$ we associate numbers $\delta_k>0$ and $M_k>0$ provided by Lemma~\ref{lem:connect}. 
This ends the choice of quantifiers.

\medskip
\noindent\emph{Construction of $\Xi$.}
We now are prepared to construct a subset $\Xi\subset\Sigma_N$ in the projection of $\cL(\alpha)$. First, we construct \emph{forward} orbits on which the (forward) Lyapunov exponent is $\alpha$. 

As the chosen orbit pieces, and their chosen neighborhoods, are uniformly contracting, we can fix a sequence of sufficiently fast increasing natural numbers $(n_k)_k$ such that for each $k$ for every $(\rho_1\ldots\rho_{n_k})\in\Xi^+_k(n_k)$ we have that
\[
	\lvert f_{[\rho_1\ldots\,\rho_{n_k}]}(I_k)\rvert<\delta_{k+1}.
\]
Note that $n_k$ can be chosen arbitrarily large. We will further specify the choice of this sequence in Section~\ref{subsec:prelimste}. Hence, by Lemma~\ref{lem:connect},  to  each $(\rho_1\ldots\rho_{n_k})$ we associate one (there may be several choices, we just pick one) finite sequence $(\tau_1\ldots \tau_m)$, $m\le M_{k+1}$, such that
\[
	(f_{[\tau_1\ldots\,\tau_m]}\circ f_{[\rho_1\ldots\,\rho_{n_k}]})(I_k)\subset I_{k+1}.
\]
We point out  that the sequence $(\tau_1\ldots\tau_m)$ depends on the initial sequence $(\rho_1\ldots\rho_{n_k})$, which is not reflected by the notation to simplify the exposition.

We consider now the set of all such concatenated finite sequences defined by
\begin{equation}\label{eq:laterneeded}
	\Xi_k'\eqdef \{(\rho_1\ldots\,\rho_{n_k}\tau_1\ldots\,\tau_{m})\colon
		(\rho_1\ldots\,\rho_{n_k})\in\Xi^+_k(n_k)\},
\end{equation}
note  again that here $(\tau_1\ldots\,\tau_{m})$ depends on $(\rho_1\ldots\,\rho_{n_k})$.
We write  $(\rho_1\ldots\,\rho_{n_k})=\varrho$ and  $(\tau_1\ldots\,\tau_{m})=\vartheta$ and say that $\varrho$ is a {\emph{main sequence}} and that $\vartheta$ is a {\emph{connecting sequence}}.
Finally, we consider the set
$\Xi^+$  of all one-sided infinite sequences
\[
	\Xi^+
	\eqdef \{\varrho_1\vartheta_1\varrho_2 \vartheta_2 \ldots\varrho_k\vartheta_k \ldots\colon
	\varrho_k\vartheta_k \in\Xi_k'\}.
\]
Note that by construction, for every $k\ge1$ we have
\[
	(f_{[\varrho_\ell \vartheta_\ell] }\circ\ldots\circ f_{[\varrho_1 \vartheta_1]})(I_1)
	\subset I_{\ell+1},
	\quad \ell=1,\ldots,k.
\]
By our choice of quantifiers,  for every $x\in I_1$ and every $\xi\in\Xi^+$ we have
\begin{equation} \label{eqn:forwad}
	\lim_{n\to\infty} \frac 1n \log \,\lvert(f_\xi^n)'(x)\rvert =\alpha.
\end{equation}

Until now we proved, that for some interval $I_1$ there exists a large set of forward-infinite symbolic sequences $\xi^+$ such that for \emph{every} point $x\in I_1$ the forward orbit of $(x,\xi^+)$ satisfies \eqref{eqn:forwad}. Now we can do the construction in the other (time) direction using the sets of finite sequences $\Xi^-_k(n)$ instead of $\Xi^+_k(n)$. It is somewhat analogous: we take the sequence $(\mu_{-k})_{k\ge1}$ of measures defined by $\mu_{-1}=\mu_1, \mu_{-2}=\mu_2,\ldots$. For every $k\ge1$ we apply Proposition \ref{prolem:fluminense} item 1) to the inverse map $F^{-1}$ and the $F^{-1}$-invariant measure $\mu_{-k}$ and obtain a set of sequences that we will denote by $\Xi^-_k(n_k)$. Then we connect them using Lemma \ref{lem:connect}.
Note that constructed backward itineraries are expanding, thus for a backward itinerary we just get \emph{one} point following it instead of \emph{an interval}
of points as in the forward itinerary. Note also that for each finite concatenation we get a closed interval of starting points and that these intervals form a nested sequence, the point is given by the intersection of these intervals.
In this way we obtain a set $\Xi^-$ of backward-infinite symbolic sequences such that for each of them there exists a corresponding backward orbit $(y,\xi^-)$ satisfying
\[
\lim_{n\to-\infty} \frac 1n \log \,\lvert(f_\xi^n)'(y)\rvert =\alpha.
\]
By Lemma \ref{lem:connect} we can make each of those backward orbits end at some point in $I_1$. Hence, each of those trajectories can be prolonged into the future by any $\xi\in\Xi^+$. 

To summarize, we obtained a set of two-sided infinite sequences
\[
	\Xi\eqdef\Xi^-.\Xi^+
	= \{\xi^-.\xi^+\colon\xi^\pm\in\Xi^\pm\}
\]	
such that  $\Xi\subset\pi(\cL(\alpha))$, where $\pi\colon\Sigma_N\times\bS^1\to\Sigma_N$ denoted the natural projection.
Note that $\Xi$ depends on the choice of the quantifiers $\Xi=\Xi((\varepsilon_k)_k, (n_k)_k)$. The sequence $(n_k)_k$, and hence the set $\Xi$, will be further specified in Section~\ref{subsec:prelimste}. 

%-----------------------------------------------------------------------------------------------------
\subsection{Specifying the set $\Xi=\Xi((\varepsilon_k)_k, (n_k)_k)$}\label{subsec:prelimste}
%-----------------------------------------------------------------------------------------------------

We need the following technical lemma which estimates how rapidly sequences in $\Xi=\Xi^-.\Xi^+$ are ``branching out''. However, for technical reasons (see the proof of Lemma~\ref{lem:entropyestimates}), we are only interested in some specific type of branching (sequences at length $\iota$ branch out to sequences of length $N$, where $\iota$ satisfies $N-\iota\le\iota$).
This will be used in Section~\ref{subsec:estoftheentr} to construct a special probability measure.

Recall the quantifiers fixed in Section~\ref{sec:conlarsubset}.

\begin{lemma} \label{lem:skelestrong}
	There are a sequence $(n_k)_k$, a constant $K>1$, and a function $\delta\colon\bN\to(0,1)$ with
\begin{equation}\label{eq:cond0}
	n_k>2^k
	\quad	\text{ and }\quad
	\lim_{N\to\infty}\delta(N)=0
\end{equation}
such that for every $N\ge1$ there exist between $K^{-1} \cdot e^{N(h-\delta(N))}$ and $K \cdot e^{N(h+\delta(N))}$ different sequences of length $N$ which admit continuations to a one-sided infinite in $\Xi^+$.
Moreover, 
\[
	\text{ for every }
	\iota\in\{0,\ldots,N\} \text{ satisfying }N-\iota\le \iota
\]
there exist at least $K^{-1} \cdot e^{\iota h}e^{-\iota \delta(\iota)}$ different finite sequences of length $\iota$ such that  each of them has at least 
$K^{-1} \cdot e^{(N-\iota)h}e^{-N \delta(\iota)}$ 
 and at most $K \cdot e^{(N-\iota) h }e^{N \delta(\iota)}$ different continuations to a sequence in $\Xi^+$.
 
The same statement also holds for $\Xi^-$, modulo time reversal.
\end{lemma}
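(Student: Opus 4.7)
The proof is a careful bookkeeping argument combining the block structure of $\Xi^+$ (concatenations $\varrho_1\vartheta_1\varrho_2\vartheta_2\cdots$ where $\varrho_j\in\Xi^+_j(n_j)$ has length $n_j$ and $\vartheta_j$ is a deterministic function of $\varrho_j$ of length $m_j\leq M_{j+1}$) with the branching estimate of Proposition~\ref{prolem:fluminense} item 1) applied to each individual block $\Xi^+_j(n_j)$.

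I first choose $(n_k)_k$ inductively growing so fast that, writing $L_k:=\sum_{j=1}^k(n_j+M_{j+1})$ for the worst-case cumulative length through block $k$, one has $n_k>2^k$ together with $L_{k-1}+\log K_k+n_k\lvert h(\mu_k)-h\rvert\leq \varepsilon_k n_k$. This is always achievable, since Proposition~\ref{prolem:fluminense} leaves $n_k$ free from above. I then set $\delta(N):=C\varepsilon_{k(N)}$, where $k(N)$ is the block index in which the $N$th position lies and $C$ is an absolute constant; the fast growth of $(n_k)$ together with $\varepsilon_k\to0$ forces $\delta(N)\to0$.

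For the total count: a length-$N$ prefix of an element of $\Xi^+$ is an initial segment $\varrho_1\vartheta_1\cdots\varrho_{k-1}\vartheta_{k-1}$ (each $\vartheta_j$ determined by $\varrho_j$) followed either by a partial $\varrho_k$ of length $\ell\leq n_k$ or by a full $\varrho_k$ plus a partial $\vartheta_k$. For the upper bound, each block $j<k$ contributes at most $\card\Xi^+_j(n_j)\leq K_j e^{n_j(h+2\varepsilon_j)}$ and the partial block $k$ contributes at most $K_k e^{\ell(h+2\varepsilon_k)}$ length-$\ell$ prefixes of $\Xi^+_k(n_k)$; by the choice of $n_k$, the preceding-block factors combine to at most $\exp(\varepsilon_k n_k)\leq\exp(\varepsilon_k N)$, yielding at most $K e^{N(h+\delta(N))}$ prefixes in total. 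The matching lower bound follows by selecting, in each block $j<k$, a specific $\varrho_j$ furnished by the lower branching bound of Proposition~\ref{prolem:fluminense}, and then applying its lower count inside block $k$.

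For the branching statement with $\iota\in[N/2,N]$: since $N$ lies in block $k$ and $L_{k-1}\leq\varepsilon_k N$ by our choice, the assumption $\iota\geq N/2$ forces $\iota\geq L_{k-1}$ once $\varepsilon_k<1/2$, so $\iota$ also lies in block $k$. Writing $j_\iota\approx\iota-L_{k-1}$ and $\ell_N\approx N-L_{k-1}$ (the approximation absorbing the realized connector lengths, which vary by at most $\max_{j\leq k}M_{j+1}$), Proposition~\ref{prolem:fluminense} item 1) applied to $\Xi^+_k(n_k)$ with parameters $(j_\iota,\ell_N)$ supplies the needed branching counts. Multiplying by the $\prod_{j<k}\card\Xi^+_j(n_j)\leq e^{\varepsilon_k N}$ full choices in earlier blocks, absorbing a bounded factor from the possibility of ending inside $\vartheta_k$, and using $\iota\geq N/2$ to convert $e^{\varepsilon_k N}$ into $e^{2\varepsilon_k\iota}$, all error terms are absorbed into $\delta(\iota)$. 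The main technical obstacle is juggling these three error sources simultaneously---the variable connector lengths $m_j$, the cumulative entropy discrepancies $\varepsilon_j$, and the requirement that $\delta$ be block-wise constant yet vanish at infinity---but all three collapse once $n_k$ dominates everything before block $k$, which is easily compatible with the mild lower bound $n_k>2^k$. The argument for $\Xi^-$ is verbatim after time reversal.
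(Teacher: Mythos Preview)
Your overall strategy---choose $(n_k)$ growing fast enough that earlier blocks are negligible, then bookkeep with Proposition~\ref{prolem:fluminense}---matches the paper's. But the branching argument contains a concrete error. You assert that ``$L_{k-1}\le\varepsilon_k N$ by our choice'' and conclude from $\iota\ge N/2$ that $\iota$ lies in the same block $k$ as $N$. Your stated growth condition, however, only yields $L_{k-1}\le\varepsilon_k n_k$, not $L_{k-1}\le\varepsilon_k N$ for every $N$ in block $k$. When $N$ sits near the \emph{beginning} of block $k$, one has $N\approx m_k\le L_{k-1}$, and since $L_{k-1}\ge n_{k-1}$ the inequality $L_{k-1}\le\varepsilon_k N$ would force $n_{k-1}\lesssim\varepsilon_k n_{k-1}$, which is impossible. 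In that regime $\iota\ge N/2$ can land in block $k-1$, so your single-block application of Proposition~\ref{prolem:fluminense} does not cover it.

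The paper avoids this by anchoring on the block containing $\iota$ rather than $N$: if $\iota\in\varrho_k$, then the growth condition $\sum_{i\le k}(n_i+M_{i+1})<n_{k+1}$ together with $N\le 2\iota$ forces $N\in\varrho_k\cup\vartheta_k\cup\varrho_{k+1}$, giving three cases. The substantive one (Case~3, with $\iota\in\varrho_k$ and $N\in\varrho_{k+1}$) requires combining the tail-branching count inside $\Xi^+_k(n_k)$ with the head count inside $\Xi^+_{k+1}(\ell)$; this cross-block product is precisely what your argument omits. The fix is straightforward---either anchor on $\iota$'s block as the paper does, or explicitly add the straddling case---but as written the proof has a gap.
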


\begin{proof}
We will present the proof for $\Xi^+$, the case of $\Xi^-$ is analogous.
For the sequences $(\varepsilon_k)_k$, $(K_k)_k$, and $(M_k)_k$ chosen in Section~\ref{sec:conlarsubset} we choose a sequence $(n_k)_k$ such that
\begin{equation}\label{eq:conddd1}
	\prod_{i=1}^{k+1}K_i
	\le e^{n_{k-1}\varepsilon_k},
\end{equation}
\begin{equation}\label{eq:conddd2}
	\sum_{i=1}^{k-1}n_i\varepsilon_i
	\le n_k\varepsilon_k
	\quad\text{ or, equivalently, }\quad 
	\sum_{i=1}^kn_i\varepsilon_i\le 2n_k\varepsilon_k,
\end{equation}
and
\begin{equation}\label{e.KKKK}
	M_1 + \sum_{i=1}^{k-1}(n_i+M_{i+1})
	<n_k\quad
	\text{ and }\quad
	e^{M_{k+1}h}
	\le e^{n_{k-1}\varepsilon_k}.
\end{equation}
Note that these conditions hold simultaneously if $(n_k)_k$ grows sufficiently fast. 

By the construction in Section~\ref{sec:conlarsubset}, any sequence in $\xi\in \Xi^+$ is a concatenation of main and connecting finite sequences $\varrho_1\vartheta_1\varrho_2 \vartheta_2 \ldots\varrho_k\vartheta_k \ldots$ such that $\varrho_k\in\Xi^+_k(n_k)$ and $|\vartheta_k| \le M_{k+1}$. To shorten notation, we say that an index $\iota\in \varrho_k$ provided that it enumerates a symbol in the block $\varrho_k$ within the sequence $\xi=\varrho_1 \vartheta_1 \varrho_2 \vartheta_2 \ldots \varrho_k \vartheta_k \ldots$, similar for any of the other blocks. Denote by 
\[
	m_1=m_1(\xi)\eqdef0,\quad
	m_k=m_k(\xi)\eqdef\sum_{i=1}^{k-1}(n_i+ \lvert\vartheta_i\rvert)
\]	 
the position of the beginning of the block $\varrho_k$.

To show the first claim in the lemma, let $N\ge1$. Let $k\ge1$ be such that $m_k<  N\le m_{k+1}$ and write $N=m_k+\ell$, $\ell\ge 1$. Using~\eqref{eqn:condskeleent} we have that the number of sequences of length $N$ is given by
\[
	\card\Xi^+_1(n_1)\cdot\ldots\cdot\card\Xi^+_{k-1}(n_{k-1})\cdot\card\Xi^+_k(\ell)
	\le \prod_{i=1}^{k-1} K_i \cdot e^{Nh} 
	\cdot e^{\sum_{i=1}^{k-1}n_i\varepsilon_i}e^{\ell\varepsilon_k}.
\]
By~\eqref{eq:conddd1} and~\eqref{eq:conddd2} the latter can be estimated from above by
\[
	e^{n_{k-1}\varepsilon_k}
	\cdot e^{Nh} 
	\cdot e^{2n_{k-1}\varepsilon_{k-1}}e^{\ell\varepsilon_k}
	\le e^{Nh}e^{3n_{k-1}\varepsilon_{k-1}+\ell\varepsilon_k}
	\le e^{N(h+3\varepsilon_{k-1})},
\]
with an analogous lower bound proving the claimed property.

To show the second claim in the lemma, let now $N\ge 1$ and $\iota\in\{1,\ldots,N\}$ satisfying $N-\iota\le\iota$. First, note that either $\iota\in\varrho_k$ or $\iota\in\vartheta_k$ for appropriate $k$.  We will only discuss the  case $\iota\in\varrho_k$, the other one is simpler and hence omitted. 
Note that $\iota\in\varrho_k$ implies 
\begin{equation}\label{eq:formula}
	m_{k-1}
	\le \iota
	\le \sum_{i=1}^k(n_i + \lvert\vartheta_i\rvert)
	\le \sum_{i=1}^k(n_i + M_{i+1})
	< n_{k+1},
\end{equation}
where the last inequality follows from~\eqref{e.KKKK}. 
Now note that \eqref{eq:formula} and our hypothesis $N\le 2\iota$ imply that  either $N\in\varrho_k$ or $N\in\vartheta_k$ or $N\in\varrho_{k+1}$. 
This leads to the three cases studied below. In the course, we will be implicitly defining constants $c^{(i)}(\iota,N), C^{(i)}(\iota,N)$, $i=1,2,3$, needed at the end of the proof.

\smallskip\noindent\textbf{Case 1: $\iota\in\varrho_k$ and $N\in\varrho_k$.}
\begin{figure}[h] 
 	\begin{overpic}[scale=.55]{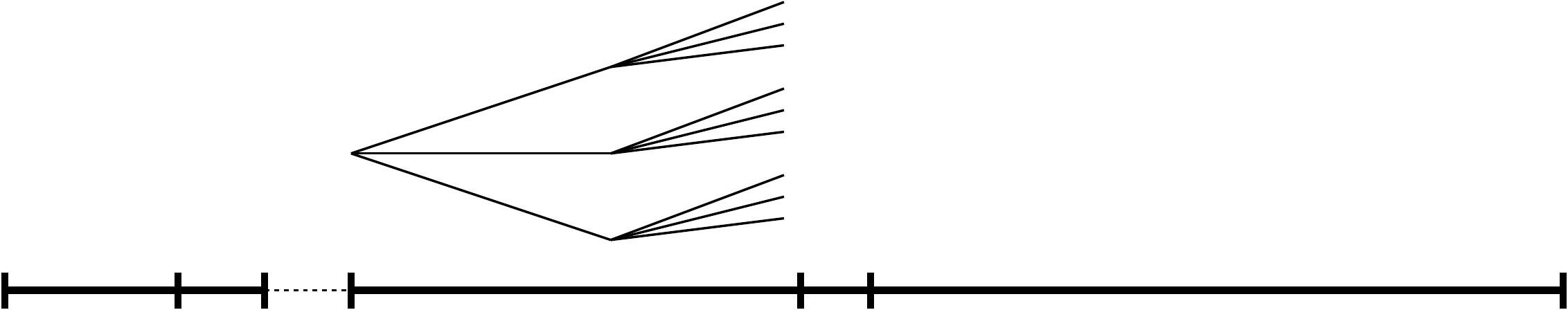}
		\put(21.2,2.8){\small $m_k$}
		\put(39,2.8){\small $\iota$}
		\put(48.5,2.8){\small $N$}
 		\put(0.1,-3){\small $\varrho_1$}
 		\put(12,-3){\small $\vartheta_1$}
 		\put(23,-3){\small $\varrho_k$}
 		\put(51,-3){\small $\vartheta_k$}
 		\put(75,-3){\small $\varrho_{k+1}$}
 	\end{overpic}
\caption{Case 1 with $j,N\in\varrho_k$}
\label{fig.proofCase1}
\end{figure}

Write $\iota=m_k+j$ and $N=m_k+\ell$ and note that $N-\iota =j-\ell$. Compare Figure~\ref{fig.proofCase1}.
By our choice of the quantifiers (see 
\eqref{e:cjns} in Section~\ref{sec:conlarsubset} applied to $\Xi^+_k(\ell))$), there exist at least 
\begin{equation}\label{eq:LDM1a}\begin{split}
	\frac13K_k^{-2}e^{-2(\ell-j)\varepsilon_k}e^{j(h-\varepsilon_k)}
	&= \frac13K_k^{-2}e^{-2(N-\iota)\varepsilon_k}e^{j(h-\varepsilon_k)}\\
	&= \frac13K_k^{-2}e^{-(j+2(N-\iota))\varepsilon_k}\cdot e^{jh}
	=H(\iota,N)\cdot e^{jh},
\end{split}\end{equation}
where
\begin{equation}\label{eq:sacoscheio}
	H(\iota,N)
	\eqdef \frac13K_k^{-2}e^{-(j+2(N-\iota))\varepsilon_k},
\end{equation}
initial sequences within the block $\varrho_k$ each of which has between (see \eqref{e.conos})
\begin{equation}\label{eq:LDM1b}\begin{split}
	\frac13 K_k^{-2}e^{-2j\varepsilon_k}e^{(\ell-j)(h-\varepsilon_k)} 
	&=	\frac13 K_k^{-2}e^{-2j\varepsilon_k}e^{(N-\iota)(h-\varepsilon_k)} \\
	&= \frac13 K_k^{-2}e^{-(2j+N-\iota)\varepsilon_k}\cdot e^{(N-\iota)h}\\
	&\eqdef c^{(1)}(\iota,N)\cdot e^{(N-\iota)h}
\end{split}\end{equation}
and
\begin{equation}\label{eq:LDM1c}\begin{split}
K_k e^{(\ell-j) (h +\varepsilon_k)}
	&= K_k e^{(N-\iota)(h+\varepsilon_k)}
	= K_ke^{(N-\iota)\varepsilon_k} \cdot e^{(N-\iota)h}\\
	&\eqdef C^{(1)}(\iota,N)\cdot e^{(N-\iota)h}
	\end{split}
\end{equation}
continuations to a sequence of length $\ell$ within this block.

\smallskip\noindent\textbf{Case 2: $\iota\in\varrho_k$ and $N\in\vartheta_k$.}
Write $\iota=m_k+j$ and $N=m_k+n_k+\ell$ and note that $N-\iota =n_k-j+\ell$. By our choice of the constants (see
\eqref{e:cjns} in Section~\ref{sec:conlarsubset} now applied to $\Xi^+_k(n_k)$ with $\ell=n_k$) and also using $n_k-j\le N-\iota$, there exist at least 
\begin{equation}\label{eq:LDM2a}\begin{split}
	\frac13K_k^{-2}e^{-2(n_k-j)\varepsilon_k}e^{j(h-\varepsilon_k)}
	&\ge \frac13K_k^{-2}e^{-2(N-\iota)\varepsilon_k}e^{j(h-\varepsilon_k)}\\
	&= \frac13K_k^{-2}e^{-(j+2(N-\iota))\varepsilon_k}\cdot e^{jh}
	= H(\iota,N)\cdot e^{jh}
\end{split}\end{equation}
initial sequences of length $j$ within the block $\varrho_k$. By \eqref{e.conos} each of them has between
\begin{equation}\label{eq:LDM2b}
	\frac13 K_k^{-2}e^{-2j\varepsilon_k}e^{(n_k-j)(h-\varepsilon_k)} 
\end{equation}
and
\begin{equation}\label{eq:LDM2c}
	K_k e^{(n_k-j) (h +\varepsilon_k)}
\end{equation}
continuations to a sequence of length $n_k$ within this block. Each of those sequences can then be continued (without further branching) to a sequence of length $N-m_k$ by some connecting sequence whose length is between $0$ and $M_k$. We now estimate the terms in \eqref{eq:LDM2b} and \eqref{eq:LDM2c}. Using that  $n_k-j\le N-\iota$ and $\ell \le M_{k+1}$ we get
\begin{equation}\label{eq:wrong1}\begin{split}
	\text{eq.}\eqref{eq:LDM2b}
	&= \frac13 K_k^{-2} e^{-2j\varepsilon_k}  
		e^{- (n_k-j)\varepsilon_k }  
		e^{-\ell h}e^{(n_k+\ell-j)h}\\
	&\ge \frac13 K_k^{-2} e^{-(2j+N-\iota)\varepsilon_k}  e^{-M_{k+1}h}
		\cdot e^{(N-\iota)h}
				\eqdef c^{(2)}(\iota,N)\cdot e^{(N-\iota)h},\\
	\text{eq.}\eqref{eq:LDM2c}
	&= K_k e^{(n_k-j)\varepsilon_k} e^{-\ell h} e^{(n_k+\ell-j)h}\\
	&< K_k e^{(N-\iota)\varepsilon_k} 
	\cdot e^{(N-\iota)h}
			\eqdef C^{(2)}(\iota,N)\cdot e^{(N-\iota)h} .
\end{split}\end{equation}

\smallskip\noindent\textbf{Case 3: $\iota\in\varrho_k$ and $N\in\varrho_{k+1}$.}
Write $\iota=m_k+j$ and $N=m_k+n_k+\lvert \vartheta_k\rvert +\ell=m_{k+1}+\ell$. First, as in Case 2, there exist initial sequences of length $j$ within $\varrho_k$ which can be estimated as in \eqref{eq:LDM2a} and each of them has between \eqref{eq:LDM2b} and \eqref{eq:LDM2c} continuations to a sequence of length $n_k$ within this block. Then each of them is continued (without further branching) by some connecting sequence of length $\lvert\vartheta_k\rvert$ up to length $m_{k+1}$ at the beginning of block $\varrho_{k+1}$. Compare Figure~\ref{fig.proofCase3}.
Then, by~\eqref{eqn:condskeleent} in Section~\ref{sec:conlarsubset} applied to $\Xi^+_{k+1}(\ell)$, for each such sequence there exist between
\begin{equation}\label{e:ptqp}
	K_{k+1}^{-1}e^{\ell(h-\varepsilon_{k+1})}
	= K_{k+1}^{-1}e^{(N-m_{k+1})(h-\varepsilon_{k+1})}
	\text{ and }
		K_{k+1}e^{(N-m_{k+1})(h+\varepsilon_{k+1})}
\end{equation}
continuations to a sequence of length $N-m_k$.
\begin{figure}[h] 
 	\begin{overpic}[scale=.55]{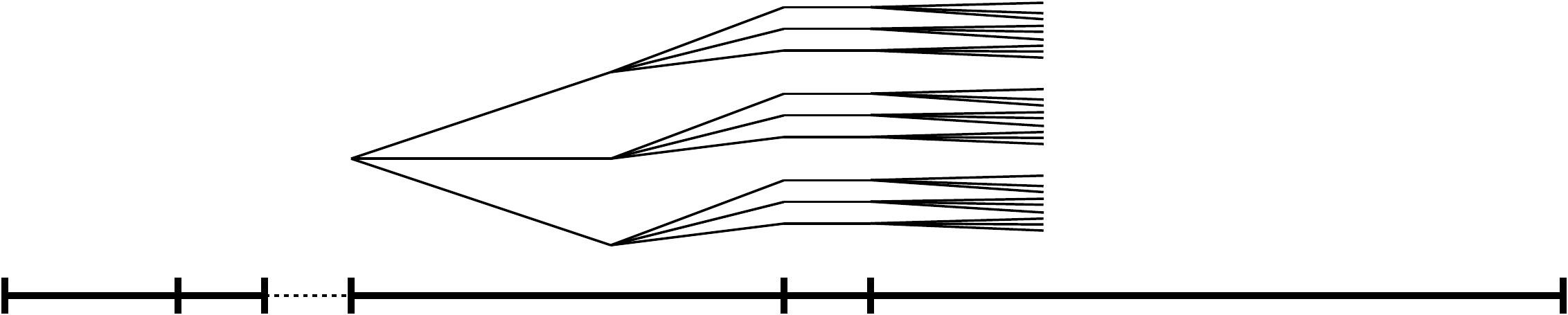}
		\put(21.2,2.8){\small $m_k$}
		\put(39,2.8){\small $\iota$}
		\put(54,2.8){\small $m_{k+1}$}
		\put(66,2.8){\small $N$}
 		\put(0.1,-3){\small $\varrho_1$}
 		\put(12,-3){\small $\vartheta_1$}
 		\put(23,-3){\small $\varrho_k$}
 		\put(51,-3){\small $\vartheta_k$}
 		\put(75,-3){\small $\varrho_{k+1}$}
 	\end{overpic}
\caption{Case 3 with $j\in\varrho_k$ and $N\in\varrho_{k+1}$}
\label{fig.proofCase3}
\end{figure}
Thus,  as for \eqref{eq:LDM2a} and using $n_k-j\le N-\iota$, there exist at least
\begin{equation}\label{eq:LDM2d}\begin{split}
	\frac13 K_k^{-2}e^{-2(n_k-j)\varepsilon_k} e^{j(h-\varepsilon_k)}
	&\ge \frac13 K_k^{-2} e^{-2(N-\iota)\varepsilon_k} e^{j(h-\varepsilon_k)}\\
	&= \frac13 K_k^{-2}  e^{-(j+2(N-\iota))\varepsilon_k}
		\cdot e^{jh}
		=H(\iota,N)\cdot e^{jh}  
\end{split}\end{equation}
initial sequences of length $j$ within $\varrho_k$. Moreover, multiplying the terms in 
\eqref{e:ptqp} with~\eqref{eq:LDM2b} and~\eqref{eq:LDM2c}, respectively, each of them has between
\begin{equation}\label{eq:LDM3b}\begin{split}
	&\frac13 K_k^{-2}e^{-2j\varepsilon_k}e^{(n_k-j)(h-\varepsilon_k)} 
	\cdot
	K_{k+1}^{-1}e^{(N-m_{k+1})(h-\varepsilon_{k+1})}\\
&= \frac13 K_k^{-2}K_{k+1}^{-1}
			e^{-2j\varepsilon_k-(n_k-j)\varepsilon_k-(N-m_{k+1})\varepsilon_{k+1}}
			e^{(n_k-j+N-m_{k+1})h} \\
&= \frac13 K_k^{-2}K_{k+1}^{-1}
			e^{-2j\varepsilon_k-(n_k-j)\varepsilon_k-(N-m_{k+1})\varepsilon_{k+1}}
			e^{-\lvert\vartheta_k\rvert h}
			e^{(n_k-j+\lvert\vartheta_k\rvert+N-m_{k+1})h} \\
&\ge \frac13 K_k^{-2}K_{k+1}^{-1}
			e^{-(2j+N-\iota)\varepsilon_k} 
			\cdot e^{-M_{k+1}h}
			e^{(N-\iota)h}			
\eqdef c^{(3)}(\iota,N)\cdot e^{(N-\iota)h} 		
\end{split}\end{equation}
and
\begin{equation}\label{eq:LDM3c}\begin{split}
	K_k &e^{(n_k-j) (h +\varepsilon_k)}
	\cdot
	K_{k+1}e^{(N-m_{k+1})(h+\varepsilon_{k+1})}\\
	&= K_kK_{k+1}
		e^{(n_k-j)\varepsilon_k+(N-m_{k+1})\varepsilon_{k+1}}
		e^{(n_k-j+N-m_{k+1})h}\\
	&\le  K_kK_{k+1}
		e^{(N-\iota)\varepsilon_k}
		\cdot e^{(N-\iota)h}
	\eqdef C^{(3)}(\iota,N)\cdot e^{(N-\iota)h} 
\end{split}\end{equation}
continuations each of which can then be continued to an infinite sequence on $\Xi^+$. This ends the  estimates in Case 3.

\smallskip
We are now ready to estimate the cardinality of the set of finite compound sequences $\varrho_1\vartheta_1\ldots\varrho_{k-1}\vartheta_{k-1}$ which is given by 
\[
	\card\Xi^+_1(n_1)\cdot\ldots\cdot\card\Xi^+_{k-1}(n_{k-1}).
\]	
Joining the above Cases 1, 2, and 3,  let us now estimate all possible continuations to a sequence $\varrho_1\vartheta_1\ldots\varrho_{k-1}\vartheta_{k-1}\xi_{m_k}\ldots\xi_\iota$ of length $\iota$ which then has continuations to an infinite sequence $\xi\in \Xi^+$. Writing $\iota=m_k+j$, there exist (using~\eqref{eq:LDM1a} in Case 1,~\eqref{eq:LDM2a} in Case 2, and~\eqref{eq:LDM2d} in Case 3) at least
\begin{equation}\label{eq:defL}
	L
	= \card\Xi^+_1(n_1)\cdot\ldots\cdot\card\Xi^+_{k-1}(n_{k-1})
		\cdot 
		H(\iota,N)\cdot e^{jh}
\end{equation}
initial finite sequences of length $\iota$ such that each of them has at least (using~\eqref{eq:LDM1b},~\eqref{eq:LDM2b}, and~\eqref{eq:LDM3b})
\[
	 \min_{r=1,2,3}c^{(r)}(\iota,N)\cdot 
		e^{(N-\iota)(h-\varepsilon_k)}
\]
and at most (using~\eqref{eq:LDM1c},~\eqref{eq:LDM2c}, and~\eqref{eq:LDM3c})
\[
	\max_{r=1,2,3}C^{(r)}(\iota,N)\cdot 
	e^{(N-\iota)(h-\varepsilon_k)}
\]
continuations to an infinite sequence in $\Xi^+$.
To conclude the proof of the lemma, we finally estimate the above terms.
Recalling (see \eqref{eq:laterneeded}) that $\card\Xi_\ell'=\card\Xi^+_\ell(n_\ell)$, with~\eqref{eqn:condskeleent} and \eqref{eq:defL} we have
\[\begin{split}
	L
	&\ge \prod_{i=1}^{k-1}\Big(K_i^{-1}e^{n_i(h-\varepsilon_i)}\Big)
	\cdot 	H(\iota,N)
	\cdot e^{jh}\\
	&= \prod_{i=1}^{k-1}K_i^{-1} 
		\cdot e^{-\sum_{i=1}^{k-1}n_i\varepsilon_i}
		\cdot H(\iota,N)
	\cdot e^{\iota h}
	 \eqdef \widetilde H(\iota,N)
	\cdot e^{\iota h}.
\end{split}\]
We can conclude, recalling the definition of $ H(\iota,N)$ in~\eqref{eq:sacoscheio}
\[\begin{split}
	\widetilde H(\iota,N)
	&= \prod_{i=1}^{k-1}K_i^{-1} 
		\cdot e^{-\sum_{i=1}^{k-1}n_i\varepsilon_i}
		\cdot \frac13 K_k^{-2}
		\cdot e^{-(j+2(N-\iota))\varepsilon_k}\\
	\small{(\text{using }N-\iota\le \iota\text{ and }j\le\iota)}\quad
	&\ge 	\prod_{i=1}^{k}K_i^{-2} 
		\cdot e^{-\sum_{i=1}^{k-1}n_i\varepsilon_i}
		\cdot \frac13 
		\cdot e^{-3\iota\varepsilon_k}\\
	\small{(\text{using }\eqref{eq:conddd1}\text{ and }n_{k-1}\le\iota)}\quad
	&\ge 	e^{-2\iota\varepsilon_k} 
		\cdot e^{-\sum_{i=1}^{k-1}n_i\varepsilon_i}
		\cdot \frac13 
		\cdot e^{-3\iota\varepsilon_k}\\
	\small{(\text{using }\eqref{eq:conddd2}\text{ and }n_{k-1}+j<\iota)}\quad
	&> 	e^{-2\iota\varepsilon_k} 
		\cdot e^{-2\iota\varepsilon_{k-1}}
		\cdot \frac13 
		\cdot e^{-3\iota\varepsilon_k}.
\end{split}\]
Moreover, using the definitions of $c^{(r)}(\iota,N)$ in~\eqref{eq:LDM1b},~\eqref{eq:wrong1}, and~\eqref{eq:LDM3b}, we have 
\[\begin{split}
	\min_{r=1,2,3}c^{(r)}(\iota,N)=c^{(3)}(\iota,N)
	&= \frac13K_k^{-2}K_{k+1}^{-1} 
		\cdot e^{-(2j+N-\iota)\varepsilon_k} 
		\cdot e^{-M_{k+1}h}\\
	\small{(\text{using }N-\iota\le \iota\text{ and }j\le\iota)}\quad
	&\ge \frac13K_k^{-2}K_{k+1}^{-1} 
		\cdot e^{-3\iota\varepsilon_k} 
		\cdot e^{-M_{k+1}h}\\
	\small{(\text{using }\eqref{eq:conddd1},~\eqref{e.KKKK},\text{ and }n_{k-1}\le\iota)}\quad
	&\ge \frac13 e^{-2\iota\varepsilon_k}
		\cdot e^{-3\iota\varepsilon_k} 
		\cdot e^{-\iota\varepsilon_k} .
\end{split}\]
Finally, using~\eqref{eq:LDM1c},~\eqref{eq:wrong1}, and~\eqref{eq:LDM3c}, we have
\[\begin{split}
	\max_{r=1,2,3}C^{(r)}(\iota,N)
	=C^{(3)}(\iota,N)
	&= K_kK_{k+1}
		\cdot e^{(N-\iota)\varepsilon_k}\\
	\small{(\text{using }N-\iota\le \iota, \eqref{eq:conddd1},\text{ and }n_{k-1}\le\iota)}\quad		
	&\le K_kK_{k+1}
		\cdot e^{\iota\varepsilon_k}
	\le e^{\iota\varepsilon_k}			
	\cdot e^{\iota\varepsilon_k}.
\end{split}\]

Now, choosing the function $\delta$ appropriately, this implies the claimed estimate, where the constant $K$ in the lemma takes also care of the remaining (not considered) cases.
\end{proof}

%-----------------------------------------------------------------------------------------------------
\subsection{Estimate of the entropy of $\Xi$: Bridging measures}\label{subsec:estoftheentr}
%-----------------------------------------------------------------------------------------------------

In this section, we show that $\Xi$ has large entropy, bigger than $h$ defined in~\eqref{eq:let}. The heart of the argument is the construction of certain Borel probability measures $\nu^\pm$ and the application of the Mass distribution principle that we recall below (see~\cite{Mat:95}). Here we follow a general principle of so-called \emph{bridging measures} already used in other contexts. See for example \cite[Section 5.2]{GelRam:09} and \cite[Section 5.2]{GelPrzRam:10} (where those measures are called \emph{w-measures}). We will use Frostman's lemma again in Section~\ref{sssec:entcocy}.

\begin{lemma}[Mass distribution principle or Frostman's lemma]\label{l.frostman}
Consider a compact metric space $(X,d)$ and a subset $\Xi\subset X$. Let $\nu$ be a finite Borel measure such that $\nu(\Xi)>0$. Suppose that there exists $D>0$ such that for every $x\in \Xi$ it holds
$$
\liminf_{\varepsilon\to 0}
\frac{\log \nu ( B(x,\varepsilon))}{\log \varepsilon} \ge D.
$$
Then $\mathrm{HD} (\Xi) \ge D$, where
$\mathrm{HD}$ denotes the Hausdorff dimension.
\end{lemma}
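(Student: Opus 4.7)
The plan is to prove the Mass Distribution Principle by the standard route: showing that for every $s<D$ the $s$-dimensional Hausdorff measure $\mathcal{H}^s(\Xi)$ is positive, from which $\mathrm{HD}(\Xi)\ge s$ follows, and then letting $s\nearrow D$. Fix $s<D$. First, I would use the pointwise liminf hypothesis to produce, for every $x\in\Xi$, a scale $\varepsilon(x)>0$ such that $\nu(B(x,\varepsilon))\le\varepsilon^s$ for all $\varepsilon<\varepsilon(x)$; this follows since $\liminf_{\varepsilon\to 0}\log\nu(B(x,\varepsilon))/\log\varepsilon\ge D>s$ means $\log\nu(B(x,\varepsilon))\ge s\log\varepsilon$ for all sufficiently small $\varepsilon>0$ (recall $\log\varepsilon<0$).

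Next, I would stratify $\Xi$ according to the size of this local scale by setting
\[
  \Xi_n\eqdef\{x\in\Xi\colon \varepsilon(x)>1/n,\ \nu(B(x,\varepsilon))\le\varepsilon^s\text{ for all }\varepsilon<1/n\}.
\]
Then $\Xi=\bigcup_{n\ge 1}\Xi_n$ is a countable increasing union, and since $\nu(\Xi)>0$ there exists $n_0$ with $\nu(\Xi_{n_0})>0$ by countable additivity. This stratification is the only mildly delicate point, and it is necessary in order to get a uniform scale on which the measure estimate holds.

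The core computation is then the lower bound for $\mathcal{H}^s(\Xi_{n_0})$. Given any countable cover $\{U_i\}_{i\in\bN}$ of $\Xi_{n_0}$ by sets with $\diam U_i<1/n_0$, discard the $U_i$ that do not meet $\Xi_{n_0}$; for each remaining $U_i$ pick $x_i\in U_i\cap\Xi_{n_0}$, so $U_i\subset B(x_i,\diam U_i)$ with $\diam U_i<1/n_0<\varepsilon(x_i)$, hence
\[
  \nu(U_i)\le\nu(B(x_i,\diam U_i))\le(\diam U_i)^s.
\]
Summing and using the covering property together with subadditivity of $\nu$ gives
\[
  \sum_i(\diam U_i)^s\ge \sum_i\nu(U_i)\ge\nu(\Xi_{n_0})>0.
\]
Taking the infimum over all such covers yields $\mathcal{H}^s_{1/n_0}(\Xi_{n_0})\ge\nu(\Xi_{n_0})>0$, and letting the mesh tend to zero gives $\mathcal{H}^s(\Xi_{n_0})\ge\nu(\Xi_{n_0})>0$; therefore $\mathrm{HD}(\Xi)\ge\mathrm{HD}(\Xi_{n_0})\ge s$. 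Since $s<D$ was arbitrary, $\mathrm{HD}(\Xi)\ge D$, concluding the proof. The main subtlety, as noted, is simply passing from a pointwise liminf condition to a uniform one on a positive-measure subset; once that stratification is in hand, the rest is an immediate comparison between $\nu$-mass and covering sums.
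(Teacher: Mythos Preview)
The paper does not give its own proof of this lemma: it is stated as a classical fact with a reference to Mattila's textbook (\cite{Mat:95}) and then used as a black box. Your argument is the standard proof of the mass distribution principle and is correct; the only cosmetic point is that from $x_i\in U_i$ one gets $U_i\subset \overline{B}(x_i,\diam U_i)$ rather than the open ball, but replacing $\diam U_i$ by, say, $2\diam U_i$ only introduces a harmless factor $2^{-s}$ in the final lower bound and changes nothing.
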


\begin{remark}
\label{r.HD=h}
Consider the natural projections $\pi^\pm\colon\Sigma_N\to\Sigma_N^\pm$ and the shift maps $\sigma^\pm\colon\Sigma_N^\pm\to\Sigma_N^\pm$. Recall that, by construction, $\pi^\pm(\Xi)=\Xi^\pm$. Note that $h_{\rm top}(\sigma^\pm,\Xi^\pm)=h_{\rm top}(\sigma^\pm,\pi^\pm(\Xi))\le h_{\rm top}(\sigma,\Xi)$.

Note that for the standard metric the Hausdorff dimension of any set in $\Sigma_N^+$ is equal to its topological entropy relative to  $\sigma^+$.
\end{remark}

We will apply Lemma~\ref{l.frostman} and Remark~\ref{r.HD=h} to estimate the topological entropy of the sets $\Xi^\pm$ relative to $\sigma^\pm$ and hence the topological entropy of $\Xi$ relative to $\sigma$. 

\begin{lemma}\label{lem:entropyestimates}
	We have $h_{\rm top}(\sigma^+,\Xi^+)\ge h$ and $h_{\rm top}(\sigma^-,\Xi^-)\ge h$.
\end{lemma}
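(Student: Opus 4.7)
The plan is to apply the Mass distribution principle (Lemma~\ref{l.frostman}) together with Remark~\ref{r.HD=h}, which identifies $h_{\rm top}(\sigma^+,\Xi^+)=(\log 2)\cdot\mathrm{HD}(\Xi^+)$. It thus suffices to construct a Borel probability measure $\nu^+$ concentrated on $\Xi^+$ such that, for every length-$M$ cylinder $[\omega_M]$ with $[\omega_M]\cap\Xi^+\neq\emptyset$,
\[
	\nu^+([\omega_M])\le e^{-M(h-\varepsilon_M)}
\]
with $\varepsilon_M\to 0$ as $M\to\infty$ uniformly in the choice of $\omega_M$; then Frostman's lemma yields $\mathrm{HD}(\Xi^+)\ge h/\log 2$ and hence $h_{\rm top}(\sigma^+,\Xi^+)\ge h$. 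The argument for $\nu^-$ on $\Xi^-$ is symmetric, so I focus on $\nu^+$.

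First I would build $\nu^+$ directly from the tree structure of $\Xi^+$ introduced in Section~\ref{sec:conlarsubset}. Every $\xi^+\in\Xi^+$ decomposes uniquely as $\xi^+=\varrho_1\vartheta_1\varrho_2\vartheta_2\cdots$ with $\varrho_k\in\Xi^+_k(n_k)$, and each connecting block $\vartheta_k$ is a deterministic function of $\varrho_k$, so genuine branching occurs only at the main blocks. Accordingly, I set
\[
	\nu^+([\varrho_1\vartheta_1\cdots\varrho_k\vartheta_k])
	\eqdef \prod_{i=1}^{k}\frac{1}{\card\Xi^+_i(n_i)}.
\]
Summing over the $\card\Xi^+_{k+1}(n_{k+1})$ admissible children $\varrho_{k+1}$ reproduces the parent mass, so by Kolmogorov extension $\nu^+$ is a well-defined Borel probability measure, concentrated on the closed set $\Xi^+$.

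Next I would establish the cylinder estimate. For a length-$M$ cylinder $[\omega_M]$ meeting $\Xi^+$ write $M=m_k+j$ with $0\le j<n_k+\lvert\vartheta_k\rvert$, so $M$ falls inside either the main block $\varrho_k$ (when $j<n_k$) or the connecting block $\vartheta_k$. The mass factorizes as
\[
	\nu^+([\omega_M])
	= \Bigl(\prod_{i=1}^{k-1}\frac{1}{\card\Xi^+_i(n_i)}\Bigr)\cdot\frac{N_{k,j}}{\card\Xi^+_k(n_k)},
\]
where $N_{k,j}$ is the number of completions of the fixed length-$j$ prefix $(\omega_{m_k}\ldots\omega_{M-1})$ to an element of $\Xi^+_k(n_k)$ compatible with $[\omega_M]$ (with $N_{k,j}=1$ when $j\ge n_k$, since the connecting block is deterministic). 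Plugging in the upper bound $N_{k,j}\le K_k\,e^{(n_k-\min(j,n_k))(h(\mu_k)+\varepsilon_k)}$ from Proposition~\ref{prolem:fluminense} item~1 (which applies to \emph{every} length-$j$ initial sequence of $\Xi^+_k(n_k)$, since any such sequence meets the Shannon-McMillan-Breiman typical set $S$ in the proof of that proposition and hence obeys the SMB upper bound $\nu([\omega_j])\le K_ke^{-j(h(\mu_k)-\varepsilon_k)}$) and the lower bounds $\card\Xi^+_i(n_i)\ge K_i^{-1}e^{n_i(h(\mu_i)-\varepsilon_i)}$ from~\eqref{eqn:condskeleent}, together with the growth conditions~\eqref{eq:conddd1}--\eqref{e.KKKK} (in particular $\sum_{i=1}^{k}n_i\varepsilon_i\le 2n_k\varepsilon_k$ and the negligibility $\sum_{i=1}^{k-1}\lvert\vartheta_i\rvert=o(m_k)$ of the connecting blocks) and the convergence $h(\mu_k)\to h$, gives
\[
	\nu^+([\omega_M])\le e^{-M(h-\varepsilon_M)},\quad \varepsilon_M\to 0.
\]
Frostman's lemma then yields the claim, and the symmetric backward construction gives $h_{\rm top}(\sigma^-,\Xi^-)\ge h$.

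The main obstacle will be the careful bookkeeping of the sub-exponential error factors near the block boundaries, in particular the verification that the upper bound on $N_{k,j}$ in Proposition~\ref{prolem:fluminense} item~1 applies to \emph{every} (and not merely most) length-$j$ initial sequence of $\Xi^+_k(n_k)$, and the control of the cumulative contribution of the connecting blocks; both may require imposing a mild additional fast-decay condition on $(\varepsilon_k)$ (e.g.\ $n_k\varepsilon_k=o(n_{k-1})$) beyond those in \eqref{eq:conddd1}--\eqref{e.KKKK}, which is harmless as these parameters were chosen freely in Section~\ref{subsec:prelimste}.
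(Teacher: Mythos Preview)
Your approach is different from the paper's and is essentially correct, but the ``mild additional fast-decay condition'' you defer to the final paragraph is exactly the crux and is less harmless than you suggest. The paper does \emph{not} align the Frostman measure with the block decomposition of $\Xi^+$; instead it fixes dyadic levels $r_k=2^k$ and, for each $r_k$-cylinder meeting $\Xi^+$, distributes mass uniformly over its $r_{k+1}$-subcylinders meeting $\Xi^+$. The branching counts between consecutive dyadic levels are supplied by Lemma~\ref{lem:skelestrong} (whose hypothesis $N-\iota\le\iota$ is tailored precisely to $N=r_{k+1}=2\iota$), and the resulting error at level $r_k$ is of the form $e^{O(r_k\delta(r_k))}$ with $\delta\to0$, automatically subexponential in the \emph{current} scale. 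This is the whole point of the dyadic trick.

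In your block-aligned scheme the uniform ratio $N_{k,j}/\card\Xi^+_k(n_k)$ inevitably carries an error factor of order $e^{2n_k\varepsilon_k}$: your SMB argument gives $\nu([\omega_j])\le K_k e^{-j(h-\varepsilon_k)}$, but to turn this into a count you must divide by the \emph{minimal} $\nu$-measure $K_k^{-1}e^{-n_k(h+\varepsilon_k)}$ of an $n_k$-cylinder, and then divide again by $\card\Xi^+_k(n_k)\ge K_k^{-1}e^{n_k(h-\varepsilon_k)}$. For small $j$ one has $M\asymp m_k\asymp n_{k-1}$, so you need $n_k\varepsilon_k=o(n_{k-1})$. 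This upper bound on $n_k$ competes against the lower bounds coming from the contraction requirement of Section~\ref{sec:conlarsubset} (which blows up when $\alpha=0$ and $\lvert\chi(\mu_k)\rvert\to0$) and from \eqref{eq:conddd1}--\eqref{e.KKKK}; reconciling them is possible but is a genuine recursive construction, not just bookkeeping. A cleaner variant of your idea avoids this entirely: replace the uniform law on each $\Xi^+_k(n_k)$ by the weighted law $\tilde\nu_k(\{\varrho\})\propto\nu([\varrho])$. Then the prefix probability is directly bounded by $K_k\,e^{-j(h-\varepsilon_k)}/(1-\kappa)$, the troublesome $e^{2n_k\varepsilon_k}$ factor disappears, and only the innocuous growth conditions already present suffice.
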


\begin{proof}
Consider the sequence $(r_k)_{k\ge0}$ given by $r_k=2^k$. Note that by~\eqref{eq:cond0} in  Lemma~\ref{lem:skelestrong}  the sequence $(n_k)_k$ grows  faster than the sequence $(r_k)_k$.

We define a probability measure $\nu^+$ depending only on the (forward) one-sided sequences $\Sigma_N^+$ and a measure $\nu^-$ depending only on the (backward) one-sided sequences $\Sigma_N^-$. The measure $\nu^+$ is constructed as follows (the measure $\nu^-$ is analogously defined): for every $k\ge1$
\begin{itemize}
\item[--] for every (``parent") cylinder of level $r_k$ intersecting $\Xi^+$, $\nu^+$ is uniformly subdistributed on its (``child")  subcylinders of level $r_{k+1}$ intersecting $\Xi^+$,
\item[--] as the family of cylinders of levels $r_k$, $k\ge1$, generate the Borel $\sigma$-algebra of $\Sigma_N^+$, we obtain a Borel probability measure on this $\sigma$-algebra.
\end{itemize}

\begin{figure}
\begin{overpic}[scale=.40 
  ]{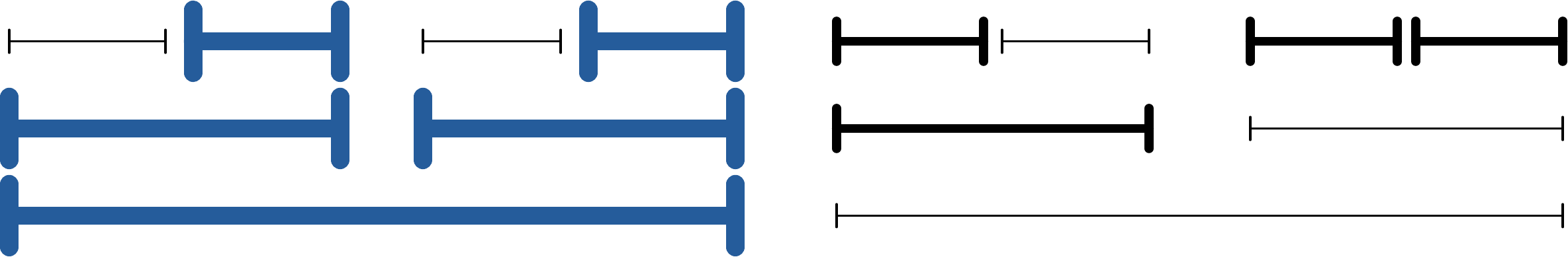}
        \put(102,13){\small$\Sigma^+_{r_{k+1}}$}
        \put(102,7){\small$\Sigma^+_{L}$}
        \put(102,1){\small$\Sigma^+_{r_k}$}
\end{overpic}
\caption{Schematic construction of $\nu^+$: $r_i$-cylinders which intersect $\Xi^+$, $i=k,k+1$, (bold), cylinders on which $\nu^+$ is distributed (heavy bold and blue)}
\label{fig:bridingmeasure}
\end{figure}

Given $L\ge1$ and $\xi^+\in \Xi^+$, denote by $\Delta^+_L(\xi^+)\eqdef [\xi_0\ldots\xi_{L-1}]$ the  cylinder of length $L$ containing $\xi^+$.

\begin{claim}
	For every $\xi^+\in\Xi^+$ we have $\lim_{L\to\infty}-\frac1L\log\nu^+(\Delta^+_L(\xi^+))\ge h$.
\end{claim}

\begin{proof}
First, let
\[
	C
	\eqdef \max_{\xi^+\in\Xi^+}\Delta^+_2(\xi^+)
	>0.
\]
In what follows we fix $\xi^+$ and omit in the notation the dependence of $\Delta_L^+$ on $\xi^+$.
Given $L\ge1$, consider some cylinder $\Delta^+_L$ of length $L$ which has nonempty intersection with $\Xi^+$. We are going to estimate $\nu^+(\Delta^+_L)$. 
There exists a unique index $k\ge1$ such that $r_k<L\le r_{k+1}$. We have
\[
	\nu^+(\Delta^+_L)
	= \nu^+(\Delta_{r_0}^+) \cdot
		\left(\frac {\nu^+(\Delta_{r_1}^+)} {\nu^+(\Delta_{r_0}^+)}  \cdots
		\frac {\nu^+(\Delta_{r_k}^+)} {\nu^+(\Delta_{r_{k-1}}^+)} \right)
		\cdot
		\frac {\nu^+(\Delta^+_L)} {\nu^+(\Delta_{r_k}^+)}
	= T_1\cdot T_2\cdot T_3	,
\]
where $\Delta_{r_i}^+$ is the corresponding parent $r_i$-cylinder of $\Delta^+_{r_{i+1}}$, $i=0,\ldots,k-1$. 
Let us now estimate the  three terms $T_1,T_2$, and $T_3$ from above. Compare Figure~\ref{fig:bridingmeasure}.

First note that, since $r_0=2$ is fixed,  $T_1=\nu^+(\Delta^+_2)\le C$ does not depend on $L$.

For every $i=1,\ldots,k$, by the second part of Lemma~\ref{lem:skelestrong} applied to $N=r_i$ and $\iota=r_{i-1}$ (note that the hypothesis $N-\iota= \iota$ is satisfied),  we obtain that each cylinder $\Delta_{r_i}^+$ contains at least
\[
	K^{-1} \cdot e^{(r_i-r_{i-1})h}
				e^{-r_i\delta(r_{i-1})}
\]
subcylinders of length $r_{i+1}$ intersecting $\Xi^+$ and hence (using $r_i=2r_{i-1}$) we have
\[
\begin{split}
	T_2
	&\le \prod_{i=1}^k \Big(K \cdot e^{-(r_i-r_{i-1})h}
				e^{r_i\delta(r_{i-1})}\Big)\\
	&= K^k \prod_{i=1}^k e^{r_i\delta(r_{i-1})}\cdot e^{(r_0-r_k)h}
	= K^k e^{\sum_{i=1}^kr_{i}\delta(r_{i-1})}e^{-(r_{k}-r_0)h} 
	= e^{-r_{k}h}S(k),			
\end{split}\]
where
\[
	S(k)
	\eqdef K^{k} e^{\sum_{i=1}^kr_{i}\delta(r_{i-1})}e^{r_0h}. 
\]
Recall the particular choice of $r_k=2^k$. For any $r>0$ let 
$$
d(r) \eqdef \max\{i\colon \delta(r_{i-1}) \geq r\}.
$$ 
Let $D=\max_i \delta(r_i)$. Then for any $k$ sufficiently large  so that $d(r)<k$ we have

\[
	\frac 1 {r_k} \sum_{i=1}^k r_i \delta(r_{i-1}) 
	< \frac 1 {r_k} \sum_{i=1}^{d(r)} r_i D + \frac 1 {r_k} \sum_{i=d(r)+1}^k r_i r 
	\leq 2^{d(r) + 1 - k} D + 2 r.
\]
The latter term we can make arbitrarily small by choosing a small $r$ and the former summand tends to zero with $k\to\infty$.
Hence, it follows that
\begin{equation}\label{eqeq:estiSk}
	\lim_{k\to\infty}\frac{1}{r_k}\log S(k)=0.
\end{equation}

The estimate of $T_3$ is done in two steps. First, since $r_k<L\le r_{k+1}$, there is a cylinder $\Delta^+_{r_k}$ which contains $\Delta^+_L$. Let $\ell$ be the number of  cylinders of level $r_{k+1}$ intersecting $\Xi^+$ and subdividing $\Delta^+_{r_k}$. Note that each such cylinder, by construction of $\nu^+$, has measure
\[
	\frac1\ell\nu^+(\Delta^+_{r_k}).
\]
 By Lemma~\ref{lem:skelestrong} applied to $N=r_{k+1}$ and $\iota=r_k$, we have
\[
	\ell 
	\ge K^{-1} \cdot e^{(r_{k+1}-r_k)h} e^{-r_{k+1}\delta(r_k)}
	= K^{-1} \cdot e^{(r_{k+1}-r_k)h} e^{- 2 r_{k}\delta(r_k)}.
\]
Let $\ell'\le\ell$ be the number of  cylinders of level $r_{k+1}$ intersecting $\Xi^+$ which are contained in $\Delta^+_L$. Again by Lemma~\ref{lem:skelestrong} applied now $N=r_{k+1}$ and $\iota=L$ we get
\[
	\ell'
	\le K \cdot e^{(r_{k+1}-L) h} e^{ r_{k+1} \delta(L)}.
\]
Hence 
\[
\begin{split}
	T_3
	&= \nu^+(\Delta^+_L)\cdot\frac{1}{\nu^+(\Delta^+_{r_k})}
	= \ell'\frac1\ell\nu^+(\Delta^+_{r_k})
		\cdot\frac{1}{\nu^+(\Delta^+_{r_k})}\\
	& \le \frac{K \cdot e^{(r_{k+1}-L) h} e^{ r_{k+1} \delta(L)}}
			{K^{-1} \cdot e^{(r_{k+1}-r_k)h} e^{- 2 r_{k}\delta(r_k)}}
	= e^{-(L-r_k)h}R(k),	
\end{split}\]
where 
\[
	R(k)
	\eqdef K^2e^{2 r_k (\delta(L) +\delta(r_k))}.
\]
In this case, it follows easily that
\begin{equation}\label{eq:estiRk}
	\lim_{k\to\infty}\frac{1}{r_k}\log R(k) = 0.
\end{equation}

This concludes the estimates of $T_1,T_2,T_3$.
Putting them together, we get
\[
	\nu^+(\Delta^+_L)
	\leq
		e^{-Lh}
		Q(k), \quad \text{where} \quad
	Q(k)
	= K^{-1} e^{-r_0(h+\delta(r_0))}\cdot e^{-r_kh}S(k)\cdot e^{r_kh}R(k).
\]
Using~\eqref{eqeq:estiSk} and~\eqref{eq:estiRk}, it follows immediately that
\[	
		\lim_{k\to\infty}\frac{1}{r_k}\log Q(k)=0.
\]		
As the cylinder $\Delta^+_L$ which intersects $\Xi^+$ was arbitrary, we get that
\[
	\lim_{L\to\infty}-\frac1L \log \nu^+(\Delta^+_L(\xi^+)) \geq h,
\]
proving the claim.
\end{proof}

Analogous arguments apply to $\nu^-$. 

Note that, by construction, we have $\nu^+(\Xi^+)=\nu^-(\Xi^-)=1$. 
By Lemma~\ref{l.frostman} applied to the probability measure $\nu^+$ on the space $\Sigma_N^+$ we obtain $h_{\rm top}(\sigma^+,\Xi^+)\ge h$. The same arguments give $h_{\rm top}(\sigma^-,\Xi^-)\ge h$ using $\nu^-$ instead of $\nu^+$. This concludes the proof of the lemma.
\end{proof}

%-----------------------------------------------------------------------------------------------------
\subsection{End of the proof of Theorem~\ref{theoprop:zero}}\label{sec:endofproporrr}
%-----------------------------------------------------------------------------------------------------

Recall that so far we worked under the hypothesis that the sign of the exponents of the measures $\mu_k$ was negative. To conclude the proof in this case, note that $\Xi\subset\pi(\cL(\alpha))$ and the  projections $\pi$ and $\pi^+$ do not increase entropy, hence 
\[
	h_{\rm top}(F,\cL(\alpha))
	\ge h_{\rm top}(\sigma,\pi(\cL(\alpha)))
	\ge h_{\rm top}(\sigma,\Xi)
	\ge h_{\rm top}(\sigma^+,\Xi^+)
	\ge h.
\]	
This ends the proof of Theorem~\ref{theoprop:zero} in this case. 

In the other case, when the exponents of the measures $\mu_k$ are positive, we can perform the same construction for $F^{-1}$ instead of $F$ and construct a set $\Xi=\Xi(F^{-1})$. However, since we want to determine the topological entropy with respect to $F$, we have to consider the ``inverse" set of $\Xi$, that is, the set of sequences $\Xi'\eqdef\{(\xi^+.\xi^-)\colon (\xi^-.\xi^+)\in\Xi\}$. To conclude, note that $h_{\rm top}(\sigma^+,\pi^+(\Xi'))=h_{\rm top}(\sigma^-,\pi^-(\Xi))$ and apply the second  assertion in Lemma~\ref{lem:entropyestimates}.

%===========================================================================
\section{Measures of maximal entropy: Proof of Theorem \ref{main2}}\label{sec:proofmain2} 
%===========================================================================

As explained in Section~\ref{subsec:maxent1}, any measure of maximal entropy (with respect to $F$) projects to the $(1/N,\ldots,1/N)$-Bernoulli measure $\mathfrak b$. In this section, we study its properties and conclude the  proof of Theorem \ref{main2}.

%===========================================================================
\subsection{Synchronization}\label{sec:synch}
%===========================================================================

We call a Bernoulli measure $\mathfrak b=(b_1,\ldots,b_N)$ \emph{nondegenerate} if all weights $b_i$, $i=1,\ldots,N$, are positive.
 By Lemma~\ref{lem:common} (there is no measure which is simultaneously $f_i$-invariant for every $i$) the assumptions in \cite[Theorem 8.6]{Cra:90} are satisfied. Hence  
 for the Bernoulli measure $\mathfrak b$ above there exists
a (at least one) $F$-ergodic measure  with positive exponent and a (at least one)
$F$-ergodic measure  with negative exponent, both projecting to $\mathfrak b$. In what follows, in our context, we will show that those measures are unique and will be denoted by $\mu_\pm^{\mathfrak b}$.

Following for example \cite{Mal:}, given a Bernoulli measure $\mathfrak b$, we say that an IFS $\{f_i\}_{i=0}^{N-1}$ with probabilities $\mathfrak b$ is \emph{forward synchronizing} if for every $x,y\in \bS^1$ for $\mathfrak b$-almost every one-sided sequence $\xi\in\Sigma_N^+$ we have
\begin{equation} \label{eqn:distal}
	\lvert f_{\xi}^n(x)- f_\xi^n(y)\rvert \to 0.
\end{equation}
\emph{Backward synchronization} is defined in the same way, but for the IFS $\{f_i^{-1}\}_{i=0}^{N-1}$. We say that it is \emph{synchronizing} if it is both backward and forward synchronizing.

The IFS $\{f_i\}_{i=0}^{N-1}$ is (\emph{forward}) \emph{proximal} if for every $x,y\in \bS^1$ there exists at least one sequence $\xi\in\Sigma_N^+$ such that \eqref{eqn:distal} holds, \emph{backward proximality} is defined analogously.
By \cite[Theorem E]{Mal:}, forward proximality of the IFS implies that every nondegenerate Bernoulli measure satisfies forward synchronization. Similarly, backward proximality implies backward synchronization.%
\footnote{In fact, \cite[Theorem E]{Mal:} (stated for groups of circle homeomorphisms) shows even that we have exponential synchronization, that is, for a given Bernoulli measure  convergence in~\eqref{eqn:distal} is exponential. However, we will not make use of this fact.}

\begin{lemma} \label{lem:synch}
For every Bernoulli measure $\mathfrak b$ satisfying synchronization there are
ergodic measures $\mu_+^{\mathfrak b}$ with positive exponent and $\mu_-^{\mathfrak b}$ with negative exponent such that $\mathfrak b=\pi_\ast\mu_\pm^{\mathfrak b}$.
Moreover, for every ergodic measure $\mu$ with 
 $\mathfrak b=\pi_\ast\mu$ we have that $\mu=\mu_+^{\mathfrak b}$
 or $\mu=\mu_-^{\mathfrak b}$.
\end{lemma}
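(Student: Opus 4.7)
The existence of the measures $\mu_\pm^\mathfrak{b}$ was already recorded in Section~\ref{subsec:maxent1} via \cite[Theorem 8.6]{Cra:90}, using the fact (Lemma~\ref{lem:common}) that no Borel probability measure on $\bS^1$ is simultaneously invariant under every $f_i$. The novel content of the lemma is the uniqueness statement, which I plan to extract from the synchronization hypothesis by a joining argument.

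To prove uniqueness of $\mu_-^\mathfrak{b}$, suppose $\mu$ and $\mu'$ are two ergodic $F$-invariant probability measures with $\pi_\ast\mu = \pi_\ast\mu' = \mathfrak{b}$ and $\chi(\mu), \chi(\mu') < 0$. First I would disintegrate both over $\mathfrak{b}$,
$$
\mu = \int \delta_\xi \otimes \mu_\xi \, d\mathfrak{b}(\xi), \qquad \mu' = \int \delta_\xi \otimes \mu'_\xi \, d\mathfrak{b}(\xi),
$$
so that $F$-invariance translates to the equivariance $(f_{\xi_0})_\ast \mu_\xi = \mu_{\sigma\xi}$, and similarly for $\mu'$, holding for $\mathfrak{b}$-almost every $\xi$. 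Then I would form the relative product
$$
\lambda \eqdef \int \delta_\xi \otimes (\mu_\xi \otimes \mu'_\xi) \, d\mathfrak{b}(\xi)
$$
on $\Sigma_N \times \bS^1 \times \bS^1$, which is invariant under the diagonal skew-product $F^{(2)}(\xi,x,y) \eqdef (\sigma\xi, f_{\xi_0}(x), f_{\xi_0}(y))$ (invariance follows directly from the equivariance above and $\sigma_\ast\mathfrak{b}=\mathfrak{b}$).

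The goal of the next step is to show that $\lambda$ is concentrated on the diagonal $\{x=y\}$; projecting onto the $(\xi,x)$- and $(\xi,y)$-coordinates then immediately yields $\mu = \mu'$. To this end, consider the bounded continuous distance $\Delta(\xi,x,y) \eqdef \lvert x-y\rvert$. Forward synchronization should give $\Delta \circ F^{(2),n} \to 0$ pointwise along orbits, and once this convergence is established on a $\lambda$-full set, bounded convergence combined with $F^{(2)}$-invariance of $\lambda$ yields
$$
\int \Delta \, d\lambda = \lim_n \int \Delta \circ F^{(2),n} \, d\lambda = 0,
$$
forcing $\Delta = 0$ $\lambda$-almost surely. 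Uniqueness of $\mu_+^\mathfrak{b}$ will then follow by applying the analogous argument to $F^{-1}$ with backward synchronization in place of forward synchronization.

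The hard part will be upgrading the synchronization statement — which by definition holds \emph{pointwise} in $(x,y)$ for $\mathfrak{b}$-a.e.\ $\xi$, hence by Fubini only on $(\mathfrak{b}\otimes\Leb\otimes\Leb)$-full sets — to a $\lambda$-almost-sure statement, since the conditional measures $\mu_\xi \otimes \mu'_\xi$ may well be mutually singular with Lebesgue. I plan to bypass this by appealing to the stronger form of \cite[Theorem E]{Mal:}, which furnishes a measurable equivariant section $\xi \mapsto q(\xi)\in\bS^1$ such that, for $\mathfrak{b}$-a.e.\ $\xi$, forward synchronization holds for \emph{all} pairs $(x,y)$ outside $\{q(\xi)\}$. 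Since this section describes a random repelling fixed point with \emph{positive} fiber exponent, ergodicity combined with $\chi(\mu), \chi(\mu') < 0$ precludes $\mu_\xi$ (and $\mu'_\xi$) from charging $\{q(\xi)\}$ on a positive-$\mathfrak{b}$-measure set of $\xi$ — otherwise the graph of $q$ would support a positive portion of $\mu$, which by ergodicity would force $\mu$ to coincide with the graph measure and hence have positive exponent, a contradiction. This rules out the exceptional fiber point and produces the desired $\lambda$-almost-sure convergence of $\Delta\circ F^{(2),n}$.
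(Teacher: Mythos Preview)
Your joining argument is a valid alternative and shares the key ingredient with the paper's proof: an equivariant exceptional section $\xi\mapsto q(\xi)$ (the paper's $z^+(\xi)$) off which forward synchronization holds for \emph{all} pairs. The paper does not cite this from \cite{Mal:} but derives it directly from the synchronization hypothesis via an elementary Fubini computation: the $\sim_\xi$-equivalence classes are intervals, and the identity $\sum_i \fm(C_i(\xi))^2=1$ (obtained from $(\fm\times\fm\times\mathfrak b)(B^c)=1$) forces a single class of full Lebesgue measure, namely $\bS^1\setminus\{z^+(\xi)\}$. From there the paper argues more directly than you do---synchronization on $C^+(\xi)$ is incompatible with positive fiber exponent, so any positive-exponent ergodic lift disintegrates onto the graph of $z^+$ and is hence unique; symmetrically for $z^-$ and negative exponent; and a short extra step rules out zero-exponent lifts. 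Your relative-product route buys a clean abstract mechanism (bounded convergence of $\Delta\circ F^{(2),n}$ against an invariant joining) but is less explicit, in that it does not by itself identify $\mu_\pm^{\mathfrak b}$ as graph measures. Two small gaps remain in your write-up: you only treat two lifts of strictly negative exponent, whereas the lemma also excludes $\chi(\mu)=0$ (your method handles this unchanged, joining such a $\mu$ with $\mu_-^{\mathfrak b}$ and reaching a contradiction); and your assertion that the graph of $q$ has \emph{positive} exponent needs a line of justification---either the paper's observation that expansion cannot occur inside the synchronizing set $C^+(\xi)$, or the counting observation that at most one ergodic lift can equal the graph measure and (by your joining) at most one can avoid it, so the two Crauel lifts of opposite sign already exhaust all possibilities.
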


\begin{proof}
Consider the set
\[
	B\eqdef \big\{(x,y,\xi) \in \bS^1\times \bS^1 \times\Sigma_N^+
		\colon \limsup_{n\to\infty}\, \lvert f_\xi^n(x)- f_\xi^n(y)\rvert>0\big\}
\]
and write
\[
	B_{(x,y)}
	\eqdef \big( \{(x,y)\} \times \Sigma_N^+ \big) \cap B.
\]
Note that forward synchronization means that for every $(x,y)\in \bS^{1} \times \bS^1$ it holds that $\mathfrak b (B_{(x,y)})=0$.

Given $\xi\in\Sigma_N^+$, divide $\bS^1$ into equivalence classes by the relation
\[
	x \sim_\xi  y
	\iff
	\lim_{n\to\infty}\,\lvert f_\xi^n(x)- f_\xi^n(y)\rvert = 0.
\]	
One can easily check that it is indeed an equivalence relation. As the fiber maps are homeomorphisms, those equivalence classes are simply connected, i.e. intervals or points. Note that in principle for $\xi$ there may exist uncountably many
classes. However, there can exist only countably many classes which are nontrivial intervals. Let us denote these
classes by $C_i(\xi)$ with $i\in I(\xi)$.

Let us denote by $\fm$ the Lebesgue measure.

\begin{claim}\label{claim:let}
	For $\mathfrak b$-almost every $\xi$ we have $I(\xi)=\{1\}$ and $\fm(C_1(\xi))=1$.
\end{claim}

\begin{proof}
Note that if $(x,y,\xi) \not\in B$, $x\ne y$, then there is an index $i=i(x,y,\xi)$ such that $x,y\in C_i(\xi)$. As the diagonal $\{(x,y)\in \bS^1\times \bS^1; x=y\}$ has $\Leb\times\Leb$ measure zero, we have
\begin{equation} \label{eqn:synch}
(\Leb \times \Leb \times \mathfrak b)(B^c) =
 \int_{\Sigma_N^+}
 \left(
 \sum_{i\in I(\xi)}
 \Leb(C_i(\xi)))^2  \right) d\mathfrak b(\xi).
\end{equation}
By the comments above, synchronization, and Fubini's Theorem we have that $(\Leb \times \Leb \times \mathfrak b)(B^c)=1$, hence the integrand in \eqref{eqn:synch} is $1$ almost everywhere. As $\sum \Leb C_i(\xi) \leq 1$, $\sum (\Leb C_i(\xi))^2=1$ can happen if, and only if, $I(\xi) = \{1\}$ and $\Leb C_1(\xi)=1$.
Therefore, for $\mathfrak b$-almost every $\xi$ the index set $I(\xi)$ consists of exactly one element
and there is exactly one class with full Lebesgue measure. This class is the whole circle except one point. 
\end{proof}

Using Claim~\ref{claim:let}, for $\mathfrak b$-almost every $\xi$, let $\{z^+(\xi)\}\eqdef\bS^1\setminus C_1(\xi)$ be this missing point and denote  $C^+(\xi)\eqdef C_1(\xi)$.

Clearly, the  synchronization in the set $C^+(\xi)$ implies that the disintegration of any positive exponent invariant ergodic measure projecting to $\mathfrak b$
is supported on $z^+(\xi)$ for almost all $\xi$. We will denote by $\mu_+^{\mathfrak b}$ this measure.

The same arguments applied to  the IFS $\{f_i^{-1}\}$ 
imply that for $\mathfrak b$-almost every $\xi$  there is exactly one class with full Lebesgue measure, called  $C^-(\xi)$, where the points synchronized as $n\to -\infty$. Analogously, $\bS^1= C^-(\xi)\cup\{z^-(\xi)\}$.
Arguing as above we prove that  
 $\mu_-^{\mathfrak b}$ is the only ergodic  measure with negative exponent that projects to $\mathfrak{b}$ and that its disintegration is supported on the points $z^-(\xi)$.
 
 It remains to exclude the case that there is a third ergodic measure $\mu$ (necessarily with zero exponent) projecting to $\mathfrak b$. Note that we have obtained a set $\Sigma^\pm$ of full measure $\mathfrak b$
 for which the points $z^+(\xi)$ and $z^-(\xi)$ are well defined and different. Consider now the disintegration  $\mu_\xi$ of $\mu$ for $\xi\in \Sigma^\pm$. The forward synchronization with the point $z^-(\xi)$ implies that the measure $\mu$ has negative exponent in those fibers. This leads to a contradiction.
 \end{proof}

\subsection{End of the proof of Theorem \ref{main2}}
We can now conclude the proof of Theorem \ref{main2}.
By Lemma~\ref{lem:synch} 
applied to the $(1/N,\ldots, 1/N)$-Bernoulli measure $\mathfrak{b}$ it follows that
 there is exactly a
pair of ergodic measures of
maximal entropy $\log N$ with positive and negative exponent
$\mu_+=\mu_+^{\mathfrak b}$ and $\mu_-=\mu_+^{\mathfrak b}$, respectively.

Assume that the second conclusion in the theorem is not true, that is, that there exists $\alpha\ne\alpha_\pm=\chi(\mu_\pm)$ such that $h_{\rm top}(\cL(\alpha))=\log N$.
Let us assume that $\alpha\ge0$, the proof of the other case is analogous.
By property (E4)  in Section~\ref{ss.convexconjugate} we have $\log N=\cE_\cN(\alpha')$ with $\cN=\cM_{{\rm erg},>0}$ for all $\alpha'$ between $\alpha$ and $\alpha_+$. Hence, by (E4) the function $\cP_\cN$ is not differentiable at $0$. Recall that a measure of maximal entropy is an ergodic equilibrium states for $q=0$. By property (P10) in Section~\ref{ss.pressuref}, there exist two ergodic measures of maximal entropy  (with respect to $\cN$) with exponents given by the (different) left and right derivatives $D_{L/R}\cP_\cN(0)$, these derivatives being nonnegative by the choice of $\cN$. Hence, there would exist two ergodic measures of maximal entropy with two distinct nonnegative Lyapunov exponents, contradicting Lemma~\ref{lem:synch}.
\qed

\subsection{Proof of Corollary~\ref{c.maximalentropy}} 
By  Lemma~\ref{lem:synch}, applied to the (Bernoulli) measure of maximal entropy we have that there are
exactly two ergodic measures of maximal entropy. Arguing by contradiction, suppose that  there is another invariant measure
$\mu$ of maximal entropy which is the weak$\ast$ and in entropy limit of a sequence of ergodic measures, which must be nonergodic. 
Then almost every measure in its (nontrivial) ergodic decomposition
has maximal entropy. Hence this measure is a (nontrivial) linear combination of $\mu_+$ and $\mu_-$ and, in particular,
$\alpha= \chi(\mu)\in (\alpha_- , \alpha_+)$. Without weakening of assumptions suppose that $\alpha\ge 0$. Recall that the function $\mathcal{E}_{>0}$
is continuous and has a unique global maximum at $\alpha_+$. Hence for $\delta$ small $\cE_{>0} (\alpha')<
\log N-\delta$
for all $\alpha'$ in a small neighborhood of $\alpha$.
If there would exist such a sequence of ergodic measures
weak$\ast$ (and hence in Lyapunov exponent)  and in entropy converging to $\mu$  then eventually the Lyapunov exponents of the measures would be arbitrarily close to $\alpha$
and their entropies arbitrarily close to $\log N$. This provides a contradiction with the above inequality.
\qed

%------------------------------------------------------------------------------------------------------------
\section{Shapes of pressure and Lyapunov spectrum: Proof of Theorem~\ref{main3}}\label{sec:proofmain3}
%------------------------------------------------------------------------------------------------------------

Recall the properties of pressure, Legendre-Fenchel transform, and convex functions given in Section \ref{sec:entpres}. For convenience, the items are proved in the following order: a), f), g), h), i), b), c), d), e), j), k).

\emph{Property a)}: Convexity follows from basic properties of pressure. The (one-sided) derivative(s) of the pressure function is equal to the Lyapunov exponent of a corresponding equilibrium state by the definition of Legendre-Fenchel transform. For the pressure $\cP_{>0}$ all the equilibrium states have nonnegative Lyapunov exponent, for the pressure $\cP_{<0}$ all the equilibrium states have nonpositive Lyapunov exponent.
Hence property a) follows from property (P9) in Section~\ref{ss.pressuref}.

\emph{Properties f), g), and h)} are formulated in Theorem \ref{main1}.

We need the following lemma.
\begin{lemma}\label{lem:stolen}
	There is $c>0$ such that $\frac {\cE(\alpha) - \cE(0)} {|\alpha|} \leq c \cE(0)$.
\end{lemma}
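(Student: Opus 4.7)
The plan is to combine the restricted variational principle from Theorem~\ref{main1} items a)--b) with the quantitative bridging result Lemma~\ref{lem:main3}. The key idea is that $\cE(\alpha)$ is achieved as a supremum of entropies of ergodic measures with exponent $\alpha$, and each such measure can be ``bridged" through zero via Lemma~\ref{lem:main3} to an ergodic measure of the opposite sign and near-zero exponent, losing only a controlled fraction of its entropy. Since those bridged measures have entropy bounded above by the values of $\cE$ near $0$, continuity of $\cE$ at $0$ closes the estimate.

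First I will treat $\alpha>0$; the case $\alpha<0$ is entirely symmetric. By Theorem~\ref{main1} b), I pick a maximizing sequence of ergodic measures $(\mu_k)_{k\ge1}$ with $\chi(\mu_k)=\alpha$ and $h(\mu_k)\nearrow \cE(\alpha)$. For each $\mu_k$, Lemma~\ref{lem:main3} produces, with the universal constant $c>0$ furnished there, ergodic measures $(\nu_{k,i})_{i\ge1}$ with $\chi(\nu_{k,i})<0$, $\chi(\nu_{k,i})\to 0$ as $i\to\infty$, and
\[
\liminf_{i\to\infty} h(\nu_{k,i}) \ge \frac{h(\mu_k)}{1+c\alpha}.
\]
The matching upper bound $h(\nu_{k,i})\le \cE(\chi(\nu_{k,i}))$ is immediate from Theorem~\ref{main1} a). Since $\alpha\mapsto \cE(\alpha)$ is continuous at $0$ (item h) of Theorem~\ref{main3}, which at this point has already been established as an immediate consequence of Theorem~\ref{main1} a), b), c)), letting $i\to\infty$ gives $h(\mu_k)/(1+c\alpha)\le \cE(0)$. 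Letting $k\to\infty$ and rearranging yields $\cE(\alpha)\le (1+c\alpha)\cE(0)$, hence
\[
\frac{\cE(\alpha)-\cE(0)}{\alpha} \le c\,\cE(0),
\]
as claimed. For $\alpha<0$ I run the same argument with the signs swapped, this time applying Lemma~\ref{lem:main3} to bridge a measure with negative exponent $\alpha$ to ergodic measures of small positive exponent, and controlling those from above by $\cE_{>0}$ near $0$.

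I do not anticipate a serious obstacle: the estimate is a direct composition of the restricted variational principle with the universal-constant bridging lemma, and the bound $c\cE(0)$ is uniform in $\alpha$ precisely because the constant $c$ in Lemma~\ref{lem:main3} does not depend on the measure. The only point to verify is that continuity of $\cE$ at $0$ is at our disposal at this point in the proof of Theorem~\ref{main3}, which it is, since items a), f), g), h) have been settled before the lemma is invoked.
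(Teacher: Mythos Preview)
Your proof is correct and follows essentially the same approach as the paper's own proof. The paper simply states that the inequality $\cE(0)\ge \cE(\alpha)/(1+c|\alpha|)$ is ``an immediate consequence of Lemma~\ref{lem:main3} and property h)'', whereas you unpack this by explicitly choosing a maximizing sequence via the restricted variational principle, bridging through zero, and passing to the limit using continuity; the underlying mechanism is identical.
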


\begin{proof}
An immediate consequence of Lemma \ref{lem:main3} and property h) is that there exists $c>0$ such that
\[
	\cE(0) \geq \frac {\cE(\alpha)} {1+c|\alpha|}
\]
for all $\alpha \neq 0$. Hence,
\[
	\frac {\cE(\alpha) - \cE(0)} {|\alpha|} \leq c \cE(0).
\]
Taking the limit $\alpha\to0$, we hence get
\begin{equation} \label{eqn:main3}
	\max \{D_R \cE(0), -D_L \cE(0)\} \leq c \cE(0),
\end{equation}
which proves the finiteness of the derivatives $D_L \cE(0), D_R \cE(0)$. The other inequality follows from convexity of $\cE_{<0}$ and $\cE_{>0}$ and property f).
\end{proof}

\emph{Property i)} follows now from Lemma \ref{lem:stolen} and Property a).

\emph{Property b)} follows from the fact that the sets $\cM_{\rm erg, >0}$ and $\cM_{\rm erg, <0}$ contain measures with arbitrarily small Lyapunov exponent (Corollary~\ref{cor:maisemenos}) and hence the limit derivative of both $\cP_{>0}$ (as $q\to -\infty$) and $\cP_{<0}$ (as $q\to \infty$) is zero. The fact that those are indeed plateaus, not asymptotic behaviour, follows from property i) proved below. Indeed, by the definition of Legendre-Fenchel transform, $D_- = D_R \cE(0)$ and $D_+ = D_L \cE(0)$.

\emph{Property c)} follows from Theorem \ref{main1} item c). Indeed, by the definition of Legendre-Fenchel transform, $h_+= \lim_{\alpha \searrow 0} \cE(\alpha)$ and $h_-=\lim_{\alpha \nearrow 0} \cE(\alpha)$.

\emph{Property d)} follows from Theorem \ref{main1} and property a). Indeed, a concave function with a maximum in the interior of the domain is nonincreasing to the right of the maximum and nondecreasing to the left of the maximum.

\emph{Property e)} follows immediately from Theorem \ref{main1}, because by the basic properties of entropy $\cP_{>0}(0)$ is the supremum of entropies of ergodic measures with positive Lyapunov exponents (and similarly $\cP_{<0}(0)$ - negative Lyapunov exponents) and those classes of measures both contain a measure of maximal entropy.

\emph{Property j)} was proved in the course of  the proof of Property i). Indeed, by contradiction, assume that $\cE(0)= 0$. Then equation \eqref{eqn:main3} would imply that $D_L \cE(0)= D_R \cE(0)=0$. However, this would imply that $\cE$ attains its maximum at zero and hence $\cE(0)=\log N$, a contradiction. Together with Theorem~\ref{theoprop:zero}, we have $h_{\rm top}(\cL(0))>0$.

\emph{Property k)}: assume now that we are under assumptions of Theorem \ref{main2} and that hence there are exactly two maxima of the entropy spectrum $\cE(\alpha)$, achieved at points $\alpha_-<0$ and $\alpha_+>0$. As the concave function with a unique maximum in the interior of the domain has negative derivative (or one-sided derivatives in case that the derivative is not defined) to the right of the maximum and positive derivative to the left of the maximum, the required changes in items d) and i) follow immediately.  Property k) follows from (local) uniqueness of the maximum  by (P11) is Section~\ref{ss.pressuref}.
\qed

%----------------------------------------------------------------------------------------------------------------
\section{One-step $2\times 2$ matrix cocycles: Proof of Theorem~\ref{teo:SL2Rskewproduct}}\label{sec:cocyles}
%----------------------------------------------------------------------------------------------------------------

The goal of this section is to prove Theorem~\ref{teo:SL2Rskewproduct}. 
For that, using the notation in Section~\ref{subsec:cocyles},
for every $\mathbf A\in \mathrm{SL}(2,\bR)^N$
we  study the one-sided cocycle $A\colon\Sigma_N^+\to\mathrm{SL}(2,\bR)$ and consider the associated one-sided step skew-product $F_{\mathbf A}\colon\Sigma_N^+\times\bP^1\to\Sigma_N^+\times\bP^1$ defined in~\eqref{eq:defsteskecoc}. 
Recall that the \emph{Lyapunov exponents of the cocycle $\mathbf A$} at $\xi^+ \in \Sigma_N^+$ are the
limits
\[
\begin{split}
	\lambda_1 (\mathbf A,\xi^+)
	&\eqdef \lim_{n\to \infty} \frac{1}{n}\log \,\lVert \mathbf A^n(\xi^+)\rVert,\\
	\lambda_2 (\mathbf A,\xi^+)
	&\eqdef \lim_{n\to \infty} \frac{1}{n}\log\, \lVert (\mathbf A^n(\xi^+))^{-1}\rVert^{-1},
\end{split}\]
whenever they exist. Otherwise, we denote by $\underline\lambda_1$ and $\overline\lambda_1$ and $\underline\lambda_2$ and $\overline\lambda_2$ the lower and upper limits, respectively.
We analyze the spectrum of Lyapunov exponents of one-step cocycles.
 Note that for every $\xi^+$ we have $\lambda_2(\mathbf A,\xi^+)=-\lambda_1(\mathbf A,\xi^+)$ whenever those exponents are well defined.
The Lyapunov exponent $\lambda_1$ (and hence $\lambda_2$) of $\mathbf A$ are intimately related to the (one-sided) Lyapunov exponent $\chi^+$ of the step skew-product $F_{\mathbf A}$ defined as in~\eqref{def:exponentFfirst} taking only the limit $n\to\infty$, as we will see below. 
Given $\alpha\ge0$, similarly to the level sets in~\eqref{def:levelset} we will analyze the \emph{level sets of Lyapunov exponents} 
\[
	\cL^+_{\mathbf A}(\alpha)
	\eqdef \{\xi^+\in\Sigma_N^+\colon \lambda_1(\mathbf A,\xi^+)=\alpha\}.
\]

The following is our main translation step  from skew-products to cocycles.
\begin{theorem}\label{theoprop:onesidedspectrum}
	For every $\mathbf A\in\mathrm{SL}(2,\bR)^N$  we have the following:
\begin{enumerate}	
	\item
For every $\alpha>0$ it holds
\[\begin{split}
	\{\xi^+\in\Sigma_N^+\colon \lambda_1(\mathbf A, \xi^+) = \alpha\} 
	&= \{\xi^+\in\Sigma_N^+\colon \chi^+(\xi^+,v) 
		= -2\alpha\text{ for some }v\in\bP^1\} \\
	&= \{\xi^+\in\Sigma_N^+\colon \chi^+(\xi^+,v) 
		= 2\alpha\text{ for some }v\in\bP^1\}.
\end{split}\]
\item 
For $\alpha=0$ it holds
\[\begin{split}
	\{\xi^+\in\Sigma_N^+\colon \lambda_1(\mathbf A, \xi^+) = 0\} 
	&\subset \{\xi^+\in\Sigma_N^+\colon \chi^+(\xi^+,v) 
		= 0\text{ for some }v\in\bP^1\} \\
	&\subset \{\xi^+\in\Sigma_N^+\colon \underline{\lambda}_1(\mathbf A, \xi^+) = 0\}
\end{split}\]
and those  three sets have the same topological entropy. 
\end{enumerate}
\end{theorem}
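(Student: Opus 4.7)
The starting point is the derivative identity for the projective action: for $A\in\mathrm{SL}(2,\bR)$ and a unit vector $v\in\bR^2$ representing a point of $\bP^1$, a direct computation in local charts gives $\lvert f_A'(v)\rvert=\lVert Av\rVert^{-2}$. Iterating this along a skew-product orbit, the ratios telescope into the fundamental relation
\[
    \chi^+(\xi^+,v)=-2\lim_{n\to\infty}\frac{1}{n}\log\,\lVert\mathbf A^n(\xi^+)v\rVert
\]
whenever the right-hand side converges. Every claim of the theorem then reduces to comparing growth rates of $\lVert\mathbf A^n v\rVert$ for individual directions with $\lVert\mathbf A^n\rVert$, leveraging two $\mathrm{SL}(2,\bR)$-specific tools: $\lVert(\mathbf A^n)^{-1}\rVert=\lVert\mathbf A^n\rVert\ge 1$, and the resulting sandwich $\lVert\mathbf A^n\rVert^{-1}\le\lVert\mathbf A^n v\rVert\le\lVert\mathbf A^n\rVert$ for every unit $v$.

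For part~(1) with $\alpha>0$, the singular value decomposition $\mathbf A^n(\xi^+)=K_n\operatorname{diag}(\sigma_n,\sigma_n^{-1})K_n'$ with $\sigma_n=\lVert\mathbf A^n(\xi^+)\rVert\sim e^{n\alpha}$, together with the exponential gap between the two singular values, produces via a Cauchy sequence argument in $\bP^1$ a unique limit direction $v_\infty(\xi^+)\in\bP^1$ with $\lVert\mathbf A^n(\xi^+)v_\infty\rVert\sim e^{-n\alpha}$ and, for every other $v$, $\lVert\mathbf A^n(\xi^+)v\rVert\sim e^{n\alpha}$. Substituting into the fundamental relation yields both fiber exponents $\pm 2\alpha$ at $\xi^+$, proving the ``$\supset$'' inclusions. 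The reverse inclusions follow from the sandwich combined with a decomposition of $v$ in the singular basis of $\mathbf A^n(\xi^+)$: if $\chi^+(\xi^+,v)=\pm 2\alpha$, then the coefficient of $v$ along the expanding singular direction is controlled, which pins $\lambda_1(\mathbf A,\xi^+)=\alpha$.

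For part~(2), the first inclusion follows from the sandwich alone: $\lambda_1(\mathbf A,\xi^+)=0$ forces $\lVert\mathbf A^n(\xi^+)v\rVert^{1/n}\to 1$ for every $v$. The second inclusion combines $\underline\lambda_1\ge 0$ (from $\lVert\mathbf A^n\rVert\ge 1$) with a direction-versus-singular-basis argument: if $\liminf_n\tfrac{1}{n}\log\lVert\mathbf A^n\rVert>0$, the realizer $v$ of $\chi^+(\xi^+,v)=0$ would be forced to align exponentially with the momentary contracting singular direction, producing $\lVert\mathbf A^n v\rVert^{1/n}$ converging to a value $<1$, a contradiction. For the entropy equality, I would invoke Proposition~\ref{pro:Bowen} together with Kingman's subadditive ergodic theorem: for any $\sigma^+$-ergodic $\nu$, $\nu$-a.e.~$\xi^+$ satisfies $\lim_n\tfrac{1}{n}\log\lVert\mathbf A^n(\xi^+)\rVert=\lambda_1(\mathbf A,\nu)\ge 0$, so the classes of ergodic measures supported on $A=\{\lambda_1=0\}$, $B$, and $C=\{\underline\lambda_1=0\}$ all coincide with $\{\nu\in\cM_{\rm erg}(\Sigma_N^+):\lambda_1(\mathbf A,\nu)=0\}$. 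Setting $c=\sup\{h(\nu):\lambda_1(\mathbf A,\nu)=0\}$, the lower bound $h_{\rm top}(A)\ge c$ is immediate from $h(\nu)=h_{\rm top}(G(\nu))$ with $G(\nu)\subset A$ (Proposition~\ref{pro:Bowen} ii)), while the matching upper bound $h_{\rm top}(C)\le c$ follows from Bowen's inequality $h_{\rm top}(QR(c))\le c$, once it is verified that for every $\xi^+\in C$ one can select $\nu\in V_{\sigma^+}(\xi^+)$ with $\lambda_1(\mathbf A,\nu)=0$.

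The principal obstacle will be verifying this last selection: for $\xi^+\in C$ with $\liminf_n\tfrac{1}{n}\log\lVert\mathbf A^n(\xi^+)\rVert=0$ along some subsequence $n_k$, one must choose a further subsequence along which the empirical measures $\mu_{\xi^+,n_k}$ converge weak$^\ast$ to some $\nu$ with vanishing top Lyapunov exponent. This is delicate because Kingman's convergence is only a $\nu$-a.e.~statement and does not a priori transfer to the specific orbit of $\xi^+$; the argument requires exploiting the subadditive cocycle identity $\log\lVert\mathbf A^n(\sigma^i\xi^+)\rVert\le\log\lVert\mathbf A^{i+n}(\xi^+)\rVert+\log\lVert\mathbf A^i(\xi^+)\rVert$ (derived from $\lVert(\mathbf A^i)^{-1}\rVert=\lVert\mathbf A^i\rVert$) together with a careful choice of $n_k$ to upper-bound $\int\log\lVert\mathbf A^n\rVert\,d\nu$ for each fixed $n$. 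A secondary technicality is the Cauchy-sequence argument for the exceptional direction $v_\infty$ in part~(1), which relies on the exponential rate of separation of singular values.
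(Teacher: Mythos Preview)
Your treatment of part~(1) and of the two inclusions in part~(2) follows the paper's route: the paper packages the same singular-value analysis into Proposition~\ref{prolem:regular} and Corollaries~\ref{cor:irregular}--\ref{cor:limexireg}, and your outline is a correct compressed version of that.

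The entropy equality is where your plan has a genuine gap. The selection step you flag as the ``principal obstacle'' --- that every $\xi^+$ with $\underline\lambda_1(\mathbf A,\xi^+)=0$ admits some $\nu\in V_{\sigma^+}(\xi^+)$ with $\lambda_1(\mathbf A,\nu)=0$ --- is simply false. Take $N=2$ with the diagonal cocycle $A_0=\mathrm{diag}(2,1/2)$, $A_1=A_0^{-1}$, and $\xi^+=0^{k_1}1^{k_1}0^{k_2}1^{k_2}\ldots$ with $k_j=2^j$. Then $a_m\eqdef\log\lVert\mathbf A^m(\xi^+)\rVert=\lvert S_m\rvert\log 2$ (with $S_m$ the signed symbol count), so $\underline\lambda_1=0$ along block endpoints while $\overline\lambda_1>0$; yet, because the density of block transitions tends to zero, \emph{every} weak$^\ast$ accumulation point of the empirical measures lies in the segment between $\delta_{0^\infty}$ and $\delta_{1^\infty}$, and all such measures have $\lambda_1=\log 2>0$. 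Your inequality $\log\lVert\mathbf A^n(\sigma^i\xi^+)\rVert\le a_{i+n}+a_i$ does bound $\int\log\lVert\mathbf A^n\rVert\,d\mu_{\xi^+,m}$ by essentially $\tfrac{2}{m}\sum_{i<m}a_i$, but this Ces\`aro sum need not vanish with $\liminf a_m/m$ --- in the example it diverges. Note also that your scheme, if it worked, would yield the restricted variational principle $h_{\rm top}(\cL^+_{\mathbf A}(0))=\sup\{h(\nu)\colon\lambda_1(\mathbf A,\nu)=0\}$, which is strictly more than the theorem asserts.

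The paper proves the entropy equality by a completely different, constructive argument (Proposition~\ref{prolem:exp0}). Writing $h\eqdef h_{\rm top}(\{\underline\lambda_1=0\})$ and unpacking the definition of topological entropy as a critical exponent for covers, one extracts for each small $\varepsilon$ some $n$ and at least $e^{(h-2\varepsilon)n}$ pairwise disjoint level-$n$ cylinders each meeting $\{\xi^+\colon\lVert\mathbf A^n(\xi^+)\rVert\le e^{n\varepsilon}\}$. Freely concatenating such blocks along a sequence $\varepsilon_i\searrow 0$, with fast-growing multiplicities, builds a set $X$ on which submultiplicativity forces $\lambda_1=0$; a Frostman mass-distribution estimate on $X$ then gives $h_{\rm top}(X)\ge h$, hence $h_{\rm top}(\{\lambda_1=0\})\ge h$.
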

We will prove this theorem in Section~\ref{sssec:entcocy} and conclude the proof of Theorem~\ref{teo:SL2Rskewproduct} in Section~\ref{ss5sec:entcocy}. In Sections~\ref{subsec:pelim}--\ref{sec:relations} we collect  preparatory results. 

%-------------------------------------------------------------------------------------
\subsection{Preliminary steps}\label{subsec:pelim}
%-------------------------------------------------------------------------------------

We first collect a series of results (see also Caveat~\ref{cav:convex}) relating the exponents of cocycles and skew-products.

In what follows, we use the standard metric generated by $\theta\mapsto (\cos(\theta\pi),\sin(\theta\pi))$ mapping $\bR/\bZ$ to $\bP^1$, we denote by $\fm$ the corresponding image of the Lebesgue measure.
Let us start from elementary linear algebra: 

\begin{lemma} \label{lem:linalg}
For every $A\in \mathrm{SL}(2,\mathbb{R})$ we have $\lVert A^{-1}\rVert=\lVert A\rVert$ and
\begin{itemize}
\item[i)] $\min_v \lvert f_A'(v)\rvert =||A||^{-2}$ and $\max_v |f_A'(v)| = ||A||^2$,
\item[ii)] for any $\delta>0$ the points $v\in \bP^1$ which satisfy
\[
|f_A'(v)| \leq \frac {(1+\delta^2)||A||^2}{1+\delta^2 ||A||^4}.
\]
form an interval of length $1-\arctan \delta$.
\end{itemize}
\end{lemma}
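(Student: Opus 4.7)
The plan is to reduce everything to the singular value decomposition of $A$ and a direct computation of the derivative of the projective action. Since $A\in\mathrm{SL}(2,\bR)$, its singular values are of the form $\sigma$ and $1/\sigma$ with $\sigma=\lVert A\rVert\ge 1$, which immediately gives $\lVert A^{-1}\rVert=\lVert A\rVert$ (same singular values). I write $A=U\Sigma V^T$ with $\Sigma=\operatorname{diag}(\sigma,\sigma^{-1})$ and $U,V\in \mathrm{SO}(2)$ (using $\det A=1$, possibly absorbing signs).

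The key step is the formula
\[
	\lvert f_A'(v)\rvert=\frac{1}{\lVert Av\rVert^{2}}
\]
for $v\in\bP^{1}$ viewed as a unit vector in $\bR^{2}$. First I would derive this by parametrising $\bP^1$ via $\theta\mapsto v(\theta)=(\cos\pi\theta,\sin\pi\theta)$, writing $Av(\theta)=r(\theta)(\cos\pi\psi(\theta),\sin\pi\psi(\theta))$ with $\psi=f_A\circ$ (parametrisation), and differentiating. Taking the determinant of the pair $(Av,Av')$ and using $\det A=1$ together with $\det(v,v')=\pi$ produces the identity $r^{2}\pi\psi'=\pi$, i.e., $\psi'=1/\lVert Av\rVert^{2}$. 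This is exactly $\lvert f_A'(v)\rvert$ in the ambient metric, because the parametrisation is an isometry from $\bR/\bZ$ to $\bP^1$. Since $\lVert Av\rVert$ attains its maximum $\sigma$ and minimum $\sigma^{-1}$ over unit vectors $v$ (and these are assumed at the singular vectors of $A$), item i) follows at once.

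For item ii), working in the singular-value basis I write $v=(\cos\pi\theta,\sin\pi\theta)$ so that
\[
	\lVert Av\rVert^{2}=\sigma^{2}\cos^{2}(\pi\theta)+\sigma^{-2}\sin^{2}(\pi\theta).
\]
The condition $\lvert f_A'(v)\rvert\le\frac{(1+\delta^{2})\lVert A\rVert^{2}}{1+\delta^{2}\lVert A\rVert^{4}}$ is equivalent (after clearing denominators) to
\[
	\cos^{2}(\pi\theta)\,(\sigma^{4}-1)\ge \frac{\delta^{2}(\sigma^{4}-1)}{1+\delta^{2}},
\]
which, assuming $\sigma>1$ (the case $\sigma=1$ being trivial, as $f_A$ is then an isometry and both sides of the claimed bound equal $1$), simplifies to $\tan^{2}(\pi\theta)\le 1/\delta^{2}$. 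This singles out an interval of $\theta$'s centred at $0$ of length $\frac{2}{\pi}\arctan(1/\delta)$, which using $\arctan(1/\delta)+\arctan\delta=\pi/2$ equals $1-\frac{2}{\pi}\arctan\delta$, i.e.\ the length claimed in the statement (up to the normalisation convention of $\arctan$).

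The only slightly delicate point is keeping track of the normalisation constants in the derivative formula and the angle parametrisation; once $\lvert f_A'(v)\rvert=\lVert Av\rVert^{-2}$ is in place, the rest is elementary trigonometry and the reduction to the diagonal case via SVD. No further dynamical input is needed.
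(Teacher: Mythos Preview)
Your proof is correct; the paper itself states the lemma as elementary linear algebra and gives no proof, so there is nothing to compare against. Your derivation via the singular value decomposition and the identity $\lvert f_A'(v)\rvert=\lVert Av\rVert^{-2}$ is the standard route, and you are right to flag the normalisation: with the paper's parametrisation $\theta\mapsto(\cos\pi\theta,\sin\pi\theta)$ of $\bP^1$ by $\bR/\bZ$, the interval length comes out as $1-\tfrac{2}{\pi}\arctan\delta$, so the ``$\arctan$'' in the statement must be read with the convention that it takes values in $[0,\tfrac12]$ rather than $[0,\tfrac\pi2]$.
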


Let us write 
\[
	f_i\eqdef f_{A_i}
	\quad\text{ for every }i=0,\ldots,N-1.
\]	
Let 
\begin{equation}\label{def:MM}
	M\eqdef\max_{i=0,\ldots,N-1} \lVert A_i\rVert.
\end{equation}

\begin{caveat}\label{cav:convex}
Note that, given $\xi^+$ and $\ell$, unless $f_{[\xi_0\ldots\xi_{\ell-1}]}$ is an isometry, the function $|f_{[\xi_0\ldots\xi_{\ell-1}]}'|$ attains its unique maximum and minimum at some $v_+(\xi^+,\ell)\in\bP^1$ and $v_-(\xi^+,\ell)\in\bP^1$, respectively, and is monotone between them. This hypothesis is explicitly stated in Lemma~\ref{lem:distancevplus} and is a consequence of the 
hypotheses of the other lemmas in Section~\ref{sec:relations} (excluded the first part of Lemma~\ref{prolem:regular}).
\end{caveat}

\begin{lemma}\label{lem:distancevplus}
	For every $\xi^+$ and $\ell\ge1$ so that $f_{[\xi_0\ldots\xi_{\ell-1}]}$  and $f_{[\xi_0\ldots\xi_{\ell}]}$
	are not isometries we have
	\[
	|v_+(\xi^+,\ell) - v_+(\xi^+,\ell +1)| 
	\leq \arctan \left(\frac{1-M^{-4}}{M^{-4}\lVert\mathbf A^{\ell+1}(\xi^+)\rVert^4-1}
	\right)^{1/2}.
	\]
\end{lemma}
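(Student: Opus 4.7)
\medskip

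\noindent\textbf{Proof plan.} The strategy is to exploit the chain rule together with Lemma~\ref{lem:linalg} in order to locate both $v_+(\xi^+,\ell)$ and $v_+(\xi^+,\ell+1)$ inside a common short interval of $\bP^1$, and then to read off the desired estimate from Lemma~\ref{lem:linalg}(ii).

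First I would expand $f_{[\xi_0\ldots\xi_\ell]} = f_{\xi_\ell}\circ f_{[\xi_0\ldots\xi_{\ell-1}]}$ and apply the chain rule, writing
\[
    \lvert f_{[\xi_0\ldots\xi_\ell]}'(v)\rvert
    = \lvert f_{\xi_\ell}'(f_{[\xi_0\ldots\xi_{\ell-1}]}(v))\rvert \cdot \lvert f_{[\xi_0\ldots\xi_{\ell-1}]}'(v)\rvert.
\]
By Lemma~\ref{lem:linalg}(i) and the definition of $M$ in~\eqref{def:MM}, we have $\lvert f_{\xi_\ell}'\rvert\le M^2$ everywhere, while by the same lemma the maximum of $\lvert f_{[\xi_0\ldots\xi_\ell]}'\rvert$ equals $\lVert\mathbf A^{\ell+1}(\xi^+)\rVert^2$ and is attained at $v_+(\xi^+,\ell+1)$. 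Therefore
\[
    \lvert f_{[\xi_0\ldots\xi_{\ell-1}]}'(v_+(\xi^+,\ell+1))\rvert
    \ge M^{-2}\lVert\mathbf A^{\ell+1}(\xi^+)\rVert^2.
\]
On the other hand, at $v_+(\xi^+,\ell)$ the derivative equals $\lVert\mathbf A^\ell(\xi^+)\rVert^2$, and submultiplicativity $\lVert\mathbf A^{\ell+1}(\xi^+)\rVert\le M\lVert\mathbf A^\ell(\xi^+)\rVert$ gives $\lVert\mathbf A^\ell(\xi^+)\rVert^2\ge M^{-2}\lVert\mathbf A^{\ell+1}(\xi^+)\rVert^2$. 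Hence both points $v_+(\xi^+,\ell)$ and $v_+(\xi^+,\ell+1)$ belong to the set
\[
    E\eqdef \big\{v\in\bP^1\colon \lvert f_{[\xi_0\ldots\xi_{\ell-1}]}'(v)\rvert \ge M^{-2}\lVert\mathbf A^{\ell+1}(\xi^+)\rVert^2\big\}.
\]

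Next I would invoke Lemma~\ref{lem:linalg}(ii) applied to $A=\mathbf A^\ell(\xi^+)$: if a number $\delta>0$ satisfies
\[
    \frac{(1+\delta^2)\lVert\mathbf A^\ell(\xi^+)\rVert^2}{1+\delta^2\lVert\mathbf A^\ell(\xi^+)\rVert^4}
    \le M^{-2}\lVert\mathbf A^{\ell+1}(\xi^+)\rVert^2,
\]
then $E$ is contained in the complement of the interval given by Lemma~\ref{lem:linalg}(ii), hence in an interval of length $\arctan\delta$, and consequently $\lvert v_+(\xi^+,\ell)-v_+(\xi^+,\ell+1)\rvert\le \arctan\delta$. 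Rearranging the inequality above yields
\[
    \delta^2 \ge \frac{1-M^{-2}\lVert\mathbf A^{\ell+1}(\xi^+)\rVert^2\lVert\mathbf A^\ell(\xi^+)\rVert^{-2}}{M^{-2}\lVert\mathbf A^{\ell+1}(\xi^+)\rVert^2\lVert\mathbf A^\ell(\xi^+)\rVert^2-1}.
\]

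Finally, to eliminate the dependence on $\lVert\mathbf A^\ell(\xi^+)\rVert$, I would apply the two-sided submultiplicative bounds
\[
    M^{-1}\lVert\mathbf A^{\ell+1}(\xi^+)\rVert\le \lVert\mathbf A^\ell(\xi^+)\rVert \le M\lVert\mathbf A^{\ell+1}(\xi^+)\rVert,
\]
using the lower bound to estimate the denominator from below by $M^{-4}\lVert\mathbf A^{\ell+1}(\xi^+)\rVert^4-1$, and the upper bound to estimate the numerator from above by $1-M^{-4}$. This shows that the choice
\[
    \delta \eqdef \left(\frac{1-M^{-4}}{M^{-4}\lVert\mathbf A^{\ell+1}(\xi^+)\rVert^4-1}\right)^{1/2}
\]
satisfies the required inequality, completing the proof. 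The main (modest) obstacle is merely the bookkeeping in the last step, where one must choose the correct direction of the submultiplicative estimate for each of the numerator and denominator in order to arrive at a bound depending only on $\lVert\mathbf A^{\ell+1}(\xi^+)\rVert$ and $M$.
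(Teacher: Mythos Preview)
Your proof is correct and follows essentially the same strategy as the paper: combine the chain rule with Lemma~\ref{lem:linalg} to trap the two maximizers in a short interval. The only difference is a harmless duality: the paper applies Lemma~\ref{lem:linalg}(ii) to the \emph{longer} product $\mathbf A^{\ell+1}(\xi^+)$ and estimates $\lvert f_{[\xi_0\ldots\xi_\ell]}'(v_+(\xi^+,\ell))\rvert$ from below by $M^{-4}\lVert\mathbf A^{\ell+1}(\xi^+)\rVert^2$, whereas you apply it to the \emph{shorter} product $\mathbf A^\ell(\xi^+)$ and estimate $\lvert f_{[\xi_0\ldots\xi_{\ell-1}]}'(v_+(\xi^+,\ell+1))\rvert$ instead. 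The paper's choice gives the final bound directly in terms of $\lVert\mathbf A^{\ell+1}(\xi^+)\rVert$, while yours requires the additional (correct) step of eliminating $\lVert\mathbf A^\ell(\xi^+)\rVert$ via the two-sided submultiplicative estimate; the outcome is identical.
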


\begin{proof}
Note that the hypothesis implies that $v_+(\xi^+,\ell)$ and $v_+(\xi^+,\ell +1)$ are well defined. 
By Lemma~\ref{lem:linalg}\,i)
\[
	M^2\big|f_{[\xi_0\ldots\xi_{\ell}]}'(v_+(\xi^+,\ell))\big| 
	\geq \big|f_{[\xi_0\ldots\xi_{\ell-1}]}'(v_+(\xi^+,\ell))\big|  
	= \lVert \mathbf A^\ell(\xi^+)\rVert^2 
	\ge M^{-2} \lVert \mathbf A^{\ell+1}(\xi^+)\rVert^2 .
\]
Applying Lemma~\ref{lem:linalg}\,ii) to $A=\mathbf A^{\ell+1}(\xi^+)$ and $v=v_+(\xi^+,\ell)$, we
determine $\delta\eqdef |v_+(\xi^+,\ell) - v_+(\xi^+,\ell +1)|$ by solving the inequality in item ii) 
for $\delta$ and obtain
\[
	\delta^2
	\le \frac{1-\lvert f_A'(v)\rvert/\lVert A\rVert^2}{\lvert f_A'(v)\rvert\lVert A\rVert^2-1}
	\le \frac{1-M^{-4}}{M^{-4}\lVert A\rVert^4-1},
\]
where we applied the above estimates. Using Lemma~\ref{lem:linalg}, the definitions of $v_+(\xi^+,\ell)$ and $v_+(\xi^+,\ell+1)$ and the choice of $\delta$ imply the assertion.
\end{proof}

%----------------------------------------------------------------------------------------------------------------
\subsection{Regular one-sided sequences}
%----------------------------------------------------------------------------------------------------------------

Using the notation in Section~\ref{subsec:pelim},  define
\[
	v_0(\xi^+)
	\eqdef \lim_{\ell\to\infty}v_+(\xi^+,\ell)
\]	
whenever this limit exists.
We derive the following result about one-sided sequences $\xi^+\in\Sigma_N^+$ which are \emph{regular} for the cocycle, that is, for which $\lambda_1(\mathbf A,\xi^+)$ is well-defined.
Throughout this section, denote
\begin{equation}\label{eq:convergent}
	a_\ell 
	\eqdef \frac 1 \ell \log \,\lVert\mathbf A^\ell(\xi^+)\rVert.
\end{equation}

\begin{proposition} \label{prolem:regular}
Assume $\xi^+$ satisfies $\lambda_1(\mathbf A, \xi^+) = \alpha$. 
\begin{itemize}
\item If  $\alpha=0$ then $\chi^+(\xi^+,v)=0$ for all $v\in \bP^1$. 
\item If $\alpha>0$ then $v_0(\xi^+)$ is well-defined and we have 
\[
	\chi^+( \xi^+,v_0 (\xi^+))= 2\alpha
	\quad\text{ and }\quad
	\chi^+(\xi^+,v)= -2\alpha\text{ for all }v\ne v_0(\xi^+).
\]	 
\end{itemize}
\end{proposition}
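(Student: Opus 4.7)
The plan is to prove the proposition by exploiting the exact derivative formula $|f_A'(v)| = 1/\lVert A\hat v\rVert^2$ for $A \in \mathrm{SL}(2,\bR)$ and unit representatives $\hat v$ of $v \in \bP^1$, together with the singular-basis decomposition
\[
\lVert A\hat v\rVert^2 = \cos^2\phi \cdot \lVert A\rVert^{-2} + \sin^2\phi \cdot \lVert A\rVert^2,
\]
where $\phi$ is the angle between $\hat v$ and the minimal-expansion direction of $A$. Applied with $A = \mathbf A^\ell(\xi^+)$, this minimal-expansion direction coincides (up to sign) with $v_+(\xi^+, \ell)$ by Caveat~\ref{cav:convex}, and the whole proposition reduces to controlling $\phi_\ell := |v - v_+(\xi^+, \ell)|$ as $\ell \to \infty$.

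If $\alpha = 0$, Lemma~\ref{lem:linalg}\,i) alone suffices: the sandwich $\lVert \mathbf A^\ell(\xi^+)\rVert^{-2} \le |(f_\xi^\ell)'(v)| \le \lVert \mathbf A^\ell(\xi^+)\rVert^2$ combined with $a_\ell \to 0$ forces $\chi^+(\xi^+, v) = 0$ uniformly in $v$.

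For $\alpha > 0$ the argument proceeds in three steps. \emph{Convergence of $v_+(\xi^+, \ell)$:} since $a_\ell \to \alpha > 0$, for any small $\epsilon > 0$ we have $\lVert \mathbf A^{\ell+1}(\xi^+)\rVert \ge e^{(\ell+1)(\alpha - \epsilon)}$ for all large $\ell$, so Lemma~\ref{lem:distancevplus} yields $|v_+(\xi^+, \ell+1) - v_+(\xi^+, \ell)| \le \tilde C e^{-2(\ell+1)(\alpha - \epsilon)}$; summability makes the sequence Cauchy, and the tail estimate $|v_0(\xi^+) - v_+(\xi^+, \ell)| \le C_\epsilon e^{-2\ell(\alpha - \epsilon)}$ follows. \emph{Computation of $\chi^+(\xi^+, v_0(\xi^+))$:} the bound $\limsup_\ell \ell^{-1}\log|(f_\xi^\ell)'(v_0)| \le 2\alpha$ is immediate from Lemma~\ref{lem:linalg}\,i); for the matching lower bound, the singular-basis identity and $\sin^2(\pi \phi_\ell) = O(\phi_\ell^2)$ give
\[
\lVert \mathbf A^\ell(\xi^+)\hat v_0\rVert^2 \le \lVert \mathbf A^\ell(\xi^+)\rVert^{-2} + O(\phi_\ell^2)\,\lVert \mathbf A^\ell(\xi^+)\rVert^2 \le C_\epsilon' \, e^{-2\ell(\alpha - 3\epsilon)},
\]
since the first-step estimate yields $\phi_\ell^2 \lVert \mathbf A^\ell(\xi^+)\rVert^2 \le C_\epsilon^2 e^{-4\ell(\alpha-\epsilon)} \cdot e^{2\ell(\alpha+\epsilon)} = C_\epsilon^2 e^{-2\ell(\alpha - 3\epsilon)}$; hence $\liminf \ge 2(\alpha - 3\epsilon)$ and letting $\epsilon \to 0$ gives $2\alpha$. \emph{Case $v \ne v_0(\xi^+)$:} once $\ell$ is large, the triangle inequality gives $\phi_\ell \ge |v - v_0(\xi^+)|/2 > 0$, so $\sin^2(\pi\phi_\ell)$ is bounded below and the singular-basis identity gives $c\,\lVert \mathbf A^\ell(\xi^+)\rVert^2 \le \lVert \mathbf A^\ell(\xi^+)\hat v\rVert^2 \le \lVert \mathbf A^\ell(\xi^+)\rVert^2$, whence $\chi^+(\xi^+, v) = -2\alpha$.

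The main obstacle is the middle step: the point $v_0(\xi^+)$ is generally not exactly any $v_+(\xi^+, \ell)$, and the small tilt $\phi_\ell$ is amplified by the exponentially large factor $\lVert \mathbf A^\ell(\xi^+)\rVert^2$ in the singular-basis decomposition. What rescues the argument is that Lemma~\ref{lem:distancevplus} supplies the quadratic decay $\phi_\ell = O(\lVert \mathbf A^\ell(\xi^+)\rVert^{-2})$; any weaker rate (for instance, merely $O(\lVert \mathbf A^\ell(\xi^+)\rVert^{-1})$) would allow $\phi_\ell^2 \lVert \mathbf A^\ell(\xi^+)\rVert^2$ to dominate, and the identity $\chi^+(\xi^+, v_0(\xi^+)) = 2\alpha$ would be out of reach.
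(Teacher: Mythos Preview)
Your proof is correct and takes a more direct route than the paper's. Both approaches share the same core input—Lemma~\ref{lem:distancevplus}, which furnishes the exponential convergence rate $|v_+(\xi^+,\ell)-v_+(\xi^+,\ell+1)|=O(e^{-2\ell(\alpha-\epsilon)})$—but you use it differently. You establish the existence of $v_0(\xi^+)$ immediately from the Cauchy property, then compute both exponents uniformly via the explicit identity $|f_A'(v)|=\lVert A\hat v\rVert^{-2}$ and the singular-basis decomposition. The paper instead begins with a Borel--Cantelli argument to show $\cL(-2\alpha,\xi^+)$ has full Lebesgue measure, deduces the existence of $v_0(\xi^+)$ indirectly via an accumulation-point contradiction (Claim~\ref{cl:claim108}) using the monotonicity of $|f'_{[\xi_0\ldots\xi_{\ell-1}]}|$ on the intervals $J_1(\ell),J_2(\ell)$, and only afterwards invokes Lemma~\ref{lem:distancevplus} to verify the hypotheses of the separately stated Lemma~\ref{lem:landau} for the $+2\alpha$ direction. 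Your argument is shorter and avoids both Borel--Cantelli and Lemma~\ref{lem:landau}; the paper's interval-decomposition machinery, on the other hand, is set up precisely so that it extends to the nonregular case treated in Lemma~\ref{lem:irregular}, where $a_\ell$ need not converge and your direct Cauchy argument is unavailable.
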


\begin{proof}
The case $\alpha=0$ follows immediately from Lemma \ref{lem:linalg} i). 
So let us assume that $\alpha >0$. 
Let 
$$
\cL(-2\alpha,\xi^+)\eqdef\{v\in\bP^1\colon\chi^+(\xi^+,v)=-2\alpha\}.
$$
Note that, by definition of  $\lambda_1(\mathbf A, \xi^+)$, we have $\alpha = \lim_{\ell\to\infty} a_\ell$.

Let $(\varepsilon_\ell)_\ell$ be a sequence of positive numbers $\varepsilon_\ell<2a_\ell$ converging to $0$  such that
\[
	\sum_{\ell\ge1} \arctan e^{-\ell \varepsilon_\ell} < \infty.
\]
By Lemma \ref{lem:linalg} i), for all $v\in\bP^1$ for every $\ell\ge1$ we have that
\begin{equation}\label{eq:usedlaterr}
	e^{-2\ell a_\ell}
	\le \lvert f_{[\xi_0\ldots\xi_{\ell-1}]}'(v)\rvert.
\end{equation}
By Lemma \ref{lem:linalg} ii), for any $\ell$ there is an interval $I_\ell\subset \bP^1$
of length $1-\arctan e^{-\ell\varepsilon_\ell}$ such that for every point $v\in I_\ell$
\[
	|f_{[\xi_0\ldots\xi_{\ell-1}]}'(v)| 
	\le \frac {(1+e^{-2\ell \varepsilon_\ell}) e^{2\ell a_\ell}}
		{1+e^{2\ell (2a_\ell - \varepsilon_\ell)}} 
\]
(note that the right hand side is approximately $e^{-2\ell (a_\ell-\varepsilon_\ell)}$).
By the Borel-Cantelli Lemma,
almost every $v\in \bP^1$ belongs to infinitely many intervals $I_\ell$. Together with~\eqref{eq:usedlaterr} we   hence have $\chi^+(\xi^+,v)=-2\alpha$.
 Thus,  $\cL(-2\alpha,\xi^+)$ has full Lebesgue  measure. What remains to prove is that $\cL(-2\alpha,\xi^+)$ is the whole set $\bP^1$ minus one point which will turn out to be the point $v_0(\xi^+)$.

Let $v_1, v_2\in\cL(-2\alpha,\xi^+)$. Given $\delta>0$, let  $L\ge1$ be such that for all $\ell>L$ we have that $\lvert a_\ell-\alpha\rvert<\delta$ and for $i=1,2$ (for the second inequality again using~\eqref{eq:usedlaterr})
\begin{equation}\label{eq:formulaaaaa}
	-2(\alpha+\delta)
	\le -2\ell a_\ell
	\le \frac1\ell\log |f_{[\xi_0\ldots\xi_{\ell-1}]}'(v_i)|
	\le -2(\alpha-\delta).
\end{equation}
The points $v_1, v_2$ divide $\bP^1$ into two intervals.  Note that by definition 
 \[
 	\lvert f'_{[\xi_0\ldots\xi_{\ell-1}]}(v_+(\xi^+,\ell))\rvert 
	= \lVert \mathbf A^\ell(\xi^+)\rVert\ge 1
\]	 
and hence $v_+(\xi^+,\ell)\ne v_i$, $i=1,2$. Thus, let us denote by $J_1(\ell)=J_1(\ell,v_1,v_2)$ the interval which contains $v_+(\xi^+,\ell)$ and by 
 $J_2(\ell)= J_2(\ell, v_1,v_2)$ the other one.  
Note that by monotonicity of the derivative, we have
\begin{equation}\label{eq:estmiates}
	|f_{[\xi_0\ldots\,\xi_{\ell-1}]}'(v)| \le e^{-2\ell( \alpha-\delta)}
	\quad\text{ for every }\quad v\in J_2(\ell).
\end{equation}	 

\begin{claim}
\label{cl.J1J2}
For every
 $\ell$ large enough it holds 
 $J_1(\ell) = J_1(\ell+1)$. 
\end{claim} 
 
 \begin{proof}
 By Lemma~\ref{lem:linalg} i) and the second estimate in~\eqref{eq:formulaaaaa} we have
\[
	|f_{[\xi_0\ldots\,\xi_{\ell}]}'(v_+(\xi^+,\ell))| 
	\geq |f_{[\xi_0\ldots\,\xi_{\ell-1}]}'(v_+(\xi^+,\ell))| \cdot M^{-2} 
	\ge e^{2\ell (\alpha-\delta)}\cdot  M^{-2}\ge 1.
\]
By~\eqref{eq:estmiates} the interval $J_1(\ell)$ must contain $v_+(\xi^+,\ell+1)$.
\end{proof}

\begin{claim}\label{cl:claim108}
$v_0(\xi^+)$ is well-defined. 
\end{claim}
\begin{proof}
By contradiction, otherwise $(v_+(\xi^+,\ell))_\ell$ would have at least two accumulation points which would divide $\bP^1$ into two connected components.  Since by the above $\cL(-2\alpha,\xi^+)$ has full measure and hence is dense, there would exist points $u_1,u_2\in \cL(-2\alpha,\xi^+)$, one in each of them. Hence, each of the correspondingly defined intervals $J_1(\ell,u_1,u_2)$ and $J_2(\ell,u_1,u_2)$ would contain one of the accumulation points and hence eventually the accumulating points $v_+(\xi^+,\ell)$, for infinitely many times. This would contradict that $J_1(\ell,u_1,u_2)=J_1(\ell+1,u_1,u_2)$ for all large enough $\ell$ as in Claim~\ref{cl.J1J2}.
\end{proof}

\begin{claim} \label{cl.kat}
For every $v\ne v_0(\xi^+)$ we have $-2\alpha\le\chi^+ (\xi^+,v)\le-2(\alpha-\delta)$.
\end{claim}

\begin{proof}
The first inequality follows from~\eqref{eq:usedlaterr}. For the second, by the above  $\cL(-2\alpha,\xi^+)$ has full Lebesgue measure  and hence is dense in $\bP^1$. Hence, for any $v\in \bP^1 \setminus \{v_0(\xi^+)\}$ we can find $u_1, u_2 \in \cL(-2\alpha, \xi^+)$ such that the points $v$ and $v_0(\xi^+)$ are in different components of $\cL(-2\alpha, \xi^+)\setminus \{u_1, u_2\}$. Thus, $v_0(\xi^+)\in J_1(\ell,u_1, u_2)$ and hence $v\in J_2(\ell,u_1, u_2)$ for $\ell$ large enough. Now~\eqref{eq:estmiates} implies the claim. 
\end{proof}

By Claim~\ref{cl.kat}, as $\delta>0$ was arbitrary, we conclude that for every $v\ne v_0(\xi^+)$ we have $\chi^+ (\xi^+,v)=-2\alpha$.

The only thing that remains to prove is that the  Lyapunov exponent at $v_0(\xi^+)$ is $2\alpha$. For that we invoke
the following lemma whose proof we postpone. 

In what follows we use the Banach-Landau notation%
\footnote{Recall that for two real valued functions $f,g\colon\bR\to\bR$ we have $f=O(g)$ if and only if there exists a positive number $C$ and $x_0$ such that $\lvert f(x)\rvert \le C\lvert g(x)\rvert$ for every $x\ge x_0$.}.

\begin{lemma}\label{lem:landau}
For every $\xi^+$, if $(\ell_i)_i$ is a sequence of positive integers such that 
\begin{itemize}
\item[a)] the limit $\alpha\eqdef \lim_{i\to\infty}\frac{1}{\ell_i}\log\,\lVert \mathbf A^{\ell_i}(\xi^+)\rVert>0$ exists,
\item[b)] the limit $v_0(\xi^+)\eqdef \lim_{i\to\infty}v_+(\xi^+,\ell_i)$ exists, and
\item[c)]  for every $\delta>0$ and every $i$ we have $|v_+(\xi^+,\ell_i) - v_0(\xi^+)|=O(e^{-2\ell_i(\alpha-\delta)})$.
\end{itemize}
Then 
\[
	\lim_{i\to\infty}\frac{1}{\ell_i}\log\,\lvert f_{[\xi_0\ldots\,\xi_{\ell_i-1}]}'(v_0(\xi^+))\rvert =2\alpha.
\]	 
\end{lemma}

Let us show that the hypotheses of   Lemma \ref{lem:landau} are  verified.

Since we consider the case $\lambda_1(\mathbf A,\xi^+)=\alpha>0$, we have hypothesis a) in Lemma \ref{lem:landau}. By Claim \ref{cl:claim108}, $v_0(\xi^+)=\lim_{\ell\to\infty}v_+(\xi^+,\ell)$ is well defined and hence we have hypothesis b) in Lemma \ref{lem:landau}. What remains to verify is hypothesis c) in this lemma.
Observe that $\alpha>0$ implies that for every $\ell$ large enough the map
$f_{[\xi_0\ldots \xi_\ell]}$ is not an isometry. Hence  by Lemma~\ref{lem:distancevplus} with~\eqref{eq:convergent} we get
\[\begin{split}
	|v_+(\xi^+,\ell) - v_+(\xi^+,\ell +1)| 
	&\leq \arctan \left(\frac{1-M^{-4}}{M^{-4}e^{4(\ell+1)a_{\ell+1}}-1}
	\right)^{1/2}\\
	&\leq  \left(\frac{1-M^{-4}}{M^{-4}e^{4(\ell+1)a_{\ell+1}}-1}
	\right)^{1/2} ,
\end{split}\]
where the latter holds for $\ell$ sufficiently large.
Hence  given $\delta>0$ for every $\ell$  large we get (after some simple approximation steps)
\begin{equation}\label{e.todesacocheio}
	|v_+(\xi^+,\ell) - v_+(\xi^+,\ell +1)| 	
	\le e^{-2(\ell+1) (a_{\ell+1}-\delta)}
	\le e^{-2(\ell+1) (\alpha-2\delta)},
	\end{equation}
where we also used that $\alpha=\lim_\ell a_\ell>0$. Thus, for $\ell$ large enough we get 
\[
	|v_0(\xi^+)-v_+(\xi^+,\ell)|
	\le \sum_{k \ge\ell }\lvert v_+(\xi^+,k)-v_+(\xi^+,k+1)\rvert
	\le e^{-2(\ell+1) (\alpha-3\delta)}.
\]
This shows that for any $\delta>0$ we have $\lvert v_0(\xi^+)-v_+(\xi^+,\ell)\rvert=O(e^{-2\ell(\alpha-\delta)})$. Hence we have hypothesis c) in  Lemma \ref{lem:landau}.

We can now apply  Lemma \ref{lem:landau} to get $\chi^+(\xi^+,v_0(\xi^+))=2\alpha$. 

What remains is to give the postponed proof.
\begin{proof}[Proof of Lemma~\ref{lem:landau}]
Clearly, with Lemma~\ref{lem:linalg} i), we get 
\[
	\lim_{i\to\infty}\frac{1}{\ell_i}\log\,\lvert f_{[\xi_0\ldots\,\xi_{\ell_i-1}]}'(v_0)\rvert \le 2\alpha.
\]	 
What remains to show is the other inequality.
Observe that $\alpha>0$ implies that for every $\ell_i$ large enough the map
$f_{[\xi_0\ldots \xi_{\ell_i}]}$ is not an isometry.
Note that by the definition of $v_+(\xi^+,\ell)$ and by Lemma~\ref{lem:linalg} i) we have
\[
	|f_{[\xi_0\ldots\xi_{\ell-1}]}' (v_+(\xi^+,\ell))|
	= \max_{v} |f_{[\xi_0\ldots\xi_{\ell-1}]}'(v)|
	=  \lVert \mathbf A^\ell(\xi^+) \rVert^2 .
\]
Hence, by Lemma \ref{lem:linalg} ii) applied to $A=\mathbf A^\ell(\xi^+)$, for every $v$ with $|v-v_+(\xi^+,\ell)| \le \frac{1}{2}
\arctan \delta^\prime$ we have 
\[
|f_{[\xi_0\ldots\xi_{\ell-1}]}'(v)| \geq 
		\frac{(1+(\delta')^2)   \lVert \mathbf A^{\ell}(\xi^+) \rVert^2}
			{1+(\delta')^2  \lVert \mathbf A^{\ell}(\xi^+) \rVert^4}
\]
(note that the interval in Lemma \ref{lem:linalg} is the complement of an concentric interval centered at $v_+(\xi^+,\ell)$).
Applying the above for $v=v_0$ and $\delta'=\delta_i'=2\tan\delta_i$, where $\delta_i\eqdef |v_+(\xi^+,\ell_i) - v_0|$,  we have 
\[
		|f_{[\xi_0\ldots\xi_{\ell_i-1}]}'(v_0)| \geq 
		\frac{(1+(\delta_i')^2)   \lVert \mathbf A^{\ell_i}(\xi^+) \rVert^2}
			{1+(\delta_i')^2  \lVert \mathbf A^{\ell_i}(\xi^+) \rVert^4}.
\]
By hypothesis, for any $\delta>0$ we have $\delta_i=O(e^{-2\ell_i(\alpha-\delta)})$ and hence 
\[
	(\delta_i')^2
	= O(\delta_i^2)
	= O(\delta_i)^2
	= O(e^{-4\ell_i (\alpha-\delta)}).
\]
Recalling that $\alpha=\lim_i \frac{1}{\ell_i}\log \lVert \mathbf A^{\ell_i}(\xi^+) \rVert>0$, we conclude
\[\begin{split}
	\lim_{i\to\infty}\frac{1}{\ell_i}\log|f_{[\xi_0\ldots\xi_{\ell_i-1}]}'(v_0)| 
	&\geq \lim_{i\to\infty}\frac{1}{\ell_i}\log
		\frac{(1+(\delta_i')^2)   \lVert \mathbf A^{\ell_i}(\xi^+) \rVert^2}
			{1+(\delta_i')^2 \lVert \mathbf A^{\ell_i}(\xi^+) \rVert^4}\\
	&= - \lim_{i\to\infty}\frac{1}{\ell_i}\log
		((\delta_i')^2 \lVert \mathbf A^{\ell_i}(\xi^+) \rVert^2)\\
	&\ge 4(\alpha-\delta)-2\alpha=2\alpha-4\delta.		
\end{split}\]
As $\delta$ was arbitrary, this shows the inequality $\ge$ and hence proves the lemma.
\end{proof}
This finishes the proof of the proposition.
\end{proof}

%-------------------------------------------------------------------------------------------------------------
\subsection{Nonregular one-sided sequences}
%-------------------------------------------------------------------------------------------------------------

We now study one-sided sequences $\xi^+\in\Sigma_N^+$ which are \emph{nonregular} for the cocycle, that is, for which $\underline\lambda_1(\mathbf A,\xi^+)<\overline\lambda_1(\mathbf A,\xi^+)$.

\begin{lemma} \label{lem:irregularfacil}
Assume $\xi^+$ satisfies $\underline{\lambda}_1(\mathbf A, \xi^+) = 0$.
 Then there exists a sequence $(n_i)_i$ such that  for every point $v\in\bP^1$ it holds
\[
	\lim_{i\to\infty}\frac{1}{n_i} \log |f_{[\xi_0\ldots\xi_{n_i-1}]}'(v)| 
	= 0.
\]
\end{lemma}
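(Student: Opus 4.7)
The plan is to exploit Lemma \ref{lem:linalg} i) to sandwich the quantity $\frac{1}{n}\log|f_{[\xi_0\ldots\xi_{n-1}]}'(v)|$ between $\pm \frac{2}{n}\log\|\mathbf{A}^n(\xi^+)\|$ uniformly in $v$, and then pick the subsequence $(n_i)$ directly from the definition of $\underline{\lambda}_1$.

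First I would choose, by the definition of liminf, a sequence $(n_i)_i\nearrow\infty$ such that
\[
    \lim_{i\to\infty}\frac{1}{n_i}\log\,\lVert\mathbf{A}^{n_i}(\xi^+)\rVert=\underline{\lambda}_1(\mathbf{A},\xi^+)=0.
\]
Note that since $A_j\in\mathrm{SL}(2,\bR)$ has determinant one, its singular values $\sigma_1\ge\sigma_2$ satisfy $\sigma_1\sigma_2=1$, so $\lVert \mathbf{A}^{n}(\xi^+)\rVert\ge 1$ and hence $\log\lVert\mathbf{A}^n(\xi^+)\rVert\ge 0$ for every $n$; this guarantees that the sequence $\frac{1}{n_i}\log\lVert\mathbf{A}^{n_i}(\xi^+)\rVert$ is nonnegative and genuinely tends to $0$.

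Next, applying Lemma \ref{lem:linalg} i) to the matrix $A=\mathbf{A}^{n_i}(\xi^+)$, whose projective action is precisely $f_{[\xi_0\ldots\xi_{n_i-1}]}$, yields for every $v\in\bP^1$ the two-sided bound
\[
    \lVert \mathbf{A}^{n_i}(\xi^+)\rVert^{-2}
    \le \lvert f_{[\xi_0\ldots\,\xi_{n_i-1}]}'(v)\rvert
    \le \lVert \mathbf{A}^{n_i}(\xi^+)\rVert^{2}.
\]
Taking logarithms and dividing by $n_i$ gives
\[
    -\frac{2}{n_i}\log\,\lVert \mathbf{A}^{n_i}(\xi^+)\rVert
    \le \frac{1}{n_i}\log\,\lvert f_{[\xi_0\ldots\,\xi_{n_i-1}]}'(v)\rvert
    \le \frac{2}{n_i}\log\,\lVert \mathbf{A}^{n_i}(\xi^+)\rVert,
\]
and the choice of $(n_i)_i$ forces both outer terms to $0$, giving the desired conclusion uniformly in $v\in\bP^1$.

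There is no real obstacle: the subtle point in the rest of the cocycle discussion (for regular sequences, in Proposition \ref{prolem:regular}) is that the positive Lyapunov direction $v_0(\xi^+)$ grows like $\lVert\mathbf{A}^n\rVert^{+2}$ while generic directions shrink like $\lVert\mathbf{A}^n\rVert^{-2}$, so convergence of $\frac{1}{n}\log\lVert\mathbf{A}^n\rVert$ is not enough by itself to pin down the exponent at every $v$. Here, however, $\underline{\lambda}_1=0$ already forces the norm to be subexponential along $(n_i)_i$, so \emph{both} the maximal and minimal fiber derivatives are subexponential, trivializing any dependence on the direction $v$. No distortion or geometric argument is needed.
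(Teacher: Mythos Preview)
Your proof is correct and essentially identical to the paper's own argument: both pick $(n_i)_i$ realizing the liminf $\underline\lambda_1(\mathbf A,\xi^+)=0$ and then apply the two-sided bound from Lemma~\ref{lem:linalg}~i) to squeeze $\frac{1}{n_i}\log|f_{[\xi_0\ldots\xi_{n_i-1}]}'(v)|$ between $\pm\frac{2}{n_i}\log\lVert\mathbf A^{n_i}(\xi^+)\rVert$. Your added remark that $\lVert\mathbf A^n(\xi^+)\rVert\ge1$ (from $\det=1$) is a helpful sanity check but not strictly needed.
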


\begin{proof}
There exists a subsequence  $(n_i)_i$ so that by Lemma~\ref{lem:linalg} i)
and with the notation in \eqref{eq:convergent}
 we have
 $\lim_ia_{n_i}=0$ and 
\[
	-2 a_{n_i}
	\le \frac{1}{n_i}\log\,\lvert f_{[\xi_0\ldots\,\xi_{n_i-1}]}'(v)\rvert
	\le 2 a_{n_i}
	\quad\text{ for every }\quad
	v\in\bP^1.
\]	
Since $\underline\lambda_1(\mathbf A,\xi^+)=0$ this immediately implies the lemma.
\end{proof}

\begin{lemma} \label{lem:irregular}
Assume $\xi^+$ satisfies $\underline{\lambda}_1(\mathbf A, \xi^+) = \alpha_1$, $\overline{\lambda}_1(\mathbf A, \xi^+) = \alpha_2$ for some $0<\alpha_1<\alpha_2$. Then for every $\alpha \in [\alpha_1, \alpha_2]$ there exists a sequence $(m_i)_i$ such that 
\[
	\lim_{i\to\infty}\frac{1}{m_i} \log \,\lVert \mathbf A^{m_i}(\xi^+)\rVert = \alpha.
\]
Moreover, for any such a sequence $(m_i)_i$
there are two cases:
\begin{enumerate}
\item[(a)]either  the limit $v_0\eqdef \lim_i v_+(\xi^+,m_i)$ exists and
\begin{equation} \label{eqn:irrcase1}
	\lim_{i\to\infty}\frac{1}{m_i} \log |f_{[\xi_0\ldots\xi_{m_i-1}]}'(v)| = -2\alpha
	\quad\text{ for all }\quad v\ne v_0,
\end{equation}
and for all $\alpha \in (\alpha_1,\alpha_0)$, where
$$
\alpha_0\eqdef\alpha_1\frac{ \log M +\alpha_2}{ \log M + \alpha_1},
$$ 
 there exists a subsequence $(n_i)_i$ of $(m_i)_i$ such that
\[
	\lim_{i\to\infty}\frac{1}{n_i} \log |f_{[\xi_0\ldots\xi_{n_i-1}]}'(v_0)| 
	= 2\alpha.
\]
\item[(b)] or for all $v\in\bP^1$ there exists a subsequence of $(m_i)_i$ for which~\eqref{eqn:irrcase1} holds.
\end{enumerate}
\end{lemma}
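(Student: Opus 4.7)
The proof breaks into three steps: the existence of $(m_i)$, the dichotomy between cases (a) and (b), and the sub-subsequence claim in case (a). For the first step, set $a_\ell := \tfrac{1}{\ell}\log\|\mathbf A^\ell(\xi^+)\|$; the submultiplicative bounds $\|\mathbf A^{\ell+1}\|\le M\|\mathbf A^\ell\|$ and $\|\mathbf A^{\ell+1}\|\ge M^{-1}\|\mathbf A^\ell\|$ yield $|(\ell+1)a_{\ell+1}-\ell a_\ell|\le\log M$, and hence the slow variation $|a_{\ell+1}-a_\ell|=O(1/\ell)$. Combined with $\liminf a_\ell=\alpha_1<\alpha<\alpha_2=\limsup a_\ell$, a standard intermediate-value argument along finite stretches produces $m_i\to\infty$ with $a_{m_i}\to\alpha$, proving the first assertion.

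Given such $(m_i)$, compactness of $\bP^1$ guarantees that $(v_+(\xi^+,m_i))_i$ has accumulation points. If it converges, we set $v_0:=\lim_i v_+(\xi^+,m_i)$ and are in case (a); otherwise the accumulation set has at least two points, placing us in case (b). The identity~\eqref{eqn:irrcase1} rests in both cases on Lemma~\ref{lem:linalg}\,ii): applied to $A=\mathbf A^{m_i}(\xi^+)$ with parameter $\delta$, the region where $|f_A'|$ exceeds the threshold $(1+\delta^2)\|A\|^2/(1+\delta^2\|A\|^4)$ is the $\arctan\delta$-interval centered at $v_+(\xi^+,m_i)$. In case (a), for any $v\ne v_0$ and any sufficiently small fixed $\delta$, this interval eventually excludes $v$, yielding $\tfrac{1}{m_i}\log|f_{[\xi_0\ldots\xi_{m_i-1}]}'(v)|\le -2a_{m_i}+o(1)\to-2\alpha$; the matching lower bound $\ge -2\alpha$ is the uniform estimate $|f_A'|\ge\|A\|^{-2}$ from Lemma~\ref{lem:linalg}\,i). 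Case (b) reduces to the same reasoning: given any $v\in\bP^1$, pick an accumulation point $v'\ne v$ of $(v_+(\xi^+,m_i))_i$ and extract a subsequence $(n_i)$ with $v_+(\xi^+,n_i)\to v'$, then run the same argument.

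The remaining claim---for $\alpha\in(\alpha_1,\alpha_0)$ finding $(n_i)\subset(m_i)$ along which $\tfrac{1}{n_i}\log|f_{[\xi_0\ldots\xi_{n_i-1}]}'(v_0)|\to 2\alpha$---is the delicate one. The plan is to invoke Lemma~\ref{lem:landau}, which demands the fast decay $|v_+(\xi^+,n_i)-v_0|=O(e^{-2n_i(\alpha-\delta)})$ for every $\delta>0$; this is precisely where $\alpha<\alpha_0$ is needed. Using $\limsup a_\ell=\alpha_2$, for each $m_i$ we pick an auxiliary time $q_i\ge m_i$ with $a_{q_i}$ close to $\alpha_2$, and telescope via Lemma~\ref{lem:distancevplus}:
\[
|v_+(\xi^+,q_i)-v_0|\le C\sum_{\ell\ge q_i}\|\mathbf A^\ell(\xi^+)\|^{-2}.
\]
The key computation is that this sum decays at rate $e^{-2q_i\alpha_0}$: on $\ell\in[q_i,L_+q_i]$ with $L_+=(\log M+\alpha_2)/(\log M+\alpha_1)$ the submultiplicative lower bound $\|\mathbf A^\ell\|\ge M^{-(\ell-q_i)}e^{q_i\alpha_2}$ dominates, for $\ell>L_+q_i$ the baseline $\|\mathbf A^\ell\|\ge e^{\ell\alpha_1+o(\ell)}$ takes over, and the two contributions meet precisely at the exponent $\alpha_1 L_+=\alpha_0$. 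Combined with a matching bound on $|v_+(\xi^+,m_i)-v_+(\xi^+,q_i)|$, the condition $\alpha<\alpha_0$ lets us extract $(n_i)\subset(m_i)$ verifying the hypothesis of Lemma~\ref{lem:landau}, yielding the desired limit. The \emph{main obstacle} is exactly this coordination of the chosen $(n_i)$ with the auxiliary $\alpha_2$-times, together with the algebra that produces $\alpha_0$ as the balance point between submultiplicative losses controlled by $\log M$ and the $\alpha_2$-growth.
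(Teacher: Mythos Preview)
Your overall strategy matches the paper's: slow variation of $a_\ell=\tfrac1\ell\log\lVert\mathbf A^\ell(\xi^+)\rVert$ for the existence of $(m_i)$, Lemma~\ref{lem:linalg} for the dichotomy, and Lemma~\ref{lem:landau} fed by a telescoping estimate through Lemma~\ref{lem:distancevplus} for the last claim. Your handling of the dichotomy is in fact more direct than the paper's; the paper first shows via a Borel--Cantelli argument that the set of $v$ satisfying~\eqref{eqn:irrcase1} has full Lebesgue measure and then runs an interval-stabilization argument (as in Proposition~\ref{prolem:regular}) with two reference points $v_1,v_2$ drawn from that set, whereas you read~\eqref{eqn:irrcase1} straight off Lemma~\ref{lem:linalg}\,ii). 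That shortcut is legitimate.

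The genuine gap sits exactly where you place it. You bound $|v_+(\xi^+,q_i)-v_0|\le C\sum_{\ell\ge q_i}\lVert\mathbf A^\ell\rVert^{-2}$ and correctly compute the rate $e^{-2q_i\alpha_0}$ (this already uses, implicitly, that $v_+(\xi^+,\ell)$ converges along the \emph{full} sequence of integers, which does follow from $\alpha_1>0$ and summability of the increments but should be said). The problem is the ``matching bound'': the estimate $|v_+(\xi^+,n_i)-v_+(\xi^+,q_i)|\le\sum_{\ell=n_i}^{q_i-1}C\lVert\mathbf A^{\ell+1}\rVert^{-2}$ is only $O(e^{-2n_i(\alpha-\delta)})$ if $a_\ell\ge\alpha-\delta$ on the whole stretch $[n_i,q_i]$, and this fails for an arbitrary element $n_i$ of $(m_j)$. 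The paper resolves this by taking $n_i$ to be the \emph{last} element of $(m_j)$ preceding a fixed $\alpha_2$-time $t_i$, so that on $[n_i,t_i]$ and on a further controlled range beyond (quantified via the slow-variation Claim~\ref{clai:claim}) the values $a_\ell$ stay above $\alpha-\delta$; the telescoping sum from $n_i$ to infinity is then split into three pieces, governed respectively by $\alpha$, by the submultiplicative descent from the $\alpha_2$-peak, and by the $\alpha_1$-baseline, and the condition $\alpha<\alpha_0$ is precisely what makes the first piece dominate. Your identification of $\alpha_0$ as the balance point is the heart of this, but the extraction rule for $n_i$ and the three-range decomposition must be spelled out for the argument to close.
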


\begin{proof}
Let
$\alpha \in [\alpha_1, \alpha_2]$.
We note the following  simple fact.

\begin{claim}\label{clai:claim}
	Consider $\beta$ and $\gamma$ with $\beta>\gamma$.
	\begin{itemize} 
	\item
	If
	 $a_m\ge\beta$ and $a_{m+k}<\gamma$ then 
	 $\displaystyle{ m+k> m\frac{\log M+\beta}{\log M+\gamma}}$.
	 \vskip 0.1cm
	\item
	If $a_m\le\gamma$ and $a_{m+k}>\beta$
	then 
	$ \displaystyle{ m+k>m\frac{\log M-\gamma}{\log M-\beta}}.$
	\end{itemize}
\end{claim}

\begin{remark}
\label{r.smallsteps}
 Claim~\ref{clai:claim} implies that there exists $(m_i)_i$ so that $\lim_ia_{m_i}= \alpha$.
Indeed, by hypothesis this holds if $\alpha=\alpha_2$. Otherwise, if $\alpha<\alpha_2$ and 
$\delta$ small, it is enough to observe that if $a_m>\alpha_2-\delta\ge\alpha+\delta$ then $a_{m+k}<\alpha-\delta$ only if 
$$
m+k>\frac{m(\log M +(\alpha+\delta))}{(\log M+(\alpha-\delta))},
$$ 
and hence $k\ge 2$ when $m$ is large. 
\end{remark}

Arguing as  in the proof of Proposition \ref{prolem:regular}, we get that the set 
\[
	\cL'(-2\alpha,\xi^+, (m_i)_i)
	\eqdef \Big\{v\in\bP^1\colon
		\lim_{i\to\infty}\frac 1 {m_i} \log |f_{[\xi_0\ldots\xi_{m_i-1}]}'(v)| 
	= -2\alpha\Big\}
\]
has full  Lebesgue measure. Now choose some $v_1, v_2\in \cL'(-2\alpha,\xi^+, (m_i)_i)$ and as in
the proof of  Proposition \ref{prolem:regular} define the intervals $J_1(m_i)=J_1(m_i,v_1,v_2)$ and 
$J_2(m_i,v_1,v_2)$. Note that, as we consider a sequence for which in general $m_{i+1}\ne m_i+1$, we cannot proceed directly to get  $J_1(m_i)=J_1(m_{i+1})$ as in Claim~\ref{cl.J1J2}, however we can argue as follows. 

Either we do have $J_1(m_i)=J_1(m_{i+1})$ for all $i\ge1$ large enough, in which case we  continue exactly as in the proof of Proposition \ref{prolem:regular} and obtain the existence of the limit $v_0\eqdef \lim_iv_+(\xi^+,m_i)$ and that for every $v\ne v_0$ and for this very sequence the assertion~\eqref{eqn:irrcase1} holds true.  This proves the first part of the claim in Case (a).

Or we have $J_1(m_i)=J_1(m_{i+j})$ for infinitely many $j\ge 1$ and $J_2(m_i)=J_1(m_{i+\ell})$ for infinitely many $\ell$. Hence, taking any $v\in\bP^1$, there is some subsequence $(m_{i_k})_k$ such that $v\in J_1(m_{i_k})$ and hence for this subsequence  the assertion~\eqref{eqn:irrcase1} holds true, proving Case (b).

\smallskip
It remains to prove the remaining part of Case (a) where
$\alpha\in (\alpha_1,\alpha_0)$.
For that we will apply Lemma \ref{lem:landau}. Note that hypotheses a) and b) are satisfied by the hypotheses of the Case (a) we consider. It remains to check hypothesis c) of that lemma. 

\begin{claim}\label{cl:fundao}
	There exists a subsequence $(n_i)_i$ of $(m_i)_i$ such that for any sufficiently small $\delta>0$ and for every $i$ we have
\[\lvert v_+(\xi^+,n_i)-v_0\rvert=O(e^{-2n_i(\alpha-\delta)}).\]	
\end{claim}

With this claim at hand all hypotheses in Lemma \ref{lem:landau} are satisfied and hence we have
\[
	\lim_{i\to\infty}\frac{1}{n_i} \log |f_{[\xi_0\ldots\xi_{n_i-1}]}'(v_0)| 
	= 2\alpha
\]
which concludes the proof of the lemma.

\begin{proof}[Proof of Claim \ref{cl:fundao}]
First observe that the definition of $\alpha_0$ implies that $\alpha_1<\alpha_0\le \alpha_2$.
Consider  $\delta>0$ sufficiently small (we will specify this further) such that 
\[
	\delta < \min\{(\alpha_2-\alpha)/2, (\alpha-\alpha_1)/2\}.
\]	
By hypothesis $\overline\lambda_1(\mathbf A,\xi^+)=\alpha_2$, there is a sequence $(r_i)_i$ for which $a_{r_i}\ge \alpha_2-\delta$ for all $i$. To define  the subsequence $(n_i)_i$  of $(m_i)_i$ we  consider  an auxiliary strictly increasing sequence   $(t_i)_i$ 
given by the positive integers such that $a_{t_i}\ge \alpha_2-\delta$ for all $i\ge1$. For every $i\ge1$ let 
 \[
	n_i
	\eqdef \max \Big\{ n \colon 
		n\in \{m_j\}, n<t_i,  a_n < \alpha+\delta\Big\},
\]
that is, $a_{n_i}$ is the last sequence element which was below $\alpha+\delta$ before  approaching values close to $\alpha_2$. 
Let 
\begin{equation}\label{eq:notag}\begin{split}
 	L_i'(\delta)
	&\eqdef  n_i\frac{\log M+(\alpha_2-\delta)}{\log M+(\alpha+\delta)}\cdot
			\frac{\log M-(\alpha+\delta)}{\log M-(\alpha_2-\delta)}, \notag\\
	L_i''(\delta)
	&\eqdef 
	 n_i \frac {\log M + (\alpha-\delta)} {\log M + (\alpha_1+\delta)} \cdot
	\frac{\log M+(\alpha_2-\delta)}{\log M+(\alpha+\delta)}\cdot
			\frac{\log M-(\alpha+\delta)}{\log M-(\alpha_2-\delta)}\notag\\
	&= 		\frac {\log M + (\alpha-\delta)} {\log M + (\alpha_1+\delta)}\cdot L_i'(\delta).
\end{split}\end{equation}	
Clearly $L_i''(\delta)>L_i'(\delta)>n_i$.
Let
$$
\ell_i'(\delta)\eqdef \lceil L_i'(\delta)\rceil
\quad 
\mbox{and}
\quad
\ell_i''(\delta)\eqdef \lceil L_i''(\delta)\rceil.
$$
The above implies that there are an increasing function $\tau(\delta)$, $\tau(\delta) \to 1$ as $\delta\to 0$ such that  (for sufficiently large $i$)
\[
	\ell_i''(\delta)
	> n_i  \, \tau(\delta) \, \frac{\log M +\alpha_2}{\log M +\alpha_1}\, 
				\frac{\log M - \alpha}{\log M -\alpha_2}
	>n_i  \, \tau(\delta) \,\frac{\log M+\alpha_2}{\log M+\alpha_1}			,
\]
where for the last inequality we use that $\alpha_2>\alpha$.
Multiplying both sides by $\alpha_1-\delta$ and recalling the definition of $\alpha_0$, we get
\[
	\ell_i''(\delta)(\alpha_1-\delta)
	> n_i\left(\tau(\delta)\alpha_0-\tau(\delta)\frac{\log M+\alpha_2}{\log M+\alpha_1}\delta\right)	.
\]
Specifying now $\delta$, it will be enough that for given $\alpha$ we have that $\delta>0$ is sufficiently small such that 
\[
	\tau(\delta)\alpha_0-\tau(\delta)\frac{\log M+\alpha_2}{\log M+\alpha_1}\delta
	>\alpha.
\]
Hence, for $i$ large we obtain
\begin{equation}\label{e.dominates}
	n_i (\alpha-\delta) 
	<n_i \alpha 
	<\ell_i''(\delta) (\alpha_1-\delta).
\end{equation}
In what follows, we take $\delta$ in this way and further-on omit the dependence on $\delta$. 

\begin{figure}
\begin{overpic}[scale=.40 
  ]{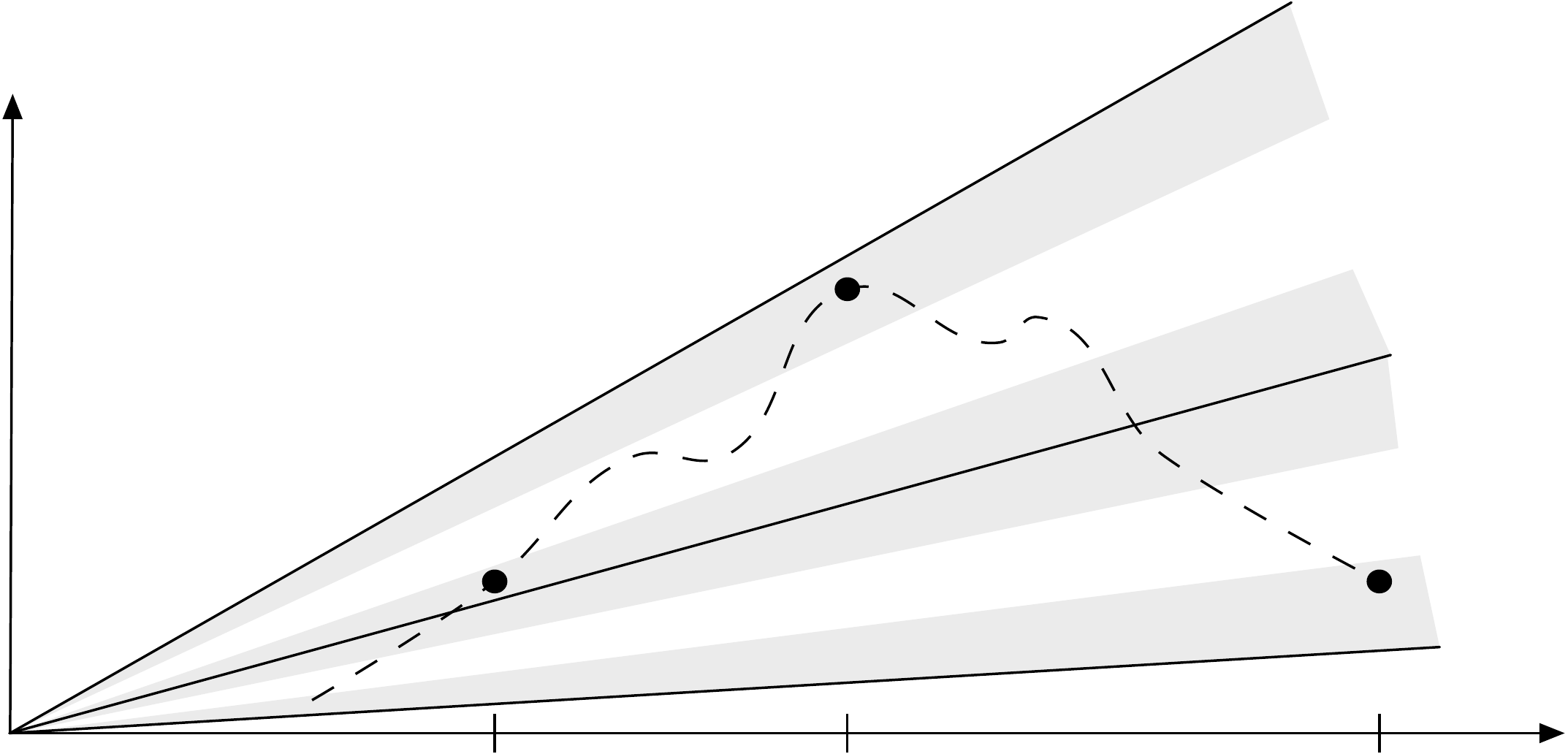}
        \put(84,47.5){\small$n\alpha_2$}
        \put(93.5,6){\small$n\alpha_1$}
        \put(90,25){\small$n\alpha$}
        \put(30,-3){\small$n_i$}
        \put(53,-3){\small$n_i'$}
        \put(87,-3){\small$n_i''$}
\end{overpic}
\caption{}
\label{fig:cones}
\end{figure}

Given $n_i$, let
\[
	n_i'
	\eqdef \min\{n\colon n>t_i,  a_n<\alpha+\delta\}
\]
be the smallest index $n>t_i$ at which $a_n$ drops below $\alpha+\delta$ again and let
\[
	n_i''
	\eqdef \min\{n\colon n>n_i', a_n<\alpha_1+\delta\}
\]
be the first $n>n_i'$ at which $a_n$ approaches the other accumulated exponent $\alpha_1$. Compare Figure~\ref{fig:cones}.
Note that by Claim~\ref{clai:claim} applied twice (to the pairs of times $(n_i,t_i)$ and  $(t_i, n_i')$)  we have $n_i'\ge  L_i'\ge\ell_i'$. In particular, invoking Remark \ref{r.smallsteps}, for any $n=n_i,\ldots,n_i'$ and hence  in particular for $n=\ell_i'$, we have
\[	a_n\ge \alpha-\delta.
\]
Consequently, for any $n\ge\ell_i'+1$ we have
\[
	na_n
	\ge \ell_i'(\alpha-\delta) - (n-\ell_i')\log M.
\]
Putting this together and noting also that we can assume that $a_n>\alpha_1-\delta$ for all $n$, by construction,  we have
\begin{equation}\label{eq:estimmates}\begin{cases}
	a_n\ge\alpha-\delta
	&\text{ for all }n\in I_1\eqdef \{n_i+1,\ldots, \ell_i'\}\\
	n a_n\ge \ell_i'(\alpha-\delta) -(n-\ell_i')\log M
	&\text{ for all }n\in I_2 \eqdef \{\ell_i'+1,\ldots, \ell_i''\}\\
	a_n\ge\alpha_1-\delta
	&\text{ for all }n\in I_3 \eqdef \{ \ell_i''+1, \ldots\}.
\end{cases}\end{equation}

Recall again that  the limit $v_0\eqdef \lim_i v_+(\xi^+,n_i)$ exists since we choose the terms $n_i$ in the sequence $(m_j)_j$ satisfying the hypothesis of Case (a). 
Thus,  applying Lemma~\ref{lem:distancevplus} to a telescoping sum we have (for $n_i$ large enough) 
\[\begin{split}
	|v_+(\xi^+,n_i)&-v_0| 
	\le \sum_{\ell=0}^\infty\lvert v_+(\xi,n_i+\ell)-v_+(\xi,n_i+\ell+1)\rvert\\
	&\le \sum_{\ell=0}^\infty\arctan\left(\frac{1-M^{-4}}
				{M^{-4}\lVert\mathbf A^{n_i+\ell+1}(\xi^+)\rVert^4-1}\right)^{1/2}
	\le \sum_{n= n_i+1}^\infty e^{-2 n (a_n -\delta)},
\end{split}\]
where the last inequality follows after some simple approximation steps as in \eqref{e.todesacocheio}.
To finish the proof of the claim, we  now divide the latter sum into three subsums over the index sets $I_1,
I_2,$ and $I_3$ defined in \eqref{eq:estimmates} and estimate these sums. 

\noindent
$\bullet$ First sum:
\[
	\sum_{n\in I_1} e^{-2 n (a_n -\delta)}	
	\le\sum_{n\ge n_{i}+1 } e^{-2 n \alpha}	
	\le e^{-2 n_i \alpha } \, \frac{ e^{-2\alpha } }{1-e^{-2\alpha}}
	< e^{-2 n_i (\alpha-\delta) } \, \frac{ e^{-2\alpha } }{1-e^{-2\alpha}}.	
\]
$\bullet$ Second sum: with \eqref{eq:estimmates} we have
\[\begin{split}
	&\sum_{n\in I_2} e^{-2 n (a_n -\delta)}	
	\le \sum_{n=\ell_i'+1}^{\ell_i''}e^{-2\ell_i'(\log M+(\alpha-\delta)) +2 n (\log M+ \delta)}\\
	&\le e^{-2\ell_i'(\log M+(\alpha-\delta))}
		\,\frac{e^{2(\ell_i''+1)(\log M+\delta)}-e^{2(\ell_i'+1)(\log M+\delta)}}
			{e^{2( \log M+\delta)}-1}	\\
	(\text{with }\eqref{eq:notag})\quad
	&= e^{-2\ell_i''(\log M+(\alpha_1+\delta))}
	e^{2\ell_i''(\log M+\delta)}
		\,\frac{e^{2(\log M+\delta)}-e^{-2(\ell_i''-\ell_i'-1)(\log M+\delta)}}
			{e^{2( \log M+\delta)}-1}\\
	&\le e^{-2\ell_i''\alpha_1}
		\, \frac{e^{2(\log M+\delta)}}{e^{2(\log M+\delta)}-1}.
\end{split}\]
Note that in the but last step, by a slight abuse of notation, we apply \eqref{eq:notag} for $\ell_i'$ and $\ell_i''$ which asymptotically coincide with $L_i'$ and $L_i''$, respectively. \\
$\bullet$ Third sum: with \eqref{eq:estimmates} we have
\[
	\sum_{n\in I_3}e^{-2na_n}
	\le  \sum_{n\ge \ell_i''+1}e^{-2n(\alpha_1-\delta)} 
	\le e^{-2\ell_i''(\alpha_1-\delta)}
		\frac{e^{-2(\alpha_1-\delta)}}{1-e^{-2(\alpha_1-\delta)}}.
\]

Finally, recalling that \eqref{e.dominates},  we have $n_i(\alpha-\delta)<\ell_i'' (\alpha_1 -\delta) <\ell_i''\alpha_1$ for every sufficiently large $i$. Hence the first sum dominates the second and the third ones and hence, for any $\delta>0$ small enough we have $\lvert v_+(\xi^+,n_i)-v_0\rvert=O(e^{-2n_i(\alpha-\delta)})$. This proves the claim.\end{proof}

The proof of the lemma is now complete.
\end{proof}

%-------------------------------------------------------------------------------------
\subsection{Relations between exponents of cocycles and skew-products}\label{sec:relations}
%-------------------------------------------------------------------------------------

We have the following consequences of Lemmas~\ref{lem:irregularfacil} and \ref{lem:irregular}.

\begin{corollary} \label{cor:irregular}
Assume that $\xi^+$ satisfies $\underline{\lambda}_1(\mathbf A, \xi^+) = \alpha_1$, $\overline{\lambda}_1(\mathbf A, \xi^+) = \alpha_2$ for some $\alpha_1<\alpha_2$. 
\begin{enumerate}
\item  If $\alpha_1=0$ then if $\chi^+(\xi^+,v)$  is well defined (i.e., the limit exists) then it is equal to zero.
\item If $\alpha_1>0$ then there is no $v\in \bP^1$ for which the Lyapunov exponent $\chi^+(\xi^+,v)$ is well defined.
\end{enumerate}
\end{corollary}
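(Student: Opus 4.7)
The proof will treat the two items separately, with item (1) being essentially immediate and item (2) requiring a careful extraction of two subsequences along which finite-time exponents converge to different values.

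For item (1), the plan is to apply Lemma~\ref{lem:irregularfacil} directly. Since $\underline\lambda_1(\mathbf A,\xi^+)=\alpha_1=0$ by hypothesis, that lemma produces a sequence $(n_i)_i$ such that $\frac{1}{n_i}\log\lvert f_{[\xi_0\ldots\xi_{n_i-1}]}'(v)\rvert\to 0$ for \emph{every} $v\in\bP^1$. If the full limit $\chi^+(\xi^+,v)$ exists, then it must agree with every subsequential limit, and hence must equal $0$.

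For item (2), fix an arbitrary $v\in\bP^1$. The plan is to exhibit two different subsequences of $\bN$ along which $\frac{1}{n}\log\lvert f_{[\xi_0\ldots\xi_{n-1}]}'(v)\rvert$ has distinct limits, so that $\chi^+(\xi^+,v)$ cannot be well defined. Because $\alpha_1>0$ and $\alpha_2>\alpha_1$, the quantity $\alpha_0=\alpha_1\frac{\log M+\alpha_2}{\log M+\alpha_1}$ strictly exceeds $\alpha_1$, so the open interval $(\alpha_1,\alpha_0)$ is nonempty. I will pick two distinct values $\alpha,\alpha'\in(\alpha_1,\alpha_0)$ and, for each of them, apply Lemma~\ref{lem:irregular}, which furnishes a sequence $(m_i)_i$ with $\frac{1}{m_i}\log\lVert\mathbf A^{m_i}(\xi^+)\rVert\to\alpha$.

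The key observation is the following dichotomy provided by Lemma~\ref{lem:irregular}: either we are in Case~(a), in which $v_0\eqdef\lim_i v_+(\xi^+,m_i)$ exists and the limit of $\frac{1}{m_i}\log\lvert f'\rvert(v)$ along $(m_i)_i$ equals $-2\alpha$ for $v\ne v_0$, while at the exceptional point $v=v_0$ there is a further subsequence $(n_i)_i$ of $(m_i)_i$ yielding limit $2\alpha$; or we are in Case~(b), where for every $v$ some subsequence of $(m_i)_i$ realizes the limit $-2\alpha$. In either case, for the fixed $v$ there is a subsequence of $\bN$ along which $\frac{1}{n}\log\lvert f_{[\xi_0\ldots\xi_{n-1}]}'(v)\rvert$ converges to an element of $\{-2\alpha,2\alpha\}$. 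Applying the same reasoning with $\alpha'$ in place of $\alpha$ gives an accumulation point in $\{-2\alpha',2\alpha'\}$. Since $\alpha,\alpha'>0$ are distinct, these two two-element sets are disjoint, producing two different accumulation points and concluding that $\chi^+(\xi^+,v)$ does not exist. The main obstacle is precisely handling the Case~(a)/Case~(b) dichotomy uniformly in $v$; the crucial point is that in \emph{both} cases one of the values $\pm 2\alpha$ is realized as a subsequential limit at the fixed $v$, which is enough for the argument.
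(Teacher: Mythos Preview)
Your proof is correct and takes essentially the same approach as the paper's. The paper's proof is terser---it simply notes that for any $\alpha\in(\alpha_1,\alpha_0)$ and every $v$ there is a subsequential limit in $\{-2\alpha,2\alpha\}$ and declares this concludes the proof---while you make the final step explicit by choosing two distinct values $\alpha,\alpha'$ and observing that $\{-2\alpha,2\alpha\}\cap\{-2\alpha',2\alpha'\}=\emptyset$.
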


\begin{proof}
Item 1 follows immediately from Lemma \ref{lem:irregularfacil}. To show Item 2 observe that by Lemma \ref{lem:irregular} for any $\alpha\in(\alpha_1,\alpha_0)$ and for every $v\in \bP^1$ there is a sequence $(m_i)_i$   for which 
\[
	\lim_{i\to\infty} \frac 1 {m_i} \log |f_{[\xi_0\ldots\xi_{m_i-1}]}'(v)| 
	\in\{-2\alpha,2\alpha\}.
\]
This concludes the proof.
\end{proof}

\begin{corollary}\label{cor:limexireg}
	Assume that $(\xi^+,v)\in\Sigma_N^+\times\bP^1$ is such that $\chi^+(\xi^+,v)$ is well defined and nonzero, then $\lambda_1(\mathbf A,\xi^+)=2\lvert\chi^+(\xi^+,v)\rvert$.
\end{corollary}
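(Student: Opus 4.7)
The plan is to derive this corollary directly by combining Proposition~\ref{prolem:regular} with Corollary~\ref{cor:irregular}. The strategy is a simple case analysis on whether $\lambda_1(\mathbf{A},\xi^+)$ exists, using the existence and nonvanishing of $\chi^+(\xi^+,v)$ to rule out each of the bad possibilities.

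First I would show that the hypothesis forces $\xi^+$ to be regular for the cocycle, i.e.\ $\underline\lambda_1(\mathbf{A},\xi^+)=\overline\lambda_1(\mathbf{A},\xi^+)$. Suppose for contradiction that $\underline\lambda_1(\mathbf{A},\xi^+)<\overline\lambda_1(\mathbf{A},\xi^+)$. By Corollary~\ref{cor:irregular}, one of two scenarios holds: either $\underline\lambda_1(\mathbf{A},\xi^+)=0$, in which case any point $v$ for which $\chi^+(\xi^+,v)$ is well defined must satisfy $\chi^+(\xi^+,v)=0$; or $\underline\lambda_1(\mathbf{A},\xi^+)>0$, in which case $\chi^+(\xi^+,v)$ is not well defined at any $v\in\mathbb{P}^1$. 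Both alternatives contradict the hypothesis that $\chi^+(\xi^+,v)$ is well defined and nonzero. Hence the limit $\lambda_1(\mathbf{A},\xi^+)=\alpha$ exists.

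It then remains to apply Proposition~\ref{prolem:regular} to the regular sequence $\xi^+$. The case $\alpha=0$ is immediately excluded: the first bullet of Proposition~\ref{prolem:regular} would force $\chi^+(\xi^+,v)=0$, contrary to hypothesis. Therefore $\alpha>0$, and the second bullet of the same proposition yields the dichotomy $\chi^+(\xi^+,v)\in\{2\alpha,-2\alpha\}$, so that $|\chi^+(\xi^+,v)|=2\alpha=2\lambda_1(\mathbf{A},\xi^+)$, which (after rearrangement) gives the claimed identity. There is no real obstacle here beyond bookkeeping, since all the analytical work has already been carried out in Proposition~\ref{prolem:regular} and Corollary~\ref{cor:irregular}; the corollary is essentially a dictionary between the two settings, stating that a well-defined nonzero fiber exponent can only arise from a regular cocycle trajectory with strictly positive top Lyapunov exponent, whose value is then determined (up to sign and the factor $2$) by the fiber exponent itself.
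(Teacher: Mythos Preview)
Your proof is correct and follows exactly the same approach as the paper: first invoke Corollary~\ref{cor:irregular} to force regularity of $\xi^+$ (ruling out both the $\underline\lambda_1=0$ and $\underline\lambda_1>0$ irregular cases), then apply Proposition~\ref{prolem:regular} to extract the value of $\lambda_1$. The paper's proof is simply a two-line compression of your case analysis.
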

\begin{proof}
By Corollary \ref{cor:irregular} we obtain $\underline\lambda_1(\mathbf A,\xi^+)=\overline\lambda_1(\mathbf A,\xi^+)=\lambda_1(\mathbf A,\xi^+)>0$  and by Proposition~\ref{prolem:regular} $\lambda_1(\mathbf A,\xi^+)=2\lvert\chi^+(\xi^+,v)\rvert$.
\end{proof}

%----------------------------------------------------------------------------------------------------------------
\subsection{Entropy spectrum: Proof of Theorem~\ref{theoprop:onesidedspectrum}}\label{sssec:entcocy}
%----------------------------------------------------------------------------------------------------------------

We finally study the topological entropy of several level sets. 

\begin{proposition} \label{prolem:exp0}
The sets 
\[
	S_0 
	= \{\xi^+\in\Sigma_N^+\colon \lambda_1(\mathbf A,\xi^+)=0\}
	\quad\text{and}\quad
	S_1 = \{\xi^+\in\Sigma_N^+\colon \underline{\lambda}_1(\mathbf A,\xi^+)=0\}
\]
have the same topological entropy.
\end{proposition}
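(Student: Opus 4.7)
The inclusion $S_0\subset S_1$ is immediate, giving $h_{\rm top}(S_0)\le h_{\rm top}(S_1)$. For the reverse inequality, the plan is to apply Bowen's theorem (Proposition~\ref{pro:Bowen}~i)) with $c:=h_{\rm top}(S_0)$: if every $\xi^+\in S_1$ admits some $\nu\in V_{\sigma^+}(\xi^+)$ with $h(\nu)\le c$, then $S_1\subset QR(c)$ and hence $h_{\rm top}(S_1)\le c$.

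A preliminary observation handles the entropy bound once the right measure is found. Any $\sigma^+$-invariant probability $\nu$ with $\lambda_1(\mathbf{A},\nu)=0$ satisfies $h(\nu)\le c$: by Kingman's subadditive ergodic theorem the cocycle exponent equals $\lim_n \tfrac{1}{n}\log\lVert\mathbf{A}^n(\cdot)\rVert$ $\nu$-a.e., so $\nu(S_0)=1$; for ergodic $\nu$ this gives $G(\nu)\subset S_0$ up to a $\nu$-null set and then Proposition~\ref{pro:Bowen}~ii) yields $h(\nu)=h_{\rm top}(G(\nu))\le h_{\rm top}(S_0)=c$. The extension to non-ergodic $\nu$ is via ergodic decomposition, using that $\lambda_1(\mathbf{A},\cdot)\ge 0$ on every invariant measure (because $\lVert A\rVert\ge 1$ for $A\in\mathrm{SL}(2,\mathbb{R})$), so $\lambda_1(\nu)=0$ forces almost every ergodic component to have zero exponent as well.

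To produce such a $\nu$ for $\xi^+\in S_1$, I would use $\underline{\lambda}_1(\mathbf{A},\xi^+)=0$ to pick $n_k\to\infty$ with $a_{n_k}(\xi^+)\to 0$ and consider the periodic $\sigma^+$-invariant probability $\nu^{(k)}$ supported on the orbit of $\tilde\xi^{(k)}:=(\xi_0\xi_1\ldots\xi_{n_k-1})^\infty$. Since $\mathbf{A}^{n_k}(\tilde\xi^{(k)})=\mathbf{A}^{n_k}(\xi^+)=:B$ and $1\le \rho(B)\le \lVert B\rVert$ in $\mathrm{SL}(2,\mathbb{R})$, the periodic Oseledets formula yields
\[
\lambda_1(\mathbf{A},\nu^{(k)})=\tfrac{1}{n_k}\log\rho(B)\in[0,a_{n_k}(\xi^+)]\longrightarrow 0.
\]
Moreover, the cyclic wrap-around only contributes to a boundary of length $\le \ell$ in each cylinder of length $\ell$, so $\lvert\nu^{(k)}([w])-\mu_{\xi^+,n_k}([w])\rvert\le \ell/n_k$ for the empirical measure $\mu_{\xi^+,n_k}:=\tfrac{1}{n_k}\sum_{j=0}^{n_k-1}\delta_{\sigma^j\xi^+}$. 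Hence any weak$\ast$ accumulation point $\nu$ of $\{\nu^{(k)}\}$ is also an accumulation point of $\{\mu_{\xi^+,n_k}\}$ and lies in $V_{\sigma^+}(\xi^+)$.

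The main obstacle is upgrading the trivial bound $\lambda_1(\mathbf{A},\nu)\ge 0$ (given both by the $\mathrm{SL}(2,\mathbb{R})$ constraint and by upper semicontinuity of $\nu\mapsto \lambda_1(\mathbf{A},\nu)=\inf_N \tfrac{1}{N}\int\log\lVert\mathbf{A}^N\rVert\,d\nu$, which only controls jumps \emph{upward} in weak$\ast$ limits) to the matching upper bound $\lambda_1(\mathbf{A},\nu)\le 0$. I would refine the choice of $(n_k)$ by imposing the rapid-growth condition $n_{k-1}/n_k\to 0$ and exploit the $\mathrm{SL}(2,\mathbb{R})$ identity $\lVert\mathbf{A}^N(\sigma^j\xi^+)\rVert\le \lVert\mathbf{A}^{j+N}(\xi^+)\rVert\cdot\lVert\mathbf{A}^j(\xi^+)\rVert$ (coming from $\lVert B^{-1}\rVert=\lVert B\rVert$) together with a decomposition of $[0,n_k)$ into the short prefix $[0,n_{k-1})$, whose contribution to the empirical average is negligible, and the dominant block $[n_{k-1},n_k)$ on which subadditivity forces $a_{n_k-n_{k-1}}(\sigma^{n_{k-1}}\xi^+)\to 0$. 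Applied to $\int\log\lVert\mathbf{A}^N\rVert\,d\nu^{(k)}$ for each fixed $N$, this should give $\tfrac{1}{N}\int\log\lVert\mathbf{A}^N\rVert\,d\nu\to 0$ as $N\to\infty$, i.e.\ $\lambda_1(\mathbf{A},\nu)=0$. The preliminary observation then yields $h(\nu)\le c$, and Bowen's theorem closes the argument.
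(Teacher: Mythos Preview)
Your route via Bowen's $QR(c)$ theorem is genuinely different from the paper's constructive argument, but the central step does not go through. The claim that a weak$\ast$ limit $\nu$ of the empirical (or periodic) measures along a subsequence with $a_{n_k}(\xi^+)\to 0$ must satisfy $\lambda_1(\mathbf A,\nu)=0$ is false, and your proposed refinement does not rescue it.

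Take $A_0=\mathrm{diag}(e^\beta,e^{-\beta})$, $A_1=A_0^{-1}$, and $\xi^+=0^{m_1}1^{m_1}0^{m_2}1^{m_2}\ldots$ with $m_k\nearrow\infty$ rapidly. Then $\lVert\mathbf A^n(\xi^+)\rVert=e^{\beta|s_n|}$ where $s_n$ counts $0$'s minus $1$'s among the first $n$ symbols, so $a_{n_k}(\xi^+)=0$ at the block-end times $n_k=2\sum_{j\le k}m_j$ and $\xi^+\in S_1$. Any subsequence of these $n_k$ (in particular one with $n_{k-1}/n_k\to 0$) gives the same limit $\nu=\tfrac12(\delta_{0^\infty}+\delta_{1^\infty})$, for which $\lambda_1(\mathbf A,\nu)=\beta>0$. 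The periodic measures $\nu^{(k)}$ all have $\lambda_1(\mathbf A,\nu^{(k)})=0$ (since $\mathbf A^{n_k}(\xi^+)=\mathrm{Id}$), so the exponent genuinely jumps up in the limit, exactly as upper semicontinuity of $\lambda_1$ permits. Your inequality $\lVert\mathbf A^N(\sigma^j\xi^+)\rVert\le\lVert\mathbf A^{j+N}(\xi^+)\rVert\cdot\lVert\mathbf A^j(\xi^+)\rVert$ is correct, but summing over $j<n_k$ yields a bound involving $\tfrac{1}{n_k}\sum_{j<n_k}\log\lVert\mathbf A^j(\xi^+)\rVert$, which in this example is of order $\beta n_k$. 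Knowing $a_{n_k}\to 0$, or even $a_{n_k-n_{k-1}}(\sigma^{n_{k-1}}\xi^+)\to 0$, gives no control on the intermediate values $a_j$ for $n_{k-1}<j<n_k$, and these are precisely what determine $\int\log\lVert\mathbf A^N\rVert\,d\nu$. In this particular example $h(\nu)=0\le c$ happens to hold, but your argument supplies no mechanism for bounding $h(\nu)$ once $\lambda_1(\mathbf A,\nu)=0$ is abandoned.

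The paper proceeds instead by a direct construction: from $h_{\rm top}(S_1)=h$ one extracts, for each $\varepsilon_i\searrow 0$, a family of at least $e^{(h-2\varepsilon_i)n_i}$ words $w$ of a common length $n_i$ with $\big|\tfrac{1}{|w|}\log\lVert\mathbf A(w)\rVert\big|\le\varepsilon_i$; concatenating such words (with suitable repetitions) produces a set $X\subset S_0$, because forward submultiplicativity controls the norm along concatenations, and a Frostman argument on an equidistributed measure gives $h_{\rm top}(X)\ge h$.
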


\begin{proof}
We clearly have $S_0\subset S_1$ and hence $h_{\rm top}(S_0)\le h_{\rm top}(S_1)$. 

It only  remains to prove the other inequality, for which we will invoke again Frostman's Lemma~\ref{l.frostman}. 
Let $h=h_{\rm top}(S_1)$.
For any $\varepsilon>0$ consider the sets
\[
	X_{n, \varepsilon} 
	\eqdef \left\{\xi^+\in\Sigma_N^+\colon 
	\left\lvert\frac 1n \log\, \lVert A_{\xi_{n-1}}\circ\ldots\circ A_{\xi_0}\rVert \right\rvert\leq \varepsilon\right\}.
\]
Note that for any $N\ge1$ we obtain the corresponding cover by open (cylinder) sets
\[
	S_1
	\subset \bigcup_{n\ge N}
				\bigcup_{\xi^+\in X_{n,\varepsilon}}[\xi_0\ldots\xi_{n-1}].
\]

Recalling Appendix \ref{App:B}, fix $\mathscr{A}=\{[i]\colon i=0,\ldots,N-1\}$ the cover by cylinders of level $1$. Note that there exists $N=N(\varepsilon)\ge1$ such that for any open cover $\mathcal U$ of $S_1$, which satisfies $n_{\mathscr A}(U)\ge N$ for every $U\in\mathcal U$, we have 
\[
	\sum_{U\in\mathcal U}e^{-(h-\varepsilon)n_{\mathscr A}(U)}
	>C(\varepsilon)\eqdef\frac{1}{1-e^{-\varepsilon}}.
\]
Note that $n_{\mathscr A}(U)$ for a cylinder set $U=[\xi_0\ldots\xi_{n-1}]$ is just its length $n$. 

\begin{claim}
There exist $n\ge N$ and $\widehat{\mathcal U}=\{[\xi_0\ldots\xi_{n-1}]\colon \xi^+\in X_{n,\varepsilon}\}$ a family of cylinders of \emph{equal} length $n\ge N$ each one intersecting $S_1$ such that 
\[
	\card(\widehat{\mathcal U})
	>e^{(h-2\varepsilon)n}.
\]
\end{claim}

\begin{proof}
By the above, there exists a cover $\mathcal U$ of $S_1$ by cylinders 
\[
	\mathcal U
	= \{ [\xi_0\ldots\xi_{n-1}]\colon n\ge N,\xi^+\in X_{n,\varepsilon}\}
\]
such that
\[
	\sum_{U\in\mathcal U}e^{-(h-\varepsilon)n_{\mathscr A}(U)}
	>C(\varepsilon).
\]
We will show that we can choose $\widehat{\mathcal U}$ being a subfamily of $\mathcal U$.
Indeed, by contradiction, suppose that for every $n\ge N$, denoting by $\mathcal U(n)\subset\mathcal U$ the subfamily of cylinders of length $n$, we would have $\card(\mathcal U(n))\le e^{(h-2\varepsilon)n}$.  This would imply that
\[
	\sum_{U\in\mathcal U}e^{-(h-\varepsilon)n_{\mathscr A}(U)}
	=\sum_{n\ge N}\sum_{U\in\mathcal U(n)}e^{-(h-\varepsilon)n}
	\le \sum_{n\ge N}e^{(h-2\varepsilon)n}e^{-(h-\varepsilon)n}
	<C(\varepsilon),	
\]
contradiction.
\end{proof}

Now we take a sequence $(\varepsilon_i)_i$ decreasing to zero and apply the above to each $\varepsilon_i$. This provides a sequence $n_i=n(\varepsilon_i)\ge1$ and families $\widehat{\mathcal U}_i$ of cylinder sets of \emph{equal} length $n_i$ each satisfying
\[
	\card(\widehat{\mathcal U}_i)
	>e^{(h-2\varepsilon_i)n_i}.
\]
Note that by the standard property of cylinders,  the elements in $\widehat{\mathcal U}_i$ are pairwise disjoint.

Now, given $m\ge1$, for each $i$ we consider the family of cylinders of length $m\,n_i$ which are formed by all possible cylinders which are $m$ concatenated elements from the family $\widehat{\mathcal U}_i$, denote this family by $\widehat{\mathcal U}_i^{m}$. Again,  this is a family of pairwise disjoint cylinders.

Choose now a fast growing sequence $(m_i)_i$ satisfying
\begin{equation} \label{eqn:growthmi}
	\lim_{k\to\infty}\frac {n_{k+1}} {\sum_{i=1}^k m_i n_i} 
	=0
	\quad\text{ and }\quad
	\max_{i=1,\ldots,k-1}\frac{m_in_i}{m_{k}n_{k}}<\frac{1}{k^2}.
\end{equation}
Let $X$ be the set of one-sided infinite sequences of the form
\[
	X
	= \{\xi^+=\varrho_1\varrho_2\ldots\colon \varrho_i\in\widehat{\mathcal U}_i^{m_i}, i=1,2,\ldots\}.
\]

\begin{claim}
$X\subset S_0$.
\end{claim}

\begin{proof}
Each $\ell\ge1$ we can write as $\ell=\sum_{i=1}^k m_i n_i + j n_{k+1}+r$  for some $j\in\{0,\ldots,  m_{k+1}-1\}$ and $r\in\{0,\ldots,n_{k+1}-1\}$.  Hence, recalling~\eqref{def:MM}, from~\eqref{eqn:growthmi} we obtain
\[\begin{split}
	\left\lvert\frac 1 \ell \log \,\lVert \mathbf A^{\ell}(\xi^+)\rVert\right\rvert
	&\leq \frac {\sum_{i=1}^k m_i n_i \varepsilon_i + j n_{k+1} \varepsilon_{k+1}+r\log M} 						{\sum_{i=1}^k m_i n_i + j n_{k+1}+r}\\
	&\le k\frac{1}{k^2}+\varepsilon_{k+1}+\frac{n_{k+1}}{\sum_{i=1}^km_in_i}\log M \to 0,
\end{split}\]
as $\ell\to\infty$ (and hence $k\to\infty$).
\end{proof}

\begin{claim}
	$h_{\rm top}(X)\ge h$.
\end{claim}

\begin{proof}
The construction of the set $X$ can be described as the intersection of an infinite nested family $X(\ell)$, each being a finite union of cylinders. For any $\ell_k \eqdef \sum_{i=1}^k m_i n_i$ denote by $X(\ell_k)$ the union of all $\ell_k$th level cylinders which intersect $X$. By construction of $X$, each cylinder in $X(\ell_k)$ contains at least $e^{(h-2\varepsilon_{k+1})n_{k+1}}$ cylinders from $X(\ell_{k+1})$.

We will equidistribute on $X$ a probability measure $\nu$, estimate its local dimension and apply Frostman's Lemma~\ref{l.frostman}. For every cylinder in $X(\ell_k)$ the measure $\nu$ is to be equidistributed on its subcylinder from $X(\ell_{k+1})$. 
Denote by $\Delta^+_\ell(\xi^+)$ the cylinder of length $\ell$ containing $\xi^+$.
By induction, we can prove that for every $\ell_k$th level cylinder $C \in X(\ell_k)$ we have
\[
	\nu(C) 
	\leq e^{-\sum_{i=1}^k m_i n_i (h-2\varepsilon_i) }.
\]
Hence, for every $\xi^+\in X$ we have
\[
	\liminf_{k\to\infty} - \frac{1}{\ell_k} \log \nu(\Delta^+_{\ell_k}(\xi^+)) 
	\geq \liminf_{k\to\infty} 
		\frac {\sum_{i=1}^k m_i n_i (h-2\varepsilon_i)} 
				{\sum_1^k m_i n_i} = h,
\]
where we used \eqref{eqn:growthmi}. Now Frostman's Lemma~\ref{l.frostman} implies  $h_{\rm top}(X) \geq h$. Since  $S_0\supset X$ and entropy is monotone, this proves the claim.
\end{proof}
This proves $h_{\rm top}(S_0) \ge h_{\rm top}(S_1)$ and finishes the proof of the proposition.
\end{proof}

We can now conclude the Proof of Theorem~\ref{theoprop:onesidedspectrum}.

\begin{proof}[Proof of Theorem~\ref{theoprop:onesidedspectrum}]
Consider first the case of $\alpha>0$. By Proposition \ref{prolem:regular} we have
\[
	\{\xi^+\in\Sigma_N^+\colon \lambda_1(\mathbf A, \xi^+) = \alpha\} 
	\subset \{\xi^+\in\Sigma_N^+\colon \chi^+(\xi^+,v) 
		= 2\alpha\text{ for some }v\in\bP^1\}.
\]
To obtain the other inclusion, consider $\xi^+\in\Sigma_N^+$ such that there is a Lyapunov regular point $(\xi^+,v)$ for some $v\in\bP^1$ with exponent $\chi^+(\xi^+,v)=2\alpha$ in the fiber. By  Corollary \ref{cor:limexireg}, $\lambda_1(\mathbf A,\xi^+)$ is well defined.
Again applying Proposition \ref{prolem:regular}, we obtain that $\alpha= \lambda_1(\mathbf A,\xi^+)$, proving the other inclusion.
The case $-2\alpha$ is analogous and hence omitted.  \\
Now consider the case $\alpha=0$. Again by Proposition \ref{prolem:regular} we have
\[
	\{\xi^+\in\Sigma_N^+\colon \lambda_1(\mathbf A, \xi^+) = 0\} 
	\subset \{\xi^+\in\Sigma_N^+\colon \chi^+(\xi^+,v) 
		= 0\text{ for all }v\in\bP^1\}.
\]
To show the second inclusion, assume that $\chi^+(\xi^+,v)=0$ for all $v$. Then either $\lambda_1(\mathbf A,\xi^+)$ exists and hence by the first claim in Proposition~\ref{prolem:regular} must be equal to $0$. Or $\alpha_1=\underline\lambda_1(\mathbf A,\xi^+)<\overline\lambda_1(\mathbf A,\xi^+)$ and then by Case (b) in
Lemma~\ref{lem:irregular} we can exclude that $\alpha_1>0$, hence proving $\underline\lambda_1(\mathbf A,\xi^+)=0$ and thus the other inclusion. The assertion about the entropy is just Proposition~\ref{prolem:exp0}.
This proves the theorem.
\end{proof}

%-------------------------------------------------------------------------------------------------------------
\subsection{Entropy spectrum: Proof of Theorem~\ref{teo:SL2Rskewproduct}}\label{ss5sec:entcocy}
%-------------------------------------------------------------------------------------------------------------

Now we are ready to prove Theorem~\ref{teo:SL2Rskewproduct}. Note the differences in some statements in which we jump from studying the one-sided shift space $\Sigma_N^+$ to the two-sided one $\Sigma_N$.

\begin{proof}[Proof of Theorem~\ref{teo:SL2Rskewproduct}]

After Theorem~\ref{theoprop:onesidedspectrum}, it remains to see the properties of the level sets of two-sided sequences. 
Note that by Corollary~\ref{cor:limexireg} for every $\alpha\ne0$ we have
\[
\pi (\cL^+(\alpha)) \subset \cL^+_{\mathbf A}(\lvert \alpha\rvert/2),
\]
where 
\[
	\cL^+(\alpha)
	\eqdef \{(\xi^+,v)\in\Sigma_N^+\times\bS^1\colon\chi^+(\xi^+,v)=\alpha\}
\]
and  $\pi$ denotes the natural projection. By Proposition~\ref{prolem:regular} for every $\alpha$ 
\[
 \cL^+_{\mathbf A}(\lvert\alpha\rvert/2) \subset 
 \pi (\cL^+(\alpha)). 
\]
Hence, for every $\alpha> 0$ we have
\[
	\pi( \cL^+(-\alpha))=
	\cL^+_{\mathbf A}(\alpha/2)=
	\pi(\cL^+(\alpha)).
\]
Finally, recalling that for every set $\Theta\subset \Sigma_N \times \bP^1$ we have  $h_{\rm top} (F_{\mathbf{A}}, \Theta)=h_{\rm top} (\sigma, \pi(\Theta))$, see for instance \cite[Lemma 4.9]{DiaGelRam:}, for every $\alpha>0$ we have 
\[
	h_{\rm top}(\cL^+(-\alpha))
	= h_{\rm top}(\cL^+_{\mathbf A}(\alpha/2))
	= h_{\rm top}(\cL^+(\alpha)).
\]
By the same argument, applying Theorem~\ref{theoprop:onesidedspectrum} item 2., we obtain 
\[
	h_{\rm top}(\cL^+_{\mathbf A}(0))
	= h_{\rm top}(\pi(\cL^+(0)))
	= h_{\rm top}(\cL^+(0)).
\]

We now relate the one-sided and the two-sided spectra.
Note that any $(\xi,v)$ with $\chi(\xi,v)=\alpha$ satisfies $\chi^+(\xi^+,v)=\alpha$. This immediately implies that for all $\alpha$ we have $h_{\rm top}(\cL(\alpha))\le h_{\rm top}(\cL^+(\alpha))$.

\begin{claim} \label{cl:parte1} 
For every $\alpha\ge0$ it holds
$h_{\rm top}(\cL^+_{\mathbf{A}}(\alpha/2))
	\le h_{\rm top}(\cL(\alpha))$.	
\end{claim}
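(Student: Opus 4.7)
The plan is to reduce Claim~\ref{cl:parte1} to a set-theoretic identity by establishing
\[
\pi^{++}(\cL(\alpha)) = \cL^+_{\mathbf A}(\alpha/2),
\]
where $\pi^{++}\colon \Sigma_N \times \bP^1 \to \Sigma_N^+$ is the composition of the projection to $\Sigma_N$ with the projection to $\Sigma_N^+$. Since $\pi^{++}$ is a continuous factor map from $(\Sigma_N\times\bP^1, F)$ onto $(\Sigma_N^+, \sigma^+)$, monotonicity of topological entropy under factor maps yields
\[
h_{\rm top}(\cL^+_{\mathbf A}(\alpha/2)) = h_{\rm top}(\sigma^+, \pi^{++}(\cL(\alpha))) \le h_{\rm top}(F, \cL(\alpha)),
\]
which is exactly the claim. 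The inclusion $\pi^{++}(\cL(\alpha)) \subset \cL^+_{\mathbf A}(\alpha/2)$ is immediate: for $(\xi, v) \in \cL(\alpha)$ with $\alpha > 0$ we have $\chi^+(\xi^+, v) = \alpha$, and Proposition~\ref{prolem:regular} (the relation $\chi^+ = 2\lambda_1$ at the special point, together with its $\alpha = 0$ statement) gives $\lambda_1(\mathbf A, \xi^+) = \alpha/2$.

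The substantive direction is $\cL^+_{\mathbf A}(\alpha/2) \subset \pi^{++}(\cL(\alpha))$: given $\xi^+$ with $\lambda_1(\mathbf A, \xi^+) = \alpha/2$, I must produce a two-sided extension $\xi = \xi^-.\xi^+$ and a fiber point $v$ with $\chi(\xi, v) = \alpha$. For $\alpha > 0$, Proposition~\ref{prolem:regular} forces $v = v_0(\xi^+)$, so the problem becomes constructing a backward sequence $\xi^-$ with $\chi^-(\xi^-, v_0(\xi^+)) = \alpha$ (backward fiber Lyapunov exponent). Observing that $F^{-1}$ satisfies Axioms CEC$\pm$ and Acc$\pm$ with the roles of $+$ and $-$ interchanged, the backward analog of Proposition~\ref{prolem:regular} produces, for every backward-regular $\xi^-$ with backward cocycle exponent $\alpha/2$, a unique fiber point $v_0^-(\xi^-) \in \bP^1$ satisfying $\chi^-(\xi^-, v_0^-(\xi^-)) = \alpha$.

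The task thus reduces to showing that the map $\xi^- \mapsto v_0^-(\xi^-)$ has image containing (or at least dense in, followed by a shadowing step) all of $\bP^1$, so that one can select $\xi^-$ with $v_0^-(\xi^-) = v_0(\xi^+)$. For this, I first apply Lemma~\ref{l:minimalcommon} to a backward blending interval $J^-$ to obtain a bounded-length prefix $(\xi_{-k}, \ldots, \xi_{-1})$ with $f_{[\xi_{-k}\ldots \xi_{-1}.]}(v_0(\xi^+)) \in J^-$. Then the backward analog of Proposition~\ref{newprop:skeleton} applied to $F^{-1}$ produces basic sets inside $J^-$ all of whose ergodic measures have backward fiber exponent close to $\alpha$; the periodic orbits of such a basic set furnish a dense collection of candidates $v_0^-(\eta^-) \in J^-$, and a standard shadowing/approximation argument lets me prescribe a continuation $(\xi_{-k-1}, \xi_{-k-2}, \ldots)$ for which $v_0^-(\xi^-) = v_0(\xi^+)$ exactly.

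For $\alpha = 0$ the matching is essentially automatic: Proposition~\ref{prolem:regular} gives $\chi^+(\xi^+, v) = 0$ for every $v$, while the symmetric backward statement gives $\chi^-(\xi^-, v) = 0$ for every $v$ and every backward-regular $\xi^-$ with backward cocycle exponent $0$; such a $\xi^-$ exists by Corollary~\ref{cor:maisemenos} applied to $F^{-1}$. The main obstacle throughout is the surjectivity/density of $\xi^- \mapsto v_0^-(\xi^-)$ in the case $\alpha > 0$. Its resolution relies on two ingredients working in tandem: Axiom Acc$-$ to bring $v_0(\xi^+)$ into $J^-$ in bounded time, and Axiom CEC$-$ to produce basic sets of $F^{-1}$ inside $J^-$ with the prescribed backward exponent. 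The construction mirrors the forward arguments of Sections~\ref{ss.previous}--\ref{sec:bridging} applied to $F^{-1}$.
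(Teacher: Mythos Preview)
Your overall strategy---show $\pi^{++}(\cL(\alpha))=\cL^+_{\mathbf A}(\alpha/2)$ and invoke monotonicity of entropy under factors---is the same as the paper's. The inclusion $\subset$ and the case $\alpha=0$ are fine. But the core of your argument for $\alpha>0$ rests on a sign inversion that makes the construction fail.

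You assert that the backward analogue of Proposition~\ref{prolem:regular} yields a unique point $v_0^-(\xi^-)$ with $\chi^-(\xi^-,v_0^-(\xi^-))=\alpha$. It is the opposite. If $\eta^+=(\xi_{-1}\xi_{-2}\ldots)$ has $\lambda_1(\mathbf A^{-1},\eta^+)=\alpha/2$, then Proposition~\ref{prolem:regular} applied to $\mathbf A^{-1}$ gives a single point $v_0(\eta^+,\mathbf A^{-1})$ with forward $F_{\mathbf A^{-1}}$-exponent $+\alpha$ and \emph{all other} points with exponent $-\alpha$. Since $f_\xi^{-m}=(f_{\mathbf A^{-1}})_{\eta^+}^m$, one has
\[
\chi^-(\xi^-,v)=\lim_{m\to\infty}\frac{1}{-m}\log\big|(f_\xi^{-m})'(v)\big|
=-\,\chi^+_{F_{\mathbf A^{-1}}}(\eta^+,v),
\]
so the unique special point carries backward exponent $-\alpha$, while \emph{every other} $v\in\bP^1$ has backward exponent $+\alpha$. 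Consequently, forcing $v_0^-(\xi^-)=v_0(\xi^+)$ as you propose produces forward exponent $+\alpha$ and backward exponent $-\alpha$ at $v_0(\xi^+)$; the two-sided limit does not exist and the point is not in $\cL(\alpha)$.

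The correct requirement is the trivially generic one $v_0(\eta^+,\mathbf A^{-1})\ne v_0(\xi^+)$, for which the elaborate surjectivity argument via Acc$-$, CEC$-$, basic sets, and shadowing is unnecessary. The paper simply picks \emph{one} $\eta^+\in\cL^+_{\mathbf A^{-1}}(\alpha/2)$ (this exists because $\lVert A\rVert=\lVert A^{-1}\rVert$ in $\mathrm{SL}(2,\bR)$ forces the Lyapunov spectra of $\mathbf A$ and $\mathbf A^{-1}$ to coincide), and handles the exceptional coincidence $v_0(\eta^+,\mathbf A^{-1})=v_0(\xi^+)$ by a one-symbol modification. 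Once you correct the sign, your plan collapses to this same short argument; the machinery you invoke is not needed.
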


\begin{proof}
The case $\alpha=0$ follows immediately from Proposition~\ref{prolem:regular}.

Consider now the case $\alpha>0$.
Again by Proposition~\ref{prolem:regular}, for every $\xi^+\in\cL^+_{\mathbf{A}}(\alpha/2)$ the vector $v_0(\xi^+)\in\bP^1$ is well defined and  $\chi^+(\xi^+,v_0(\xi^+))=\alpha$. 
We need to prove the following statement: for every $\xi^+ \in \cL^+_{\mathbf A}(\alpha/2)$ there exists $\eta^-$ such that for $\xi=\eta^-.\xi^+$ we have $\chi(\xi, v_0(\xi^+))=\alpha$. This will imply that $h_{\rm top}(\cL^+_{\mathbf{A}}(\alpha/2))
	\le h_{\rm top}(\cL(\alpha))$.

Let us first consider  the special case when the vector $v_0(\xi^+)$ is simultaneously fixed by all the maps $f_i$. In this case, this vector is also fixed by all the maps $f_i^{-1}$, and we can choose $\xi=\xi^+.\xi^+=(\ldots\xi_1\xi_0.\xi_0\xi_1\ldots)$. Indeed,
\[
	(f_\xi^{-n})'(v_0(\xi^+)) 
	= \prod_{i=0}^{n-1} (f_{\xi_i}^{-1})'(v_0(\xi^+)) 
	= \prod_{i=0}^{n-1} (f_{\xi_i}'(v_0(\xi^+)))^{-1} 
	= (f_\xi^n)'(v_0(\xi^+))^{-1}
\]
and hence $\chi^+(\xi^+, v_0(\xi^+))=\alpha$ implies $\chi(\xi, v_0(\xi^+))=\alpha$.

Assume now that there exists some $f_i$ such that $f_i(v_0(\xi^+))\neq v_0(\xi^+)$. Since we consider $\mathrm{SL}(2,\bR)$  cocycles, the Lyapunov spectra for  the cocycle generated by $\mathbf A$ and the one for  the cocycle generated by $\mathbf A^{-1}=\{A_i^{-1}\colon A_i\in\mathbf A\}$ coincide. Hence there is $\eta^+\in\cL^+_{\mathbf A^{-1}}(\alpha/2)$.
We now apply Proposition~\ref{prolem:regular} to $\mathbf A^{-1}$.  Either $v_0(\xi^+)=v_0(\xi^+,\mathbf A)$ does not coincide with the vector $v_0(\eta^+,\mathbf A^{-1})$ defined with respect to $\mathbf A^{-1}$ and $\eta^+$ and hence $\chi^+(\eta^+,v_0(\xi^+,F_{\mathbf A^{-1}}))=-\alpha$. In this case, we have $\chi(\xi,v_0(\xi^+))=\alpha$ for the concatenated two-sided sequences $\xi=\eta^+.\xi^+=(\ldots \eta_1\eta_0.\xi_0\xi_1\ldots)$. Or, $v_0(\xi^+,\mathbf A)= v_0(\eta^+,\mathbf A^{-1})$.  
Then for some map $f_i$, $ v\eqdef f_i(v_0(\xi^+))\ne v_0(\xi^+)$, in which case we have $\chi(\xi,v)=\alpha$ with $\xi=(\ldots\eta_1\eta_0 i.\xi_0\xi_1\ldots)$, ending the proof of the claim.	
\end{proof}

The symmetry of the entropy spectrum of the skew-product  now follows from the next claim.

\begin{claim}
\label{cl:parte2} 
For every $\alpha>0$ it holds
 $h_{\rm top}(\cL(\alpha))=h_{\rm top}(\cL(-\alpha))$. 
 \end{claim}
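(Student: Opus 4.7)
The plan is to establish the stronger identity $\pi_\Sigma \cL(\alpha)=\pi_\Sigma \cL(-\alpha)$ as subsets of $\Sigma_N$. Once this is in hand, combining it with $h_{\rm top}(\cL(\pm\alpha); F) = h_{\rm top}(\pi_\Sigma \cL(\pm\alpha); \sigma)$---the same consequence of \cite[Lemma 4.9]{DiaGelRam:} already invoked in the paragraph preceding this claim---yields the asserted equality of topological entropies at once.

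To characterize both projections, fix a two-sided sequence $\xi$ and let $\eta^+\in\Sigma_N^+$ be the time-reversal of its past, $\eta^+_i\eqdef\xi_{-i-1}$. Unfolding $f_{A_i^{-1}}=f_{A_i}^{-1}$ identifies $f_\xi^{-m}=f_{\eta^+_{m-1}}^{-1}\circ\cdots\circ f_{\eta^+_0}^{-1}$, so the backward fiber derivative of $F_{\mathbf A}$ along $\xi^-$ equals the forward fiber derivative of the associated skew-product $F_{\mathbf A^{-1}}$ (with fiber maps $\{f_i^{-1}\}$) along $\eta^+$. This gives the conjugacy-like formula
\[
	\chi^-(\xi^-, v)
	= -\chi^+_{\mathbf A^{-1}}(\eta^+, v).
\]
Membership $\xi\in\pi_\Sigma\cL(\beta)$ is therefore equivalent to the existence of $v\in\bP^1$ with $\chi^+(\xi^+, v)=\beta$ and $\chi^+_{\mathbf A^{-1}}(\eta^+, v)=-\beta$. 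Applying Proposition~\ref{prolem:regular} separately to the pairs $(\mathbf A,\xi^+)$ and $(\mathbf A^{-1},\eta^+)$---the proposition is cocycle-agnostic and applies to any element of $\mathrm{SL}(2,\bR)^N$---the first equation forces $\lambda_1(\mathbf A,\xi^+)=|\beta|/2$ together with $v=v_0(\xi^+,\mathbf A)$ if $\beta>0$ (respectively $v\ne v_0(\xi^+,\mathbf A)$ if $\beta<0$), and the second equation forces $\lambda_1(\mathbf A^{-1},\eta^+)=|\beta|/2$ together with $v\ne v_0(\eta^+,\mathbf A^{-1})$ if $\beta>0$ (respectively $v=v_0(\eta^+,\mathbf A^{-1})$ if $\beta<0$).

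The decisive observation is that the two sign cases $\beta=+\alpha$ and $\beta=-\alpha$ yield exactly the same joint condition on $v$. Indeed for $\beta=+\alpha$ the combined requirements read $v=v_0(\xi^+,\mathbf A)$ and $v\ne v_0(\eta^+,\mathbf A^{-1})$, forcing $v_0(\xi^+,\mathbf A)\ne v_0(\eta^+,\mathbf A^{-1})$; for $\beta=-\alpha$ the roles of equality and inequality swap to $v\ne v_0(\xi^+,\mathbf A)$ and $v=v_0(\eta^+,\mathbf A^{-1})$, which collapses to the very same inequality. Hence both $\xi\in\pi_\Sigma\cL(\alpha)$ and $\xi\in\pi_\Sigma\cL(-\alpha)$ are equivalent to the single triple of conditions: (i) $\lambda_1(\mathbf A,\xi^+)=\alpha/2$, (ii) $\lambda_1(\mathbf A^{-1},\eta^+)=\alpha/2$, (iii) $v_0(\xi^+,\mathbf A)\ne v_0(\eta^+,\mathbf A^{-1})$. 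Consequently $\pi_\Sigma\cL(\alpha)=\pi_\Sigma\cL(-\alpha)$ and the claim follows. The only mildly technical point is the identity $\chi^-(\xi^-, v)=-\chi^+_{\mathbf A^{-1}}(\eta^+, v)$, which is a routine unfolding of definitions exploiting $\mathrm{SL}(2,\bR)$'s closure under inversion; everything else is bookkeeping of how the extremal fiber directions exchange between $\mathbf A$ and $\mathbf A^{-1}$ when the sign of the exponent is reversed.
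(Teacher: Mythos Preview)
Your argument is correct and in fact proves a strictly stronger statement than what is needed: you establish the set equality $\pi_\Sigma\cL(\alpha)=\pi_\Sigma\cL(-\alpha)$, from which the entropy equality follows via \cite[Lemma 4.9]{DiaGelRam:}. One small citation issue: the step ``the first equation forces $\lambda_1(\mathbf A,\xi^+)=|\beta|/2$'' is not literally the content of Proposition~\ref{prolem:regular}, which assumes regularity of $\xi^+$ for the cocycle and deduces the fiber exponents; to go in the direction you need you should first invoke Corollary~\ref{cor:limexireg} (existence of a nonzero fiber exponent implies cocycle regularity), and only then apply Proposition~\ref{prolem:regular} to pin down the value and the dichotomy $v=v_0$ versus $v\ne v_0$. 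With that correction the logic is airtight.

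Your route is genuinely different from the paper's. The paper does \emph{not} prove the set equality $\pi_\Sigma\cL(\alpha)=\pi_\Sigma\cL(-\alpha)$; instead it bounds entropy through the one-sided cocycle level set $\cL^+_{\mathbf A}(\alpha/2)$ as a bridge, showing separately $h_{\rm top}(\cL^+_{\mathbf A}(\alpha/2))\le h_{\rm top}(\cL(\alpha))$ (Claim~\ref{cl:parte1}) and $h_{\rm top}(\cL^+_{\mathbf A}(\alpha/2))\le h_{\rm top}(\cL(-\alpha))$, and combining with the easy reverse inequalities. For the latter it \emph{constructs}, given $\xi^+$, a suitable backward tail $\eta^-$ and a direction $v$ so that $(\eta^-.\xi^+,v)\in\cL(-\alpha)$; this requires a case analysis (coincidence or not of the two extremal directions, possibly inserting one extra symbol) that your symmetric characterization via conditions (i)--(iii) circumvents entirely. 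The paper's approach is more constructive and fits its running narrative through $\cL^+_{\mathbf A}$; yours is cleaner, avoids the degenerate-coincidence case by showing it simply lies outside both projections, and yields a sharper conclusion at the level of base sequences.
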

 
 \begin{proof}
 The proof of this claim is analogous to the previous one.
 Let $(\xi^+,v)$ such $\chi^+(\xi^+,v)=-\alpha$. By Corollary~\ref{cor:limexireg} we have $\lambda_1(\mathbf A,\xi^+)=2\alpha$. Now by Proposition~\ref{prolem:regular}, we in fact have  $\chi^+(\xi^+,w)=-\alpha$ for all $w\ne v_0(\xi^+)$, the latter being well defined. Again by Proposition~\ref{prolem:regular} now applied to $\mathbf A^{-1}$  there is $\eta^+\in\cL^+_{\mathbf A^{-1}}(\alpha/2)$ so that: either $v_0(\xi^+)=v_0(\xi^+,\mathbf A)$ does not coincide with the vector $v_0(\eta^+,\mathbf A^{-1})$ in which case we would have $\chi(\xi,v_0(\eta^+,\mathbf A^{-1}))=-\alpha$, where $\xi=(\ldots \eta_1\eta_0.\xi_0\xi_1\ldots)$. Or $v_0(\xi^+)=v_0(\xi^+,\mathbf A)=v_0(\eta^+,\mathbf A^{-1})$. In this case, if $f_j(v_0(\eta^+,\mathbf A^{-1}))= v_0(\xi^+)$ for every $j$ then we argue as above and let $\xi=(\ldots \xi_1\xi_0.\xi_0\xi_1\ldots)$. Otherwise,  there is  some map $f_j$ such that $w=f_j(v_0(\eta^+,\mathbf A^{-1}))\ne v_0(\xi^+)$, in which case we have $\chi(\xi,w)=\alpha$ for $\xi=(\ldots\eta_1\eta_0 j.\xi_0\xi_1\ldots)$. 
This proves $h_{\rm top}(\cL^+_{\mathbf A}(\alpha/2))\le h_{\rm top}(\cL(-\alpha))$.
\end{proof}
The proof of the theorem is now complete.
\end{proof}

\appendix
%----------------------------------------------------------------------------------------------
\section{The set $\fE_{N,\rm shyp}$ of elliptic cocycles}\label{App:A}
%----------------------------------------------------------------------------------------------

In this section, we define $\fE_{N,\rm shyp}$ and prove that this set is open and dense (in $\fE_N$). 

First observe that, according to Remark \ref{rem:example}, given $F\colon\Sigma_2\times\bS^1\to\Sigma_2\times\bS^1$ with fiber maps $f_0,f_1$ which satisfies Axioms CEC$\pm$ and Acc$\pm$ there exists $\varepsilon=\varepsilon(f_0,f_1)>0$ such that every skew-product $G\colon\Sigma_2\times\bS^1\to\Sigma_2\times\bS^1$ with fiber maps $g_0,g_1$ which are $\varepsilon$-close to $f_0,f_1$, respectively, also satisfies those axioms.   Also observe that for every $N>2$, every skew-product $H\colon\Sigma_N\times\bS^1\to\Sigma_N\times\bS^1$ with fiber maps $h_0,\ldots,h_{N-1}$ such that $h_0=f_0$ and $h_1=f_1$ also satisfies the axioms.

Second, take $F$ such that $f_0$ is a Morse-Smale diffeomorphism with exactly two fixed points (a global attractor and a global repeller) and $f_1$ is an irrational rotation. By Remark \ref{r.Gorod} the skew-product $F$ satisfies the axioms and hence we can define an $\varepsilon(f_0,f_1)$ as above.

Let us denote by $\fH_1\subset \mathrm{SL}(2,\bR)$ the subset of hyperbolic matrices and by $\fI_1\subset \mathrm{SL}(2,\bR)$ the one of ``irrational rotations"
\[
	\fI_1
	\eqdef \left\{\left(\begin{matrix}
		\cos2\pi\theta&\sin2\pi\theta\\-\sin2\pi\theta&\cos2\pi\theta\end{matrix}
				\right)\colon\theta\in[0,1),\theta\not\in\bQ\right\}.
\]	
Note that if $A\in\fH_1$ and $B\in\fI_1$, then with $\mathbf A=\{ A,B\}$ the skew-product $F_{\mathbf A}$  satisfies the axioms and we can define an $\varepsilon(f_A,f_B)$ as above (recall \eqref{eq:defsteskecoc}).
Now let
\[\begin{split}
	\fE_{N,\rm shyp}
	&\eqdef 
	\bigcup_{A\in\fH_1,B\in\fI_1}
	\Big\{ \mathbf A\in \fE_N\colon
		\text{there exist }A',B'\in\langle \mathbf A\rangle,C\in\mathrm{SL}(2,\bR)
		\text{ so that }\\
		&\phantom{\bigcup_{A\in\fH_1,B\in\fI_1}}
			 f_{C^{-1}A'C},f_{C^{-1}B'C}\text{ are }\varepsilon(f_A,f_B)\text{-close to }f_A,f_B\text{, respectively}
		\Big\} .
\end{split}\]
  
Note that there is a natural identification of $\mathrm{SL}(2,\bR)^N$ with a subset of $\bR^{4N}$. 

\begin{proposition}\label{p.elliptic}
The set $\fE_{N,\rm shyp}\subset\fE_N$ is open and dense (in $\fE_N$). Moreover,  for every  one-step $2\times 2$ matrix cocycle $\mathbf A\in\fE_{N,\rm shyp}$ its induced step skew-product $F_{\mathbf A}$ satisfies 
Axioms CEC$\pm$ and Acc$\pm$ and is proximal.
\end{proposition}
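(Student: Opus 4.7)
The plan is to establish three assertions: (i) $\fE_{N,\rm shyp}$ is open in $\fE_N$, (ii) $F_{\mathbf A}$ satisfies Axioms CEC$\pm$, Acc$\pm$ and is proximal for every $\mathbf A\in\fE_{N,\rm shyp}$, and (iii) $\fE_{N,\rm shyp}$ is dense in $\fE_N$.

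For (i), note that membership in $\fE_{N,\rm shyp}$ is witnessed by a choice of pair $(A,B)\in\fH_1\times\fI_1$, two specific finite words $A'$ and $B'$ in the generators of $\mathbf A$, and a conjugation $C\in\mathrm{SL}(2,\bR)$, together with an open $C^1$-closeness condition between $f_{C^{-1}A'C}, f_{C^{-1}B'C}$ and $f_A, f_B$. For $\tilde{\mathbf A}$ close to $\mathbf A$, the same words produce $\tilde A', \tilde B'\in\langle\tilde{\mathbf A}\rangle$ depending continuously on $\tilde{\mathbf A}$, so the closeness condition persists on some neighborhood, yielding openness.

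For (ii), observe that $C$ induces a smooth change of coordinates on $\bP^1$ conjugating the IFS $\{f_{A_i}\}_{i=0}^{N-1}$ to $\{f_{C^{-1}A_iC}\}_{i=0}^{N-1}$, and that Axioms CEC$\pm$, Acc$\pm$ and proximality are invariant under smooth conjugacy. The semigroup of the conjugated IFS contains the maps $f_{C^{-1}A'C}$ and $f_{C^{-1}B'C}$, which by definition of $\fE_{N,\rm shyp}$ are $\varepsilon(f_A,f_B)$-close to a Morse-Smale north-pole/south-pole map $f_A$ (since $A\in\fH_1$) and to an irrational rotation $f_B$ (since $B\in\fI_1$). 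By the very choice of $\varepsilon(f_A,f_B)$ built into the definition, Remark~\ref{rem:example} applies and yields Axioms CEC$\pm$ and Acc$\pm$ for the conjugated IFS, and Remark~\ref{rem:proximal} then yields proximality.

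For (iii), let $\mathbf A\in\fE_N$ with elliptic $R\in\langle\mathbf A\rangle$. A first arbitrarily small perturbation of a generator appearing in the word for $R$ makes $\varrho(R)$ irrational, so that $R$ becomes conjugate to an irrational rotation via some $C\in\mathrm{SL}(2,\bR)$; we set $B'\eqdef R$. To produce a hyperbolic $A'\in\langle\mathbf A\rangle$, we examine the traces
\[
	\mathrm{tr}(A_iR^n)=\mathrm{tr}(B_i\cdot\mathrm{Rot}(n\varrho(R))),
	\quad\text{where }B_i\eqdef C^{-1}A_iC.
\]
A direct computation based on $\det B_i=1$ shows that $|\mathrm{tr}(B_i\cdot\mathrm{Rot}(\theta))|$ is a continuous function of $\theta$ whose maximum over $\theta\in\bR/\bZ$ strictly exceeds $2$ unless $B_i\in\mathrm{SO}(2)$. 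Hence, provided some generator $A_i$ satisfies $B_i\notin\mathrm{SO}(2)$, the irrationality of $\varrho(R)$ combined with Weyl equidistribution produces $n$ with $A_iR^n$ hyperbolic, and we set $A'\eqdef A_iR^n$. Taking $A\eqdef C^{-1}A'C\in\fH_1$ and $B\eqdef C^{-1}B'C\in\fI_1$, the closeness condition holds with distance zero, placing $\mathbf A$ in $\fE_{N,\rm shyp}$. The remaining degenerate case, in which $B_i\in\mathrm{SO}(2)$ for every $i$, means that all generators are simultaneously conjugate to rotations by the common $C$; this is a closed, positive-codimension real-analytic subvariety of $\mathrm{SL}(2,\bR)^N$ which can be escaped by an arbitrarily small perturbation of a single generator while preserving the ellipticity of $R$. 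The main obstacle is handling this degenerate case carefully while remaining inside $\fE_N$; this is the perturbative content inherited from \cite{AviBocYoc:10}, which we plan to adapt to our setting.
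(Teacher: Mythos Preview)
Your treatment of openness and of the second assertion (Axioms CEC$\pm$, Acc$\pm$, proximality via Remarks~\ref{rem:example} and~\ref{rem:proximal} after a smooth conjugacy) matches the paper's, though you spell out more detail than the paper does.

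For density, the paper also reduces to the subset where the closeness condition holds with distance zero, but it produces the hyperbolic and elliptic-irrational elements in the \emph{opposite order}. The paper first perturbs to obtain a hyperbolic element in $\langle\mathbf A\rangle$ (via a case analysis: if all elements are elliptic, use a power of an irrational-rotation element to bring a vector back to itself and then perturb by a diagonal matrix to make the fixed point hyperbolic), and only afterwards perturbs again to make the rotation number of some elliptic element irrational, invoking \cite[Lemma~A.4]{AviBocYoc:10} to see that the trace varies to first order under rotation perturbations. Your route---first make the elliptic element an irrational rotation after conjugation by some $C$, then find a hyperbolic $A_iR^n$ via the trace formula $\mathrm{tr}(B_i\cdot\mathrm{Rot}(\theta))=(a+d)\cos\theta+(b-c)\sin\theta$ together with equidistribution of $n\varrho(R)$---is more self-contained: it replaces the appeal to \cite{AviBocYoc:10} by the explicit observation that the maximum of that trace over $\theta$ equals $\sqrt{(a+d)^2+(b-c)^2}=\sqrt{a^2+b^2+c^2+d^2+2}$, which strictly exceeds $2$ unless $B_i\in\mathrm{SO}(2)$, and it needs no further perturbation in the generic case. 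Both arguments dispose of the degenerate simultaneous-rotation situation by one additional small perturbation; your description of it as a positive-codimension subvariety is adequate. The trade-off is that the paper's order makes the production of the hyperbolic element entirely constructive after at most two explicit perturbations, while yours uses a density/equidistribution argument but avoids external input.
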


Certainly the existence of a dense (and automatically open) subset of $\fE_N$ of cocycles with a hyperbolic element is well know, but let us sketch  a proof for completeness. 

\begin{proof}
By definition, the set $\fE_{N,\rm shyp}$ is open. It remains to show its density (in $\fE_N$). For that we consider the subset of $\fE_{N,\rm shyp}$,
\[
\begin{split}
	\fE_{N,\rm shyp}'
	&\eqdef 
	\bigcup_{A\in\fH_1,B\in\fI_1}
\Big\{ \mathbf A\in \fE_N\colon
		\text{there exist }A',B'\in\langle \mathbf A\rangle,C\in\mathrm{SL}(2,\bR)
		\text{ so that }\\
	 &\phantom{\bigcup_{A\in\fH_1,B\in\fI_1}}		 C^{-1}A'C=A ,C^{-1}B'C=B
		\Big\} .
		\end{split}\]

\begin{lemma}
	$\fE_{N,\rm shyp}'$ is dense in $	\fE_{N}$.
\end{lemma}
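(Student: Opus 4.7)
The plan is to show that an arbitrary $\mathbf A \in \fE_N$ admits arbitrarily small perturbations in $\fE_{N,\rm shyp}'$. Recalling that conjugation preserves both hyperbolicity and rotation number, membership in $\fE_{N,\rm shyp}'$ is equivalent to the existence of some $A',B'\in\langle\mathbf A\rangle$ with $A'$ hyperbolic and $B'$ elliptic of irrational rotation number (then one simply chooses any $C$ diagonalising $B'$ to an element of $\fI_1$ and sets $A=C^{-1}A'C\in\fH_1$, $B=C^{-1}B'C\in\fI_1$). So it suffices to prove density of the set $\{\mathbf A\in\fE_N\colon \langle \mathbf A\rangle \text{ contains a hyperbolic element and an irrational elliptic element}\}$.

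Fix $\mathbf A\in\fE_N$ and a small open neighbourhood $\cU\subset\fE_N$. By definition there is a word $W$ such that $R_1\eqdef W(\mathbf A)$ is elliptic. I would first make two successive small perturbations of the generators (each staying inside $\cU$), each adjusting only one of the following two conditions while preserving the other, which are respectively open and dense in the space of cocycles: (a) the element $R_1=W(\mathbf A)$ remains elliptic and, moreover, has \emph{irrational} rotation number (a dense condition, since rational rotation numbers form a countable subset of the interval); (b) some generator $A_{i_0}$ does \emph{not} commute with $R_1$. Condition (b) is generic because the centraliser of an elliptic element of irrational rotation number in $\mathrm{SL}(2,\bR)$ is a one-dimensional subgroup (the one-parameter group to which the element belongs), so requiring every generator to commute with $R_1$ is a positive codimension condition which can always be broken by an arbitrarily small perturbation.

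After these perturbations we write $R_1=C R_\theta C^{-1}$ with $\theta/\pi$ irrational and $R_\theta$ the standard rotation. Then $\{R_1^n\}_{n\in\bN}$ is dense in the circle $\{CR_\alpha C^{-1}\colon\alpha\in[0,2\pi)\}$. A short computation using cyclicity of trace gives, for any $A_{i_0}$ and $M\eqdef C^{-1}A_{i_0}C=\binom{a\ b}{c\ d}\in\mathrm{SL}(2,\bR)$,
\[
\trace(R_1^n A_{i_0})=\trace(R_{n\theta} M)=(a+d)\cos(n\theta)+(c-b)\sin(n\theta),
\]
whose values on the closure fill the interval $[-r,r]$ with $r=\sqrt{(a+d)^2+(c-b)^2}=\sqrt{\lVert M\rVert_{\mathrm{Frob}}^2+2}$ (using $\det M=1$). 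Since $\lVert M\rVert_{\mathrm{Frob}}^2\ge 2$ with equality exactly when $M\in SO(2)$, and $M\in SO(2)$ is equivalent to $A_{i_0}$ lying in the same one-parameter subgroup as $R_1$, i.e.\ to $A_{i_0}$ commuting with $R_1$, condition (b) yields $r>2$. Since hyperbolicity $\lvert\trace\rvert>2$ is an open condition on $\alpha$ and $\{n\theta\mod 2\pi\}$ is dense, there exists $n_0$ with $R_1^{n_0} A_{i_0}$ hyperbolic. This element lies in $\langle \mathbf A\rangle$, together with the irrational elliptic $R_1$, so the perturbed $\mathbf A'$ belongs to $\fE_{N,\rm shyp}'$.

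The main subtlety is the compatibility of the two perturbations in the second paragraph: tweaking the generators to make the rotation number of $W(\mathbf A)$ irrational could in principle push us into the commuting locus of step (b), and vice versa. The way around this is to perform step (b) first (it is an open and dense condition on $\mathbf A$, independent of the rotation number), and then step (a) within the open set where (b) holds, using that the map $\mathbf A\mapsto \varrho(W(\mathbf A))$ from the space of cocycles with $W(\mathbf A)$ elliptic to $\bR$ is continuous and locally non-constant under generic transverse perturbations; restricting to the open set where (b) holds cannot destroy local non-constancy, so irrational values remain dense there. With both (a) and (b) secured, the trace argument completes the proof.
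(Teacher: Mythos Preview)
Your proof is correct and takes a genuinely different route from the paper's. Both proofs reduce to producing, after small perturbation, a hyperbolic element and an elliptic element of irrational rotation number in $\langle\mathbf A\rangle$, but they achieve this in reverse order and by different mechanisms.

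The paper first manufactures a hyperbolic element by an explicit construction: assuming (after perturbation) that some $A_1$ has irrational rotation number, it finds $k$ with $A_1^k A_2$ fixing a chosen vector, then post-composes $A_2$ with a small diagonal matrix to force that fixed point to be hyperbolic. Only afterwards does it perturb again to make some elliptic word have irrational rotation number, citing \cite[Lemma~A.4]{AviBocYoc:10} for the non-constancy of $\varrho$ along a specific one-parameter family. Your argument inverts this: you first arrange (b) a generator $A_{i_0}$ not commuting with the elliptic word $R_1$ and (a) $\varrho(R_1)$ irrational, and then obtain hyperbolicity \emph{for free}, without further perturbation, via the identity $\trace(R_{n\theta}M)=(a+d)\cos n\theta+(b-c)\sin n\theta$ and the observation that the amplitude $\sqrt{\lVert M\rVert_{\mathrm{Frob}}^2+2}$ exceeds $2$ exactly when $M\notin SO(2)$, i.e.\ when $A_{i_0}$ fails to commute with $R_1$.

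Your route is more conceptual: the trace--Frobenius norm computation is a clean reason why an irrational elliptic together with anything outside its maximal compact must generate a hyperbolic element, and it avoids the paper's somewhat ad hoc fixed-point-plus-diagonal-perturbation step. The cost is that your justification of step (a) (density of irrational $\varrho(W(\mathbf A))$ within the open set where (b) holds) is left at the level of ``locally non-constant under generic transverse perturbations''. This is true --- the trace of $W(\mathbf A)$ is a non-constant polynomial in the matrix entries (it is clearly non-constant when $W$ has length one, hence non-constant on any open set by analyticity), so each rational level set of $\varrho$ is a proper analytic variety and their countable union has empty interior --- but you should make this explicit, or simply cite \cite[Lemma~A.4]{AviBocYoc:10} as the paper does.
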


\begin{proof}
Let $\mathbf A\in\fE_N$, that is, assume that $\langle\mathbf{A}\rangle$ contains  elliptic matrices. Note that if $A\in \mathrm{SL}(2,\bR)$ is hyperbolic then for every $C\in \mathrm{SL}(2,\bR)$ we have that $C^{-1}A C$ is also hyperbolic. 

\begin{claim}
There is an arbitrarily small perturbation of 
 $\mathbf A$ containing a hyperbolic element.
 \end{claim}

\begin{proof}
Recall that $A\in \mathrm{SL}(2,\bR)^N$ is called \emph{parabolic} if $\lvert\trace A\rvert=2$. Notice that every parabolic $A$ can be arbitrarily approximated by  hyperbolic ones.
There are three cases:
(1) $\mathbf A$ contains a hyperbolic element;
(2)$\mathbf A$ contains a parabolic element; and 
(3)  $\mathbf A$ contains only elliptic elements.
Observe that in the first two cases we are done. We now consider the third case.

Pick $A_1,A_2\in\mathbf A$ and the vector $v=(1,0)$. Possibly after a small perturbation, we can assume that $A_1$ has irrational rotation number and hence, after possibly a new perturbation,  we can assume that
$A(v)=v$, where $A=A_1^k \circ A_2$ for some (large) $k$. If $v$ is a hyperbolic fixed point for $f_A$ we are done. Otherwise we
consider the perturbations of $A_2$ given by
$$
A_{2,t}= A_2 \circ \left(\begin{matrix}1+t&0\\0&1/(1+t)\end{matrix}\right)
 $$
 and observe that  $A_t(v)=v$, where $A_t= A_1^k \circ A_{2,t}$  and that $v$ is necessarily hyperbolic for $f_{A_t}$.
Recalling that $\fE_N$ is open, we can assume that this perturbation was sufficiently small such that $\mathbf A_t=\{A_1,A_{2,t},A_3,\ldots,A_N\}$ is elliptic and that $\langle \mathbf A_t\rangle$ contains a hyperbolic element. 
\end{proof}

Observe now that the above achieved hyperbolic elements will not be destroyed by sufficiently small further perturbations. It remains to obtain one further perturbation to get an element which is matrix-conjugate to a irrational rotation.

Consider an  elliptic matrix  $A'=A_{i_k} \circ \cdots  \circ A_{i_1}$. If its rotation number is  already irrational we are done. Otherwise we consider rotation matrices $R_r$, small $r\ge 0$, the elliptic matrix 
$
A_r' = A_{i_k} \circ R_r \circ \cdots  \circ A_{i_1}\circ R_r,
$
and the map $F(r) = \trace (A_r)= 2\cos (\varrho (A_r))$. 
By \cite[Lemma A.4]{AviBocYoc:10}, $F'(0)>0$.
This immediately implies that  there are  arbitrarily small perturbations of $\mathbf{A}$ with 
irrational rotation number. This proves the density of $\fE_{N,\rm shyp}'$ (in $\fE_N$) and finishes the proof of the lemma.
 \end{proof}

Note that the second part of the proposition just rephrases
Remark~\ref{rem:example},  that asserts that for every $\mathbf{A}$ in 
$\mathcal{E}_{N, \mathrm{shyp}}$ the corresponding skew-product map $F_{\mathbf{A}}$
satisfies the axioms and proximality.
\end{proof}

%----------------------------------------------------------------------------------------------
\section{Entropy}\label{App:B}
%----------------------------------------------------------------------------------------------

Let $X$ be a compact metric space. Consider a continuous map $f\colon X\to X$, a set $Y\subset X$,  and a finite open cover $\mathscr{A} = \{A_1, A_2,\ldots, A_n\}$ of $X$. Given $U\subset X$ we write $U \prec \mathscr{A}$ if there is an index $j$ so that $U\subset A_j$, and $U\nprec\mathscr{A}$ otherwise.
Taking $U\subset X$ we define
$$
	n_{f,\mathscr{A}}(U) :=
		\begin{cases}
		0&\text{ if } U \nprec \mathscr{A},\\
		\infty &\text{ if } f^k(U)\prec \mathscr{A}\text{ for all } k\in\mathbb{N},\\
		 \ell&\text{ if }  f^k(U)\prec \mathscr{A}\text{ for all }  k\in \{0, \dots,  \ell-1\}
		 	\text{ and }f^\ell(U)\nprec\mathscr{A}.
		\end{cases}
$$
If $\mathcal U$ is a countable collection of open sets, given $d>0$ let
\[
	 m(\mathscr A,d,\mathcal U)
	:= \sum_{U\in\mathcal U}e^{-d \,n_{f,\mathscr{A}}(U)}.
\]
Given a set $Y\subset X$, let
$$
	m_{\mathscr{A}, d} (Y)
	:= \lim_{\rho \to 0}\inf \Big\{m(\mathscr A,d,\mathcal U)\colon
		Y \subset\bigcup_{U\in\mathcal U} U, e^{-n_{f,\mathcal{A}}(U)}<\rho
		\text{ for every } U\in\mathcal U
	\Big\}.
$$
Analogously to what happens for the Hausdorff measure, $d\mapsto m_{\mathcal{A},d}(Y)$
jumps from $\infty$ to $0$ at a unique critical point and we define
$$
  h_{\mathscr{A}}(f,Y)
	:= \inf\{d\colon m_{\mathscr{A}, d}(Y)=0\}
   = \sup\{d\colon m_{\mathscr{A}, d}(Y)=\infty\}.
$$
The \emph{topological entropy} of $f$ on the set $Y$ is defined by
$$
	h_{\rm top}(f,Y)
	:= \sup_{\mathscr{A}} h_{\mathscr{A}}(f,Y) ,
$$
When $Y=X$, we simply write $h_{\rm top}(X) = h_{\rm top}(f,X)$. 

By~\cite[Proposition 1]{Bow:73},  in the case of $Y$ compact this definition is equivalent to the canonical definition of topological entropy (see, for example, \cite[Chapter 7]{Wal:82}).

\bibliographystyle{alpha}
\bibliography{bib}
\end{document}